\newcommand{\NewTheorem}[3]{
	\newaliascnt{#1}{TheoremEnvironment}
	\newtheorem{#1}[#1]{#2}
	\aliascntresetthe{#1}
	\crefname{#1}{#2}{#3}
	\Crefname{#1}{#2}{#3}
}
\theoremstyle{definition}
\theoremstyle{plain}
\crefname{appendix}{Appendix}{Appendices}
\Crefname{appendix}{Appendix}{Appendices}
\crefname{enumi}{}{}
\Crefname{enumi}{}{}
\crefname{enumii}{}{}
\Crefname{enumii}{}{}
\crefname{enumiii}{}{}
\Crefname{enumiii}{}{}
\renewcommand{\p@enumii}{}
\renewcommand{\p@enumiii}{}
\numberwithin{equation}{section}
\crefname{equation}{}{}
\Crefname{equation}{}{}
\newcommand{\SwapSymbols}[1]{
	\expandafter\let\expandafter\temporarysymbol\csname #1\endcsname
	\expandafter\let\csname #1\expandafter\endcsname\csname var#1\endcsname
	\expandafter\let\csname var#1\endcsname\temporarysymbol
}
\newcommand{\cA}{\mathcal{A}}
\newcommand{\cX}{\mathcal{X}}
\newcommand{\cY}{\mathcal{Y}}
\newcommand{\ka}{\mathfrak{a}}
\newcommand{\kb}{\mathfrak{b}}
\newcommand{\km}{\mathfrak{m}}
\newcommand{\kn}{\mathfrak{n}}
\newcommand{\p}{\mathfrak{p}}
\newcommand{\q}{\mathfrak{q}}
\let\originalleft\left
\let\originalright\right
\renewcommand{\left}{\mathopen{}\mathclose\bgroup\originalleft}
\renewcommand{\right}{\aftergroup\egroup\originalright}
\newcommand{\into}{\hookrightarrow}
\newcommand{\onto}{\twoheadrightarrow}
\newcommand{\isoto}{\xrightarrow{\smash{\raisebox{-0.25em}{$\sim$}}}}
\newcommand{\set}[2][]{\mathopen{#1\{}#2\mathclose{#1\}}}
\newcommand{\setwithtext}[2][]{\mathopen{#1\{}\,\textnormal{#2}\,\mathclose{#1\}}}
\newcommand{\setwithcondition}[3][]{\mathopen{#1\{}\,#2\mathrel{#1|}#3\,\mathclose{#1\}}}
\newcommand{\op}{\mathrm{op}}
\newcommand{\compl}{\mathrm{c}}
\newcommand{\transpose}{\mathrm{t}}
\renewcommand{\subset}{\subseteq}
\newcommand{\resp}{resp.\ }
\newcommand{\id}{\mathrm{id}}
\DeclareMathOperator{\Hom}{Hom}
\DeclareMathOperator{\End}{End}
\DeclareMathOperator{\Ext}{Ext}
\DeclareMathOperator{\Ker}{Ker}
\DeclareMathOperator{\Cok}{Cok}
\let\Im\relax
\DeclareMathOperator{\Im}{Im}
\newcommand{\Ab}{\mathrm{Ab}}
\DeclareMathOperator{\Mod}{Mod}
\let\mod\relax
\DeclareMathOperator{\mod}{mod}
\DeclareMathOperator{\Flat}{Flat}
\DeclareMathOperator{\Cot}{Cot}
\DeclareMathOperator{\artin}{artin}
\DeclareMathOperator{\Ann}{Ann}
\DeclareMathOperator{\rad}{rad}
\DeclareMathOperator{\soc}{soc}
\newcommand{\Zg}{\mathrm{Zg}}
\newcommand{\inj}{\mathrm{inj}}
\newcommand{\flcot}{\mathrm{flcot}}
\DeclareMathOperator{\Spec}{Spec}
\DeclareMathOperator{\Max}{Max}
\DeclareMathOperator{\Supp}{Supp}
\DeclareMathOperator{\fp}{fp}
\newcommand{\Do}{D}
\newcommand{\Dc}{D}
\title{Flat cotorsion modules over Noether algebras}
\subjclass[2010]{16G30 (Primary), 16D70, 16D40, 13B35 (Secondary)}
\keywords{Flat cotorsion module; Noether algebra; pure-injective module; Ziegler spectrum; elementary duality; ideal-adic completion}
\author{Ryo Kanda}
\address[Ryo Kanda]{Department of Mathematics, Graduate School of Science, Osaka City University, 3-3-138, Sugimoto, Sumiyoshi, Osaka, 558-8585, Japan}
\email{ryo.kanda.math@gmail.com}
\author{Tsutomu Nakamura}
\address[Tsutomu Nakamura]{Graduate School of Mathematical Sciences, University of Tokyo, 3-8-1 Komaba Meguro-ku Tokyo 153-8914, Japan}
\email{ntsutomu@ms.u-tokyo.ac.jp}
\begin{document}

\begin{abstract}
	For a module-finite algebra over a commutative noetherian ring, we give a complete description of flat cotorsion modules in terms of prime ideals of the algebra, as a generalization of Enochs' result for a commutative noetherian ring. As a consequence, we show that pointwise Matlis duality gives a bijective correspondence between the isoclasses of indecomposable injective left modules and the isoclasses of indecomposable flat cotorsion right modules. This correspondence is an explicit realization of Herzog's homeomorphism induced from elementary duality of Ziegler spectra.
\end{abstract}

\maketitle
\tableofcontents

\section{Introduction}
\label{sec.Intro}

A right module $M$ over a ring $A$ is called \emph{cotorsion} if $\Ext^1_A(F, M)=0$ for every flat right $A$-module $F$.
This class of modules was originally studied in the context of abelian groups (see \cite[\S54]{MR0255673}), and Enochs \cite{MR754698} extended it to the current definition, in relation to the precedent work \cite{MR636889} containing the question whether flat covers exist for an arbitrary ring. This question, later called the \emph{flat cover conjecture}, was affirmatively solved by Bican, El Bashir, and Enochs \cite{MR1832549}, showing that the class of flat modules and the class of cotorsion modules form a complete cotorsion pair, i.e., given any module $M$, there exists a surjection from a flat module to $M$ with cotorsion kernel and an injection from $M$ into a cotorsion module with flat cokernel.
This cotorsion pair is called the \emph{flat cotorsion pair}.

Like torsion pairs, cotorsion pairs are a general notion in abelian categories, which initially appeared in \cite{MR565595}. A \emph{cotorsion pair} consists of two classes of objects in an abelian category such that they are the orthogonal subcategory of each other with respect to the first extension functor $\Ext^{1}(-,-)$.
This notion is closely related to abelian model structures (\cite{MR1938704}, \cite{MR2355778}), and plays an important role in homological algebra and representation theory (e.g., \cite{MR1044344}, \cite{MR1097029}, \cite{MR2009441}, \cite{MR2584945}, \cite{MR4013804}, \cite{MR4091895}, \cite{MR4076700}), 
extending its scope to exact and triangulated categories (e.g., \cite{MR2385669}, \cite{MR2811572}, \cite{MR2861070}, \cite{MR3931945}, \cite{MR3928292}, \cite{arXiv:2007.06536}).

Given a cotorsion pair, it is often important to consider the intersection of the two classes, called the \emph{core} of the cotorsion pair. 
For the flat cotorsion pair, its core consists of all flat cotorsion modules, and they have nice homological properties close to projective modules and injective modules.
To explain such aspects, let us pay our attention to complexes of modules.

Gillespie \cite{MR2052954} showed that the flat cotorsion pair induces two complete cotorsion pairs in the category of complexes, and this fact, along with the work of Bazzoni, Cort\'{e}s-Izurdiaga, and Estrada \cite{MR4140057},
enables us to show that the (unbounded) derived category of modules is equivalent to the homotopy category of $K$-flat complexes of flat cotorsion modules; see \cite[Appendix~A]{MR4127282}. In fact, this remarkable equivalence can be regarded as a restriction of a bigger equivalence, by identifying the derived category with the homotopy category of $K$-projective complexes of projective modules. Indeed, Neeman \cite{MR2439608} proved that the homotopy category of projective modules is equivalent to the pure derived category of flat modules (in the sense of Murfet and Salarian \cite{MR2737778}), which turns out to be also equivalent to the homotopy category of flat cotorsion modules as shown by \v{S}\v{t}ov{\'\i}\v{c}ek \cite[Corollary~5.8]{arXiv:1412.1615}; see also \cite[Remark~A.9]{MR4127282}. 

If the ring is left coherent and all flat right modules have finite projective dimension, then the equivalence between the  homotopy category of projective modules and that of flat cotorsion modules also induces an equivalence between their full subcategories consisting of totally acyclic complexes. Furthermore, the homotopy category of totally acyclic complexes of projective modules is equivalent to the stable category of Gorenstein-projective modules (\cite{Buc86}; see also \cite[Proposition~7.2]{MR2157133}), and the homotopy category of totally acyclic complexes of flat cotorsion modules is equivalent to the stable category of Gorenstein-flat cotorsion modules (studied in \cite{MR3623182}); see \cite{MR4132086} for more details.

These facts motivate us to determine the structure of flat cotorsion modules. The aim of this paper is to give a noncommutative generalization of Enochs' structure theorem \cite{MR754698} for flat cotorsion modules over a commutative noetherian ring $R$. Enochs showed that
an $R$-module $M$ is flat cotorsion if and only if $M$ is isomorphic to
\begin{equation*}
\prod_{\p\in \Spec R} T_\p,
\end{equation*}
where each $T_\p$ is the $\p$-adic completion of some free $R_\p$-module, that is,
\begin{equation*}
T_\p=(R_\p^{(B_\p)})^\wedge_\p:=\varprojlim_{n\geq 1}(R_\p^{(B_\p)}\otimes_R R/\p^n)
\end{equation*}
for a basis set $B_\p$. The cardinality of $B_\p$ for each $\p\in \Spec R$ is determined by $M$.
Enochs reached this formulation by using Matlis' result \cite{MR99360} on the structure of injective $R$-modules and an isomorphism 
\begin{equation}\label{intro.EnochsGriffith}
T_\p\cong \Hom_R(E_R(R/\p), E_R(R/\p)^{(B_\p)}),
\end{equation}
where $E_R(R/\p)$ denotes the injective envelope of $R/\p$.
We generalize Enochs' structure theorem to Noether algebras, which are a simultaneous generalization of commutative noetherian rings and finite-dimensional algebras over a field. Noether algebras have been studied from various aspects (e.g., \cite{MR617088,MR630621}, \cite{MR1898632}, \cite{MR2427009}, \cite{arXiv:1912.07117}, \cite{arXiv:2010.05676}, \cite{arXiv:2006.01677}, \cite{arXiv:2106.00469}).

Let $R$ be a commutative noetherian ring. A \emph{Noether $R$-algebra} is a ring $A$ together with a ring homomorphism $\phi\colon R\to A$ such that the image of $\phi$ is contained in the center of $A$ and $A$ is finitely generated as an $R$-module.
Denote by $\Spec A$ the set of prime (two-sided) ideals of $A$.
The structure homomorphism $R\to A$ induces a canonical map $\Spec A\to \Spec R$ given by $P\mapsto \phi^{-1}(P)$. For brevity, we write $P\cap R:=\phi^{-1}(P)$.

It is known that Matlis' result on injective $R$-modules is generalized to a Noether algebra $A$; there is a one-to-one correspondence
\begin{equation}\label{intro.ClassifOfIndecInjOverNoethAlg}
	\Spec A\isoto\setwithtext{isoclasses of indecomposable injective right $A$-modules}
\end{equation}
in which each $P\in\Spec A$ corresponds to $I_{A}(P)$, the unique indecomposable direct summand of the injective envelope of $A/P$.
Using the injective module $I_{A^{\op}}(P)$ over the opposite ring $A^\op$,
we define
\begin{equation*}
T_A(P):=\Hom_{R}(I_{A^{\op}}(P),E_{R}(R/\p)),
\end{equation*}
which is an indecomposable flat cotorsion right $A$-module (\cref{CompletionNecessity}) and also an indecomposable projective right module over $\widehat{A_\p}:=(A_\p)^\wedge_\p$ (\cref{DecompOfCompOfAlg}).
The following is one of the main results of this paper:

\begin{theorem}[\cref{ClassifOfFlCotOverNoethAlg}]\label{intro.ClassifOfFlCotOverNoethAlg}
	Let $A$ be a Noether $R$-algebra. A right $A$-module $M$ is flat cotorsion if and only if $M$ is isomorphic to
	\begin{equation*}
		\prod_{P\in\Spec A}(T_A(P)^{(B_{P})})_{\p}^{\wedge}
	\end{equation*}
	for some family of sets $\{B_{P}\}_{P\in\Spec A}$, where $T_A(P)^{(B_{P})}$ is the direct sum of $B_P$-indexed copies of $T_A(P)$ and $\p:=P\cap R$. The cardinality of each $B_{P}$ is uniquely determined by $M$.
\end{theorem}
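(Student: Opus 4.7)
The plan is to reduce to Enochs' structure theorem for commutative noetherian rings by restriction of scalars, then refine the decomposition at each $\p \in \Spec R$ using the block decomposition of $\widehat{A_\p}$ supplied by \cref{DecompOfCompOfAlg}.

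For the \emph{if} direction, \cref{CompletionNecessity} gives that each $T_A(P)$ is flat cotorsion. Since $A$ is right noetherian, hence right coherent, products of flat right $A$-modules are flat by Chase's theorem, and products of cotorsion modules are cotorsion because $\Ext^1_A(-,-)$ commutes with products in the second variable. It therefore suffices to verify that each factor $(T_A(P)^{(B_P)})_\p^\wedge$ is flat cotorsion, which will follow from \cref{DecompOfCompOfAlg}: $T_A(P)$ is indecomposable projective over the complete local Noether algebra attached to $P$, so $T_A(P)^{(B_P)}$ is a direct sum of copies of this projective, and its $\p$-adic completion is the completion of a free module over this complete local ring.

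For the \emph{only if} direction, let $M$ be a flat cotorsion right $A$-module. Via restriction of scalars through the central map $R \to A$, $M$ is flat over $R$ (as $A$ is $R$-finite) and cotorsion over $R$, because for any flat $R$-module $F$ we have $F \otimes_R A$ flat over $A$ and $\Ext^1_R(F, M) \cong \Ext^1_A(F \otimes_R A, M) = 0$ by adjunction. Enochs' theorem then yields a canonical $R$-linear isomorphism $M \cong \prod_{\p \in \Spec R} \widehat{M_\p}$; because $R$ maps into the center of $A$, both localization and $\p$-adic completion are $A$-linear operations, so this decomposition is automatically $A$-linear. Fixing $\p$, the factor $\widehat{M_\p}$ is a flat cotorsion module over $\widehat{A_\p}$, which by \cref{DecompOfCompOfAlg} splits as a finite product of complete local Noether algebras indexed by $\set{P \in \Spec A : P \cap R = \p}$. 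The corresponding central idempotent decomposition further splits $\widehat{M_\p}$ into $\prod_{P \cap R = \p} M^{(P)}$, with each $M^{(P)}$ flat cotorsion over the block whose unique indecomposable projective is $T_A(P)$. One then identifies $M^{(P)} \cong (T_A(P)^{(B_P)})_\p^\wedge$ for a uniquely determined cardinal $|B_P|$.

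The main obstacle is this final identification, which amounts to proving the analogue of Enochs' theorem over a (possibly noncommutative) complete local Noether algebra: every flat cotorsion module over such a ring is the completion of a free module on the unique indecomposable projective. This should go through using that complete local Noether algebras are semiperfect and complete in their Jacobson radical, together with the flat cover/envelope machinery that already underlies \cref{CompletionNecessity}. Once this identification is established, uniqueness of $|B_P|$ follows in a standard way by reducing modulo the Jacobson radical of the $P$-block and computing the dimension of $M^{(P)} / \rad(\widehat{A_\p}) M^{(P)}$ over the corresponding residue division ring.
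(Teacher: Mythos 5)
Your reduction to Enochs' theorem by restriction of scalars fails at its first step: a flat right $A$-module need not be flat as an $R$-module, because $A$ is only assumed to be \emph{finitely generated} over $R$, not flat or projective. For instance, with $R=k[x,y]/(xy)$ and $A=R/(x)$, the free module $A$ is flat over $A$ but not flat over $R$. (Your cotorsion-over-$R$ argument is fine, but without flatness over $R$ you cannot invoke Enochs' structure theorem at all.) Even in the commutative case your stated decomposition misidentifies the components: the factor at $\p$ is $\varLambda^{\p}\Hom_{R}(R_{\p},M)$ (complete the \emph{colocalization}), not $\varLambda^\p(M_\p)$; already for $M=\bbQ\times\bbZ_p$ over $\bbZ$ the latter gives the wrong answer at $\p=(0)$. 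The paper instead obtains the product decomposition by embedding $M$ as a direct summand of $\Hom_R(I,E)$ for an injective left $A$-module $I$, decomposing $I$ into indecomposables (\cref{FlCotIffDSummandOfProdOfDual}), and then showing that direct summands of products of $\p$-local $\p$-complete modules again decompose along the product (\cref{DecompOfProdOfLocCompMod}).

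The second fatal step is the ``central idempotent decomposition'' at a fixed $\p$. \cref{DecompOfCompOfAlg} is a decomposition of $\widehat{A_\p}$ as a \emph{right module}, $\widehat{A_\p}\cong\bigoplus_P T_A(P)^{n_P}$, not a decomposition of the ring into blocks indexed by the primes over $\p$. In general there are no central idempotents separating the distinct $T_A(P)$: the $2\times 2$ lower triangular matrix algebra over a field (\cref{ex.TriMatAlg} with $R=k$) is a connected ring with two primes and two non-isomorphic indecomposable projectives. So $\widehat{M_\p}$ does not split as $\prod_{P}M^{(P)}$ with each $M^{(P)}$ living over a ``block with a unique indecomposable projective,'' and the analogue of Enochs' theorem you would then need is misstated. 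Finally, the identification you defer as ``the main obstacle'' is precisely the content of \cref{DecompFlCotAtEachBasePrime}; it is not a routine consequence of semiperfectness but rests on the whole apparatus of \cref{sec.LocCompFlModAsFlCov,DescripOfLocCompFlMod}: a $\p$-local $\p$-complete flat module is the flat cover of the semisimple module $M\otimes_A(A_\p/\rad A_\p)$ (\cref{FlCovAndInjEnvAndTop}), flat covers of $S_A(P)^{(B)}$ are computed by Matlis-dualizing injective envelopes (\cref{DualOfInjEnv,DualOfInj}), and cotorsionness of $(T_A(P)^{(B)})^\wedge_\p$ comes from the identification with $\Hom_R(I_{A^\op}(P),E_R(R/\p)^{(B)})$ rather than from a ``completion of a free module'' argument. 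As it stands, only the uniqueness-of-cardinality argument at the end of your sketch matches a complete argument in the paper.
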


This theorem recovers Enochs' result because $T_R(\p)\cong \widehat{R_\p}$ and $(T_R(\p)^{(B_{\p})})_{\p}^{\wedge}\cong (R_\p^{(B_{\p})})_{\p}^{\wedge}$ for each $\p\in\Spec R$ and any set $B_\p$.
Moreover, each component of the direct product in \cref{intro.ClassifOfFlCotOverNoethAlg} has a description
\begin{equation*}
(T_{A}(P)^{(B_P)})_{\p}^{\wedge}\cong \Hom_{R}(I_{A^{\op}}(P),E_{R}(R/\p)^{(B_P)}),
\end{equation*}
which recovers the isomorphism \cref{intro.EnochsGriffith} (see \cref{DualOfInj}).

As a consequence of \cref{intro.ClassifOfFlCotOverNoethAlg}, we obtain the following result:

\begin{corollary}[\cref{ClassifOfIndecFlCotOverNoethAlg}]\label{intro.ClassifOfIndecFlCotOverNoethAlg}
	Let $A$ be a Noether $R$-algebra. Then there is a one-to-one correspondence 
	\begin{equation*}
		\Spec A\isoto\setwithtext{isoclasses of indecomposable flat cotorsion right $A$-modules}
	\end{equation*}
	given by $P\mapsto T_{A}(P)$.
\end{corollary}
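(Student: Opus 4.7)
The plan is to deduce this corollary directly from Theorem~\ref{intro.ClassifOfFlCotOverNoethAlg} together with the uniqueness of the cardinalities $|B_P|$. First I would observe that the assignment $P \mapsto T_A(P)$ is well-defined, since the excerpt already asserts that each $T_A(P)$ is an indecomposable flat cotorsion right $A$-module. A crucial preliminary observation is that $(T_A(P))_\p^\wedge \cong T_A(P)$: this holds because $T_A(P)$ is an indecomposable projective right $\widehat{A_\p}$-module, hence a direct summand of the $\p$-adically complete ring $\widehat{A_\p}$; since $\p$-adic completion commutes with finite direct sums, $T_A(P)$ itself must be $\p$-adically complete. Consequently, applying the theorem to $M = T_A(P)$ produces the family $B_P = \{*\}$, $B_Q = \emptyset$ for $Q \neq P$, and these cardinalities are unique.

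For injectivity of $P \mapsto T_A(P)$, an isomorphism $T_A(P) \cong T_A(P')$ would identify the two families of uniquely determined cardinalities, forcing $P = P'$. For surjectivity, let $M$ be an arbitrary indecomposable flat cotorsion right $A$-module. By the theorem, $M \cong \prod_{Q \in \Spec A} (T_A(Q)^{(C_Q)})_\q^\wedge$. A direct product in a module category is a biproduct, so whenever two factors are nonzero the module splits; indecomposability of $M$ therefore forces $C_Q = \emptyset$ for all but one $Q$, say $Q = P$, and $M \cong (T_A(P)^{(C_P)})_\p^\wedge$. If $|C_P| \geq 2$, I would pick $c \in C_P$ and use that $\p$-adic completion commutes with finite direct sums to write $M \cong T_A(P) \oplus (T_A(P)^{(C_P \setminus \{c\})})_\p^\wedge$. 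The right summand must be nonzero (otherwise $M \cong T_A(P)$, but then applying the theorem and uniqueness to $M$ would force $|C_P| = 1$, contrary to $|C_P| \geq 2$), so $M$ decomposes, contradicting indecomposability. Thus $|C_P| = 1$ and $M \cong T_A(P)$.

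The main point requiring care is the identity $(T_A(P))_\p^\wedge \cong T_A(P)$, which is what bridges the ``raw'' classification by families $\{B_P\}$ and the tidy indexing by $P \in \Spec A$; it rests on the structural fact that $T_A(P)$ lives over the complete ring $\widehat{A_\p}$ as a summand of the ring itself. Everything else is a formal consequence of the classification theorem, the fact that binary products are biproducts in a module category, and the exactness of $\p$-adic completion on finite direct sums.
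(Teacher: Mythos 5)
Your proposal is correct and follows essentially the same route as the paper: injectivity from the uniqueness of the cardinalities $|B_P|$ in \cref{ClassifOfFlCotOverNoethAlg}, and surjectivity by noting that an indecomposable $M\cong (T_A(P)^{(B)})^\wedge_{\p}$ contains $T_A(P)\cong (T_A(P))^\wedge_{\p}$ as a direct summand, forcing $|B|=1$. The only cosmetic difference is that you rederive the $\p$-completeness of $T_A(P)$ from its being a projective summand of $\widehat{A_\p}$, whereas the paper already records this in \cref{def.IndecFlCot} via the description $T_A(P)=\Hom_R(I_{A^{\op}}(P),E_R(R/\p))$ and \cref{InjDualSendsTorsToComp}; both justifications are valid.
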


We denote by $\inj_{A}$ (\resp $\flcot_{A}$) the set of the isoclasses of indecomposable injective (\resp flat cotorsion) right $A$-modules.
By \cref{intro.ClassifOfIndecInjOverNoethAlg,intro.ClassifOfIndecFlCotOverNoethAlg}, there is a bijection $\inj_{A^\op}\isoto \flcot_{A}$ given by $I_{A^\op}(P)\mapsto T_A(P)$.  
We interpret this bijection as a phenomenon on Ziegler spectra.

An exact sequence of right modules over a ring $A$ is said to be \emph{pure exact} if its exactness is preserved by the functor $-\otimes_AU$ for every left $A$-module $U$. A right $A$-module $N$ is called \emph{pure-injective} if the functor $\Hom_A(-,N)$ sends pure exact sequences to exact sequences.
The isoclasses of indecomposable pure-injective right modules form a topological space $\Zg_A$ called the \emph{Ziegler spectrum} of $A$.
There is a bijection, called  \emph{elementary duality}, between the open subsets of $\Zg_A$ and those of $\Zg_{A^\op}$. Note that this does not mean that these topological spaces are homeomorphic in general.
Our assumption that $A$ is a Noether $R$-algebra ensures that $\inj_{A}$ and $\flcot_{A}$ are closed subsets of $\Zg_A$.
We endow $\inj_{A}$ and $\flcot_{A}$ with the topologies induced from $\Zg_A$.

\begin{theorem}[\cref{HomeoBwFlCotAndInjOverNoethAlg,FlCotAndInjAreRefl}]\label{intro.HomeoBwFlCotAndInjOverNoethAlg}
	Let $A$ be a Noether $R$-algebra. The bijection $\inj_{A^{\op}}\isoto \flcot_{A}$ given by $I_{A^{\op}}(P)\mapsto T_{A}(P)$ is a homeomorphism. The open sets of these topological spaces bijectively correspond to the specialization-closed subsets of $\Spec A$.
\end{theorem}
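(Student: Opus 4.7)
The plan is to identify both $\inj_{A^{\op}}$ and $\flcot_{A}$ with $\Spec A$ carrying the topology whose open sets are exactly the specialization-closed subsets, thereby establishing the homeomorphism and the description of open sets simultaneously.

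First, I would treat the injective side. Under the bijection $\Spec A \cong \inj_{A^{\op}}$ from \cref{intro.ClassifOfIndecInjOverNoethAlg} applied to $A^{\op}$, I expect the Ziegler topology on $\inj_{A^{\op}}$ to correspond to the topology on $\Spec A$ whose opens are the specialization-closed subsets. Over a commutative noetherian ring, this is classical (Gabriel--Ziegler); for a Noether $R$-algebra, it follows by reducing to the commutative case via the structure homomorphism $R\to A$ (each $I_{A^{\op}}(P)$ is the injective envelope of a simple $A^{\op}$-module whose $R$-support is $\{\p\}$ with $\p=P\cap R$), together with the fact that Ziegler-closed subsets of $\inj_{A^{\op}}$ correspond to hereditary torsion classes of finite type in $\Mod A^{\op}$, and these in turn biject with specialization-closed subsets of $\Spec A$. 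This identification should be established in the Ziegler-spectrum preliminaries of the paper.

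Second, I would transport this topological identification to $\flcot_{A}$ using elementary duality. Elementary duality provides a lattice isomorphism between open subsets of $\Zg_{A^{\op}}$ and open subsets of $\Zg_{A}$, and equivalently between their closed subsets. The central input is that, under this duality, the closed subset $\inj_{A^{\op}}\subset \Zg_{A^{\op}}$ is matched with the closed subset $\flcot_{A}\subset \Zg_{A}$; this is the content of Herzog's theorem alluded to in the introduction, and in our setting it is reinforced by the explicit structure in \cref{intro.ClassifOfFlCotOverNoethAlg}. Restricting to these closed subsets yields a lattice isomorphism between the open subsets of $\inj_{A^{\op}}$ and the open subsets of $\flcot_{A}$ as subspaces of their respective Ziegler spectra.

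Third, I would check that the pointwise bijection induced by this lattice isomorphism matches $I_{A^{\op}}(P)\mapsto T_{A}(P)$. Elementary duality on indecomposable pure-injectives is implemented by a character-module type construction, and the definition $T_{A}(P)=\Hom_{R}(I_{A^{\op}}(P),E_{R}(R/\p))$ realizes this abstract construction concretely via pointwise Matlis duality (cf.\ the isomorphism \cref{intro.EnochsGriffith} and its generalization). Because $\inj_{A^{\op}}$ and $\flcot_{A}$ are $T_{0}$ spaces in which points are determined by their Ziegler closures, a lattice isomorphism of open subsets that is compatible with a pointwise bijection must itself be a homeomorphism implementing that bijection.

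The main obstacle is the second and third steps: one has to verify that elementary duality carries $\inj_{A^{\op}}$ exactly to $\flcot_{A}$ \emph{and} that, on indecomposable points, this duality is given precisely by $I_{A^{\op}}(P)\mapsto T_{A}(P)$. Once this compatibility between the abstract duality and the concrete $\Hom_{R}(-,E_{R}(R/\p))$ construction is pinned down, combining the two topological identifications immediately yields both assertions of the theorem.
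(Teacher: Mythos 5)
Your overall strategy matches the paper's: establish the correspondence between open subsets of $\inj_{A^{\op}}$ and specialization-closed subsets of $\Spec A$ via supports (this is exactly what the paper does, using the Herzog--Krause open basis $(M)=\setwithcondition{I}{\Hom(M,I)\neq 0}$ for finitely presented $M$ and the computation of $\Supp_A M$), and then transport the topology to $\flcot_A$ through elementary duality realized pointwise by $I_{A^{\op}}(P)\mapsto T_A(P)$. However, the step you yourself flag as ``the main obstacle'' --- that elementary duality, on points, is given precisely by $I_{A^{\op}}(P)\mapsto T_A(P)$ --- is the entire content of the proof and is left unproved. Merely asserting that ``elementary duality on indecomposable pure-injectives is implemented by a character-module type construction'' is not enough: elementary duality is a priori only a bijection on open (equivalently closed) subsets, not a map on points, and in general the two Ziegler spectra need not even be homeomorphic.

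The mechanism that closes this gap is the following criterion (the paper's \cref{CharDualOfZgPt}): if $M$ is a right $A$-module, $S\to\End_A(M)$ is any ring homomorphism, and $E$ is an injective cogenerator in $\Mod S^{\op}$, then for every pp-pair $\varphi/\psi$ one has $F_{\varphi/\psi}(M)=0$ if and only if $F_{D\psi/D\varphi}(M^*)=0$, where $M^*=\Hom_{S^{\op}}(M,E)$. The decisive choice is $S=R_{\p}$ (which acts on the $\p$-local module $I_{A^{\op}}(P)$, where $\p=P\cap R$) and $E=E_R(R/\p)\cong E_{R_\p}(\kappa(\p))$, an injective cogenerator in $\Mod R_\p$; with this choice the character dual of $I_{A^{\op}}(P)$ is \emph{exactly} $T_A(P)$ by definition. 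This yields $I_{A^{\op}}(P)\in(F_{\varphi/\psi})$ iff $T_A(P)\in(F_{D\psi/D\varphi})=\Do(F_{\varphi/\psi})$ on basic open sets, and since $\Do$ is order-preserving this extends to all open sets; the homeomorphism and the equality $\Do(\inj_{A^{\op}})=\flcot_A$ then both follow (the paper \emph{derives} the latter from the pointwise lemma rather than importing Herzog's general theorem as you propose, though that import is a legitimate alternative for establishing $\Do(\inj_{A^{\op}})=\flcot_A$ --- it still would not by itself give you the pointwise statement). Your final appeal to $T_0$-ness is then unnecessary: once you know $I_{A^{\op}}(P)\in U\iff T_A(P)\in \Do U$ for all open $U$, the map is a homeomorphism by definition.
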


It should be mentioned that Herzog \cite{MR1091706} observed the existence of a homeomorphism $\inj_{A^\op}\isoto \flcot_{A}$ for a certain class of rings, which includes all left noetherian rings.
The homeomorphism was obtained as a restriction of a bijection between certain points of Ziegler spectra, called \emph{reflexive points}; for each reflexive point $N\in\Zg_{A^\op}$, the corresponding reflexive point $DN\in\Zg_A$ is determined by the property that the closure of $N$ corresponds to the closure of $DN$ by elementary duality (regarded as a bijection for closed subsets). The following result, together with \cref{intro.HomeoBwFlCotAndInjOverNoethAlg}, shows that our homeomorphism in \cref{intro.HomeoBwFlCotAndInjOverNoethAlg} is an explicit realization of Herzog's homeomorphism for Noether algebras:

\begin{corollary}\label{intro.ElementaryDualityAndHomeomorphisms}
Let $A$ be a Noether $R$-algebra. 
For each $P\in \Spec A$, the points $I_{A^{\op}}(P)\in\Zg_{A^{\op}}$ and $T_{A}(P)\in\Zg_{A}$ are the unique generic points in their closures, and these closed subsets correspond to each other by elementary duality.
\end{corollary}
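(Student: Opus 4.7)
The plan is to combine \cref{intro.HomeoBwFlCotAndInjOverNoethAlg} with Herzog's general framework of reflexive points and elementary duality. Concretely, the theorem provides an order-preserving bijection between the open sets of $\inj_{A^{\op}}$ (\resp $\flcot_A$) and the specialization-closed subsets of $\Spec A$, under which $I_{A^{\op}}(Q)$ (\resp $T_A(Q)$) lies in the open set corresponding to $S$ if and only if $Q\in S$. Taking complements, the closure of $\{I_{A^{\op}}(P)\}$ in $\inj_{A^{\op}}$ equals $\{I_{A^{\op}}(Q)\mid Q\subset P\}$, and likewise $\overline{\{T_A(P)\}}=\{T_A(Q)\mid Q\subset P\}$ in $\flcot_A$. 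If $I_{A^{\op}}(P')$ generates the same closure as $I_{A^{\op}}(P)$, then $\{Q\mid Q\subset P'\}=\{Q\mid Q\subset P\}$, which forces $P'=P$; the analogous argument applies on the $\flcot_A$ side. Since $\inj_{A^{\op}}$ and $\flcot_A$ are closed subsets of the ambient Ziegler spectra, these closures coincide with the closures in $\Zg_{A^{\op}}$ and $\Zg_A$, so $I_{A^{\op}}(P)$ and $T_A(P)$ are indeed the unique generic points of their closures in the full spectra.

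For the elementary-duality claim, I would invoke Herzog's result that elementary duality gives an inclusion-preserving bijection between the closed subsets of $\Zg_{A^{\op}}$ and those of $\Zg_A$, together with the fact that over a Noether algebra the closed subset $\inj_{A^{\op}}\subset\Zg_{A^{\op}}$ is the elementary dual of $\flcot_A\subset\Zg_A$ (injectivity and the flat-cotorsion condition are both pp-definable and interchange under elementary duality). The restriction of elementary duality then yields an order isomorphism between the closed subsets of $\inj_{A^{\op}}$ and those of $\flcot_A$, both of which, by the previous paragraph, are parameterized by the generalization-closed subsets of $\Spec A$. Consequently elementary duality sends $\overline{\{I_{A^{\op}}(P)\}}$ to a closed subset of $\flcot_A$ of the form $\overline{\{T_A(P')\}}$ for some $P'\in\Spec A$, and the statement reduces to verifying $P'=P$.

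The main obstacle is exactly this identification, which I would resolve using the Matlis-duality presentation $T_A(P)\cong\Hom_R(I_{A^{\op}}(P),E_R(R/\p))$ established earlier in the paper. The functor $\Hom_R(-,E_R(R/\p))$ implements elementary duality at the level of indecomposable pure-injectives supported at $\p\in\Spec R$, via the standard character-module interpretation, so the pp-type of $T_A(P)$ is the Prest-dual of that of $I_{A^{\op}}(P)$. Hence $T_A(P)$ lies in the elementary dual of every closed subset of $\Zg_{A^{\op}}$ containing $I_{A^{\op}}(P)$, and in particular in the elementary dual of $\overline{\{I_{A^{\op}}(P)\}}$; combined with the uniqueness of the generic point obtained in the first paragraph, this forces $P'=P$ and completes the proof.
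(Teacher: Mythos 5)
Your proposal follows essentially the same route as the paper: the first paragraph reproduces the paper's identification of $\overline{\set{I_{A^{\op}}(P)}}$ and $\overline{\set{T_{A}(P)}}$ with the generalization closure of $P$ via the specialization-closed/open correspondence (\cref{SpClAndOpenInInj} and its flat-cotorsion analogue), and the last paragraph is in substance the paper's key \cref{FlCotAndIndecInjAndOpen}, proved by applying the character-dual criterion (\cref{CharDualOfZgPt}) to $T_{A}(P)=\Hom_{R}(I_{A^{\op}}(P),E_{R}(R/\p))$. Two remarks. First, when you invoke the ``standard character-module interpretation'' you should make explicit that $E_{R}(R/\p)$ is \emph{not} an injective cogenerator of $\Mod R$ in general; the duality lemma must be applied over $S=R_{\p}$ (acting on the $\p$-local module $I_{A^{\op}}(P)$) with $E_{R}(R/\p)\cong E_{R_{\p}}(\kappa(\p))$ an injective cogenerator of $\Mod R_{\p}$ --- your phrase ``supported at $\p$'' gestures at this but it is the one point where the argument could silently go wrong.

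Second, the final deduction has a genuine (though easily repaired) gap. From ``$T_{A}(P)$ lies in the elementary dual of every closed set containing $I_{A^{\op}}(P)$'' you get $T_{A}(P)\in \Dc\overline{\set{I_{A^{\op}}(P)}}=\overline{\set{T_{A}(P')}}$, which only yields $P\subset P'$; uniqueness of the generic point of $\overline{\set{T_{A}(P')}}$ does not upgrade membership of $T_{A}(P)$ in that closure to genericity, so it does not force $P'=P$. You need the \emph{converse} direction as well: since the pp-type duality of \cref{CharDualOfZgPt} is an if-and-only-if, one also has $T_{A}(Q)\in \Dc C\Rightarrow I_{A^{\op}}(Q)\in C$, whence $I_{A^{\op}}(P')\in\overline{\set{I_{A^{\op}}(P)}}$ and $P'\subset P$. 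Better still, the biconditional gives directly $\Dc C=\setwithcondition{T_{A}(Q)}{I_{A^{\op}}(Q)\in C}$ for every closed $C\subset\inj_{A^{\op}}$, which is exactly the paper's concluding lemma and makes the detour through an unknown $P'$ (and the implicit appeal to preservation of join-irreducible closed sets under the order isomorphism in your second paragraph) unnecessary.
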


This paper is organized as follows. In \cref{sec.Prelim}, we recall basic facts on Noether algebras, including those on the flat cotorsion pair and pure-injective modules. 
In \cref{sec.RedToLocComp}, we show that every flat cotorsion module over a Noether $R$-algebra $A$ can be decomposed as a direct product of $\p$-local $\p$-complete modules for various $\p\in \Spec R$. 
In \cref{sec.LocCompFlModAsFlCov}, we prove that each $\p$-local $\p$-complete flat (\resp $\p$-local $\p$-torsion injective) $A$-module is a flat cover (\resp injective envelope) of a semisimple $A_\p$-module. 
Furthermore, we observe that the flat cover (\resp injective envelope) of a semisimple right $A_\p$-module can be obtained by applying a variant of Matlis duality to the injective envelope (\resp flat cover) of a simple left $A_\p$-module.
In \cref{DescripOfLocCompFlMod}, we show that every $\p$-local $\p$-complete flat $A$-module is cotorsion and such a module is characterized as the $\p$-adic completion of a direct sum of indecomposable projective modules over $\widehat{A_{\p}}$.
In \cref{sec.DescripOfFlCotMod}, we complete the proofs of 
\cref{intro.ClassifOfFlCotOverNoethAlg,intro.ClassifOfIndecFlCotOverNoethAlg}.
In \cref{sec.FlCotModAsFlCovCotEnv}, we give a result that realizes flat cotorsion $A$-modules as nontrivial flat covers and pure-injective (or cotorsion) envelopes.
In \cref{sec.ZgAndElementaryDual}, we first recall some known results on Ziegler spectra and elementary duality, and then show that Herzog's homeomorphism applied to a Noether algebra coincides with the homeomorphism in \cref{intro.HomeoBwFlCotAndInjOverNoethAlg}.
\cref{sec.IdealAdicComp} provides some basic facts on ideal-adic completion over Noether algebras, which are used throughout the paper.

\subsection*{Acknowledgments}
\label{subsec.Acknowledgment}

Ryo Kanda was supported by JSPS KAKENHI Grant Numbers JP16H06337, JP17K14164, and JP20K14288, Leading Initiative for Excellent Young Researchers, MEXT, Japan, and Osaka City University Advanced Mathematical Institute (MEXT Joint Usage/Research Center on Mathematics and Theoretical Physics JPMXP0619217849).

Tsutomu Nakamura was supported by Grant-in-Aid for JSPS Fellows JP20J01865.

\section{Preliminaries}
\label{sec.Prelim}

Throughout the paper, let $A$ be a Noether $R$-algebra unless otherwise specified. That is, $R$ is a commutative noetherian ring, $A$ is a ring together with a ring homomorphism $R\to A$, called the \emph{structure homomorphism}, whose image is contained in the center of $A$, and $A$ is finitely generated as an $R$-module. It follows that $A$ is a left and right noetherian ring. We denote by $\Mod A$ the category of right $A$-modules, and interpret $\Mod A^\op$ as the category of left $A$-modules, where $A^\op$ is the opposite ring of $A$.

In this section, we collect some known results, which we will use in later sections. 

\subsection{Cotorsion modules and pure-injective modules}
\label{subsec.CotAndPInj}

	A right $A$-module $M$ is called \emph{cotorsion} if $\Ext_{A}^{1}(F,M)=0$ for all flat right $A$-modules $F$. A \emph{flat cotorsion module} is a module that is flat and cotorsion. A short exact sequence $0\to L\to M\to N\to 0$ in $\Mod A$ is said to be \emph{pure exact} if it remains exact after applying $-\otimes_{A}U$ for every $U\in\Mod A^{\op}$.
A right $A$-module $N$ is called \emph{pure-injective} if $\Hom_{A}(-,N)$ sends each pure exact sequence in $\Mod A$ to an exact sequence. Every injective module is pure-injective by definition.

\begin{proposition}\label{PureInjectiveCotorsion}
Every pure-injective right $A$-module is cotorsion. 
\end{proposition}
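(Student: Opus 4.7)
The plan is to reduce the vanishing of $\Ext^1_A(F,-)$ for flat $F$ to the definition of pure-injectivity, via the standard observation that a flat module is a pure quotient of a projective module.

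First I would choose, for an arbitrary flat right $A$-module $F$, a short exact sequence
\begin{equation*}
0\to K\to P\to F\to 0
\end{equation*}
with $P$ free (hence projective). The key point is that this sequence is automatically pure exact: for any $U\in\Mod A^{\op}$, the Tor long exact sequence reads
\begin{equation*}
\Tor_{1}^{A}(F,U)\to K\otimes_{A}U\to P\otimes_{A}U\to F\otimes_{A}U\to 0,
\end{equation*}
and flatness of $F$ kills the leftmost term, so $K\otimes_{A}U\to P\otimes_{A}U$ is injective. Thus purity is free of charge whenever the quotient is flat.

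Next, given a pure-injective right $A$-module $N$, the defining property of pure-injectivity says that $\Hom_{A}(-,N)$ sends the above pure exact sequence to a short exact sequence
\begin{equation*}
0\to \Hom_{A}(F,N)\to \Hom_{A}(P,N)\to \Hom_{A}(K,N)\to 0.
\end{equation*}
Comparing with the long exact sequence of $\Ext_{A}^{\ast}(-,N)$ associated with $0\to K\to P\to F\to 0$, the surjectivity of $\Hom_{A}(P,N)\to \Hom_{A}(K,N)$ together with $\Ext_{A}^{1}(P,N)=0$ (projectivity of $P$) forces $\Ext_{A}^{1}(F,N)=0$. Since $F$ was arbitrary, $N$ is cotorsion.

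There is no real obstacle here; the only subtle point is recognizing that the purity of the kernel comes for free from flatness of $F$ via the Tor sequence, after which the conclusion is a direct comparison between the Hom sequence and the Ext long exact sequence.
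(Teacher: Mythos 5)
Your proof is correct. The paper does not give its own argument here---it simply cites \cite[Lemma~5.3.23]{MR1753146}---and your reasoning is precisely the standard proof behind that citation: the kernel of a projective (pre)cover of a flat module is pure by the $\Tor$ long exact sequence, and then pure-injectivity of $N$ supplies the surjectivity of $\Hom_{A}(P,N)\to\Hom_{A}(K,N)$ needed to kill $\Ext_{A}^{1}(F,N)$.
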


\begin{proof}
See \cite[Lemma~5.3.23]{MR1753146}.
\end{proof}

\begin{proposition}\label{InjDualProperties}
	Fix an injective $R$-module $E$ and consider the exact functor
	\begin{equation*}
		(-)^{*}:=\Hom_{R}(-,E)\colon\Mod A \to\Mod A^\op.
	\end{equation*}
	For a right $A$-module $M$, the following hold:
	\begin{enumerate}
		\item\label{InjDualProperties.Cot} $M^{*}$ is pure-injective, and hence cotorsion.
		\item\label{InjDualProperties.Double} If $E$ is an injective cogenerator, then the canonical morphism $M\to M^{**}$ is a pure monomorphism. In particular, this map splits if $M$ is pure-injective.
		\item\label{InjDualProperties.FlBecomesInj} 
	 If $M$ is flat, then $M^{*}$ is injective. The converse holds if $E$ is an injective cogenerator.
	 		\item\label{InjDualProperties.InjBecomesFlCot} 
		If $M$ is injective, then $M^{*}$ is flat (cotorsion). The converse holds if $E$ is an injective cogenerator.
			\end{enumerate}
\end{proposition}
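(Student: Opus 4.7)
The plan is to exploit, in all four parts, the Hom--tensor adjunction
\[
\Hom_R(U \otimes_A X, E) \cong \Hom_A(U, \Hom_R(X, E)) \cong \Hom_{A^{\op}}(X, \Hom_R(U, E))
\]
for $U \in \Mod A$ and $X \in \Mod A^{\op}$, together with the exactness of $\Hom_R(-, E)$ (since $E$ is $R$-injective) and, when $E$ is a cogenerator, its reflection of exactness. Each of (1)--(4) then becomes a bookkeeping argument built on this identity.

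For (1), I would apply the adjunction to a pure exact sequence in $\Mod A^{\op}$ tested against the right $A$-module $M$: purity gives exactness after $M \otimes_A -$, which $\Hom_R(-, E)$ preserves, and the adjunction rewrites the result as exactness of $\Hom_{A^{\op}}(-, M^*)$. Pure-injectivity of $M^*$ follows, and cotorsion is \cref{PureInjectiveCotorsion}. For (2), the map $\eta_M \colon M \to M^{**}$ is injective because $E$ is a cogenerator; the triangle identity for the contravariant self-adjunction $(-)^*$ yields $(\eta_M)^* \circ \eta_{M^*} = \id_{M^*}$, so $(\eta_M)^*$ is split epi, and the standard equivalence (via the same adjunction, valid since $E$ is a cogenerator) between ``$i$ is a pure mono'' and ``$i^*$ is a split epi'' then gives that $\eta_M$ is pure. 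The final assertion restates the standard characterization of pure-injectivity. Part (3) is a direct transcription of the adjunction: it converts flatness of $M$ into injectivity of $M^*$, and $E$ being a cogenerator reverses the implication.

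The main obstacle is the forward direction of (4), where only flatness need be established (cotorsion is automatic from (1)). First I reduce to the canonical injective cogenerator $E_0 := \Hom_{\bbZ}(R, \bbQ/\bbZ)$ of $\Mod R$, under which $\Hom_R(-, E_0)$ identifies with the character module functor $(-)^+ := \Hom_{\bbZ}(-, \bbQ/\bbZ)$ on $\Mod A$. For a general injective $E$, the embedding $E \into E_0^I$ (for some set $I$) splits by injectivity of $E$, so $M^* = \Hom_R(M, E)$ is a direct summand of $\Hom_R(M, E_0^I) \cong (M^+)^I$. Then Ishikawa duality $\Tor_i^A(U, M^+) \cong \Ext_A^i(U, M)^+$ for finitely presented right $A$-modules $U$, together with the vanishing of $\Ext_A^i(U, M)$ for $i \geq 1$ (since $M$ is injective), shows that $M^+$ is flat as a left $A$-module. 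Since $A$ is left coherent (being noetherian), Chase's theorem ensures that arbitrary products of flat left $A$-modules are flat, and direct summands inherit flatness, so $M^*$ is flat. For the converse, (3) applied to $M^*$ gives $M^{**}$ injective, and (2) gives $M \to M^{**}$ a pure monomorphism; thus $M$ is a pure submodule of an injective module, hence FP-injective, and right noetherianity of $A$ upgrades FP-injectivity to injectivity.
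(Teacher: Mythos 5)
Your proof is correct. The paper does not actually argue this proposition---it only cites Enochs--Jenda and Prest---and your arguments are precisely the standard ones behind those citations: adjunction plus purity for \cref{InjDualProperties.Cot}, the triangle identity and the ``pure mono iff dual split epi'' criterion for \cref{InjDualProperties.Double}, and the Ishikawa duality $\Tor_i^A(U,M^+)\cong\Ext_A^i(U,M)^+$ together with Chase's theorem for \cref{InjDualProperties.InjBecomesFlCot}. One small handedness slip: in the paper's convention (see the proof of \cref{FlCotIffDSummandOfProdOfDual} and \cref{NonnoethereianCase}) it is \emph{right} coherence of $A$ that makes products of flat \emph{left} $A$-modules flat and that underlies the first claim of \cref{InjDualProperties.InjBecomesFlCot}; since $A$ is noetherian on both sides this is immaterial here.
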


\begin{proof}
	\cref{InjDualProperties.Cot}: See \cite[Proposition~5.3.7]{MR1753146} or \cite[Proposition~4.3.29]{MR2530988}.
	
	\cref{InjDualProperties.Double}: See \cite[Proposition~5.3.9]{MR1753146} or \cite[Corollary~2.21(b)]{MR2985554}.
	
	\cref{InjDualProperties.FlBecomesInj}: See \cite[Theorem~3.2.9]{MR1753146}.
	
	\cref{InjDualProperties.InjBecomesFlCot}: See \cite[Theorem~3.2.16]{MR1753146}.
\end{proof}

\begin{proposition}\label{InjDualProperties3}
Every flat cotorsion right $A$-module is pure-injective.
\end{proposition}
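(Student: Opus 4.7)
The plan is to realize $M$ as a direct summand of its double dual with respect to an injective cogenerator of $\Mod R$, and then use that double duals are pure-injective. Fix an injective cogenerator $E$ of $\Mod R$ (for instance $E=\bigoplus_{\km}E_{R}(R/\km)$, with $\km$ ranging over the maximal ideals of $R$, which exists because $R$ is commutative noetherian), and set $(-)^{*}:=\Hom_{R}(-,E)$; this yields exact functors $\Mod A\to\Mod A^{\op}$ and $\Mod A^{\op}\to\Mod A$.

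For a flat cotorsion right $A$-module $M$, \cref{InjDualProperties}\cref{InjDualProperties.FlBecomesInj} applied to $M$ shows that $M^{*}$ is injective over $A^{\op}$, and then \cref{InjDualProperties}\cref{InjDualProperties.InjBecomesFlCot} applied to $M^{*}$ shows that $M^{**}$ is flat (and cotorsion) over $A$. By \cref{InjDualProperties}\cref{InjDualProperties.Cot}, $M^{**}$ is pure-injective, and by \cref{InjDualProperties}\cref{InjDualProperties.Double} the canonical morphism $\eta\colon M\to M^{**}$ is a pure monomorphism.

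Let $C:=\Cok\eta$, so that
\begin{equation*}
0\to M\xrightarrow{\eta}M^{**}\to C\to 0
\end{equation*}
is a pure exact sequence in $\Mod A$. Since $M^{**}$ is flat and $M$ is a pure submodule of it, $C$ is flat: for any $U\in\Mod A^{\op}$, the Tor long exact sequence gives $\Tor_{1}^{A}(C,U)\into M\otimes_{A}U\to M^{**}\otimes_{A}U$, and the second map is injective by purity, forcing $\Tor_{1}^{A}(C,U)=0$. The cotorsion hypothesis on $M$ then gives $\Ext_{A}^{1}(C,M)=0$, so the displayed sequence splits.

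Hence $M$ is a direct summand of the pure-injective module $M^{**}$, and since the class of pure-injective modules is closed under direct summands, $M$ is pure-injective. The only nonformal step is the flatness of $C$, which is the key leverage that lets the cotorsion assumption produce the splitting; the rest is bookkeeping with \cref{InjDualProperties}.
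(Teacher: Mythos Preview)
Your proof is correct and follows essentially the same approach as the paper's proof: both show that the canonical pure monomorphism $M\to M^{**}$ has flat cokernel (the paper states this without elaboration, while you spell out the Tor argument), then use the cotorsion hypothesis to split the sequence and conclude that $M$ is a direct summand of the pure-injective module $M^{**}$. The only cosmetic difference is that the paper leaves the flatness of the cokernel as an unexplained consequence of purity, whereas you justify it explicitly.
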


\begin{proof}
This is \cite[Lemma~3.2.3]{MR1438789}, but we give a proof here as it will be used in the proof of \cref{FlCotIffDSummandOfProdOfDual}.

Let $E$ be an injective cogenerator in $\Mod R$, and put $(-)^*:=\Hom_R(-,E)$.
If $M$ is a flat right $A$-module, then $M^{*}$ is injective and $M^{**}$ is flat by \cref{InjDualProperties}\cref{InjDualProperties.FlBecomesInj} and \cref{InjDualProperties.InjBecomesFlCot}. 
Thus the cokernel of the pure monomorphism $M\to M^{**}$ in \cref{InjDualProperties}\cref{InjDualProperties.Double} is flat. If in addition $M$ is cotorsion, then the pure monomorphism splits, so $M$ is pure-injective by  \cref{InjDualProperties}\cref{InjDualProperties.Cot}.
\end{proof}

By \cref{PureInjectiveCotorsion,InjDualProperties3}, a flat cotorsion right $A$-module is nothing but a flat pure-injective right $A$-module.

\begin{remark}\label{NonnoethereianCase}
Although we are focusing on a Noether $R$-algebra, \cref{PureInjectiveCotorsion}, and \cref{PInjEnv,SpeFlCovCotEnv} below hold for an arbitrary ring. \cref{InjDualProperties}\cref{InjDualProperties.Cot,InjDualProperties.Double,InjDualProperties.FlBecomesInj} hold for a ring $A$ together with a ring homomorphism from a commutative ring $R$ to the center of $A$. The first claim of \cref{InjDualProperties.InjBecomesFlCot} holds if in addition $A$ is right coherent, and the second claim holds if $A$ is right noetherian; see \cite[Corollary~2.18(b)]{MR2985554}.
\cref{InjDualProperties3} and \cref{CotEnvAndPInjEnvForFlatMod} below hold for a left coherent ring.
\end{remark}

\subsection{Covers and envelopes}
\label{subsec.CovAndEnv}

Let $\cA$ be an additive category and let $\cX$ be a full subcategory of $\cA$ closed under isomorphisms. A morphism $f\colon N\to M$ in $\cA$ is called \emph{right minimal} if every $g\in\End_{\cA}(N)$ satisfying $fg=f$ is an isomorphism. A \emph{left minimal} morphism is defined dually, that is, it is a morphism that is right minimal in the opposite category.

A morphism $f\colon X\to M$ in $\cA$ is called an \emph{$\cX$-precover}, or a \emph{right $\cX$-approximation}, if $X\in\cX$ and, for every $X'\in\cX$, the induced map $\Hom_{\cA}(X',X)\to\Hom_{\cA}(X',M)$ is surjective. The latter condition means that every morphism from an object in $\cX$ to $M$ factors through $f$. An \emph{$\cX$-cover}, or a \emph{right minimal $\cX$-approximation}, is an $\cX$-precover $X\to M$ that is right minimal.  It is immediate that an $\cX$-cover is unique up to isomorphism in the sense that, if $f\colon X\to M$ and $f'\colon X'\to M$ are $\cX$-covers, then there exists an isomorphism $h\colon X'\to X$ such that $fh=f'$. An \emph{$\cX$-preenvelope} (or a \emph{left $\cX$-approximation}) and an \emph{$\cX$-envelope} (or a \emph{left minimal $\cX$-approximation}) are defined dually.
If an $\cX$-cover $X\to M$ (\resp an $\cX$-envelope $M\to X$) exists, then the object $X$ is often called the $\cX$-cover (\resp the $\cX$-envelope) of $M$ since the isoclass (i.e., isomorphism class) of $X$ is uniquely determined by $M$.

Now let $\cA$ be an abelian category. A \emph{cotorsion pair} in $\cA$ is a pair $(\cX,\cY)$ of full subcategories of $\cA$ such that
\begin{align*}
	\cX&=\setwithcondition{M\in\cA}{\textnormal{$\Ext_{\cA}^{1}(M,Y)=0$ for all $Y\in\cY$}}\quad\text{and}\\
	\cY&=\setwithcondition{M\in\cA}{\textnormal{$\Ext_{\cA}^{1}(X,M)=0$ for all $X\in\cX$}}.
\end{align*}
A cotorsion pair $(\cX,\cY)$ is called \emph{hereditary} if $\Ext_{\cA}^{i}(X,Y)=0$ for all $X\in\cX$, $Y\in\cY$, and $i\geq 1$. A cotorsion pair $(\cX,\cY)$ is \emph{complete} if, for every $M\in\cA$, there exist exact sequences
\begin{align*}
	0\to Y\to X\to M\to 0\quad\text{and}\quad 0\to M\to Y'\to X'\to 0
\end{align*}
with $X,X'\in\cX$ and $Y,Y'\in\cY$. Morphisms $X\to M$ and $M\to Y'$ fitting into such exact sequences are often called a \emph{special $\cX$-precover} and a \emph{special $\cY$-preenvelope}, respectively. It is easy to see that they are indeed an $\cX$-precover and a $\cY$-preenvelope.

Denote by $\Flat A$ (\resp $\Cot A$) the full subcategory of $\Mod A$ consisting of all flat (\resp cotorsion) modules. If $\cX=\Flat A$, then an $\cX$-(pre)cover is called a \emph{flat (pre)cover}, which is necessarily an epimorphism. If $\cY=\Cot A$, then a $\cY$-(pre)envelope is called a \emph{cotorsion (pre)envelope}, which is necessarily a monomorphism. 
It is known that $(\Flat A,\Cot A)$ is a complete hereditary cotorsion pair in $\Mod A$ and every right $A$-module has a flat cover and a cotorsion envelope (see \cite[the proof of Proposition~3.1.2, Lemma~3.4.1, and Theorem~3.4.6]{MR1438789} and \cite{MR1832549}), where these facts are proved for an arbitrary ring.
Given a right $A$-module $M$, we denote the flat cover of $M$ by $F_{A}(M)\to M$ and the cotorsion envelope of $M$ by $M\to C_{A}(M)$.

Projective (pre)covers, injective (pre)envelopes, and pure-injective (pre)envelopes can be defined in the same way. A projective precover is merely an epimorphism from a projective module, and an injective preenvelope is merely a monomorphism to an injective module.
An injective envelope is nothing but an essential monomorphism to an injective module.
Pure-injective (pre)envelopes also have an alternative characterization, as in \cref{PInjEnv}. Recall that a monomorphism $L\to M$ is called a \emph{pure monomorphism} if it fits into a pure exact sequence. 
Moreover, a pure monomorphism $f\colon L\to M$ is called a \emph{pure-essential monomorphism} if, for every morphism $h\colon M\to M'$ such that $hf$ is a pure monomorphism, $h$ is a pure monomorphism.

\begin{proposition}\label{PInjEnv}
	Let $f\colon M\to N$ be a morphism in $\Mod A$ with $N$ pure-injective.
	\begin{enumerate}
		\item\label{PInjEnv.Preenv} $f$ is a pure-injective preenvelope if and only if $f$ is a pure monomorphism.
		\item\label{PInjEnv.Env} $f$ is a pure-injective envelope if and only if $f$ is a pure-essential monomorphism. 
			\end{enumerate}
\end{proposition}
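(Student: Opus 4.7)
My plan is to deduce both equivalences from two basic facts: (a) when $N''$ is pure-injective, $\Hom_A(-,N'')$ carries pure exact sequences to exact sequences, which is the definition of pure-injectivity; and (b) every $A$-module admits a pure monomorphism into a pure-injective module, available through \cref{InjDualProperties}\cref{InjDualProperties.Cot,InjDualProperties.Double} by taking the double dual with respect to an injective cogenerator of $\Mod R$.

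For part \cref{PInjEnv.Preenv}, the backward direction is immediate from (a): if $f$ is a pure monomorphism, then for any pure-injective $N''$ and any $g\colon M\to N''$, applying $\Hom_A(-,N'')$ to the pure exact sequence $0\to M\to N\to\Cok f\to 0$ yields a surjection $\Hom_A(N,N'')\to\Hom_A(M,N'')$, so $g$ factors through $f$. Conversely, suppose $f$ is a pure-injective preenvelope. Using (b), fix a pure monomorphism $g\colon M\to N''$ with $N''$ pure-injective; by the preenvelope property, $g=hf$ for some $h\colon N\to N''$. For any left $A$-module $U$, the identity $g\otimes U=(h\otimes U)(f\otimes U)$ forces $f\otimes U$ to be injective, so $f$ is a pure monomorphism.

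For part \cref{PInjEnv.Env}, since a pure-injective envelope is by definition a left minimal pure-injective preenvelope, which by part \cref{PInjEnv.Preenv} is the same thing as a left minimal pure monomorphism into a pure-injective, it suffices to show that such left minimality is equivalent to pure-essentiality. Suppose $f$ is pure-essential and $g\in\End_A(N)$ satisfies $gf=f$. Since $gf=f$ is a pure monomorphism, pure-essentiality forces $g$ itself to be a pure monomorphism, so the cokernel sequence $0\to N\xrightarrow{g}N\to\Cok g\to 0$ is pure exact. Applying $\Hom_A(-,N)$ and using the pure-injectivity of $N$, the map $\Hom_A(g,N)$ is surjective; lifting $\id_N$ through $g$ produces a splitting, so $g$ is a split monomorphism and therefore an automorphism.

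For the converse, suppose $f$ is left minimal and let $h\colon N\to N'$ be such that $hf$ is a pure monomorphism. Fix a pure monomorphism $e\colon N'\to\tilde N$ with $\tilde N$ pure-injective, so $ehf\colon M\to\tilde N$ is a pure monomorphism and, by part \cref{PInjEnv.Preenv}, a pure-injective preenvelope. Applying its preenvelope property to $f\colon M\to N$ yields $v\colon\tilde N\to N$ with $(veh)f=f$, and left minimality forces $veh\in\Aut_A(N)$. Consequently $eh$ is a split monomorphism, hence a pure monomorphism, and then the factorization $(eh)\otimes U=(e\otimes U)(h\otimes U)$ shows $h\otimes U$ is injective for every left $A$-module $U$. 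The subtle point I expect to be most delicate is pinpointing where the pure-injectivity of $N$ intervenes: it is precisely what converts a pure-mono endomorphism fixing $f$ into a split, hence invertible, endomorphism in the pure-essential $\Rightarrow$ left minimal direction.
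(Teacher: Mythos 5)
Your proof of part (1) and of the ``envelope $\Rightarrow$ pure-essential'' half of part (2) is correct and is essentially the paper's argument: embed $M$ (resp.\ $N'$) purely into a pure-injective via the double dual, factor through the preenvelope, and use left minimality to produce a splitting, then read off purity from the factorization after tensoring.

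The genuine gap is in the ``pure-essential $\Rightarrow$ left minimal'' direction of part (2), at the step ``$g$ is a split monomorphism and therefore an automorphism.'' A split monic endomorphism need not be an automorphism: if $N\cong N\oplus N'$ with $N'\neq 0$, the inclusion of the first summand is a split monomorphism $N\to N$ that is not surjective. Producing a retraction $v$ with $vg=\id_{N}$ leaves open the possibility that $\Cok g\neq 0$, and nothing you have written excludes it. The repair is to invoke pure-essentiality a second time. Put $e:=gv$, an idempotent endomorphism of $N$ with $\Im e=\Im g$ and $\Ker e=\Ker v$ (the latter because $g$ is injective). Since $vf=v(gf)=(vg)f=f$, we get $ef=gf=f$, which is a pure monomorphism; pure-essentiality applied to $e$ then forces $e$ to be a (pure) monomorphism, so $\Ker v=0$. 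As $v$ is already a split epimorphism, it is an isomorphism, and $g=v^{-1}$ is an automorphism. The paper's one-line justification at this point (``left minimal by the definition of pure-essentiality because every pure monomorphism from a pure-injective module splits'') is compressing exactly this double use of the definition; your version retains only the first use and substitutes a false implication for the second.
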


\begin{proof}
\cref{PInjEnv.Preenv}: The ``if'' part is straightforward.
To show the ``only if'' part, suppose that $f$ is a pure-injective preenvelope. Let 
$E$ be an injective cogenerator in $\Mod R$ and set $(-)^{*}:=\Hom_{R}(-,E)$. Then we have the canonical pure monomorphism $g\colon M\to M^{**}$, where $M^{**}$ is pure-injective; see \cref{InjDualProperties}\cref{InjDualProperties.Cot,InjDualProperties.Double}.
Then there is a morphism $h\colon N\to M^{**}$ such that $hf=g$. Since $g$ is a pure monomorphism, it follows that $f$ is a pure monomorphism.
		
\cref{PInjEnv.Env}:	 If $f$ is a pure-essential monomorphism, then it is a pure-injective preenvelope by \cref{PInjEnv.Preenv}
and is also left minimal by the definition of pure-essentiality
because every pure monomorphism from a pure-injective module splits. To show the ``only if'' part, suppose that $f$ is a pure-injective envelope.
Let $h\colon N\to N'$ be a morphism such that $hf$ is a pure monomorphism.
To observe that $h$ is a pure monomorphism, it suffices to show that the composition of $h$ with the canonical pure monomorphism $N'\to N'^{**}$ is a pure monomorphism. Therefore, replacing $N'^{**}$ by $N'$, we may assume that $N'$ is pure-injective. Then the morphism $hf\colon M \to N'$ is a pure-injective preenvelope, but then $h$ is a split monomorphism since $f$ is a pure-injective envelope.
\end{proof}

\begin{remark}
Our pure-essentiality is the same as that of \cite[p.~145]{MR2530988}, and this definition is, in general, strictly stronger than the classical definition, in which a pure monomorphism $f\colon L\to M$ is called a pure-essential monomorphism if, for every morphism $h\colon M\to M'$ such that $hf$ is a pure monomorphism, $h$ is a \emph{monomorphism}.
It has been known to experts that some of the proofs for the existence of pure-injective envelopes (pure-injective hulls) do not work due to this difference; see \cite[p.~197, Remarks]{MR1758412}. 
However, the notion of pure-injective envelopes is consistent in any case, and they do exist over any ring. For valid proofs on the existence of pure-injective envelopes, we refer the reader to \cite[Theorem~4.3.18]{MR2530988} or \cite[\S 18-5]{MR1301329}. The former uses a functor category, and the latter (based on the classical pure-essentiality) uses a cardinality argument. The definitions of pure-injective envelopes therein are given in different ways, but they both agree with ours defined as $\cX$-envelopes for the class $\cX$ of pure-injective modules; see \cite[Proposition~4.3.16]{MR2530988} and \cite[Theorem~18-5.9]{MR1301329}.
\end{remark}

As mentioned above, every right $A$-module $M$ has a pure-injective envelope, which is unique up to isomorphism. It is denoted by $M\to H_{A}(M)$, following the notation in \cite[\S4.3.3]{MR2530988}.

Flat precovers and cotorsion preenvelopes are not necessarily special, but flat covers and cotorsion envelopes are:

\begin{lemma}\label{SpeFlCovCotEnv}
The kernel of a flat cover is cotorsion. The cokernel of a cotorsion envelope is flat. 
\end{lemma}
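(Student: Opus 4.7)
The plan is to apply Wakamatsu's lemma and its dual to the cotorsion pair $(\Flat A,\Cot A)$. Both classes are closed under extensions: for $\Flat A$ this is classical, and for $\Cot A$ it is immediate from applying $\Ext^{1}_{A}(F,-)$ with $F\in\Flat A$ to a short exact sequence with outer terms cotorsion.

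For the first claim, let $f\colon F_{A}(M)\to M$ be a flat cover with kernel $K$, fix $F'\in\Flat A$, and take an arbitrary extension $0\to K\to E\to F'\to 0$. I would form the pushout $P$ of the span $E\from K\to F_{A}(M)$, producing short exact sequences
\[
0\to F_{A}(M)\to P\to F'\to 0
\qquad\text{and}\qquad
0\to E\to P\to M\to 0,
\]
with $P$ flat by closure of $\Flat A$ under extensions. The precover property factors the map $P\to M$ from the second sequence through $f$, yielding an endomorphism of $F_{A}(M)$ whose composition with $f$ equals $f$; right minimality of the flat cover forces this endomorphism to be an automorphism, so the first pushout sequence splits. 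The long exact sequence of $\Ext^{*}_{A}(F',-)$ applied to $0\to K\to F_{A}(M)\to M\to 0$ then says $[E]\in\Ext^{1}_{A}(F',K)$ lies in the image of the connecting map, so $[E]=\partial\phi$ for some $\phi\in\Hom_{A}(F',M)$. A second application of the precover property lifts $\phi$ through $f$, so by exactness $[E]=0$ and $K$ is cotorsion.

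For the second claim, the dual pullback argument applies. Let $\eta\colon M\to C_{A}(M)$ be a cotorsion envelope with cokernel $C'$, fix $Y\in\Cot A$, and take an extension $0\to Y\to E\to C'\to 0$. I would form the pullback $Q=E\times_{C'}C_{A}(M)$, producing sequences $0\to M\to Q\to E\to 0$ and $0\to Y\to Q\to C_{A}(M)\to 0$, with $Q$ cotorsion by closure of $\Cot A$ under extensions. The preenvelope property factors the inclusion $M\to Q$ through $\eta$, giving an endomorphism of $C_{A}(M)$ that agrees with the identity after precomposition with $\eta$; left minimality of the cotorsion envelope forces this endomorphism to be an automorphism, so the second pullback sequence splits. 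The long exact sequence of $\Ext^{*}_{A}(-,Y)$ applied to $0\to M\to C_{A}(M)\to C'\to 0$ then writes $[E]$ as $\partial\psi$ for some $\psi\in\Hom_{A}(M,Y)$, and a second application of the preenvelope property (using $Y\in\Cot A$) extends $\psi$ through $\eta$; exactness again gives $[E]=0$. Therefore $\Ext^{1}_{A}(C',Y)=0$ for all $Y\in\Cot A$, so $C'\in{}^{\perp}(\Cot A)=\Flat A$ by the cotorsion pair property.

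There is no serious obstacle here. The only subtle point is the last move in each half: converting ``the image of $[E]$ vanishes'' into ``$[E]$ itself vanishes'' requires invoking the precover/preenvelope property a second time, and the bookkeeping of the pushout/pullback together with the minimality step must be set up carefully so that the lift produced by the universal property has the right compatibility with $f$ (respectively $\eta$).
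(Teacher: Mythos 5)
Your proof is correct and is exactly the route the paper takes: the paper's proof simply cites Wakamatsu's lemma (and its dual) for the complete cotorsion pair $(\Flat A,\Cot A)$, and your pushout/pullback argument is the standard proof of that lemma, including the correct final step of killing $[E]$ by a second use of the precover/preenvelope property and, for the second half, the identification ${}^{\perp}(\Cot A)=\Flat A$ from the cotorsion pair.
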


\begin{proof}
This is a consequence of Wakamatsu's lemma; see \cite[Lemmas~2.1.1 and 2.1.2]{MR1438789} or \cite[Lemma~5.3.25 and Proposition~7.2.4]{MR1753146}.
\end{proof}

By this lemma and \cref{InjDualProperties3}, a cotorsion envelope of a flat right $A$-module is a pure monomorphism into a pure-injective module, that is, a pure-injective preenvelope, by \cref{PInjEnv}\cref{PInjEnv.Preenv}. It is a pure-injective envelope by the left minimality of the cotorsion envelope. Hence we have:

\begin{proposition}\label{CotEnvAndPInjEnvForFlatMod}
For a flat right $A$-module, its cotorsion envelope and pure-injective envelope coincide. 
\end{proposition}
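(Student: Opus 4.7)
The plan is to verify that a cotorsion envelope $\iota\colon M\to C_{A}(M)$ of a flat right $A$-module $M$ is itself a pure-injective envelope; by the uniqueness of envelopes up to isomorphism, this yields $C_{A}(M)\cong H_{A}(M)$ with matching structure morphisms, which is the content of the statement.

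First I would invoke \cref{SpeFlCovCotEnv} to conclude that $F:=\Cok(\iota)$ is flat. The short exact sequence
\[
0\to M\to C_{A}(M)\to F\to 0
\]
then has flat cokernel, which forces it to be pure exact (for any $U\in\Mod A^{\op}$ the sequence remains exact after applying $U\otimes_{A}-$ because $\Tor_{1}^{A}(U,F)=0$). Hence $\iota$ is a pure monomorphism. Moreover, $C_{A}(M)$ is an extension of the two flat modules $M$ and $F$, so it is flat, and being also cotorsion it is pure-injective by \cref{InjDualProperties3}.

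Next, \cref{PInjEnv}\cref{PInjEnv.Preenv} tells us that a pure monomorphism into a pure-injective module is a pure-injective preenvelope, so $\iota$ is one. As a cotorsion envelope, $\iota$ is in particular left minimal, and a left minimal preenvelope is an envelope by definition; hence $\iota$ is a pure-injective envelope, which is the desired conclusion.

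I do not foresee a genuine obstacle here: the argument is a direct assembly of \cref{SpeFlCovCotEnv}, \cref{InjDualProperties3}, and \cref{PInjEnv}\cref{PInjEnv.Preenv}. The only point worth emphasising is the standard observation that flatness of the cokernel already promotes the Wakamatsu sequence to a pure exact sequence, since this is precisely what bridges the cotorsion-pair machinery on one side and the pure-injective envelope characterisation on the other.
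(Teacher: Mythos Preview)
Your proof is correct and follows essentially the same route as the paper: use \cref{SpeFlCovCotEnv} to make the cokernel flat (hence the sequence pure exact), invoke \cref{InjDualProperties3} to see that the flat cotorsion module $C_{A}(M)$ is pure-injective, apply \cref{PInjEnv}\cref{PInjEnv.Preenv}, and conclude by left minimality. One cosmetic point: for right $A$-modules the functor witnessing purity should be written $-\otimes_{A}U$ (and correspondingly $\Tor_{1}^{A}(F,U)$), matching the paper's convention.
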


\begin{remark}\label{ArtinFlcovProjcov}
Over a right artinian ring, all flat right (and also left) modules are projective (\cite[Theorem~28.4 and Corollary~28.8]{MR1245487}), and hence all right (and left) modules are cotorsion. So flat covers and projective covers are the same notion, and cotorsion envelopes are identity morphisms.
\end{remark}

\subsection{Prime ideals and localization}
\label{subsec.PrimeIdealsLocalization}

An \emph{ideal} means a two-sided ideal unless otherwise specified. A \emph{prime ideal} of $A$ is an ideal $P\subsetneq A$ such that, for any $a,b\in A$, the condition $aAb\subset P$ implies that $a\in P$ or $b\in P$. A \emph{maximal ideal} of $A$ is an ideal $Q\subsetneq A$ that is maximal among all ideals except $A$ itself. Every maximal ideal is a prime ideal. Denote by $\Spec A$ (\resp $\Max A$) the set of all prime (\resp maximal) ideals of $A$.

Denote by $\phi\colon R\to A$ the structure homomorphism of the Noether $R$-algebra $A$. This homomorphism induces a canonical map $\Spec A\to\Spec R$ which sends each $P\in \Spec R$ to its preimage $\phi^{-1}(P)$; see \cref{MapOfSpectra} below. Although $R$ is not necessarily a subring of $A$, we write $P\cap R$ for $\phi^{-1}(P)$.

\begin{lemma}\label{SpecOfNoethAlgMax}
For every $P\in\Spec A$, we have $P\in\Max A$ if and only if $P\cap R\in\Max R$. In particular, the map $\Spec A\to\Spec R$ restricts to $\Max A\to\Max R$.
\end{lemma}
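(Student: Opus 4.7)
The plan is to pass to the quotient ring $A/P$, which is a prime ring and, via the structure homomorphism $\phi$, a finitely generated module over $R/\p$ where $\p := P \cap R$. Since $\phi(R)$ lies in the center of $A$ and $\p = \phi^{-1}(P)$, the induced map $R/\p \to A/P$ is injective with image contained in the center $Z(A/P)$. Both implications will then reduce to standard structural facts about prime rings that are module-finite over a central subring.

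For the ``if'' direction, suppose $\p \in \Max R$, so that $R/\p$ is a field. Then $A/P$ is a finite-dimensional algebra over $R/\p$, hence an artinian ring. A prime artinian ring is simple: its Jacobson radical is nilpotent and therefore contained in the zero prime ideal of $A/P$, making $A/P$ semisimple artinian, and Artin--Wedderburn together with primeness leaves only a single simple factor. Hence $A/P$ is simple and $P \in \Max A$.

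For the ``only if'' direction, suppose $P \in \Max A$, so that $A/P$ is simple; in particular its center $K := Z(A/P)$ is a field. The ring $R/\p$ is noetherian as a quotient of the noetherian ring $R$, and $A/P$ is finitely generated over $R/\p$. Consequently the $R/\p$-submodule $K$ of $A/P$ is itself finitely generated over $R/\p$, hence integral over $R/\p$. Since $R/\p$ is a subring of the field $K$ with $K$ integral over $R/\p$, the standard commutative algebra fact that a subring of a field over which that field is integral must itself be a field forces $R/\p$ to be a field, i.e., $\p \in \Max R$.

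The ``in particular'' assertion is immediate from the ``only if'' direction. I do not anticipate any serious obstacle; the only point demanding attention is invoking noetherianity of $R/\p$ to ensure that the submodule $K \subseteq A/P$ is finitely generated, which is what upgrades its inclusion in $A/P$ to an integrality statement.
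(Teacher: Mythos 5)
Your proof is correct. The ``if'' direction is essentially the paper's argument: both of you reduce to the observation that $A/P$ is a finite-dimensional algebra over the field $R/\p$ and that a prime artinian ring is simple (the paper delegates ``prime implies maximal in an artinian ring'' to a citation, whereas you spell out the Wedderburn-type argument). The ``only if'' direction, however, is genuinely different. The paper (in \cref{NonnoetherianSpectra}) reduces to $P=0$, $\p=0$, assumes for contradiction that $R$ has a nonzero prime $\q$, localizes the inclusion $R\hookrightarrow A$ at $\q$, and uses Nakayama's lemma to produce a prime ideal of the simple ring $A_\q$ containing the nonzero ideal $\q A_\q$. You instead argue directly: the center $K=Z(A/P)$ of the simple ring $A/P$ is a field containing $R/\p$, and since $R/\p$ is noetherian and $A/P$ is module-finite over it, $K$ is a finitely generated $R/\p$-module, hence integral over $R/\p$; the standard fact that a domain over which a field is integral is itself a field then gives $\p\in\Max R$. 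Your route buys a direct (non-contradiction) argument that avoids the localization correspondence \cref{LocOfNoethAlg} and Nakayama, at the cost of invoking the fact that the center of a simple ring is a field and the integrality lemma for domains; the paper's route stays entirely within the localization formalism it has already set up and does not need to discuss centers or integral extensions. Both are complete; the one point you correctly flagged --- noetherianity of $R/\p$ to ensure $K$ is a finitely generated submodule --- is indeed the only place where care is needed, and it holds.
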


\begin{proof}
This follows from \cite[10.2.12 and 10.2.13]{MR1811901}. We give a more direct proof in \cref{NonnoetherianSpectra} below for the reader's convenience.
\end{proof}

For each $\p\in\Spec R$, the $R_{\p}$-module $A_{\p}$, the localization of $A$ at $\p$ as an $R$-module, is naturally a Noether $R_{\p}$-algebra. Moreover, for every right $A$-module $M$, the localization $M_{\p}$ has a structure of a right $A_{\p}$-module, and it holds that $M_\p\cong M\otimes_RR_\p \cong M\otimes_AA_\p$.
We say that $M$ is \emph{$\p$-local} if the canonical $A$-homomorphism $M\to M_{\p}$ is an isomorphism. In this case, $M$ itself can be regarded as a right $A_{\p}$-module.

\begin{remark}\label{LocAdj}
The localization functor $(-)_\p\colon\Mod A\to \Mod A_{\p}$ has a fully faithful right adjoint $\Mod A_{\p}\to \Mod A$, which sends each $A_\p$-module to itself but regarded as an $A$-module along the canonical ring homomorphism $A\to A_\p$. The essential image of the right adjoint consists of all $\p$-local right $A$-modules.
\end{remark}

\begin{proposition}\label{LocOfNoethAlg}
	Let $\p\in\Spec R$.
	\begin{enumerate}
		\item\label{LocOfNoethAlg.Spec} There is an order-preserving bijection
		\begin{equation*}
			\setwithcondition{P\in\Spec A}{P\cap R\subset\p}\isoto\Spec A_{\p}
		\end{equation*}
		given by $P\mapsto P_{\p}=PA_{\p}$. The inverse map is given by $Q\mapsto f^{-1}(Q)$, where $f\colon A\to A_{\p}$ is the canonical ring homomorphism.
		\item\label{LocOfNoethAlg.Max} The bijection in \cref{LocOfNoethAlg.Spec} restricts to a bijection
		\begin{equation*}
			\setwithcondition{P\in\Spec A}{P\cap R=\p}\isoto\Max A_{\p}.
		\end{equation*}
	\end{enumerate}
\end{proposition}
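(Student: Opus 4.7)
The plan is to view $A_\p = A\otimes_R R_\p$ as the central localization of $A$ at the multiplicative subset $S := \phi(R\setminus\p)$ sitting in the center of $A$; under this identification, the condition $P\cap R\subset\p$ translates to $P\cap S = \emptyset$, and I will establish the bijection by hand using the explicit description $A_\p = \{a/s : a\in A,\, s\in S\}$ with the usual equivalence. Since $S$ is central, the usual fraction arithmetic works exactly as in the commutative case, and every two-sided ideal pushed forward has the form $\{p/s : p\in P,\, s\in S\}$. In particular, $P_\p$, the $R$-module localization, coincides with the two-sided ideal $PA_\p$ of $A_\p$.

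For \cref{LocOfNoethAlg.Spec} I will verify the four standard points. First, for $P\in\Spec A$ with $P\cap S=\emptyset$, I claim $PA_\p$ is prime. Given $a,b\in A$ with $(a/1)A_\p(b/1)\subset PA_\p$, for each $c\in A$ the element $acb/1$ lies in $PA_\p$; clearing denominators yields $u\in S$ with $u\cdot acb\in P$. Choosing this works uniformly for all $c$ up to replacing $u$, so by centrality of $u$ I get $u\cdot aAb\subset P$, and since $u\notin P$ (as $u\in S$) the primality of $P$ gives $a\in P$ or $b\in P$. Properness of $PA_\p$ also follows since $1\in PA_\p$ would force some $u\in S$ into $P$. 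Second, $f^{-1}(PA_\p)=P$: if $f(a)\in PA_\p$, clearing denominators gives $sa\in P$ for some $s\in S$, and centrality of $s$ together with $s\notin P$ and primality forces $a\in P$ by the same ideal-theoretic argument ($s\cdot AaA\subset P$). Third, for $Q\in\Spec A_\p$, the preimage $f^{-1}(Q)$ is a prime ideal of $A$ (the usual argument: $aAb\subset f^{-1}(Q)$ together with centrality of denominators gives $f(a)A_\p f(b)\subset Q$) with $f^{-1}(Q)\cap R\subset\p$ because every $s\in S$ is a unit in $A_\p$, hence cannot lie in $Q$. Fourth, $f^{-1}(Q)A_\p = Q$ follows by writing an arbitrary element of $A_\p$ as $f(a)/s$, noting $f(a)=s\cdot(f(a)/s)\in Q$ iff $a\in f^{-1}(Q)$. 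Order-preservation on both sides is manifest from the descriptions.

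For \cref{LocOfNoethAlg.Max} I will apply \cref{SpecOfNoethAlgMax} to the Noether $R_\p$-algebra $A_\p$. Using the bijection in \cref{LocOfNoethAlg.Spec} for both $R$ and $A$, one checks $(PA_\p)\cap R_\p = (P\cap R)R_\p$. Thus $PA_\p\in\Max A_\p$ if and only if $(P\cap R)R_\p\in\Max R_\p$, which, since $\p R_\p$ is the unique maximal ideal of the local ring $R_\p$ and $(P\cap R)R_\p$ is prime in $R_\p$ lying below $\p R_\p$, happens precisely when $P\cap R=\p$.

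The main technical point requiring care is the primality verification for $PA_\p$ and the contraction identity $f^{-1}(PA_\p)=P$: these are the steps where the noncommutative structure of $A$ interacts with the denominators, and it is essential that $S$ lies in the center of $A$, both to allow the manipulation $s\cdot aAb\subset P$ and to invoke the ideal-theoretic characterization of primality (namely, $IJ\subset P$ forces $I\subset P$ or $J\subset P$). Once this is done, the rest of both parts is a routine translation of the classical commutative argument.
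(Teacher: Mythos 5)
Your proof is correct and follows essentially the same route as the paper: part \cref{LocOfNoethAlg.Spec} is the elementary central-localization argument that the paper delegates to McConnell--Robson and sketches in \cref{NonnoetherianSpectra} (the key observation $sa\in P$, $s$ central, $s\notin P$ $\Rightarrow$ $a\in P$ is exactly the one you verify), and part \cref{LocOfNoethAlg.Max} is identical, applying \cref{SpecOfNoethAlgMax} to the Noether $R_\p$-algebra $A_\p$ together with $(PA_\p)\cap R_\p=(P\cap R)R_\p$. One cosmetic point: the claim that a single $u\in S$ "works uniformly for all $c$" is neither justified nor needed --- just apply the primality argument pointwise to each $u_c\cdot acb\in P$ to conclude $acb\in P$ for every $c$, and then use $aAb\subset P$.
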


\begin{proof}
	\cref{LocOfNoethAlg.Spec}: This follows from \cite[2.1.16, Proposition(vii)]{MR1811901}. See also \cref{NonnoetherianSpectra} below.
	
	\cref{LocOfNoethAlg.Max}: Let $P\in\Spec A$ such that $P\cap R\subset\p$. \cref{SpecOfNoethAlgMax} applied to the Noether $R_{\p}$-algebra $A_{\p}$ implies that $PA_{\p}\in\Max A_{\p}$ if and only if $PA_{\p}\cap R_{\p}\in\Max R_{\p}$. Since $PA_{\p}\cap R_{\p}=(P\cap R)R_{\p}$, the latter condition is equivalent to $P\cap R=\p$.
\end{proof}

For $\p\in\Spec R$, the residue field at $\p$ is denoted by $\kappa(\p):=R_{\p}/\p R_{\p}$. 
Note that $A\otimes_{R}\kappa(\p)$ is a \emph{finite-dimensional} $\kappa(\p)$-algebra in the sense that the Noether $\kappa(\p)$-algebra $A\otimes_{R}\kappa(\p)$ is finite-dimensional as a $\kappa(\p)$-vector space. In particular, it is a left and right artinian ring.

\begin{proposition}\label{SpecOfNoethAlg}
	Let $\p\in\Spec R$. There is a bijection
	\begin{equation*}
		\setwithcondition{P\in\Spec A}{P\cap R=\p}\isoto\Spec(A\otimes_{R}\kappa(\p))=\Max(A\otimes_{R}\kappa(\p))
	\end{equation*}
	given by $P\mapsto P_{\p}/\p A_{\p}$. The inverse map is given by $Q\mapsto f^{-1}(Q)$, where $f\colon A\to A\otimes_{R}\kappa(\p)$ is the canonical ring homomorphism.
	
	Consequently, the fiber $\setwithcondition{P\in\Spec A}{P\cap R=\p}$ over each $\p\in \Spec R$ is a (possibly empty) finite set.
\end{proposition}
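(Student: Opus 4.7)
The plan is to realize the claimed bijection as the composite of three standard bijections: localization, quotient by the extended ideal, and the identification $A_{\p}/\p A_{\p}\cong A\otimes_{R}\kappa(\p)$. The hypothesis that $A$ is a Noether $R$-algebra enters only at the very end, to ensure that $A\otimes_{R}\kappa(\p)$ is artinian.

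First I would invoke \cref{LocOfNoethAlg}\cref{LocOfNoethAlg.Max} to obtain the bijection $\setwithcondition{P\in\Spec A}{P\cap R=\p}\isoto\Max A_{\p}$ with $P\mapsto P_{\p}$. The next step — and the one place a tiny verification is needed — is to check that $\p A_{\p}$ is contained in every maximal ideal of $A_{\p}$: if $Q\in\Max A_{\p}$ corresponds to $P$ with $P\cap R=\p$, then $\p\subset P$, and since the image of $R$ lies in the center of $A$ we get $\p A_{\p}\subset PA_{\p}=Q$. Consequently $\p A_{\p}$ sits inside the Jacobson radical of $A_{\p}$, and the correspondence theorem for two-sided ideals yields a bijection $\Max A_{\p}\isoto\Max(A_{\p}/\p A_{\p})$ given by $Q\mapsto Q/\p A_{\p}$. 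Under the canonical ring isomorphism $A_{\p}/\p A_{\p}\cong A\otimes_{R}\kappa(\p)$, the latter is $\Max(A\otimes_{R}\kappa(\p))$.

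To conclude, I would remark that $A\otimes_{R}\kappa(\p)$ is a finite-dimensional $\kappa(\p)$-algebra, hence left and right artinian, so every prime ideal is maximal and there are only finitely many of them; this identifies $\Max(A\otimes_{R}\kappa(\p))$ with $\Spec(A\otimes_{R}\kappa(\p))$ and also establishes the final finiteness assertion. Composing the three bijections gives the forward map $P\mapsto P_{\p}\mapsto P_{\p}/\p A_{\p}$, and chasing inverses (each step is given by preimage along a ring homomorphism) shows that the inverse is $Q\mapsto f^{-1}(Q)$ for the composite $f\colon A\to A\otimes_{R}\kappa(\p)$, as stated.

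I do not expect any serious obstacle: the argument is essentially bookkeeping built on top of \cref{LocOfNoethAlg}, the correspondence theorem, and the artinianness of the fiber ring. The only delicate point is the containment $\p A_{\p}\subset Q$ for $Q\in\Max A_{\p}$, which relies crucially on the sharper characterization of maximal ideals supplied by \cref{LocOfNoethAlg}\cref{LocOfNoethAlg.Max} rather than a general Nakayama-type argument.
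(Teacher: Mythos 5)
Your proposal is correct and takes essentially the same route as the paper's proof: reduce via localization to the fiber ring $A_{\p}/\p A_{\p}\cong A\otimes_{R}\kappa(\p)$ and invoke its artinianness both to identify $\Spec$ with $\Max$ there and to get the finiteness of the fiber. The only (harmless) difference is that you pass through $\Max A_{\p}$ using \cref{LocOfNoethAlg}\cref{LocOfNoethAlg.Max} together with the observation $\p A_{\p}\subseteq\rad A_{\p}$ and the ideal correspondence, whereas the paper restricts the $\Spec$-level bijection of \cref{LocOfNoethAlg}\cref{LocOfNoethAlg.Spec} and verifies directly that its image is exactly the set of primes of $A_{\p}$ containing $\p A_{\p}$.
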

\begin{proof}
The bijection in \cref{LocOfNoethAlg}\cref{LocOfNoethAlg.Spec} induces an injection 
\begin{equation*}
		\setwithcondition{P\in\Spec A}{P\cap R=\p}\to \setwithcondition{Q\in\Spec A_\p}{\p A_\p\subseteq Q},
		\end{equation*}
and an elementary argument shows that every $P\in \Spec A$ with $P\cap R \subseteq \p$ and $\p A_\p\subseteq P_\p$ satisfies $P\cap R=\p$, so the above injection is bijective, and the right-hand side can naturally be identified with $\Spec (A\otimes_{R}\kappa(\p))$ since $A\otimes_{R}\kappa(\p)=A_{\p}/\p A_{\p}$. Thus we have the desired bijection.
By \cite[Theorem~3.4 and Proposition~4.19]{MR2080008}, $A\otimes_{R}\kappa(\p)$ has only finitely many prime ideals, which are all maximal.
\end{proof}

If the structure homomorphism $R\to A$ is injective, then  the induced map $\Spec A\to \Spec R$ is surjective, i.e., each fiber is nonempty (\cite[10.2.9, Theorem]{MR1811901}).

\begin{remark}\label{NonnoetherianSpectra}
\cref{LocOfNoethAlg}\cref{LocOfNoethAlg.Spec} holds for a ring $A$ together with a ring homomorphism from a commutative ring $R$ to the center of $A$. In fact, it can be proved in a similar way to the case $A=R$ (\cite[p.~22, Theorem~4.1 and Example~2]{MR1011461}); note that, if $P\in\Spec A$, $a\in A$, $s\in R\setminus(P\cap R)$, then $as\in P$ implies that $aAs\subset P$ and hence $a\in P$. \cref{SpecOfNoethAlgMax}, \cref{LocOfNoethAlg}\cref{LocOfNoethAlg.Max}, and \cref{SpecOfNoethAlg} also hold if in addition $A$ is finitely generated as an $R$-module. 
We give here a proof of \cref{SpecOfNoethAlgMax}, which works in this setting.

The ``only if'' part of the lemma follows from \cite[10.2.10, Corollary(iii)]{MR1811901}, and it can also be proved as follows: Let $P \in \Max A$ and set $\p := P \cap R$. Then we have an injection $R/\p \hookrightarrow A/P$, so we may suppose $P=0$ and $\p=0$, and hence $R$ is a domain and $A$ is a simple ring. Suppose that there is a prime ideal $0\neq \q$ of $R$. Localization of the injection $R \hookrightarrow A$ at $\q$ yields an injection $R_\q \hookrightarrow A_\q$, where $A_\q$ is a simple ring by \cref{LocOfNoethAlg}\cref{LocOfNoethAlg.Spec}. Since $A_\q$ is a nonzero finitely generated $R_\q$-module, $A_\q/\q A_\q$ is nonzero by Nakayama's lemma, so the ring $A_\q/\q A_\q$ contains a prime ideal. This means that $A_\q$ contains a prime ideal $Q$ with $\q A_\q \subseteq Q$, but $0\neq \q R_\q \subseteq \q A_\q$, so this contradicts the fact that $A_\q$ is simple. 

To prove the ``if'' part, assume that $\p:=P\cap R\in\Max R$. Then $R/\p$ is a field, and $A\otimes_{R}(R/\p)=A/\p A$ is a finite-dimensional $(R/\p)$-algebra. Since $\p A\subset P$, we have a canonical surjective ring homomorphism $A/\p A\to A/P$, and hence $A/P$ is also a finite-dimensional $(R/\p)$-algebra. So all prime ideals of $A/P$ are maximal ideals by \cite[Proposition~4.19]{MR2080008}. In particular, the zero ideal $0=P/P$ of $A/P$ is a maximal ideal. Therefore $P\in\Max A$.

\end{remark}

\begin{remark}\label{MapOfSpectra}
In general, for a ring homomorphism $f\colon A\to B$ of noncommutative rings and a prime ideal $Q$ of $B$, the ideal $f^{-1}(Q)$ of $A$ is not necessarily a prime ideal; see \cite[10.2.3]{MR1811901}.

However, if we assume that $B$ is a \emph{centralizing} extension of $f(A)$ (cf.\ \cite[10.1.3]{MR1811901}), that is, as a right (or equivalently, left) $f(A)$-module, $B$ is generated by a (possibly infinite) subset $S\subset B$ such that every element of $S$ commutes with every element of $f(A)$, then $f^{-1}(Q)$ is a prime ideal of $A$ for every prime ideal $Q$ of $B$ (cf.\ \cite[10.2.4, Theorem]{MR1811901}). The proof is straightforward. This assumption is satisfied if the homomorphism $f$ is surjective or $f(A)$ is contained in the center of $B$.
\end{remark}

\subsection{Simple modules and injective modules}
\label{subsec.SimpleIndecInjAnd}

We will recall that (semi)simple modules and injective modules over a Noether $R$-algebra $A$ are controlled by maximal ideals and prime ideals, respectively. First we assign a simple module to each prime ideal.

Let $P\in \Spec A$ and put $\p:=P\cap R$. The ring $A_{\p}/P_{\p}$ is a simple right artinian ring, and hence decomposes as a finite direct sum of copies of a simple right $A_{\p}/P_{\p}$-module, where the simple module is unique up to isomorphism (\cite[Theorems (3.3) and (3.10)]{MR1125071}).
We denote the simple right $A_{\p}/P_{\p}$-module by $S_{A}(P)$ and its multiplicity in $A_{\p}/P_{\p}$ by $n_{P}$, that is,
\begin{equation}\label{nP}
A_{\p}/P_{\p}\cong S_A(P)^{n_P}.
\end{equation}
By construction, $S_{A}(P)\cong S_{A_{\p}}(P_{\p})$ and $S_{A}(P)$ is also a simple right $A_\p$-module. It is often regarded as a right $A$-module (which is not necessarily simple).

Denote by $\rad A$ the \emph{Jacobson radical of $A$}, which is the intersection of all annihilators of simple right (or left) $A$-modules (or equivalently, the intersection of all maximal right (or left) ideals of $A$; see \cite[Proposition~3.16]{MR2080008}). In general, the annihilator of a simple module over an arbitrary ring is a prime ideal, and any maximal (two-sided) ideal is the annihilator of some simple module (\cite[Proposition~3.15]{MR2080008}).
In particular, the Jacobson radical of a finite-dimensional algebra over a field (or more generally, a right artinian ring) equals to the intersection of all maximal ideals (\cite[Corollary~4.16 and Proposition~4.19]{MR2080008}). The following fact implies that the same characterization holds for a Noether $R$-algebra $A$:

\begin{theorem}\label{SimpleOverNoethAlg}
	There is a bijection
	\begin{equation*}
		\Max A\isoto\setwithtext{isoclasses of simple right $A$-modules}
	\end{equation*}
	given by $P\mapsto S_{A}(P)$. The inverse map is given by $S\mapsto\Ann_{A}(S)$.
\end{theorem}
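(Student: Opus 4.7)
The plan is to prove bijectivity in three steps: well-definedness of $P\mapsto S_A(P)$, maximality of $\Ann_A(S)$ for any simple right $A$-module $S$ (the crux), and reconstruction of $S$ from its annihilator.

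For well-definedness, I fix $P\in\Max A$. Then $\p:=P\cap R\in\Max R$ by \cref{SpecOfNoethAlgMax}, so $\kappa(\p)=R/\p$ is a field. Since $\p A\subset P$, the quotient $A/P$ is a quotient of the finite-dimensional $\kappa(\p)$-algebra $A\otimes_R\kappa(\p)$, and it is a simple ring by maximality of $P$. By Artin--Wedderburn, $A/P\cong M_{n_P}(D)$ for some division $\kappa(\p)$-algebra $D$, with a unique simple right module up to isomorphism, namely $S_A(P)$. This $S_A(P)$ is simple also as a right $A$-module (any $A$-submodule is an $A/P$-submodule), and $\Ann_A(S_A(P))=P$ because $A/P$ acts faithfully on $S_A(P)$.

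For the crux, let $S$ be any simple right $A$-module, $P:=\Ann_A(S)$, and $\p:=P\cap R$. I claim $\p\in\Max R$. Since $S$ is cyclic over $A$ and $A$ is finitely generated over $R$, $S$ is a finitely generated $R/\p$-module. Each nonzero $\bar r\in R/\p$ acts on $S$ as an $A$-module endomorphism (as $R$ maps into the center of $A$); if its kernel were all of $S$ then $\bar r$ would lie in $\p$, so by simplicity it acts invertibly. Hence the $R/\p$-action extends to a $\kappa(\p)$-action making $S$ into a finite-dimensional $\kappa(\p)$-vector space, say of dimension $d$. Choose a $\kappa(\p)$-basis $e_1,\dots,e_d$ of $S$, and write each element of a finite $R/\p$-generating set of $S$ as a $\kappa(\p)$-linear combination of the $e_j$; clearing all denominators by a common nonzero $t\in R/\p$, one obtains $tS\subset\bigoplus_j(R/\p)e_j$, and invertibility of $t$ on $S$ forces $S=\bigoplus_j(R/\p)e_j$ as an $R/\p$-module. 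Now for any nonzero $\bar r\in R/\p$, expand $\bar r^{-1}e_1=\sum_jb_je_j$ with $b_j\in R/\p$; applying $\bar r$ yields $\bar rb_1=1$, so $\bar r$ is invertible in $R/\p$. Thus $R/\p$ is a field, i.e., $\p\in\Max R$, whence $P\in\Max A$ by \cref{SpecOfNoethAlgMax}.

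Once this is established, $S$ is a faithful simple module over the simple artinian ring $A/P$, so $S\cong S_A(P)$; this gives surjectivity. Injectivity follows at once from $\Ann_A(S_A(P))=P$. The main obstacle is the maximality of $\p$ in the crux step: it is here that the finiteness of $A$ over $R$ enters essentially, through the basis-and-denominator argument above. An alternative route might phrase this via the fact that $S$ is an $R_\p$-module equal to its fibre $S/\p S$, invoking Nakayama over the local ring $R_\p$, but the direct free-module argument seems cleanest.
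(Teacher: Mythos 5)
Your proof is correct, and its overall architecture is the same as the paper's: both directions of the correspondence come from pairing a maximal ideal $P$ with the unique simple right module over the simple artinian ring $A_\p/P_\p$, and the whole weight of the theorem rests on showing that $\Ann_A(S)$ is maximal for every simple right $A$-module $S$. The difference lies in how that crux is handled. The paper outsources it, citing Goodearl--Warfield (Proposition~9.1(a) and Corollary~9.5 of \cite{MR2080008}) for the facts that $P=\Ann_A(S)$ is maximal and that $A/P$ is a finite direct sum of copies of $S$, and then only needs to observe that $S$ is $\p$-local. You instead prove the maximality of $\p=P\cap R$ from scratch: every nonzero element of $R/\p$ acts bijectively on $S$, so $S$ carries a finite-dimensional $\kappa(\p)$-vector space structure, and your denominator-clearing argument exhibits $S$ as a free $R/\p$-module on a $\kappa(\p)$-basis, which forces $R/\p=\kappa(\p)$. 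This is a clean, self-contained replacement for the citation (it is in essence the classical fact that a domain whose fraction field is a finitely generated module over it must be a field), and it makes explicit where the module-finiteness of $A$ over $R$ enters.

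Two small points deserve a line each. First, before forming $\kappa(\p)$ you should note that $\p$ is prime; this follows since $P=\Ann_A(S)$ is prime (a general fact recorded in \cref{subsec.SimpleIndecInjAnd}), or directly from your own observation that nonzero elements of $R/\p$ act injectively on $S$, whence $R/\p$ is a domain. Second, $S_A(P)$ is defined in the paper as the simple module over $A_\p/P_\p$ rather than over $A/P$; for $P\in\Max A$ these rings coincide because $R/\p$ is then a field, so $A/P$ is $\p$-local and $A/P=A_\p/P_\p$. Neither is a gap, but both identifications should be stated.
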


\begin{proof}
	For a simple right $A$-module $S$, let $P:=\Ann_{A}S$ and $\p:=P\cap R$. Then, by \cite[Proposition~9.1(a) and Corollary~9.5]{MR2080008}, $P$ is a maximal ideal of $A$ and the right $A$-module $A/P$ is a finite direct sum of copies of $S$. Since each $a\in R\setminus\p$ does not annihilate $S$, it acts as an isomorphism on the simple $A$-module $S$. This means that $S$ is $\p$-local, and hence $A/P=A_{\p}/P_{\p}$ is a finite direct sum of copies of $S$. Therefore $S_{A}(P)$ is isomorphic to $S$ by the definition of $S_{A}(P)$.
	
	Let $Q\in\Max A$ and $\q:=Q\cap R$. Again by the definition of $S_{A}(Q)$, we have $\Ann_{A}S_{A}(Q)=\Ann_{A}(A_{\q}/Q_{\q})=Q$. This completes the proof.
\end{proof}

It follows from \cref{SimpleOverNoethAlg} that
\begin{equation}\label{JacobsonRadical}
\rad A=\bigcap_{P\in \Max A} P.
\end{equation}
Given $\p\in \Spec R$, we have $\Max A_\p=\setwithcondition{P_\p}{\textnormal{$P\in \Spec A$, $P\cap R=\p$}}$ by \cref{LocOfNoethAlg}. Thus \cref{JacobsonRadical} implies that
	\begin{equation}\label{JacobsonRadicalLocal}
	\rad A_\p=\bigcap_{\substack{P\in\Spec A\\P\cap R=\p}}P_\p.
	\end{equation}

\begin{proposition}\label{TopOfFiber}
	For every $\p\in\Spec R$, we have
	\begin{equation*}
		A_\p/\rad A_\p\cong\bigoplus_{\substack{P\in\Spec A\\P\cap R=\p}}S_{A}(P)^{n_{P}}
	\end{equation*}
	as right $A$-modules.
\end{proposition}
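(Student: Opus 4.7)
The plan is to reduce the statement to the Chinese Remainder Theorem for finitely many pairwise comaximal two-sided ideals of $A_\p$, combined with the definition of $n_P$ in \cref{nP}.

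First I would enumerate the finite set $\setwithcondition{P\in\Spec A}{P\cap R=\p}=\{P_1,\dots,P_m\}$; by \cref{SpecOfNoethAlg} this set is finite, and by \cref{LocOfNoethAlg}\cref{LocOfNoethAlg.Max} the ideals $P_{1,\p},\dots,P_{m,\p}$ are exactly the elements of $\Max A_\p$. By \cref{JacobsonRadicalLocal} we have
\[
\rad A_\p=\bigcap_{i=1}^{m}P_{i,\p}.
\]
If $m=0$ (i.e., the fiber is empty), then the intersection is $A_\p$ (and \cref{JacobsonRadicalLocal} forces $\rad A_\p = A_\p$, which by standard Jacobson-radical theory for a noetherian ring implies $A_\p=0$), so both sides of the desired isomorphism are zero and we are done. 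Otherwise we proceed as below.

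Since $P_{1,\p},\dots,P_{m,\p}$ are distinct maximal two-sided ideals of $A_\p$, they are pairwise comaximal: $P_{i,\p}+P_{j,\p}=A_\p$ for $i\neq j$. The Chinese Remainder Theorem is valid for finitely many pairwise comaximal two-sided ideals in any (noncommutative) ring by the same elementary argument as in the commutative setting: for each $i$ one writes $1=x_i+y_i$ with $x_i\in\bigcap_{j\neq i}P_{j,\p}$ and $y_i\in P_{i,\p}$, and then $(a_1,\dots,a_m)\mapsto\sum_i a_ix_i$ provides a preimage of any tuple. Therefore the canonical map induces a ring isomorphism
\[
A_\p/\rad A_\p\;\isoto\;\prod_{i=1}^{m}A_\p/P_{i,\p},
\]
which is also a right $A_\p$-module isomorphism (hence a right $A$-module isomorphism via the canonical $A\to A_\p$).

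Finally I would apply \cref{nP} to each factor: $A_\p/P_{i,\p}\cong S_A(P_i)^{n_{P_i}}$. Substituting this in and converting the finite direct product into a direct sum yields
\[
A_\p/\rad A_\p\cong\bigoplus_{i=1}^{m}S_A(P_i)^{n_{P_i}}=\bigoplus_{\substack{P\in\Spec A\\P\cap R=\p}}S_A(P)^{n_P},
\]
as desired. The only conceptual point, which is the main thing to verify carefully, is the noncommutative CRT for finitely many pairwise comaximal two-sided ideals; everything else is bookkeeping using the results already established in the preliminaries.
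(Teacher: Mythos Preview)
Your proof is correct and follows essentially the same approach as the paper: use \cref{JacobsonRadicalLocal} together with the Chinese Remainder Theorem for the finitely many maximal ideals of $A_\p$ to get $A_\p/\rad A_\p\cong\prod_P A_\p/P_\p$, then apply \cref{nP}. The paper simply cites the noncommutative CRT rather than sketching it, and does not separate out the empty-fiber case, but otherwise the arguments are the same.
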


\begin{proof}
	We have  
	\begin{equation*}
		A_\p/\rad A_\p\cong\prod_{\substack{P\in\Spec A\\P\cap R=\p}}A_{\p}/P_{\p}\cong\bigoplus_{\substack{P\in\Spec A\\P\cap R=\p}}S_{A}(P)^{n_{P}},
	\end{equation*}
	where the first isomorphism of rings follows from \cref{JacobsonRadicalLocal} and the Chinese remainder theorem (\cite[\S7, Exercise 13]{MR1245487}), and the second follows from \cref{SpecOfNoethAlg} and \cref{nP}.
\end{proof}

\begin{remark}\label{JacobsonRadicalAlgebra}
By \cref{LocOfNoethAlg}\cref{LocOfNoethAlg.Max} and \cref{SpecOfNoethAlg}, the maximal ideals of $A_\p$ naturally correspond to those of $\overline{A_\p}:=A\otimes_R\kappa(\p)=A_{\p}/\p A_\p$, and hence the canonical surjection $A_{\p}\to\overline{A_{\p}}$ induces the isomorphism
\begin{equation*}
A_\p/\rad A_\p\isoto\overline{A_\p}/\rad \overline{A_\p}
\end{equation*}
of finite-dimensional $\kappa(\p)$-algebras.
\end{remark}

Now we turn our attention to injective modules. The following remark will be used later:

\begin{remark}\label{ClosureEnvCovLocal}
Let $\p\in \Spec R$. Injective envelopes, pure-injective envelopes, cotorsion envelopes, and flat covers in $\Mod A_\p$ are those in $\Mod A$. In particular, in view of \cref{LocAdj}, the full subcategory of $\Mod A$ formed by $\p$-local modules is closed under taking such envelopes and covers.

Indeed, left or right minimality of morphisms for $\p$-local $A$-modules is the same as that in $\Mod A_\p$. Thus we only need to show that such envelopes and covers in $\Mod A_\p$ become preenvelopes and precovers in $\Mod A$, respectively. (Pure-)injective envelopes in $\Mod A_\p$ are (pure-)monomorphisms, and they are also (pure-)monomorphisms in $\Mod A$. Flat covers and cotorsion envelopes in $\Mod A_\p$ have cotorsion kernels and flat cokernels in $\Mod A_\p$, respectively. So the desired claims will follow if we show that an injective (\resp pure-injective, cotorsion, flat) $A_\p$-module is also injective (\resp pure-injective, cotorsion, flat) in $\Mod A$. This can easily be observed by using the exactness of the left adjoint $(-)_\p\colon\Mod A \to \Mod A_\p$ to the scalar restriction functor $\Mod A_\p\to \Mod A$.
\end{remark}

Now we assign an indecomposable injective module to each prime ideal of $A$. Let $P\in \Spec A$ and put $\p:=P\cap R$. Take injective envelopes $A/P\to E_{A}(A/P)$ and $A_\p/P_\p \to E_{A_\p}(A_\p/P_\p)$ in $\Mod A$ and $\Mod A_\p$, respectively. As observed in \cref{ClosureEnvCovLocal}, $E_{A_\p}(A_\p/P_\p)\cong E_{A}(A_\p/P_\p)$. The canonical $A$-homomorphism $A/P\to A_\p/P_\p$ is injective, and it extends to a monomorphism $E_{A}(A/P) \to E_{A}(A_\p/P_\p)$, which splits. So $E_A(A/P)$ is $\p$-local. Localizing the injective envelope $A/P\to E_A(A/P)$, we obtain an essential extension $A_\p/P_\p\to E_A(A/P)$ in $\Mod A$. Therefore 
\begin{equation}\label{InjectiveEnvelopeDecomposition}
E_A(A/P)\cong E_{A}(A_\p/P_\p) \cong E_A(S_A(P))^{n_P},
\end{equation}
where the second isomorphism follows from \cref{nP}.
This fact is essentially observed in the proof of \cite[Proposition~2.5.2]{MR1898632}. 

We set 
\begin{equation*}
I_A(P):=E_A(S_A(P)),
\end{equation*}
which is an indecomposable injective right $A$-module.
By construction, it holds that 
\begin{equation}\label{IndecomposableInjectiveLocal}
I_{A}(P)\cong I_{A_{\p}}(P_{\p}),
\end{equation}
so $I_{A}(P)$ is $\p$-local.
If $A=R$, then $P=\p$, so $I_A(P)=E_R(R/\p)$ and $I_{A_{\p}}(P_{\p})=E_{R_\p}(\kappa(\p))$.

\begin{theorem}\label{InjOverNoethRing}
There is a bijection
		\begin{equation*}
			\Spec A\isoto\setwithtext{isoclasses of indecomposable injective right $A$-modules}
		\end{equation*}
		given by $P\mapsto I_{A}(P)$.
\end{theorem}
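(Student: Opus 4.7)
The plan is to prove injectivity and surjectivity of $P \mapsto I_{A}(P)$ separately.

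For injectivity, I would start from an isomorphism $I_{A}(P) \cong I_{A}(Q)$ and set $\p := P \cap R$, $\q := Q \cap R$. Using the isomorphism to view both $S_{A}(P)$ and $S_{A}(Q)$ as submodules of the same module, the essentiality of $S_{A}(P)$ in $I_{A}(P) = E_{A}(S_{A}(P))$ forces $T := S_{A}(P) \cap S_{A}(Q)$ to be a nonzero $A$-submodule. Since $T$ lies in both $S_{A}(P)$ and $S_{A}(Q)$, it is annihilated by $\p$ and by $\q$, because $S_{A}(P)$ and $S_{A}(Q)$ are $\kappa(\p)$- and $\kappa(\q)$-vector spaces respectively. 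If there were $r \in \p \setminus \q$, it would act on $T$ both as zero and as a restriction of an invertible endomorphism of $S_{A}(Q)$, forcing $T = 0$; symmetry then gives $\p = \q$. Then $T$ is annihilated by both maximal ideals $P_{\p}, Q_{\p} \in \Max A_{\p}$ (by \cref{LocOfNoethAlg}\cref{LocOfNoethAlg.Max}); since distinct maximal ideals of $A_{\p}$ sum to $A_{\p}$, we conclude $P_{\p} = Q_{\p}$, and thus $P = Q$ by \cref{LocOfNoethAlg}\cref{LocOfNoethAlg.Spec}.

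For surjectivity, given an indecomposable injective right $A$-module $I$, the goal is to locate inside $I$ a nonzero $A$-submodule that is essential in some $S_{A}(P)$; by indecomposability of $I$ and injectivity of $I_{A}(P) = E_{A}(S_{A}(P))$, this will yield $I \cong I_{A}(P)$. Using noetherianity of $R$, I pick $0 \neq y_{0} \in I$ with $\p := \Ann_{R}(y_{0})$ maximal among $R$-annihilators of nonzero elements of $I$, so that $\p \in \Spec R$, centrality of $R$ gives $\p \cdot y_{0}A = 0$, and the maximality of $\p$ implies $R \setminus \p$ acts injectively on $y_{0}A$. Thus the injection $y_{0}A \hookrightarrow (y_{0}A)_{\p}$ exhibits $(y_{0}A)_{\p}$ as a nonzero finitely generated module over the finite-dimensional $\kappa(\p)$-algebra $\overline{A_{\p}} := A \otimes_{R} \kappa(\p)$, which is artinian. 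Hence $(y_{0}A)_{\p}$ admits a simple $\overline{A_{\p}}$-submodule $\bar{S}$, and by \cref{JacobsonRadicalAlgebra} together with the classification of simple modules over the simple artinian quotients $A_{\p}/P_{\p}$, we have $\bar{S} \cong S_{A}(P)$ for a unique $P \in \Spec A$ lying over $\p$. Setting $T := y_{0}A \cap \bar{S}$ inside $(y_{0}A)_{\p}$, a clearing-of-denominators argument (for any nonzero $\bar{s} = [y_{0}a/s] \in \bar{S}$, the element $s\bar{s} = [y_{0}a/1]$ corresponds to the nonzero element $y_{0}a \in y_{0}A$ and lies in $\bar{S}$) shows $T \neq 0$, and one concludes $I_{A}(P) \cong E_{A}(T) \hookrightarrow I$, which must be an isomorphism by indecomposability.

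The main technical obstacle I anticipate is showing that the simple $\overline{A_{\p}}$-module $S_{A}(P)$ is uniform as an $A$-module, which is what guarantees $T$ is essential in $\bar{S}$. When $P \in \Max A$, this is immediate from \cref{SimpleOverNoethAlg} because $S_{A}(P)$ is then $A$-simple; in general, however, $S_{A}(P)$ need not be $A$-simple. I would instead argue that any nonzero $A$-submodule $T' \subseteq S_{A}(P)$ localizes to $T'_{\p} = S_{A}(P)$, because $T'_{\p}$ is a nonzero $A_{\p}$-submodule of the $A_{\p}$-simple module $S_{A}(P)$; then, since localization commutes with finite intersections and detects nonzero modules, any two nonzero $A$-submodules of $S_{A}(P)$ intersect nontrivially, giving the desired uniformity.
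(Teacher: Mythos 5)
Your proof is correct, and it takes a genuinely different route from the paper: the paper does not argue \cref{InjOverNoethRing} at all but simply cites the general classification of indecomposable injectives over (fully bounded) noetherian rings from \cite{MR2080008}, where the bijection is obtained via assassinator primes and uniform right ideals. Your argument is self-contained and systematically exploits the module-finite structure over $R$. For injectivity, comparing the two simple submodules inside a common injective and using the $\kappa(\p)$- and $\kappa(\q)$-vector space structures to force $\p=\q$, and then the maximality of $P_\p$ and $Q_\p$ in $A_\p$ to force $P_\p=Q_\p$, is exactly the right localization trick; note that the step ``$T$ is annihilated by $P_\p+Q_\p=A_\p$, hence $T=0$'' implicitly uses that $T$ is an $A_\p$-submodule, which holds because $S_A(P)$ and $S_A(Q)$ are $A_\p$-submodules of the $\p$-local module $I_A(P)$ (\cref{LocAdj}) and this is worth saying explicitly, since $P$ and $Q$ themselves need not be comaximal in $A$. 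For surjectivity, the passage from a maximal-annihilator prime $\p=\Ann_R(y_0)$ to the artinian fibre $A\otimes_R\kappa(\p)$, the clearing-of-denominators argument producing $T\neq 0$, and the verification that $S_A(P)$ is uniform as an $A$-module (via injectivity of localization on its submodules and compatibility with intersections) are all sound; uniformity gives that $T$ is essential in $\bar S$, hence $E_A(T)\cong I_A(P)$, and indecomposability finishes the proof. What your approach buys is an elementary proof that stays entirely within the toolkit the paper develops in \cref{subsec.PrimeIdealsLocalization,subsec.SimpleIndecInjAnd}, at the cost of being longer than the one-line citation; what the paper's citation buys is the broader context of FBN rings, where the same statement holds without a central noetherian subring.
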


\begin{proof}
See \cite[Lemma~5.14, Proposition~9.1(a), and Theorem~9.15]{MR2080008}.
\end{proof}

\subsection{Matlis duality}
\label{subsec.MatlisDual}

Completion and Matlis duality play a central role in the proof of our main results. Here, and also in \cref{sec.IdealAdicComp}, we collect some basic facts on these operations.

For an ideal $\ka\subset R$, define the \emph{$\ka$-adic completion functor} $\varLambda^{\ka}=(-)_{\ka}^{\wedge}\colon\Mod A\to\Mod A$ by
	\begin{equation*}
	\varLambda^{\ka}M=M_{\ka}^{\wedge}:=\varprojlim_{n\geq 1}M/\ka^{n}M.\end{equation*}
We say that a right $A$-module $M$ is \emph{$\ka$-adically complete} (or \emph{$\ka$-complete} for short) if the canonical $A$-homomorphism $M\to M_{\ka}^{\wedge}$ is an isomorphism.
In particular, $M_{\ka}^{\wedge}$ is $\ka$-complete (\cref{CompletionIdempotent}). 
The $\ka$-adic completion $A^\wedge_\ka$ of $A$ naturally has a ring structure, and the canonical map $A\to A^\wedge_\ka$ is a ring homomorphism. Moreover, for each right $A$-module $M$, $M^\wedge_\ka$ has a (unique) right $A^\wedge_\ka$-module structure that is compatible with the right A-module structure on $M^\wedge_\ka$ (see \cref{TwoActionOfCompletion} and \cref{UniqueCompleteStructure}). 
We often write $M_{\ka}^{\wedge}$ as $\widehat{M}$ when $M$ is $\p$-local and $\ka=\p$ for some $\p\in \Spec R$. For example, $\widehat{A_{\p}}$ means $(A_{\p})_{\p}^{\wedge}$.

If $M$ is a finitely generated right $A$-module, then the canonical $A$-homomorphism $M\otimes_RR_{\ka}^{\wedge} \to M_{\ka}^{\wedge}$ is an isomorphism (\cite[Theorem~8.7]{MR1011461}), so $M^\wedge_\ka$ is finitely generated as an $R_{\ka}^{\wedge}$-module.
The structure map $R\to A$ induces a ring homomorphism $R^\wedge_\ka\to A\otimes_R R^\wedge_\ka\cong A^\wedge_\ka$ whose image is contained in the center, and $R_{\ka}^{\wedge}$ is a commutative noetherian ring (\cite[Theorem~8.12]{MR1011461}). Thus $A_{\ka}^{\wedge}$ is a Noether $R_{\ka}^{\wedge}$-algebra. In particular, $\widehat{A_{\p}}$ is a Noether $\widehat{R_{\p}}$-algebra for each $\p\in\Spec R$. Since $R^\wedge_\ka$ is flat over $R$ (\cite[Theorem~8.8]{MR1011461}) and $A\otimes_{R}R^\wedge_\ka \cong R^\wedge_\ka\otimes_{R}A$, $A^\wedge_\ka$ is flat as a left and ring $A$-module.
If $R$ is $\ka$-complete, then all finitely generated $R$-modules are $\ka$-complete (\cite[Theorem~8.7]{MR1011461}), and hence all finitely generated right $A$-modules are $\ka$-complete.

When $R$ is a local ring with maximal ideal $\km$ and residue field $k$, the $\km$-adic completion $\widehat{R}$ is a (commutative noetherian) local ring with maximal ideal $\widehat{\km}=\km \widehat{R}$, and $k=R/\km \cong (R/\km) \otimes_R \widehat{R}\cong \widehat{R}/\widehat{\km}$; see \cite[Proposition~10.16]{MR0242802} or \cite[p.~63]{MR1011461}. The completion map $R\to\widehat{R}$ is a faithfully flat ring homomorphism (\cite[Theorem~8.14]{MR1011461}), thus a pure monomorphism in $\Mod R$ by \cite[Theorem~7.5(i)]{MR1011461}. Applying $-\otimes_{R}A$ to the completion map, we conclude that $\widehat{A}$ is faithfully flat right $A$-module and the completion map $A\to\widehat{A}$ is a pure monomorphism in $\Mod A$.

Dually, the \emph{$\ka$-torsion functor} $\varGamma_\ka\colon\Mod A \to \Mod A$ is defined by
\begin{equation*}
\varGamma_\ka:=\varinjlim_{n\geq 1} \Hom_R(R/\ka^n,-).
\end{equation*}
For a right $A$-module $M$, we have $\varGamma_\ka M=\bigcup_{n\geq 1}\setwithcondition{x\in M}{x \ka^n=0}$. We say that $M$ is \emph{$\ka$-torsion} if the canonical inclusion $\varGamma_\ka M\to M$ is an isomorphism.
It is well-known that $E_R(R/\p)$ is $\p$-torsion for each $\p\in \Spec R$ (\cite[Theorem~18.4(v)]{MR1011461}), and more generally, $I_A(P)$ is $\p$-torsion for each $P\in \Spec A$ and $\p:=P\cap R$; see \cref{ArtinianInjective} below.

\begin{remark}\label{TostionAdjoint}
The $\ka$-torsion functor $\varGamma_\ka$ is left exact and commutes with arbitrary direct sums. Moreover, we can regard $\varGamma_\ka$ as a right adjoint to the inclusion functor from the full subcategory consisting of $\ka$-torsion $A$-modules to $\Mod A$. See also \cref{CompletionPreservesSurjectivity,CommutativityWithDirestProduct,CompAdj} for analogous facts on $\varLambda^{\ka}$.
\end{remark}

The following fact relates $\ka$-torsion modules with $\ka$-complete modules:

\begin{proposition}\label{InjDualSendsTorsToComp}
	Let $E$ be an injective $R$-module. Then the functor $\Hom_{R}(-,E)\colon\Mod A\to\Mod A^{\op}$ sends $\ka$-torsion right $A$-modules to $\ka$-complete left $A$-modules.
\end{proposition}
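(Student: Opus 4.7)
The plan is to prove that for $M$ a $\ka$-torsion right $A$-module, the left $A$-module $N := \Hom_R(M, E)$ is $\ka$-complete by explicitly identifying $N/\ka^n N$ with $\Hom_R(M[\ka^n], E)$ for all $n \geq 1$ (where $M[\ka^n] := \setwithcondition{x \in M}{x\ka^n = 0}$) and then passing to the inverse limit over $n$.

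First, since $M$ is $\ka$-torsion, $M = \bigcup_{n \geq 1} M[\ka^n] = \varinjlim_n M[\ka^n]$ in $\Mod A$; each $M[\ka^n]$ is an $A$-submodule of $M$ because $R$ is contained in the center of $A$. Applying $\Hom_R(-, E)$ yields a natural isomorphism $N \cong \varprojlim_n \Hom_R(M[\ka^n], E)$, and by injectivity of $E$ the canonical maps $N \to \Hom_R(M[\ka^n], E)$ are surjective. Let $K_n$ denote their kernels, so $K_n \cong \Hom_R(M/M[\ka^n], E)$.

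The heart of the argument is to show $K_n = \ka^n N$. The inclusion $\ka^n N \subset K_n$ is immediate: for $r \in \ka^n$, $f \in N$, and $x \in M[\ka^n]$ one has $(rf)(x) = f(xr) = 0$. For the reverse inclusion, I would use the noetherian hypothesis on $R$ to pick generators $r_1, \ldots, r_k$ of $\ka^n$. These determine an $R$-linear monomorphism $\iota \colon M/M[\ka^n] \into M^k$ sending $[x]$ to $(xr_1, \ldots, xr_k)$, whose kernel vanishes by the very definition of $M[\ka^n]$. Injectivity of $E$ then promotes $\iota$ into a surjection $\iota^* \colon N^k \onto \Hom_R(M/M[\ka^n], E) \cong K_n$, and unwinding the definitions gives $\iota^*(g_1, \ldots, g_k) = \sum_{i=1}^{k} r_i g_i$, which visibly lies in $\ka^n N$. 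Hence $K_n \subset \ka^n N$.

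Combining these identifications, $N/\ka^n N \cong \Hom_R(M[\ka^n], E)$ compatibly with $n$, and taking the inverse limit gives
\begin{equation*}
N_\ka^\wedge = \varprojlim_n N/\ka^n N \cong \varprojlim_n \Hom_R(M[\ka^n], E) \cong N,
\end{equation*}
with the composite being the canonical completion map, so $N$ is $\ka$-complete. The main obstacle is the reverse inclusion $K_n \subset \ka^n N$: this is exactly where the noetherian hypothesis on $R$ (ensuring $\ka^n$ is finitely generated) and the injectivity of $E$ (turning the embedding $\iota$ into a surjection upon dualizing) enter essentially; without either, the two filtrations $\{K_n\}$ and $\{\ka^n N\}$ could define genuinely different topologies on $N$ and the inverse-limit step would fail.
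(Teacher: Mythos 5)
Your proof is correct and follows essentially the same route as the paper: both write $M=\varinjlim_n M[\ka^n]=\varinjlim_n\Hom_R(R/\ka^n,M)$, dualize to an inverse limit, and identify $\Hom_R(M[\ka^n],E)$ with $\Hom_R(M,E)/\ka^n\Hom_R(M,E)$. The only difference is that the paper obtains this last identification by citing the general isomorphism $\Hom_R(\Hom_R(R/\ka^n,M),E)\cong\Hom_R(M,E)\otimes_R(R/\ka^n)$ from Enochs--Jenda, whereas you prove exactly this special case by hand via the embedding $M/M[\ka^n]\hookrightarrow M^k$ determined by generators of $\ka^n$ — a correct and self-contained substitute for the citation.
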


\begin{proof}
	If $M$ is an $\ka$-torsion right $A$-module, then it is isomorphic to $\varinjlim_{n\geq 1}\Hom_{R}(R/\ka^{n},M)$, so we have
		\begin{align*}
		\Hom_{R}(M,E)
		&\cong\varprojlim_{n\geq 1}\Hom_{R}(\Hom_{R}(R/\ka^{n},M),E)
		\cong\varprojlim_{n\geq 1}\Hom_{R}(M,E)\otimes_{R}(R/\ka^{n})=\Hom_{R}(M,E)_{\ka}^{\wedge}
	\end{align*}
	as left $A$-modules; see \cite[Theorem~3.2.11]{MR1753146} for the second isomorphism. Hence $\Hom_{R}(M,E)$ is $\ka$-complete by \cref{CompletionIdempotent}. 
\end{proof}

In the rest of this section, we assume that $R$ is local, and denote its maximal ideal and residue field by $\km$ and $k$, respectively.
The functor $\Hom_{R}(-,E_{R}(k))\colon\Mod R \to \Mod R$ gives rise to a duality, known as \emph{Matlis duality}, between the category of finitely generated $R$-modules and the category of artinian $R$-modules, provided that $R$ is $\km$-adically complete.
This duality naturally extends to the case of Noether algebras over complete local rings (\cref{MatlisDualOverNoetherianAlgebra}) as shown in \cite[Proposition~2.6.1]{MR1898632}.
Let us observe how the proof goes, collecting related facts used in later sections.
We refer the reader to \cite[Theorem~18.6]{MR1011461}, \cite[Proposition~3.2.12 and Theorem~3.2.13]{MR1251956+}, and \cite[Appendix~A, \S4]{MR2355715} for classical results on Matlis duality for the commutative case; they will be used in the rest of the section.

First, for every $\km$-torsion right $A$-module $M$,
there is a canonical isomorphism 
\begin{equation}\label{TorsionTensorCompletion}
M\isoto M\otimes_{R}\widehat{R}
\end{equation}
of right $A$-modules (\cref{TorsionIsomorphism}). This makes $M$ an $\widehat{\km}$-torsion right $\widehat{A}$-module via the isomorphism $A\otimes_{R}\widehat{R}\cong \widehat{A}$. In fact, this is the unique right $\widehat{A}$-module structure on $M$ that is compatible with the right $A$-module structure (\cref{UniqueCompleteStructure}).
Moreover, all $A$-submodules of $M$ are also $\widehat{A}$-submodules by \cref{TorsionTensorCompletion} (and vice versa), so $M$ is artinian (\resp of finite length, simple) as a right $A$-module if and only if $M$ is artinian (\resp of finite length, simple) as a right $\widehat{A}$-module; see also \cref{TorsionEquivalence}.

A key to Matlis duality is that the injective envelope $E_{R}(k)$ is an artinian $R$-module (hence $\km$-torsion). The above arguments applied to $A=R$ makes $E_{R}(k)$ an artinian $\widehat{R}$-module, and it coincides with the injective envelope of $k\cong \widehat{R}/\widehat{\km}$ in $\Mod\widehat{R}$, that is, $E_R(k)\cong E_{\widehat{R}}(k)$.

If $M$ is a finitely generated right $A$-module, then it is, as an $R$-module, a quotient of a finitely generated free $R$-module, so its Matlis dual $\Hom_R(M,E_R(k))$ is an $R$-submodule of a finite direct sum of copies of $E_{R}(k)$. This implies that $\Hom_R(M,E_R(k))$ is artinian as an $R$-module. Thus $\Hom_R(M,E_R(k))$ is an artinian left $A$-module.
Consequently, we obtain a contravariant functor 
\begin{equation}\label{MatlisDual1}
\Hom_R(-,E_R(k))\colon\mod A \to \artin A^{\op},
\end{equation}
where $\mod A$ is the category of finitely generated right $A$-modules and $\artin A^\op$ is the category of artinian left $A$-modules.

Another key to Matlis duality is that the completion map $R\to \widehat{R}$ is identified with the canonical ring homomorphism $R\to\End_R(E_R(k))$ via the isomorphism $\widehat{R}\isoto\End_{R}(E_{R}(k))$ given by the action of $\widehat{R}$ on $E_{R}(k)$.
For every finitely generated right $A$-module $M$, the standard isomorphisms $M\otimes_R\widehat{R}\cong \widehat{M}$ and $M\otimes_R\Hom_R(E_R(k), E_R(k))\cong \Hom_R(\Hom_R(M,E_R(k)), E_R(k))$ of right $\widehat{A}$-modules give a natural isomorphism
\begin{equation}\label{DoubleMatlisDual1}
\widehat{M}\isoto\Hom_R(\Hom_R(M,E_R(k)),E_R(k))
\end{equation}
of right $\widehat{A}$-modules.

On the other hand, if a given right $A$-module $M$ is artinian as an $R$-module (we will later show that every artinian right $A$-module satisfies this), then $M$ can be regarded as a right $\widehat{A}$-module via \cref{TorsionTensorCompletion}. Since such $M$ can be embedded as an $\widehat{R}$-submodule into a finite direct sum of copies of $E_{R}(k)\cong E_{\widehat{R}}(k)$, the left $\widehat{A}$-module $\Hom_{R}(M,E_{R}(k))$ is finitely generated as an $\widehat{R}$-module, and hence as an $\widehat{A}$-module. Matlis duality for $\widehat{R}$ implies that there is a natural isomorphism
\begin{equation}\label{DoubleMatlisDual2}
M\isoto\Hom_{\widehat{R}}(\Hom_{R}(M,E_{R}(k)), E_{\widehat{R}}(k))
\end{equation}
of $\widehat{R}$-modules. Since this isomorphism commutes with the action of $A$, this is an isomorphism of right $\widehat{A}$-modules.

It remains to see that every artinian $A$-module is artinian as an $R$-module. 
To this end, we prove the following fact, in which we make use of \cref{SimpleOverNoethAlg}:

\begin{proposition}\label{SimpleMatlisDuality}
Let $(R,\km, k)$ be a commutative noetherian local ring and let $A$ be a Noether $R$-algebra. For every $P\in\Max A$, there is an isomorphism
\begin{equation*}
	\Hom_{R}(S_{A^\op}(P),E_{R}(k))\cong S_A(P)
\end{equation*}
of right $A$-modules. This realizes the bijection
\begin{equation*}
\setwithtext{isoclasses of simple left $A$-modules}\isoto\setwithtext{isoclasses of simple right $A$-modules}
\end{equation*}
defined by $S_{A^\op}(P)\mapsto S_{A}(P)$.
\end{proposition}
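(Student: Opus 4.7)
The plan is to reduce the claim to the standard $k$-linear duality between finite-dimensional left and right modules over the finite-dimensional $k$-algebra $B:=A/P$. Since $P\in\Max A$, \cref{SpecOfNoethAlgMax} gives $P\cap R=\km$, so $\km A\subset P$ and $B$ is a (simple artinian) finite-dimensional $k$-algebra. Both $S_{A}(P)$ and $S_{A^{\op}}(P)$ are naturally $B$-modules on their respective sides and are, up to isomorphism, the unique simple right and simple left $B$-modules.

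Because $S_{A^{\op}}(P)$ is annihilated by $\km$, every $R$-homomorphism $S_{A^{\op}}(P)\to E_{R}(k)$ factors through the socle $\soc E_{R}(k)=\Hom_{R}(R/\km,E_{R}(k))\cong k$, which yields a natural isomorphism
\[
\Hom_{R}(S_{A^{\op}}(P),E_{R}(k))\cong\Hom_{k}(S_{A^{\op}}(P),k)
\]
of right $A$-modules, the right-hand side carrying the contragredient action $(f\cdot a)(s):=f(as)$.

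The key step is to observe that $\Hom_{k}(-,k)$ is a contravariant $k$-linear equivalence between the categories of finite-dimensional left and right $B$-modules, so $\Hom_{k}(S_{A^{\op}}(P),k)$ is a simple right $B$-module, equivalently a simple right $A$-module. A direct check shows that its $A$-annihilator equals $P$: an element $a\in A$ acts by zero iff $f(as)=0$ for all $f$ and $s$, iff $as=0$ for all $s$ (since linear functionals separate points), iff $a\in\Ann_{A}S_{A^{\op}}(P)=P$. By the bijection in \cref{SimpleOverNoethAlg}, a simple right $A$-module with annihilator $P$ must be isomorphic to $S_{A}(P)$, giving the asserted isomorphism.

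For the bijection statement, note that the two-sided ideals of $A$ and $A^{\op}$ coincide, so \cref{SimpleOverNoethAlg} parametrizes both simple right and simple left $A$-modules by the same set $\Max A$; the argument above shows that Matlis dualization implements the identity map $\Max A\to\Max A$ under these parametrizations, thereby realizing the bijection $S_{A^{\op}}(P)\mapsto S_{A}(P)$. I do not anticipate a substantive obstacle; the main care required is in keeping the side conventions straight and in verifying the socle identification $\soc E_{R}(k)\cong k$.
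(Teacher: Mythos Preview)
Your proof is correct, and it takes a more elementary route than the paper's. The paper establishes simplicity of $\Hom_{R}(S_{A^{\op}}(P),E_{R}(k))$ by invoking the double Matlis dual isomorphism \cref{DoubleMatlisDual2}, which identifies $S_{A^{\op}}(P)$ with $\Hom_{\widehat{R}}(\Hom_{R}(S_{A^{\op}}(P),E_{R}(k)),E_{\widehat{R}}(k))$ as left $\widehat{A}$-modules; simplicity of the dual then follows because a further dual recovers a simple module. You bypass this machinery entirely by first identifying $\Hom_{R}(-,E_{R}(k))$ with $\Hom_{k}(-,k)$ on $\km$-annihilated modules via the socle of $E_{R}(k)$, and then invoking the standard $k$-linear duality for finite-dimensional modules over the finite-dimensional $k$-algebra $A/P$. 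Both proofs finish the same way: compute (or bound) the annihilator and apply \cref{SimpleOverNoethAlg}. Your argument is self-contained and avoids any reference to completion; the paper's argument, by contrast, is embedded in the surrounding development of Matlis duality over $\widehat{R}$, so it reuses \cref{DoubleMatlisDual2} at the cost of a less direct path to this particular statement.
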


\begin{proof}
Let $P\in \Max A$. Recall that $S_{A^\op}(P)$ is of finite length as an $R$-module since it is a finite-dimensional $k$-vector space; see \cref{SpecOfNoethAlgMax} and \cref{nP}. 
By \cref{DoubleMatlisDual2}, we have an isomorphism $S_{A^{\op}}(P)\isoto \Hom_{\widehat{R}}(\Hom_{R}(S_{A^{\op}}(P),E_{R}(k)), E_{\widehat{R}}(k))$ of left $\widehat{A}$-modules, where $\Hom_{R}(S_{A^{\op}}(P),E_{R}(k))$ is $\km$-torsion. Then $\Hom_{R}(S_{A^{\op}}(P),E_{R}(k))$ has to be simple as a right $\widehat{A}$-module, or equivalently, as a right $A$-module.
Thus, by \cref{SimpleOverNoethAlg}, $\Hom_{R}(S_{A^{\op}}(P),E_{R}(k))\cong S_{A}(Q)$ for some $Q\in\Max A$. Since the left-hand side is annihilated by $P$, we have $P\subset Q$. Hence $P=Q$ since $P$ is also maximal.
\end{proof}

\begin{remark}\label{ArtinianInjective}
Let $P\in \Max A$. Then $P\cap R=\km$ (\cref{SpecOfNoethAlgMax}). 
As shown in \cite[Proposition~2.5.5]{MR1898632}, the injective envelope $I_A(P)$ of $S_{A}(P)$ is a direct summand of $\Hom_R(A, E_R(k))$.
Indeed, there is a surjection $A\to S_{A^\op}(P)$ in $\Mod A^{\op}$ by construction, and $\Hom_R(-, E_R(k))$ sends this map  to an injection $S_{A}(P)\to\Hom_R(A, E_R(k))$ in $\Mod A$ by \cref{SimpleMatlisDuality}. Since $\Hom_R(A, E_R(k))$ is an injective right $A$-module by \cref{InjDualProperties}\cref{InjDualProperties.FlBecomesInj}, it contains $I_A(P)$ as a direct summand.

Consequently, $I_A(P)$ is artinian as an $R$-module because 
$\Hom_R(A, E_R(k))$ is an artinian $R$-module as we observed before \cref{MatlisDual1}.
In particular, $I_A(P)$ is $\km$-torsion, and hence becomes a right $\widehat{A}$-module, which is artinian as a right $A$-module and as a right $\widehat{A}$-module.
\end{remark}

Let us finally verify that every artinian right $A$-module $M$ is artinian as an $R$-module.
The socle $\soc_{A}M$ is a finite direct sum of simple $A$-modules and it is an essential $A$-submodule of $M$. Thus $E_{A}(M)\cong E_{A}(\soc_{A}M)$, and the right-hand side is a finite direct sum of copies of indecomposable injective modules $I_{A}(P)$ for various $P\in\Max A$; see \cref{SimpleOverNoethAlg}. 
Hence $E_{A}(M)$ is artinian as a right $R$-module by \cref{ArtinianInjective}, and so is $M$.
Therefore, 
\begin{equation}\label{ArtinianCharacterization}
\artin A=\setwithcondition{M\in \Mod A}{\textnormal{$M$ is artinian as an $R$-module}},
\end{equation}
where the inclusion ``$\supseteq$'' is trivial.
We have observed that there is a contravariant functor 
\begin{equation}\label{MatlisDual2}
\Hom_R(-,E_R(k))\colon\artin A^{\op}\to \mod \widehat{A}.
\end{equation}

Combining \cref{MatlisDual1}--\cref{MatlisDual2}, we obtain Matlis duality for a Noether algebra over a complete local ring:

\begin{theorem}\label{MatlisDualOverNoetherianAlgebra}
Let $(R,\km, k)$ be a commutative noetherian complete local ring and let $A$ be a Noether $R$-algebra. Then the contravariant functors 
$\mod A\to\artin A^{\op}$ and $\artin A^{\op}\to\mod A$ induced by $\Hom_R(-,E_R(k))$ are mutually quasi-inverse equivalences.
\end{theorem}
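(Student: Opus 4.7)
The plan is to observe that, since $R$ is complete, essentially all the work has already been done in the discussion preceding the theorem; it only remains to unravel what the natural transformations \cref{DoubleMatlisDual1} and \cref{DoubleMatlisDual2} become in this setting and check naturality.

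First I would verify that both functors are well-defined. The functor $\Hom_R(-, E_R(k))\colon \mod A \to \artin A^{\op}$ is the one of \cref{MatlisDual1}. For the other direction, \cref{MatlisDual2} a priori gives a functor $\artin A^{\op} \to \mod \widehat{A}$, but under our completeness hypothesis $\widehat{A} = A$ (using $\widehat{R} = R$ and the isomorphism $A \otimes_R \widehat{R} \cong \widehat{A}$), so this lands in $\mod A$. In passing I would note that the identification $\mod \widehat{A} = \mod A$ is compatible with the right $\widehat{A}$-module structure on $\km$-torsion modules coming from \cref{TorsionTensorCompletion}, since that structure reduces to the given $A$-action when $R$ is complete.

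Next I would show that the composition in one direction is naturally isomorphic to the identity. For $M \in \mod A$, the natural isomorphism \cref{DoubleMatlisDual1} reads
\begin{equation*}
\widehat{M} \isoto \Hom_R(\Hom_R(M, E_R(k)), E_R(k))
\end{equation*}
of right $\widehat{A}$-modules. Since $R$ is complete and $M$ is finitely generated, the completion map $M \to \widehat{M}$ is an isomorphism and $\widehat{A} = A$, so this gives a natural isomorphism $M \isoto \Hom_R(\Hom_R(M, E_R(k)), E_R(k))$ in $\mod A$. For the other composition, I would use \cref{DoubleMatlisDual2}: for $M \in \artin A$, viewed as a right $\widehat{A}$-module via \cref{TorsionTensorCompletion}, we have a natural isomorphism
\begin{equation*}
M \isoto \Hom_{\widehat{R}}(\Hom_R(M, E_R(k)), E_{\widehat{R}}(k))
\end{equation*}
of right $\widehat{A}$-modules. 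Using $\widehat{R} = R$, $\widehat{A} = A$, and $E_{\widehat{R}}(k) = E_R(k)$, this becomes a natural isomorphism $M \isoto \Hom_R(\Hom_R(M, E_R(k)), E_R(k))$ in $\artin A$.

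Both natural transformations are the standard evaluation maps $M \to \Hom_R(\Hom_R(M, E_R(k)), E_R(k))$, so naturality is automatic. No obstacle of substance remains: the genuine work, namely the artinianness of $\Hom_R(M, E_R(k))$ for finitely generated $M$, the finite generation of $\Hom_R(M, E_R(k))$ over $\widehat{A}$ for artinian $M$ (via \cref{ArtinianCharacterization} and the inclusion into a finite sum of copies of $E_R(k)$), and the unique compatibility of the $\widehat{A}$-structures, has already been carried out in the paragraphs leading up to the theorem. The only nontrivial thing I would still double-check is that the two evaluation isomorphisms are indeed mutually inverse under the identifications above — but this follows at once from the fact that both are the canonical bi-evaluation maps $m \mapsto (\phi \mapsto \phi(m))$.
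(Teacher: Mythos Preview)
Your proposal is correct and matches the paper's approach exactly: the paper does not give a separate proof but simply states that the theorem is obtained by combining \cref{MatlisDual1}--\cref{MatlisDual2}, and your write-up spells out precisely this combination, specializing the natural isomorphisms \cref{DoubleMatlisDual1} and \cref{DoubleMatlisDual2} under the hypothesis $\widehat{R}=R$.
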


\section{Decomposition of flat cotorsion modules into local complete modules}
\label{sec.RedToLocComp}

Let $A$ be a Noether $R$-algebra. In this section, we show that every flat cotorsion right $A$-module is decomposed as a direct product of $\p$-local $\p$-complete flat cotorsion modules for various $\p\in\Spec R$
(\cref{DecompToLocCompFlCot}).

The argument in this section is based on Enochs' idea that was used to describe flat cotorsion modules over a commutative noetherian ring (\cite[p.~183]{MR754698}). However, we present our generalized proof in a more precise manner for the sake of clarity.

As the first step, we prove the following lemma. Note that for a module $M$ and a set $B$, we denote by $M^{(B)}$ (\resp $M^B$) the direct sum (\resp direct product) of $B$-indexed copies of $M$.

\begin{lemma}\label{FlCotIffDSummandOfProdOfDual}
	A right $A$-module $M$ is flat cotorsion if and only if $M$ is a direct summand of
	\begin{equation}\label{eq.ProdOfDual}
		\prod_{P\in\Spec A}\Hom_{R}(I_{A^{\op}}(P),E_{R}(R/\p)^{(B_{P})})
	\end{equation}
	for some family of sets $\{B_{P}\}_{P\in\Spec A}$, where $\p:=P\cap R$ in each component.
\end{lemma}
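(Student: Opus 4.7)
Plan. For the ``if'' direction, this is a closure argument. For any $P\in\Spec A$ and any set $B_P$, the $R$-module $E_R(R/\p)^{(B_P)}$ is injective since direct sums of injectives over the noetherian ring $R$ are injective. As $I_{A^{\op}}(P)$ is injective in $\Mod A^{\op}$, \cref{InjDualProperties}\cref{InjDualProperties.InjBecomesFlCot} yields that each factor $\Hom_R(I_{A^{\op}}(P),E_R(R/\p)^{(B_P)})$ is flat cotorsion in $\Mod A$. Products of flat cotorsion right $A$-modules remain flat cotorsion (cotorsion is closed under products because $\Ext^{1}_A(F,-)$ commutes with products in the second variable, and flatness of right modules is closed under products because $A$ is left noetherian, hence left coherent), and direct summands clearly inherit both properties.

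For the ``only if'' direction, let $M$ be flat cotorsion. The plan is to build an explicit pure monomorphism from $M$ into a module of the stated form, which then splits since $M$ is pure-injective by \cref{InjDualProperties3}. For each $P\in\Spec A$, set $\p:=P\cap R$. The $R$-module $M\otimes_A I_{A^{\op}}(P)$ inherits $\p$-locality and $\p$-torsion-ness from $I_{A^{\op}}(P)$, since $R$ maps into the center of $A$. In particular, its only associated prime in $\Spec R$ is $\p$, and its injective envelope in $\Mod R$ is isomorphic to $E_R(R/\p)^{(B_P)}$ for some set $B_P$. The resulting $R$-linear monomorphism $\widetilde{\phi}_P\colon M\otimes_A I_{A^{\op}}(P)\hookrightarrow E_R(R/\p)^{(B_P)}$ corresponds via tensor-Hom adjunction to an $A$-linear map
\[
\phi_P\colon M\to N_P:=\Hom_R(I_{A^{\op}}(P),E_R(R/\p)^{(B_P)}),
\]
and these assemble into the candidate $\phi=(\phi_P)_P\colon M\to\prod_{P\in\Spec A}N_P$.

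Injectivity of $\phi$ is immediate: since $A$ is noetherian, $\Ass A$ is finite and the injective envelope of $A$ in $\Mod A^{\op}$ is a finite direct sum $\bigoplus_i I_{A^{\op}}(P_i)^{(c_i)}$, so flatness of $M$ gives an embedding $M=M\otimes_A A\hookrightarrow\bigoplus_i M\otimes_A I_{A^{\op}}(P_i)^{(c_i)}$, and every nonzero element of $M$ has nonzero image through some $\widetilde{\phi}_{P_i}$. The main obstacle is to upgrade this to purity of $\phi$. My approach would be to verify that $\phi\otimes_A U$ is injective for every left $A$-module $U$; by noetherianness of $A$ this reduces to finitely generated $U$, which embed into a finite direct sum of indecomposable injectives in $\Mod A^{\op}$, and flatness of $M$ preserves that embedding through $\otimes_A$. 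The delicate point is that the outer direct product $\prod_P$ does not commute with tensor products in general, so the argument has to be routed componentwise, checking that the joint map $(\phi_P\otimes_A U)_P\colon M\otimes_A U\to\prod_P(N_P\otimes_A U)$ is injective and then transporting this back through the natural map $(\prod_P N_P)\otimes_A U\to\prod_P(N_P\otimes_A U)$ to conclude that $\phi\otimes_A U$ itself is injective.
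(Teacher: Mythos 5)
Your ``if'' direction is exactly the paper's argument. Your ``only if'' direction takes a genuinely different route. The paper starts from the canonical map $M\to M^{**}=\Hom_R(\Hom_R(M,E),E)$ for an injective cogenerator $E$: this is \emph{already known} to be a pure monomorphism by \cref{InjDualProperties}\cref{InjDualProperties.Double}, so no purity has to be verified, and all the work goes into decomposing the injective left module $M^{*}$ into indecomposables and then pruning each $E^{C_P}$ down to $E_R(R/\p)^{(B_P)}$ using that $I_{A^{\op}}(P)$ is $\p$-local and $\p$-torsion. You instead build the target in the desired shape from the outset (via the injective envelope of $M\otimes_AI_{A^{\op}}(P)$, whose identification as $E_R(R/\p)^{(B_P)}$ is correct) and must then prove purity of $\phi$ by hand. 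That trade-off is legitimate, and your reduction is sound: since $(\phi_P\otimes_AU)_P$ factors as $\phi\otimes_AU$ followed by the natural map $(\prod_PN_P)\otimes_AU\to\prod_P(N_P\otimes_AU)$, injectivity of $(\phi_P\otimes_AU)_P$ for all finitely presented $U$ gives purity, and the splitting then follows from pure-injectivity of $M$ (\cref{InjDualProperties3}).

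The one substantive gap is that this componentwise injectivity is asserted rather than proved: the observation that flatness of $M$ preserves an embedding $U\hookrightarrow\bigoplus_iI_{A^{\op}}(Q_i)^{(d_i)}$ says nothing, by itself, about the maps $\phi_P\otimes_AU$. The missing link is the tensor evaluation isomorphism $\Hom_R(I_{A^{\op}}(P),E_R(R/\p)^{(B_P)})\otimes_AU\cong\Hom_R(\Hom_{A^{\op}}(U,I_{A^{\op}}(P)),E_R(R/\p)^{(B_P)})$, valid for finitely presented $U$ and injective target (this is the same \cite[Theorem~3.2.11]{MR1753146} used in the proof of \cref{DualOfInjEnv}). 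Under it, $(\phi_P\otimes_AU)(z)$ becomes the functional $g\mapsto\widetilde{\phi}_P\bigl((\id_M\otimes g)(z)\bigr)$, so its vanishing forces, by injectivity of $\widetilde{\phi}_P$, that $(\id_M\otimes g)(z)=0$ for \emph{every} $A^{\op}$-linear $g\colon U\to I_{A^{\op}}(P)$. Now letting $g$ range over the components of an embedding of $U$ into a finite direct sum of indecomposable injectives and using flatness of $M$ contradicts $z\neq0$. With this step inserted your proof closes; as written, the crux of the ``only if'' direction is left unverified.
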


\begin{proof}
	Since $R$ is noetherian, the direct sum $E_{R}(R/\p)^{(B_{P})}$ of injective $R$-modules is again injective and, by \cref{InjDualProperties}\cref{InjDualProperties.InjBecomesFlCot}, each $\Hom_{R}(I_{A^{\op}}(P),E_{R}(R/\p)^{(B_{P})})$ is a flat cotorsion right $A$-module.
	It is straightforward to see that the product \cref{eq.ProdOfDual} is cotorsion, and it is also flat because $A$ is left coherent (see \cite[Theorem~2.1]{MR120260} or \cite[Theorem~3.2.24]{MR1753146}). Therefore every direct summand of \cref{eq.ProdOfDual} is flat cotorsion.
	
	Conversely, suppose that $M$ is flat cotorsion. As in the proof of \cref{InjDualProperties3}, $M$ is a direct summand of $\Hom_{R}(I,E)$ for an injective cogenerator $E$ in $\Mod R$, where $I:=\Hom_{R}(M,E)$ is an injective left $A$-module. Since $A$ is left noetherian, $I$ decomposes as a direct sum of indecomposable injective left $A$-modules (\cite[Theorem~2.5]{MR99360}). Hence, using \cref{InjOverNoethRing}, we have
	\begin{equation}\label{IndecomposableDecomposition}
		I\cong \bigoplus_{P\in\Spec A}I_{A^{\op}}(P)^{(C_{P})}
	\end{equation}
	for some family of sets $\{C_{P}\}_{P\in\Spec A}$. Then
	\begin{equation*}
		\Hom_{R}(I,E)
		\cong \prod_{P\in\Spec A}\Hom_{R}(I_{A^{\op}}(P),E)^{C_{P}}
		\cong\prod_{P\in\Spec A}\Hom_{R}(I_{A^{\op}}(P),E^{C_{P}})
	\end{equation*}
as right $A$-modules.	
	Now fix $P\in\Spec A$ and let $\p:=P\cap R$. Since $I_{A^{\op}}(P)$ is $\p$-local by \cref{IndecomposableInjectiveLocal}, we have $I_{A^{\op}}(P)\cong I_{A^{\op}}(P)\otimes_{R}R_{\p}$, and hence
	\begin{equation*}
		\Hom_{R}(I_{A^{\op}}(P),E^{C_{P}})
		\cong\Hom_{R}(I_{A^{\op}}(P)\otimes_{R}R_{\p},E^{C_{P}})
		\cong\Hom_{R}(I_{A^{\op}}(P),\Hom_{R}(R_{\p},E^{C_{P}})).
	\end{equation*}
Notice that $\Hom_{R}(R_{\p},E^{C_{P}})$ is an injective $R$-module by \cref{InjDualProperties}\cref{InjDualProperties.FlBecomesInj}. Since it is $\p$-local, it cannot contain any $\q$-torsion submodule unless $\q\subseteq\p$. Therefore we have 
	\begin{equation*}
		\Hom_{R}(R_{\p},E^{C_{P}})\cong \bigoplus_{\substack{\q\in\Spec R\\\q\subset\p}}E_{R}(R/\q)^{(C_{P}^{\q})}
	\end{equation*}
	for some family of sets $\{C_{P}^\q\}_{\q\subset\p}$.
	Then  
	\begin{equation*}
		\Hom_{R}(I_{A^{\op}}(P),E^{C_{P}})
		\cong\Hom_{R}(I_{A^{\op}}(P),\bigoplus_{\substack{\q\in\Spec R\\\q\subset\p}}E_{R}(R/\q)^{(C_{P}^{\q})})
		\cong \Hom_{R}(I_{A^{\op}}(P),E_{R}(R/\p)^{(C_{P}^{\p})}),
	\end{equation*}
	where the last isomorphism follows from \cref{TostionAdjoint} (and \cref{CompAndColocOfInj}\cref{CompAndColocOfInj.Comp} below) because $I_{A^{\op}}(P)$ is $\p$-torsion (\cref{ArtinianInjective}) and each $E_{R}(R/\q)$ is $\q$-local.
	Setting $B_{P}:=C_{P}^{\p}$, we conclude that $\Hom_{R}(I,E)$ is of the form \cref{eq.ProdOfDual}.
\end{proof}

\begin{remark}\label{DualOfInjIsLocComp}
	Each component $\Hom_{R}(I_{A^{\op}}(P),E_{R}(R/\p)^{(B_{P})})$  in  \cref{eq.ProdOfDual} is $\p$-local and $\p$-complete, by \cref{IndecomposableInjectiveLocal}, \cref{InjDualSendsTorsToComp}, and \cref{ArtinianInjective}.
	Moreover, we can rewrite \cref{eq.ProdOfDual} as $\prod_{\p\in \Spec R} M(\p)$, where $M(\p)$ is
	\begin{equation*}
	\bigoplus_{\substack{P\in \Spec A\\ P\cap R=\p}} \Hom_{R}(I_{A^{\op}}(P),E_{R}(R/\p)^{(B_{P})}),
	\end{equation*}
	which is a finite direct sum due to \cref{SpecOfNoethAlg}.
	\end{remark}

By \cref{FlCotIffDSummandOfProdOfDual} and \cref{DualOfInjIsLocComp},
every flat cotorsion right $A$-module $M$ is a direct product of $\p$-local $\p$-complete flat cotorsion modules for various $\p\in\Spec R$. The next result shows that the isoclass of the component at $\p$ is uniquely determined by $M$.

\begin{lemma}\label{CompAndColocOfFlCot}
	Let $M(\p)$ be a $\p$-local $\p$-complete right $A$-module for each $\p\in\Spec R$, and let $M:=\prod_{\p\in\Spec R}M(\p)$.
	\begin{enumerate}
		\item\label{CompAndColocOfFlCot.Comp} For every ideal $\ka\subset R$, the canonical morphism $M\to\varLambda^{\ka}M$ is a split epimorphism, and
		\begin{equation*}
			\varLambda^{\ka}M=\prod_{\substack{\p\in\Spec R\\\ka\subset\p}}M(\p)
		\end{equation*}
		as quotient modules of $M$, where the right-hand side is regarded as a quotient module via the projection.
		\item\label{CompAndColocOfFlCot.Coloc} For every multiplicatively closed set $S\subset R$, the canonical morphism $\Hom_{R}(S^{-1}R,M)\to\Hom_{R}(R,M)\isoto M$ is a split monomorphism, and
		\begin{equation*}
			\Hom_{R}(S^{-1}R,M)=\prod_{\substack{\p\in\Spec R\\\p\cap S=\emptyset}}M(\p)
		\end{equation*}
		as submodules of $M$, where the right-hand side is regarded as a submodule via the inclusion.
		
		In particular, for every $\q\in\Spec R$,
		\begin{equation*}
			\Hom_{R}(R_{\q},M)=\prod_{\substack{\p\in\Spec R\\\p\subset\q}}M(\p).
		\end{equation*}
		\item\label{CompAndColocOfFlCot.Isom} Let $\q\in\Spec R$, and let $\iota_{\q}\colon M(\q)\to M$ and $\pi_{\q}\colon M\to M(\q)$ be the inclusion and the projection, respectively. Then
		$\varLambda^{\q}\Hom_{R}(R_{\q},\iota_{\q})$ and $\varLambda^{\q}\Hom_{R}(R_{\q},\pi_{\q})$ are isomorphisms, and 
		\begin{equation*}
		\varLambda^{\q}\Hom_{R}(R_{\q},M)\cong M(\q).
		\end{equation*}
	\end{enumerate}
\end{lemma}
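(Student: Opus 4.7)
The plan is to reduce each claim to a componentwise computation on the product $M=\prod_\p M(\p)$, using elementary facts about finitely generated ideals, exactness of direct products, and the basic results on completion collected in \cref{sec.IdealAdicComp}.

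As a preliminary computation, since $\ka^n$ is finitely generated ($R$ being noetherian), a coordinate-wise choice argument gives $\ka^n M=\prod_\p \ka^n M(\p)$, and exactness of direct products in $\Mod A$ yields $M/\ka^n M\cong \prod_\p M(\p)/\ka^n M(\p)$. For \cref{CompAndColocOfFlCot.Comp}, I would split the product according to whether $\ka\subseteq \p$. For $\p$ with $\ka\not\subseteq \p$, pick $a\in \ka\setminus \p$; since $M(\p)$ is $\p$-local, $a$ acts invertibly on $M(\p)$, so $\ka M(\p)=M(\p)$ and $M(\p)/\ka^n M(\p)=0$. For $\p$ with $\ka\subseteq \p$, the $\p$-complete module $M(\p)$ is also $\ka$-complete (this uses the fact from \cref{sec.IdealAdicComp} that a $\p$-local $\p$-complete module is $\ka$-complete for every ideal $\ka\subseteq \p$), so $\varLambda^\ka M(\p)=M(\p)$. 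Combining these with the commutativity of $\varprojlim$ with direct products yields a canonical isomorphism $\varLambda^\ka M\cong \prod_{\ka\subseteq\p}M(\p)$ under which the canonical map $M\to\varLambda^\ka M$ becomes the projection onto this factor of $M$, hence a split epimorphism.

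For \cref{CompAndColocOfFlCot.Coloc}, $\Hom_R(S^{-1}R,-)$ preserves direct products, so it suffices to compute each $\Hom_R(S^{-1}R,M(\p))$. If $\p\cap S=\emptyset$, every $s\in S$ acts invertibly on the $\p$-local module $M(\p)$, and evaluation at $1$ gives a natural isomorphism $\Hom_R(S^{-1}R,M(\p))\cong M(\p)$. If instead $s_0\in \p\cap S$, then for any $\phi\in \Hom_R(S^{-1}R,M(\p))$ and any $x\in S^{-1}R$, the $R$-linearity relation $\phi(x)=s_0^k\phi(x/s_0^k)$ valid for every $k$ shows $\phi(x)\in \bigcap_k s_0^k M(\p)\subseteq \bigcap_k \p^k M(\p)=0$, the last equality coming from $\p$-completeness; hence $\phi=0$. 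Thus $\Hom_R(S^{-1}R,M)=\prod_{\p\cap S=\emptyset}M(\p)$, embedded canonically as a direct factor of $M$, so the inclusion is a split monomorphism, and the special case $S=R\setminus\q$ produces the formula for $\Hom_R(R_\q,M)$.

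For \cref{CompAndColocOfFlCot.Isom}, I would apply \cref{CompAndColocOfFlCot.Coloc} with $S=R\setminus\q$ to get $\Hom_R(R_\q,M)=\prod_{\p\subseteq\q}M(\p)$, which is again a product of $\p'$-local $\p'$-complete modules indexed by primes $\p'\subseteq\q$. Applying \cref{CompAndColocOfFlCot.Comp} to this product with the ideal $\q$ picks out only the factor indexed by $\p'=\q$ (since the constraint $\q\subseteq \p'\subseteq\q$ forces $\p'=\q$), yielding $\varLambda^\q\Hom_R(R_\q,M)\cong M(\q)$. Naturality of the identifications in \cref{CompAndColocOfFlCot.Comp} and \cref{CompAndColocOfFlCot.Coloc} then shows that $\varLambda^\q\Hom_R(R_\q,\iota_\q)$ and $\varLambda^\q\Hom_R(R_\q,\pi_\q)$ correspond to identity maps on $M(\q)$, hence are isomorphisms. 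The main technical obstacle throughout is the appendix claim invoked in \cref{CompAndColocOfFlCot.Comp} that a $\p$-local $\p$-complete module is $\ka$-complete for every $\ka\subseteq\p$: injectivity of $M(\p)\to\varLambda^\ka M(\p)$ is immediate from $\bigcap_n\ka^n M(\p)\subseteq \bigcap_n\p^n M(\p)=0$, but surjectivity requires controlling the gap between the $\ka$-adic and $\p$-adic topologies on $M(\p)$ when $\ka$ is not $\p$-primary.
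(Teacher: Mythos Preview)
Your proof is correct and follows essentially the same approach as the paper: reduce to componentwise computations via commutation of $\varLambda^{\ka}$ and $\Hom_R(S^{-1}R,-)$ with direct products, then analyze each $M(\p)$ according to whether $\ka\subseteq\p$ (resp.\ $\p\cap S=\emptyset$). The only noteworthy divergence is in \cref{CompAndColocOfFlCot.Coloc} for the case $\p\cap S\neq\emptyset$: you argue directly that any $\phi$ lands in $\bigcap_k \p^k M(\p)=0$, whereas the paper observes $\varLambda^{\p}(S^{-1}R)=0$ and invokes the adjoint property of completion (\cref{CompAdj}); both arguments are short and valid. Your closing worry about surjectivity of $M(\p)\to\varLambda^{\ka}M(\p)$ is already dispatched by \cref{bCompleteISaComplete}, which avoids any topology comparison: since $(M(\p)^\wedge_\ka)^\wedge_\p\cong M(\p)^\wedge_\p\cong M(\p)$, the module $M(\p)$ is a direct summand of $M(\p)^\wedge_\ka$ and hence $\ka$-complete by idempotence of completion.
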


\begin{proof}
	\cref{CompAndColocOfFlCot.Comp}: 
	Since $\varLambda^{\ka}$ commutes with arbitrary direct products (\cref{CommutativityWithDirestProduct}), the canonical morphism $ M\to\varLambda^{\ka}M$ can be naturally identified with the direct product of the canonical morphisms $M(\p)\to\varLambda^{\ka}M(\p)$ for $\p\in \Spec R$. If $\ka\subset\p$, then the $\p$-complete module $M(\p)$ is also $\ka$-complete (\cref{bCompleteISaComplete}), so the canonical morphism $M(\p)\to\varLambda^{\ka}M(\p)$ is an isomorphism. If $\ka\not\subset\p$, then $\ka^n M(\p)=M(\p)$ for all $n\geq 1$ because $M(\p)$ is $\p$-local, so $\varLambda^{\ka}M(\p)=0$.
		
	\cref{CompAndColocOfFlCot.Coloc}: Similarly, since the functor $\Hom_{R}(S^{-1}R,-)$ commutes with arbitrary direct products, the canonical morphism $\Hom_R(S^{-1}R,M)\to M$ can be naturally identified with the direct product of 
		the canonical morphisms $\Hom_{R}(S^{-1}R,M(\p))\to M(\p)$ for $\p\in \Spec R$. 
	If $\p\cap S=\emptyset$, then $(S^{-1}R)_\p\cong R_\p$ so we can deduce from \cref{LocAdj} that $\Hom_{R}(S^{-1}R,M(\p))\cong \Hom_{R_\p}(R_\p,M(\p))\cong M(\p)$. If $\p\cap S\neq\emptyset$, then $\varLambda^\p S^{-1}R=0$, so 
	$\Hom_{R}(S^{-1}R,M(\p))\cong \Hom_{R}(\varLambda^\p S^{-1}R,M(\p))=0$ by \cref{CompAdj} since $M(\p)$ is $\p$-complete.
	
	\cref{CompAndColocOfFlCot.Isom}: This follows from \cref{CompAndColocOfFlCot.Comp} and \cref{CompAndColocOfFlCot.Coloc}.
\end{proof}

There is a dual statement of \cref{CompAndColocOfFlCot} for a direct sum of local torsion modules. We state it as a remark because the proof is immediate in view of \cref{TostionAdjoint}.

\begin{remark}\label{CompAndColocOfInj}
	Let $M(\p)$ be a $\p$-local $\p$-torsion right $A$-module for each $\p\in\Spec R$, and let $M:=\bigoplus_{\p\in\Spec R}M(\p)$.
	\begin{enumerate}
		\item\label{CompAndColocOfInj.Comp} For every ideal $\ka\subset R$, the canonical morphism $\varGamma_\ka M\to M$ is a split monomorphism, and
		\begin{equation*}
			\varGamma_{\ka}M=\bigoplus_{\substack{\p\in\Spec R\\\ka\subset\p}}M(\p)
		\end{equation*}
		as submodules of $M$, where the right-hand side is regarded as a submodule via the inclusion.
		\item\label{CompAndColocOfInj.Loc} For every multiplicatively closed set $S\subset R$, the canonical morphism $M\to M\otimes_RS^{-1}R$ is a split epimorphism, and
		\begin{equation*}
			M\otimes_RS^{-1}R=\bigoplus_{\substack{\p\in\Spec R\\\p\cap S=\emptyset}}M(\p)
		\end{equation*}
		as quotient modules of $M$, where the right-hand side is regarded as a quotient module via the projection.
		
		In particular, for every $\q\in\Spec R$,
		\begin{equation*}
			M_{\q}=\bigoplus_{\substack{\p\in\Spec R\\\p\subset\q}}M(\p).
		\end{equation*}
		\item Let $\q\in\Spec R$, and let $\iota_{\q}\colon M(\q)\to M$ and $\pi_{\q}\colon M\to M(\q)$ be the inclusion and the projection, respectively. Then
		$\varGamma_{\q}(\iota_{\q}\otimes_RR_\q)$ and $\varGamma_{\q}(R_{\q},\pi_{\q}\otimes_RR_\q)$ are isomorphisms, and 
		\begin{equation*}
		\varGamma_\q(M\otimes_RR_\q)\cong M(\q).
		\end{equation*}
	\end{enumerate}
\end{remark}

\cref{CompAndColocOfFlCot} above and \cref{IsomOfProdIffIsomOfComponent} below are shown in \cite[Lemma~2.2]{MR3904746} and  \cite[Lemma~3.1]{MR3904746}, respectively, but those were for direct products of $\p$-local $\p$-complete \emph{flat} $R$-modules for various $\p\in \Spec R$.

\begin{lemma}\label{IsomOfProdIffIsomOfComponent}
	Let $M(\p)$ and $N(\p)$ be $\p$-local $\p$-complete right $A$-modules for each $\p\in\Spec R$. For an $A$-homomorphism $f\colon\prod_{\p\in\Spec R}M(\p)\to\prod_{\p\in\Spec R}N(\p)$, the following are equivalent:
	\begin{enumerate}
		\item\label{IsomOfProdIffIsomOfComponent.Isom} $f$ is an isomorphism.
		\item\label{IsomOfProdIffIsomOfComponent.Func} $\varLambda^{\q}\Hom_{R}(R_{\q},f)$ is an isomorphism for all $\q\in\Spec R$.
		\item\label{IsomOfProdIffIsomOfComponent.Component} The composition
		\begin{equation*}
			\begin{tikzcd}
				M(\q)\ar[r,hookrightarrow] & \prod_{\p\in\Spec R}M(\p)\ar[r,"f"] & \prod_{\p\in\Spec R}N(\p)\ar[r,twoheadrightarrow] & N(\q)
			\end{tikzcd}
		\end{equation*}
		is an isomorphism for all $\q\in\Spec R$, where the first morphism is the inclusion and the last one is the projection.
	\end{enumerate}
\end{lemma}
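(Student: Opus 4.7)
The plan is to establish the chain $(1)\Rightarrow(2)\Leftrightarrow(3)\Rightarrow(1)$. The implication $(1)\Rightarrow(2)$ is immediate, since any functor preserves isomorphisms.

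For $(2)\Leftrightarrow(3)$, write $g_{\q}:=\pi_{\q}^{N}\circ f\circ\iota_{\q}^{M}$ for the composition in (3). Since $M(\q)$ and $N(\q)$ are already $\q$-local and $\q$-complete, we have the canonical identifications $\varLambda^{\q}\Hom_{R}(R_{\q},M(\q))=M(\q)$ and $\varLambda^{\q}\Hom_{R}(R_{\q},N(\q))=N(\q)$. By \cref{CompAndColocOfFlCot}\cref{CompAndColocOfFlCot.Isom} applied to $M$ and to $N$, the maps $\varLambda^{\q}\Hom_{R}(R_{\q},\iota_{\q}^{M})$ and $\varLambda^{\q}\Hom_{R}(R_{\q},\pi_{\q}^{M})$, together with their $N$-analogues, are mutually inverse isomorphisms between $M(\q)$ and $\varLambda^{\q}\Hom_{R}(R_{\q},M)$ (respectively $N(\q)$ and $\varLambda^{\q}\Hom_{R}(R_{\q},N)$). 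Applying the functor $\varLambda^{\q}\Hom_{R}(R_{\q},-)$ to the factorization $g_{\q}=\pi_{\q}^{N}\circ f\circ\iota_{\q}^{M}$ and invoking functoriality, $g_{\q}$ is identified with $\varLambda^{\q}\Hom_{R}(R_{\q},f)$ sandwiched between those isomorphisms, so $g_{\q}$ and $\varLambda^{\q}\Hom_{R}(R_{\q},f)$ are isomorphisms simultaneously.

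The substantive direction is $(3)\Rightarrow(1)$. The central input is a triangularity property: for any $\p,\q\in\Spec R$ with $\q\not\subseteq\p$, every $A$-linear map $\phi\colon M(\p)\to N(\q)$ vanishes. Indeed, picking $s\in\q\setminus\p$, the $\p$-locality of $M(\p)$ makes $s$ invertible on $M(\p)$, while $s\in\q$ gives $s^{n}N(\q)\subseteq\q^{n}N(\q)$; iterating $\phi(m)=s^{n}\phi(s^{-n}m)\in\q^{n}N(\q)$ and using the $\q$-separatedness of $N(\q)$ (a consequence of its $\q$-completeness) forces $\phi=0$. Consequently, the ``matrix entries'' $f_{\q,\p}:=\pi_{\q}^{N}\circ f\circ\iota_{\p}^{M}$ vanish whenever $\q\not\subseteq\p$, while $f_{\q,\q}=g_{\q}$ is an isomorphism by hypothesis. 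The remaining and principal obstacle is to assemble from this diagonal data an actual two-sided inverse of $f$: because $\Spec R$ is in general infinite, $f$ behaves like an infinite triangular map with isomorphisms on the diagonal, and inverting it requires a convergence argument exploiting the $\q$-adic completeness of each component $N(\q)$, organized along the natural filtration $\{\Hom_{R}(R_{\q},-)\}_{\q}$ supplied by \cref{CompAndColocOfFlCot}\cref{CompAndColocOfFlCot.Coloc} together with the specialization order on $\Spec R$.
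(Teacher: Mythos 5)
Your handling of (1)$\Rightarrow$(2) and of the equivalence (2)$\Leftrightarrow$(3) via \cref{CompAndColocOfFlCot}\cref{CompAndColocOfFlCot.Isom} matches the paper's proof. The problem is (3)$\Rightarrow$(1): you stop exactly where the real work begins. You write that ``the remaining and principal obstacle is to assemble from this diagonal data an actual two-sided inverse of $f$'' and gesture at ``a convergence argument'', but you never carry it out. That assembly \emph{is} the content of the implication, so the proof is incomplete. (The paper itself delegates this step to the argument of \cite[Lemma~3.1]{MR3904746}, run with $M(\p)$, $N(\p)$ in place of $T_\p$, $T'_\p$.)

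There is also a concrete defect in the input you do establish. Your orthogonality is componentwise: each individual entry $M(\q)\to N(\p)$ with $\p\not\subseteq\q$ vanishes, proved elementwise using a single $s\in\p\setminus\q$. Since a homomorphism out of an infinite product is not determined by its restrictions to the factors, this is strictly weaker than what the inversion argument needs. The paper uses instead the vanishing of $\Hom_{A}\bigl(\prod_{\q:\,\p\not\subseteq\q}M(\q),\,N(\p)\bigr)$ --- the Hom out of the entire off-diagonal block --- which follows from the adjunction in \cref{CompAdj} (as $N(\p)$ is $\p$-complete) together with $\varLambda^{\p}\bigl(\prod_{\q:\,\p\not\subseteq\q}M(\q)\bigr)=0$ from \cref{CompAndColocOfFlCot}\cref{CompAndColocOfFlCot.Comp}. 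Your elementwise computation cannot yield this product-level statement, because no single $s\in\p$ lies outside all the relevant $\q$ simultaneously. It is precisely this stronger vanishing that forces $\pi_{\p}\circ f$ to factor through the projection onto $\prod_{\q\supseteq\p}M(\q)$ and makes the triangular map invertible; without it, the ``convergence'' you defer to cannot even be set up.
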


\begin{proof}
	\cref{IsomOfProdIffIsomOfComponent.Isom}$\Rightarrow$\cref{IsomOfProdIffIsomOfComponent.Func} is obvious.
	\cref{IsomOfProdIffIsomOfComponent.Func} and \cref{IsomOfProdIffIsomOfComponent.Component} are equivalent by \cref{CompAndColocOfFlCot}.
	The proof of \cref{IsomOfProdIffIsomOfComponent.Component}$\Rightarrow$\cref{IsomOfProdIffIsomOfComponent.Isom} is parallel to that of \cite[Lemma~3.1]{MR3904746} with $T_{\p}$ and $T'_{\p}$ replaced by $M(\p)$ and $N(\p)$, respectively; \cite[(3.2)]{MR3904746} is replaced by the fact that, for for every $\p\in\Spec R$,
	\begin{equation}\label{Orthogonality}
		\Hom_{A}(\prod_{\substack{\q\in\Spec R\\\p\not\subset\q}}M(\q),N(\p))=0,
	\end{equation}
	which follows from \cref{CompAdj} and \cref{CompAndColocOfFlCot}\cref{CompAndColocOfFlCot.Comp}.
\end{proof}

The next lemma recovers and generalizes \cite[Theorem~4.1.14]{MR1438789}, which deals with direct products of $\p$-local $\p$-complete flat $R$-modules for various $\p\in \Spec R$.

\begin{lemma}\label{DecompOfProdOfLocCompMod}
	Let $M(\p)$ be a $\p$-local $\p$-complete right $A$-module for each $\p\in\Spec R$. If we have a decomposition
	\begin{equation*}
		\prod_{\p\in\Spec R}M(\p)\cong M_{1}\oplus M_{2},
	\end{equation*}
	then there are right $A$-modules $M_{i}(\p)$, indexed by $\p\in\Spec R$ and $i=1,2$, such that
	\begin{equation*}
		M_{i}\cong\prod_{\p\in\Spec R}M_{i}(\p)
	\end{equation*}
	and $M(\p)\cong M_{1}(\p)\oplus M_{2}(\p)$ for each $\p\in\Spec R$.
\end{lemma}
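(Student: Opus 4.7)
Define $F_{\q}:=\varLambda^{\q}\Hom_{R}(R_{\q},-)$, so that \cref{CompAndColocOfFlCot}\cref{CompAndColocOfFlCot.Isom} identifies this functor with ``extract the $\q$-component'' on a direct product of $\p$-local $\p$-complete modules. The plan is to set
\begin{equation*}
M_{i}(\q):=F_{\q}(M_{i}),
\end{equation*}
which is automatically $\q$-local and $\q$-complete, and then to reduce the lemma to exhibiting a single automorphism of $M:=\prod_{\p\in\Spec R}M(\p)$ that carries the given decomposition $M=M_{1}\oplus M_{2}$ onto the componentwise one. Applying the additive functor $F_{\q}$ to the hypothesis yields $M(\q)\cong M_{1}(\q)\oplus M_{2}(\q)$ at once; let $e_{i}\in\End_{A}(M)$ be the idempotents for the given decomposition and $\tilde e_{i,\q}\in\End_{A}(M(\q))$ those produced by this componentwise splitting, so that $F_{\q}(e_{i})=\tilde e_{i,\q}$ by construction.

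To produce the isomorphism $M_{i}\cong\prod_{\p}M_{i}(\p)$, I would introduce the ``diagonal'' idempotent $\tilde f_{i}\in\End_{A}(M)$ that acts as $\tilde e_{i,\p}$ on each factor $M(\p)$ of $M\cong\prod_{\p}M(\p)$; then $\tilde f_{i}M=\prod_{\p}M_{i}(\p)$, and a careful unwinding of \cref{CompAndColocOfFlCot}\cref{CompAndColocOfFlCot.Isom} shows $F_{\p}(\tilde f_{i})=\tilde e_{i,\p}$ as well. Setting
\begin{equation*}
u:=\tilde f_{1}e_{1}+\tilde f_{2}e_{2}\in\End_{A}(M),
\end{equation*}
the orthogonality and idempotence relations give the intertwining identity $ue_{i}=\tilde f_{i}e_{i}=\tilde f_{i}u$ for $i=1,2$. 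Once $u$ is known to be invertible this implies $u(M_{i})=u(e_{i}M)=\tilde f_{i}M=\prod_{\p}M_{i}(\p)$, which is exactly the required isomorphism.

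Invertibility of $u$ is the place to invoke \cref{IsomOfProdIffIsomOfComponent}. Since $F_{\p}$ is both additive and multiplicative,
\begin{equation*}
F_{\p}(u)=\tilde e_{1,\p}^{2}+\tilde e_{2,\p}^{2}=\tilde e_{1,\p}+\tilde e_{2,\p}=\id_{M(\p)}
\end{equation*}
for every $\p\in\Spec R$, so the implication \cref{IsomOfProdIffIsomOfComponent.Func}$\Rightarrow$\cref{IsomOfProdIffIsomOfComponent.Isom} of \cref{IsomOfProdIffIsomOfComponent} supplies the inverse of $u$. The main obstacle I anticipate is the careful verification of the identity $F_{\p}(\tilde f_{i})=\tilde e_{i,\p}$, since $\tilde f_{i}$ is defined only after fixing the identification $M\cong\prod_{\p}M(\p)$ and $F_{\p}$ has to be traced through the nested localizations, $\Hom$'s, and completions appearing in \cref{CompAndColocOfFlCot}; after that point, everything collapses to routine algebra with orthogonal idempotents and a single appeal to \cref{IsomOfProdIffIsomOfComponent}.
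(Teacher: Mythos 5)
Your proof is correct and follows essentially the same route as the paper: both define $M_i(\q):=\varLambda^{\q}\Hom_R(R_\q,M_i)$ and reduce the claim to a componentwise verification via \cref{CompAndColocOfFlCot} and \cref{IsomOfProdIffIsomOfComponent}. Your idempotent calculus (producing a single automorphism $u$ of $M$ intertwining the two decompositions) is a mildly slicker substitute for the paper's explicit construction of the maps $h_i$ from chosen splittings, but the key lemmas and the substance of the argument are the same.
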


\begin{proof}
	Let $M:= \prod_{\p\in\Spec R}M(\p)$.
	For each $\q\in\Spec R$, we have a canonical split monomorphism $\Hom_{R}(R_{\q},M)\to M$ and a canonical split epimorphism $\Hom_{R}(R_{\q},M)\to\varLambda^{\q}\Hom_{R}(R_{\q},M)$, which both become isomorphisms upon application of $\varLambda^\q\Hom_R(R_\q,-)$; see \cref{CompAndColocOfFlCot}. Since $M\cong M_{1}\oplus M_{2}$, the same holds for $M_{1}$ and $M_{2}$. For each $i=1,2$, let $h^{\q}_{i}$ be the composition
	\begin{equation*}
		\begin{tikzcd}
			M_{i}\ar[r,"g^{\q}_{i}"] & \Hom_{R}(R_{\q},M_{i})\ar[r] & \varLambda^{\q}\Hom_{R}(R_{\q},M_{i})=:M_{i}(\q),
		\end{tikzcd}
	\end{equation*}
	where $g^{\q}_{i}$ is an arbitrary splitting of the canonical split monomorphism $f^{\q}_{i}\colon\Hom_{R}(R_{\q},M_{i})\to M_{i}$. Since $\varLambda^\q\Hom_R(R_\q,f_i^\q)$ is an isomorphism, so is $\varLambda^\q\Hom_R(R_\q,g_i^\q)$.
	
Let $h_{i}\colon M_{i}\to\prod_{\p\in\Spec R}M_{i}(\p)$ be the morphism induced by the family $\set{h^{\p}_{i}}_{\p\in\Spec R}$. If we show that 
$h_{1}\oplus h_{2}\colon M_1\oplus M_2\to \left(\prod_{\p}M_{1}(\p)\right) \oplus \left(\prod_{\p} M_{2}(\p)\right)=\prod_{\p}(M_{1}(\p)\oplus M_{2}(\p))$
 is an isomorphism, then so are $h_{1}$ and $h_{2}$, and thus the desired conclusion follows since
	\begin{equation*}
	M(\p)\cong \varLambda^{\p}\Hom_{R}(R_{\p},M) \cong  \varLambda^{\p}\Hom_{R}(R_{\p},M_{1}\oplus M_{2})\cong M_{1}(\p)\oplus M_{2}(\p)
	\end{equation*}
	by \cref{CompAndColocOfFlCot}\cref{CompAndColocOfFlCot.Isom}.
	
	By \cref{IsomOfProdIffIsomOfComponent}, it suffices to show that $\varLambda^{\q}\Hom_{R}(R_{\q},h_{1}\oplus h_{2})$ is an isomorphism for each $\q\in\Spec R$. Consider the commutative diagram
	\begin{equation*}
		\begin{tikzcd}
			M_{1}\oplus M_{2}\ar[d,twoheadrightarrow,"g^{\q}_{1}\oplus g^{\q}_{2}"']\ar[r,"h_{1}\oplus h_{2}"] & \displaystyle\prod_{\p\in\Spec R}(M_{1}(\p)\oplus M_{2}(\p))\ar[dd,twoheadrightarrow,"\textnormal{projection}"] \\
			\Hom_{R}(R_{\q},M_{1}\oplus M_{2})\ar[d,twoheadrightarrow] & \\
			\varLambda^{\q}\Hom_{R}(R_{\q},M_{1}\oplus M_{2})\ar[r,equal] & M_{1}(\q)\oplus M_{2}(\q)\rlap{,}
		\end{tikzcd}
	\end{equation*}
where the vertical morphisms become isomorphisms upon application of $\varLambda^{\q}\Hom_{R}(R_{\q},-)$. Therefore $\varLambda^{\q}\Hom_{R}(R_{\q},h_{1}\oplus h_{2})$ is an isomorphism.
\end{proof}

\begin{proposition}\label{DecompToLocCompFlCot}
	A right $A$-module $M$ is flat cotorsion if and only if 
		 $M\cong \prod_{\p\in\Spec R}M(\p)$, where each $M(\p)$ is some $\p$-local $\p$-complete flat cotorsion module. The isoclass of $M(\p)$ is uniquely determined by $M$.
\end{proposition}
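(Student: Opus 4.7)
The plan is to prove both implications by leveraging the structural results already established in \cref{sec.RedToLocComp}, and to derive uniqueness from the formula in \cref{CompAndColocOfFlCot}\cref{CompAndColocOfFlCot.Isom}.

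For the ``if'' direction, suppose $M\cong\prod_{\p\in\Spec R}M(\p)$ with each $M(\p)$ flat cotorsion. Cotorsion is preserved under arbitrary products since $\Ext^{1}_{A}(F,-)$ commutes with products. Flatness is preserved under arbitrary products because $A$ is left coherent (as it is left noetherian), by the same reference invoked in the proof of \cref{FlCotIffDSummandOfProdOfDual}. Hence $M$ is flat cotorsion.

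For the ``only if'' direction, assume $M$ is flat cotorsion. By \cref{FlCotIffDSummandOfProdOfDual}, $M$ is a direct summand of
\begin{equation*}
N:=\prod_{P\in\Spec A}\Hom_{R}(I_{A^{\op}}(P),E_{R}(R/\p)^{(B_{P})})
\end{equation*}
for some family $\{B_{P}\}_{P\in\Spec A}$, where $\p=P\cap R$ in each component. By \cref{DualOfInjIsLocComp} this product can be rewritten as $N\cong\prod_{\p\in\Spec R}N(\p)$, where each $N(\p)$ is a finite direct sum of $\p$-local $\p$-complete modules, and hence is itself $\p$-local and $\p$-complete. Applying \cref{DecompOfProdOfLocCompMod} to the decomposition $N\cong M\oplus M'$ (where $M'$ is a complement of $M$ in $N$) yields right $A$-modules $M(\p)$ such that $M\cong\prod_{\p\in\Spec R}M(\p)$ and each $M(\p)$ is a direct summand of $N(\p)$. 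In particular, each $M(\p)$ is again $\p$-local and $\p$-complete, and inherits flatness and cotorsion from $N(\p)$ (the former because flat summands of flat modules are flat, the latter because cotorsion is closed under summands).

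Finally, for uniqueness, \cref{CompAndColocOfFlCot}\cref{CompAndColocOfFlCot.Isom} applied to $M\cong\prod_{\p\in\Spec R}M(\p)$ yields
\begin{equation*}
M(\q)\cong\varLambda^{\q}\Hom_{R}(R_{\q},M)
\end{equation*}
for every $\q\in\Spec R$, and the right-hand side depends only on the isoclass of $M$. Thus the isoclass of each $M(\q)$ is determined by $M$.

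No serious obstacle is anticipated: all the heavy lifting has been done in \cref{FlCotIffDSummandOfProdOfDual,DualOfInjIsLocComp,DecompOfProdOfLocCompMod,CompAndColocOfFlCot}, and the proposition is essentially an assembly of these pieces. The only subtlety is to verify that the local-complete summand $M(\p)$ produced by \cref{DecompOfProdOfLocCompMod} is again flat cotorsion, which follows immediately since both classes are closed under direct summands.
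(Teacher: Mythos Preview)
Your proof is correct and follows essentially the same approach as the paper's own proof, which simply cites \cref{FlCotIffDSummandOfProdOfDual}, \cref{DualOfInjIsLocComp}, \cref{DecompOfProdOfLocCompMod}, and \cref{CompAndColocOfFlCot}\cref{CompAndColocOfFlCot.Isom} in the same roles you give them. Your write-up is just a more expanded version of that one-line argument.
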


\begin{proof}
	The ``if'' part is clear; see the proof of \cref{FlCotIffDSummandOfProdOfDual}. The ``only if'' part follows from \cref{FlCotIffDSummandOfProdOfDual}, \cref{DualOfInjIsLocComp}, and \cref{DecompOfProdOfLocCompMod}. The uniqueness of $M(\p)$ follows from \cref{CompAndColocOfFlCot}\cref{CompAndColocOfFlCot.Isom}.
\end{proof}

\section{Local complete flat modules as flat covers}
\label{sec.LocCompFlModAsFlCov}

Let $A$ be a Noether $R$-algebra. 
In \cref{sec.RedToLocComp}, we observed that every flat cotorsion module is uniquely decomposed as a direct product of $\p$-local $\p$-complete ones. The purpose of the present section is to realize each $\p$-local $\p$-complete flat module as a flat cover of a semisimple $A_\p$-module (\cref{BijBetweenFlAndInjAndSemisimple}).

The key result in this section is \cref{FlCovAndInjEnvAndTop}, which tells us that a certain operation to a flat module yields a (nontrivial) flat cover.
The next three results are necessary steps for this;
\cref{NilpIdealAndProjCov,LiftingHomForFl} are inspired by \cite[Lemma~1.1]{MR4127282}, and \cref{FlCovAndInjEnvAndResField}\cref{FlCovAndInjEnvAndResField.FlCov} is a generalization of \cite[Proposition~4.1.6]{MR1438789}.

\begin{lemma}\label{NilpIdealAndProjCov}
	Let $J\subset A$ be a nilpotent ideal.
	\begin{enumerate}
	\item\label{NilpIdealAndProjCov.Right} For every flat right $A$-module $F$, the canonical morphism $F\to F\otimes_{A}(A/J)$ is right minimal in $\Mod A$.
	\item\label{NilpIdealAndProjCov.Left} For every injective right $A$-module $I$, the canonical morphism $\Hom_A(A/J, I)\to I$ is left minimal in $\Mod A$.
	\end{enumerate}
\end{lemma}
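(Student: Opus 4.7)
The plan is to reduce both parts to the standard observation that $\id+h$ is an automorphism whenever $h$ is a nilpotent endomorphism. Fix $n\geq 1$ with $J^n=0$.

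For part~\cref{NilpIdealAndProjCov.Right}, I would identify the canonical morphism $F\to F\otimes_A(A/J)$ with the projection $\pi\colon F\to F/FJ$. Given $g\in\End_A(F)$ with $\pi g=\pi$, set $h:=g-\id_F$, so $h(F)\subseteq FJ$. Using that $h$ is $A$-linear and that $FJ^{k+1}$ is the $A$-submodule generated by products $xj$ with $x\in FJ^k$ and $j\in J$, a short induction on $k$ gives $h(FJ^k)\subseteq FJ^{k+1}$. Hence $h^n(F)\subseteq FJ^n=0$, so $h^n=0$ and $g=\id_F+h$ is invertible with inverse $\sum_{k=0}^{n-1}(-1)^k h^k$.

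Part~\cref{NilpIdealAndProjCov.Left} is formally dual. I would identify $\iota\colon\Hom_A(A/J,I)\to I$ with the inclusion of $\Ann_I(J):=\set{x\in I : xJ=0}$ into $I$. Given $g\in\End_A(I)$ with $g\iota=\iota$, setting $h:=g-\id_I$ makes $h$ vanish on $\Ann_I(J)$. Using the ascending chain $\Ann_I(J)\subseteq\Ann_I(J^2)\subseteq\cdots\subseteq\Ann_I(J^n)=I$ in place of the descending filtration above, the key inclusion to establish is $h(\Ann_I(J^{k+1}))\subseteq\Ann_I(J^k)$: for $x\in\Ann_I(J^{k+1})$ and $y\in J^k$, we have $xy\in\Ann_I(J)$, hence $h(x)y=h(xy)=0$, and $y$ was arbitrary in $J^k$. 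Iterating this $n$ times gives $h^n(I)=h^n(\Ann_I(J^n))\subseteq\Ann_I(J^0)\cap h(\Ann_I(J))=0$, so $g=\id_I+h$ is again an automorphism.

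I do not expect a significant obstacle here; in fact, neither the flatness of $F$ nor the injectivity of $I$ is actually invoked in these minimality arguments. Those hypotheses are presumably present for the downstream applications (where the two morphisms will be upgraded to genuine flat covers and injective envelopes in the next results). The only mildly delicate point is setting up the dual filtration in part~\cref{NilpIdealAndProjCov.Left} correctly and checking that the right $A$-linearity of $h$ propagates the annihilator condition one layer at a time.
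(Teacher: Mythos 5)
Your proof is correct, and it takes a genuinely different route from the paper's. The paper argues functorially: given $g$ with $fg=f$, it applies $-\otimes_{A}(A/J)$ to the kernel and cokernel sequences of $g$, deduces $(\Cok g)\otimes_{A}(A/J)=0$ and $(\Ker g)\otimes_{A}(A/J)=0$, and then iterates $M=MJ=MJ^{2}=\cdots=0$; the flatness of $F$ is genuinely used there (to get $\Tor_{1}^{A}(F,A/J)=0$, hence the vanishing for $\Ker g$), and dually the injectivity of $I$ is used in part~\cref{NilpIdealAndProjCov.Left} (to kill $\Ext_{A}^{1}(A/J,I)$ and get the vanishing for $\Cok g$). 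You instead show directly that $h=g-\id$ is a nilpotent endomorphism, via the filtration $F\supseteq FJ\supseteq\cdots$ (\resp $\Ann_{I}(J)\subseteq\Ann_{I}(J^{2})\subseteq\cdots$), so that $g=\id+h$ is a unit in the endomorphism ring. Your observation that neither flatness nor injectivity is needed is accurate: your argument proves the minimality statement for an arbitrary right $A$-module in both parts, which is strictly more general, and it is also more elementary. What the paper's version buys is brevity given the functorial setup already in play, and a transparent formal duality between the two parts at the level of the functors $-\otimes_{A}(A/J)$ and $\Hom_{A}(A/J,-)$; the hypotheses in the statement are indeed only there because the downstream applications (\cref{LiftingHomForFl} and \cref{FlCovAndInjEnvAndTop}) upgrade these right/left minimal morphisms to flat covers and injective envelopes. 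The only cosmetic quibble is your final display in part~\cref{NilpIdealAndProjCov.Left}: writing $h^{n}(I)\subseteq h(\Ann_{I}(J))=0$ (or $\subseteq\Ann_{I}(J^{0})=\Ann_{I}(A)=0$) separately would be cleaner than the intersection of the two, but both containments are valid.
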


\begin{proof}
	This proof works for an arbitrary ring $A$.
	
	\cref{NilpIdealAndProjCov.Right}: Denote the canonical morphism by $f$ and let $g\in\End_{A}(F)$ such that $fg=f$. Since $f\otimes_{A}(A/J)$ is an isomorphism, so is $g\otimes_{A}(A/J)$. Applying $-\otimes_{A}(A/J)$ to the exact sequence
	\begin{equation*}
		\begin{tikzcd}
			F\ar[r,"g"] & F\ar[r] &\ar[r] \Cok g\ar[r] & 0,
		\end{tikzcd}
	\end{equation*}
	we obtain $(\Cok g)\otimes_{A}(A/J)=0$. Hence $\Cok g=(\Cok g)J=(\Cok g)J^{2}=\cdots$, but $J$ is nilpotent, so $\Cok g=0$. Applying $-\otimes_{A}(A/J)$ to
	\begin{equation*}
		\begin{tikzcd}
			0\ar[r] & \Ker g\ar[r] & F\ar[r,"g"] & F\ar[r] & 0,
		\end{tikzcd}
	\end{equation*}
	we obtain $(\Ker g)\otimes_{A}(A/J)=0$ since $F$ is flat. Hence $\Ker g=0$ by the same argument.
	
	\cref{NilpIdealAndProjCov.Left}:
Given a right $A$-module $M$, $\Hom_A(A/J,M)=0$ if and only if $\Hom_A(A/J^n,M)=0$ for every $n\geq 1$, since $\Hom_A(A/J^n,M)=\setwithcondition{x\in M}{xJ^n=0}$. Thus 
$\Hom_A(A/J,M)=0$ implies $M=0$, as $J$ is nilpotent. The rest of the proof is parallel to \cref{NilpIdealAndProjCov.Right}.
\end{proof}

\begin{lemma}\label{LiftingHomForFl}
	Let $\ka\subset R$ be an ideal such that $R/\ka$ is an artinian ring. For every $\ka$-complete flat right $A$-module $F$, the canonical morphism $F\to F\otimes_{R}(R/\ka)$ is a flat cover in $\Mod A$.
\end{lemma}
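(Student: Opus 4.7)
The plan is to verify both conditions for $f \colon F \to F \otimes_R (R/\ka) = F/F\ka$ to be a flat cover: right minimality and the flat precover property. The key preliminary observation is that $R/\ka^n$ is an artinian ring for every $n \geq 1$, since $R$ is noetherian and each successive quotient $\ka^i/\ka^{i+1}$ is a finitely generated, hence finite-length, $R/\ka$-module, so $R/\ka^n$ has finite length as an $R$-module and is therefore artinian. Consequently $A/\ka^n A$ is a right artinian Noether $R/\ka^n$-algebra, and every flat right $A/\ka^n A$-module is projective by \cref{ArtinFlcovProjcov}. In particular, for any flat right $A$-module $G$, the quotient $G/G\ka^n \cong G \otimes_A (A/\ka^n A)$ is projective over $A/\ka^n A$.

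For right minimality, let $g \in \End_A(F)$ satisfy $fg = f$; then $g(x) \equiv x \pmod{F\ka}$ for all $x \in F$. Since $\ka$ acts on $F$ through the center of $A$, $g$ preserves each $F\ka^n$ and induces $\bar g_n \in \End_A(F/F\ka^n)$ such that the composition $F/F\ka^n \xrightarrow{\bar g_n} F/F\ka^n \to F/F\ka$ coincides with the canonical projection. Applying \cref{NilpIdealAndProjCov}\cref{NilpIdealAndProjCov.Right} to the flat right $A/\ka^n A$-module $F/F\ka^n$ and the nilpotent ideal $\ka A/\ka^n A$, this projection is right minimal, so $\bar g_n$ is an isomorphism. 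The inverses $\{\bar g_n^{-1}\}_n$ then form a compatible inverse system, and the $\ka$-completeness isomorphism $F \cong \varprojlim_n F/F\ka^n$ identifies $g$ with $\varprojlim_n \bar g_n$, yielding that $g$ itself is an isomorphism.

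For the precover property, let $h \colon F' \to F/F\ka$ with $F'$ flat over $A$. The plan is to inductively construct a compatible family of $A$-linear maps $\tilde h_n \colon F'/F'\ka^n \to F/F\ka^n$: take $\tilde h_1$ to be the factorization of $h$ through $F'/F'\ka$, and given $\tilde h_n$, lift the composite $F'/F'\ka^{n+1} \to F'/F'\ka^n \xrightarrow{\tilde h_n} F/F\ka^n$ through the surjection $F/F\ka^{n+1} \to F/F\ka^n$, which is possible because $F'/F'\ka^{n+1}$ is projective over $A/\ka^{n+1} A$. The composites $F' \to F'/F'\ka^n \xrightarrow{\tilde h_n} F/F\ka^n$ then form a compatible inverse system, and the $\ka$-completeness of $F$ assembles them into $\tilde h \colon F' \to \varprojlim_n F/F\ka^n = F$ with $f \tilde h = h$. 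The main technical subtlety is ensuring coherence of each lift with the previously chosen $\tilde h_n$ across the whole tower rather than merely lifting $h$ at each level independently; projectivity at each stage, together with the inductive character of the construction, provides exactly this.
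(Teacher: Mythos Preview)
Your proof is correct and follows essentially the same approach as the paper's: both arguments use that $A/\ka^{n}A$ is right artinian to get projectivity of the flat quotients, lift inductively along the surjections $F/\ka^{n+1}F\to F/\ka^{n}F$ to build a compatible system, and pass to the inverse limit using $\ka$-completeness of $F$; for right minimality both apply \cref{NilpIdealAndProjCov}\cref{NilpIdealAndProjCov.Right} with the nilpotent ideal $\ka A/\ka^{n}A$ to conclude each induced $\bar g_{n}$ is an isomorphism and then identify $g$ with $\varprojlim_{n}\bar g_{n}$. The only cosmetic differences are the order of the two parts and that the paper routes the lift through the completion $G^\wedge_\ka$ of the test module before composing with the completion map $G\to G^\wedge_\ka$, whereas you assemble the composites $F'\to F'/F'\ka^{n}\to F/F\ka^{n}$ directly; these are the same construction.
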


\begin{proof}
	Denote the canonical morphism by $f$. We first show that it is a flat precover. Let $h\colon G\to F\otimes_{R}(R/\ka)$ be an $A$-homomorphism from a flat right $A$-module $G$. Then $h$ naturally factors through an $A$-homomorphism $\overline{h}\colon G\otimes_{R}(R/\ka)\to F\otimes_{R}(R/\ka)$. Set $g_{1}:= \overline{h}$.
	For each $n\geq 1$, $G\otimes_{R}(R/\ka^{n})$ is a flat $A/\ka^{n}A$-module and it is actually projective since $A/\ka^{n}A$ is right artinian (\cref{ArtinFlcovProjcov}). Thus, there exist $A$-homomorphisms $g_{n}\colon G\otimes_{R}(R/\ka^{n})\to F\otimes_{R}(R/\ka^{n})$, for all $n\geq 2$, such that the diagram
	\begin{equation}\label{eq.inv.system}
		\begin{tikzcd}
			\cdots\ar[r,twoheadrightarrow] & G\otimes_{R}(R/\ka^{3})\ar[r,twoheadrightarrow]\ar[d,"g_{3}"] & G\otimes_{R}(R/\ka^{2})\ar[r,twoheadrightarrow]\ar[d,"g_{2}"] & G\otimes_{R}(R/\ka)\ar[d,"g_{1}"] \\
			\cdots\ar[r,twoheadrightarrow] & F\otimes_{R}(R/\ka^{3})\ar[r,twoheadrightarrow] & F\otimes_{R}(R/\ka^{2})\ar[r,twoheadrightarrow] & F\otimes_{R}(R/\ka)
		\end{tikzcd}
	\end{equation}
	commutes, where the horizontal maps are the canonical ones. Defining $g$ to be the composition of $\varprojlim_{n\geq 1}g_{n}\colon G^\wedge_\ka\to F^\wedge_\ka$ and the canonical isomorphism $F^\wedge_\ka\cong F$, we have a commutative diagram
	\begin{equation*}
		\begin{tikzcd}
			G^\wedge_\ka\ar[d,"g"']\ar[r]& G\otimes_{R}(R/\ka)\ar[d,"g_{1}"] \\
			F\ar[r,"f"] & F\otimes_{R}(R/\ka)\rlap{,}
		\end{tikzcd}
	\end{equation*}
	where the horizontal map in the first row is the canonical one.
	The composition of the completion map $G\to G^\wedge_\ka$ and $g\colon G^\wedge_\ka \to F$ is a lifting of $h\colon G\to F\otimes_{R}(R/\ka)$ because $g_{1}= \overline{h}$.
	This shows that $f$ is a flat precover. 
	
	Next we show that $f$ is right minimal.
	Take an arbitrary $g\in\End_{A}(F)$ such that the diagram
	\begin{equation*}
		\begin{tikzcd}[row sep=tiny]
			F\ar[dd,"g"']\ar[dr,"f"] & \\
			& F\otimes_{R}(R/\ka) \\
			F\ar[ur,"f"'] &
		\end{tikzcd}
	\end{equation*}
	commutes. Letting $g_{n}:=g\otimes_{R}(R/\ka^{n})=g\otimes_{A}(A/\ka^{n}A)$ for each $n\geq 1$, we obtain a diagram of the form \cref{eq.inv.system} with $G=F$ and $g_{1}=\id_{F/\ka F}$. Letting $A_n:= A/\ka^n A$ and $J_n:=\ka A/\ka^{n}A\subset A_n$, we have $g_{n}\otimes_{A_n}(A_n/J_n)=g_{n}\otimes_{A}(A/\ka A)=\id_{F/\ka F}$. Thus \cref{NilpIdealAndProjCov}\cref{NilpIdealAndProjCov.Right}, applied to $J_n\subset A_{n}$, implies that $g_{n}$ is an isomorphism for every $n\geq 2$. Then $g$ is an isomorphism as $g=\varprojlim_{n\geq 1}g_{n}$. This concludes that $f$ is a flat cover.
\end{proof}

\begin{remark}\label{DecompositionOfCompletion}
The assumption on $\ka$ in \cref{LiftingHomForFl} is satisfied if and only if the Zariski closed subset $V(\ka):=\setwithcondition{\p \in \Spec R}{\ka \subseteq \p}$ consists of only (necessarily finitely many) maximal ideals of $R$. If this is the case, then the $\ka$-adic completion functor $\varLambda^\ka$ decomposes as the finite direct product $\prod_{\km\in V(\ka)} \varLambda^\km$. Indeed, setting $\kb:=\bigcap_{\km\in V(\ka)}\km$, we have $\kb^{n}\subset\ka\subset\kb$ for some positive integer $n$, so the proof goes in a similar way to that of \cite[Theorem~8.15]{MR1011461}.
\end{remark}

\begin{proposition}\label{FlCovAndInjEnvAndResField}
	Let $\p\in\Spec R$.
	\begin{enumerate}
		\item\label{FlCovAndInjEnvAndResField.FlCov} For every $\p$-local $\p$-complete flat right $A$-module $F$, the canonical morphism $F\to F\otimes_{R}\kappa(\p)$ is a flat cover in $\Mod A$.
		\item\label{FlCovAndInjEnvAndResField.InjEnv} For every $\p$-local $\p$-torsion injective right $A$-module $I$, the canonical morphism $\Hom_{R}(\kappa(\p),I)\to I$  is an injective envelope in $\Mod A$.
	\end{enumerate}
\end{proposition}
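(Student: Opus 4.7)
The plan is to localize at $\p$, after which part \cref{FlCovAndInjEnvAndResField.FlCov} becomes a direct specialization of \cref{LiftingHomForFl} and part \cref{FlCovAndInjEnvAndResField.InjEnv} reduces to a short essentiality check. The main conceptual ``obstacle'' is really just packaging the localization reduction cleanly; once that is in place, no hard new ideas are required. By \cref{LocAdj} and \cref{ClosureEnvCovLocal} the $\p$-local modules $F$ and $I$ are canonically $A_\p$-modules, and flat covers (\resp injective envelopes) in $\Mod A_\p$ remain flat covers (\resp injective envelopes) in $\Mod A$, so I may assume that $R=R_\p$ is local with maximal ideal $\p$.

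For part \cref{FlCovAndInjEnvAndResField.FlCov}, the key point is that $R/\p=\kappa(\p)$ is now a field, in particular an artinian ring, and $F$ is $\p$-complete by hypothesis; so \cref{LiftingHomForFl} applies verbatim with $\ka=\p$ to give that $F\to F\otimes_R\kappa(\p)$ is a flat cover. If one prefers to perform the localization reduction explicitly rather than tacitly, note that the $\p$-local hypothesis yields $\p^nF=(\p R_\p)^nF$ for all $n\geq 1$, so $\p$-completeness of $F$ over $R$ coincides with $\p R_\p$-completeness over $R_\p$, and $F\otimes_R\kappa(\p)=F\otimes_{R_\p}\kappa(\p)$.

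For part \cref{FlCovAndInjEnvAndResField.InjEnv}, I would first identify the evaluation-at-$\overline{1}$ map $\Hom_R(\kappa(\p),I)\to I$ with the inclusion of the $A$-submodule $I^{\p}:=\setwithcondition{x\in I}{x\p=0}\subset I$: injectivity is immediate, while surjectivity onto $I^{\p}$ uses the $\p$-locality of $I$ to extend any $x\in I^{\p}$ to an $R$-map out of $R_\p/\p R_\p=\kappa(\p)$. Since $I$ is injective, what remains is to show that $I^{\p}\into I$ is essential. This will follow from a short d\'{e}vissage: for any $0\neq x\in I$, the $\p$-torsion hypothesis yields a minimal $n\geq 1$ with $x\p^n=0$, and any nonzero $y\in x\p^{n-1}$ (which exists by minimality, with the convention $\p^0=R$) satisfies $y\p\subset x\p^n=0$ and lies in $xA$ because $R$ maps into the center of $A$, giving $0\neq y\in xA\cap I^{\p}$ as required.
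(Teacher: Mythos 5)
Your proposal is correct and follows essentially the same route as the paper: reduce to the local case via \cref{LocAdj} and \cref{ClosureEnvCovLocal}, apply \cref{LiftingHomForFl} for part (1), and verify essentiality of $\Hom_R(k,I)\hookrightarrow I$ for part (2). Your d\'{e}vissage producing a nonzero $\p$-annihilated element in $xA$ is just an explicit spelling-out of the paper's one-line observation that every nonzero submodule of a $\p$-torsion module meets the $\p$-socle.
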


\begin{proof}
By \cref{LocAdj,ClosureEnvCovLocal} along with the standard isomorphism $F\otimes_{R}\kappa(\p)\cong F\otimes_{R_\p}\kappa(\p)$, we may assume that $(R,\km, k)$ is a local ring and $\p=\km$.
Then \cref{FlCovAndInjEnvAndResField.FlCov} follows from \cref{LiftingHomForFl}. 
To prove  \cref{FlCovAndInjEnvAndResField.InjEnv}, notice that the canonical map $R\to \kappa(\p)=k$ is surjective and it induces an injection $\Hom_{R}(k,I)\to I$, which is clearly an injective preenvelope in $\Mod A$. As $I$ is $\km$-torsion by assumption, every nonzero $A$-submodule $M$ of $I$ satisfies $\Hom_R(k, M)\neq 0$. This implies that the morphism $\Hom_{R}(k,I)\to I$ is an essential monomorphism, so it is an injective envelope in $\Mod A$.
\end{proof}

\begin{proposition}\label{FlCovAndInjEnvAndTop}
	Let $\p\in\Spec R$.
	\begin{enumerate}
		\item\label{FlCovAndInjEnvAndTop.Fl} Let $F$ be a $\p$-local $\p$-complete flat right $A$-module. Then the canonical morphism
		\begin{equation*}
			F\to F\otimes_{A}(A_\p/\rad A_\p)
		\end{equation*}
 is a flat cover in $\Mod A$.
		\item\label{FlCovAndInjEnvAndTop.Inj} Let $I$ be a $\p$-local $\p$-torsion injective right $A$-module. The canonical morphism
		\begin{equation*}
			\Hom_{A}(A_\p/\rad A_\p,I)\to I
		\end{equation*}
		an injective envelope in $\Mod A$.
	\end{enumerate}
\end{proposition}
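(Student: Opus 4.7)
For \cref{FlCovAndInjEnvAndTop.Fl}, the strategy is to factor the canonical morphism $f\colon F\to F\otimes_{A}(A_\p/\rad A_\p)$ as $f_{2}\circ f_{1}$, where $f_{1}\colon F\to F\otimes_{R}\kappa(\p)\cong F/\p F$ is the canonical surjection and $f_{2}\colon F/\p F\to F\otimes_{A}(A_\p/\rad A_\p)\cong F/F\rad A_\p$ is the further quotient. By \cref{FlCovAndInjEnvAndResField}\cref{FlCovAndInjEnvAndResField.FlCov}, $f_{1}$ is a flat cover in $\Mod A$; using \cref{JacobsonRadicalAlgebra}, $f_{2}$ is identified with the canonical quotient $F/\p F\to (F/\p F)/(F/\p F)\rad\overline{A_\p}$, which is a projective cover in $\Mod\overline{A_\p}$ since $F/\p F$ is flat, hence projective, over the artinian ring $\overline{A_\p}$ (\cref{ArtinFlcovProjcov}). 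To see that $f$ is a flat precover, take $g\colon G\to F\otimes_{A}(A_\p/\rad A_\p)$ with $G$ flat over $A$: as $\p A_\p\subseteq\rad A_\p$, the target is annihilated by $\p$, so $g$ factors through $G/\p G$, which is projective over $\overline{A_\p}$; lift along $f_{2}$ to get $G/\p G\to F/\p F$, and then lift along the flat precover $f_{1}$ to obtain the desired $G\to F$.

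For right minimality in \cref{FlCovAndInjEnvAndTop.Fl}, fix $g\in\End_{A}(F)$ with $fg=f$. For each $n\geq 1$, the ring $A_\p/\p^{n}A_\p$ is artinian (being module-finite over the artinian ring $R_\p/\p^{n}R_\p$), its Jacobson radical equals $\rad A_\p/\p^{n}A_\p$ (the kernel of $A_\p/\p^{n}A_\p\twoheadrightarrow\overline{A_\p}$ is nilpotent), and the corresponding semisimple quotient is $A_\p/\rad A_\p$. The induced endomorphism $g_{n}:=g\otimes_{R}R/\p^{n}$ of the flat $(A_\p/\p^{n}A_\p)$-module $F/\p^{n}F$ becomes the identity after tensoring with $A_\p/\rad A_\p$, so \cref{NilpIdealAndProjCov}\cref{NilpIdealAndProjCov.Right}, applied to the nilpotent ideal $\rad A_\p/\p^{n}A_\p$, forces $g_{n}$ to be an isomorphism. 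Since $F$ is $\p$-complete, $g=\varprojlim_{n}g_{n}$ is an isomorphism.

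For \cref{FlCovAndInjEnvAndTop.Inj}, the canonical morphism $\iota\colon\Hom_{A}(A_\p/\rad A_\p,I)\to I$ identifies its domain with the submodule $\setwithcondition{x\in I}{x\cdot\rad A_\p=0}\subseteq I$ (using that $I$ is $\p$-local), so $\iota$ is a monomorphism into the injective module $I$, and it suffices to show $\iota$ is essential. Given a nonzero $A$-submodule $M\subseteq I$ and $0\neq x\in M$, the cyclic submodule $N:=xA$ is finitely generated and $\p$-local, and is annihilated by some $\p^{n}$ since $I$ is $\p$-torsion, making $N$ a nonzero module over the artinian ring $A_\p/\p^{n}A_\p$; its socle is nonzero, is annihilated by $\rad(A_\p/\p^{n}A_\p)=\rad A_\p/\p^{n}A_\p$, and therefore lies in $\Hom_{A}(A_\p/\rad A_\p,I)\cap M$, confirming essentiality. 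The main obstacle is the right minimality step of \cref{FlCovAndInjEnvAndTop.Fl}: one cannot simply chain the minimalities of $f_{1}$ and $f_{2}$, since compositions of flat covers are generally not flat covers; the argument must reduce modulo each $\p^{n}$ to an artinian setting and pass to the inverse limit, using $\p$-completeness of $F$ in an essential way.
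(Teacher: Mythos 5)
Your proof is correct, and while the precover half of \cref{FlCovAndInjEnvAndTop.Fl} follows the paper's route exactly (factor through $F\otimes_{R}\kappa(\p)$, use projectivity of $G\otimes_{R}\kappa(\p)$ over the artinian fibre $\overline{A_\p}$, then lift along the flat cover $F\to F\otimes_{R}\kappa(\p)$), you diverge in the two minimality arguments, in both cases legitimately. For right minimality in \cref{FlCovAndInjEnvAndTop.Fl}, the paper applies \cref{NilpIdealAndProjCov}\cref{NilpIdealAndProjCov.Right} once, to the nilpotent ideal $\rad\overline{A_\p}\subset\overline{A_\p}$, to conclude $g\otimes_R\kappa(\p)$ is an isomorphism, and then invokes the already-established right minimality of $F\to F\otimes_R\kappa(\p)$ from \cref{FlCovAndInjEnvAndResField}\cref{FlCovAndInjEnvAndResField.FlCov}; you instead rerun the inverse-limit mechanism of \cref{LiftingHomForFl} with the larger nilpotent ideals $\rad A_\p/\p^nA_\p\subset A_\p/\p^nA_\p$, which merges the two layers into one pass. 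Your version is self-contained but essentially re-proves a strengthened \cref{LiftingHomForFl}; the paper's is shorter given what is already on the table. For \cref{FlCovAndInjEnvAndTop.Inj} the paper just dualizes \cref{FlCovAndInjEnvAndTop.Fl}, whereas your direct essentiality argument (cyclic submodule killed by $\p^n$, nonzero socle over the artinian ring $A_\p/\p^nA_\p$ annihilated by $\rad A_\p$) is more elementary and arguably more transparent; it immediately yields both the preenvelope property and left minimality at once. Your closing remark about not chaining minimalities is well taken, though note the paper does in effect chain them, just carefully, via the isomorphism $g\otimes_R\kappa(\p)$.

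One small imprecision: for a general flat $A$-module $G$ the map $g$ factors through $G\otimes_R(R/\p)=G/G\p$, which is only a module over $A/\p A$ (not artinian unless $\p$ is maximal); you must localize further and work with $G\otimes_R\kappa(\p)$, which is what is actually flat, hence projective, over $\overline{A_\p}$. Writing $G/\p G$ here conflates the two; the paper avoids the issue by first reducing to the case where $R$ is local and $\p$ is maximal, which you could also do at the outset.
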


\begin{proof}
	As we observed in the proof of \cref{FlCovAndInjEnvAndResField},
	we may assume that $(R,\km, k)$ is a local ring and $\p=\km$. Put $J:= \rad A$. Since $J$ is the intersection of all maximal ideals of $A$ and $P\cap R=\km$ for all $P\in\Max A$ (see \cref{SpecOfNoethAlgMax} and \cref{JacobsonRadical}), we have $\km A\subset J$. Consequently, for every right $A$-module $M$, the canonical morphism $M\otimes_{R}k\cong M\otimes_{A}(A/\km A)\to M\otimes_{A}(A/J)$ is surjective and the canonical morphism $\Hom_{A}(A/J,M)\to\Hom_{A}(A/\km A,M)\cong\Hom_{R}(k,M)$ is injective.

	\cref{FlCovAndInjEnvAndTop.Fl}: Denote the given morphism by $f$. First we show that it is a flat precover. Let $h\colon G\to F\otimes_{A}(A/J)$ be a morphism from a flat right $A$-module $G$. It naturally factors through a morphism $\overline{h}\colon G\otimes_{A}(A/J)\to F\otimes_{A}(A/J)$. 
	By the projectivity of $G\otimes_{R}k$ in $\Mod (A\otimes_Rk)$,
	there exists a morphism $\overline{g}$ such that the diagram
	\begin{equation*}
		\begin{tikzcd}
			G\otimes_{R}k\ar[r]\ar[d,"\overline{g}"'] & \displaystyle G\otimes_{A}(A/J)\ar[d,"\overline{h}"] \\
			F\otimes_{R}k\ar[r] & \displaystyle F\otimes_{A}(A/J)
		\end{tikzcd}
	\end{equation*}
	commutes. Furthermore, by \cref{FlCovAndInjEnvAndResField}\cref{FlCovAndInjEnvAndResField.FlCov},
	there exists a morphism $g$ such that the diagram
	\begin{equation*}
	\begin{tikzcd}
			G\ar[r]\ar[d,"g"'] &G\otimes_{R}k\ar[r]\ar[d,"\overline{g}"'] & \displaystyle G\otimes_{A}(A/J)\ar[d,"\overline{h}"] \\
			F\ar[r]&F\otimes_{R}k\ar[r] & \displaystyle F\otimes_{A}(A/J)
		\end{tikzcd}
	\end{equation*}
	commutes, where unadorned morphisms are canonical ones.
	 The composition of the morphisms in the first row together with $\overline{h}$ is $h$, so the diagram shows that $h$ factors through the second row, which is $f$. Hence $f$ is a flat precover.
	
	Next, for every $s\in\End_{A}(F)$ such that $fs=f$, we have a commutative diagram
	\begin{equation*}
		\begin{tikzcd}[row sep=tiny]
			F\ar[dd,"s"']\ar[r] & F\otimes_{R}k\ar[dd,"s\otimes_{R}k"']\ar[dr] & \\
			& & F\otimes_{A}(A/J)\rlap{,} \\
			F\ar[r] & F\otimes_{R}k\ar[ur] & 
		\end{tikzcd}
	\end{equation*}
	where unadorned morphisms are canonical ones.
	Here $s\otimes_{R}k$ is an isomorphism by \cref{NilpIdealAndProjCov}\cref{NilpIdealAndProjCov.Right} as $J/\km A=\rad(A\otimes_{R}k)$ (see \cref{JacobsonRadicalAlgebra}) is nilpotent. Therefore \cref{FlCovAndInjEnvAndResField}\cref{FlCovAndInjEnvAndResField.FlCov} implies that $s$ is an isomorphism. This concludes that $f$ is a flat cover.
	
	\cref{FlCovAndInjEnvAndTop.Inj}: The injection $\Hom_A(A/J,I)\to I$ is clearly an injective preenvelope. The rest of the proof is parallel to \cref{FlCovAndInjEnvAndTop.Fl}. Use \cref{NilpIdealAndProjCov}\cref{NilpIdealAndProjCov.Left} and \cref{FlCovAndInjEnvAndResField}\cref{FlCovAndInjEnvAndResField.InjEnv} instead.
\end{proof}

\begin{lemma}\label{FlatCoverIdeal}
Let $\ka$ be an ideal of $R$ and let $M$ be a right $A$-module. 
\begin{enumerate}
\item \label{FlatCoverIdeal.Complete} If $M$ is an $\ka$-complete right $A$-module, then its flat cover $F_A(M)$ is $\ka$-complete.
\item \label{FlatCoverIdeal.Torsion}If $M$ is an $\ka$-torsion right $A$-module, then its injective envelope $E_A(M)$ is $\ka$-torsion.
\end{enumerate}

\end{lemma}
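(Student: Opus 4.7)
For part (\ref{FlatCoverIdeal.Complete}) (the $\ka$-complete case), let $f\colon F\to M$ denote the flat cover of $M$ with $F:=F_A(M)$, and write $\iota_F\colon F\to F^\wedge_\ka$ for the completion map. My plan is to exhibit $\iota_F$ as a split monomorphism; the conclusion then follows because $F^\wedge_\ka$ is $\ka$-complete (\cref{CompletionIdempotent}) and the class of $\ka$-complete modules is closed under direct summands, as $\varLambda^\ka$ commutes with finite direct sums (see \cref{CommutativityWithDirestProduct}). The key input I intend to invoke is that $F^\wedge_\ka$ is again a flat right $A$-module—a fact that should be among those collected in \cref{sec.IdealAdicComp}. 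Granting this, the naturality of completion together with the hypothesis that $\iota_M\colon M\to M^\wedge_\ka$ is an isomorphism gives a factorization $f=g\circ\iota_F$ with $g:=\iota_M^{-1}\circ f^\wedge_\ka\colon F^\wedge_\ka\to M$. Applying the flat precover property of $f$ to $g$ yields some $h\colon F^\wedge_\ka\to F$ with $f\circ h=g$, so that $f\circ(h\circ\iota_F)=f$. Right minimality of the flat cover then forces $h\circ\iota_F\in\End_A(F)$ to be an automorphism, so $\iota_F$ is a split monomorphism, as desired.

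For part (\ref{FlatCoverIdeal.Torsion}) (the $\ka$-torsion case), the plan is dual in spirit but easier, since all the required structural results on injective modules are already in Section 2. Since $A$ is right noetherian, \cref{InjOverNoethRing} yields a decomposition $E_A(M)\cong\bigoplus_{P\in\Spec A}I_A(P)^{(c_P)}$. Each indecomposable injective $I_A(P)$ is $\p$-local and $\p$-torsion for $\p:=P\cap R$ (as recorded before \cref{InjDualSendsTorsToComp}). For our fixed ideal $\ka\subset R$, an elementary check shows that $\varGamma_\ka I_A(P)=I_A(P)$ when $\ka\subset\p$ (since $\p$-torsion implies $\ka$-torsion), and $\varGamma_\ka I_A(P)=0$ otherwise (any $s\in\ka\setminus\p$ acts invertibly on the $\p$-local module $I_A(P)$ and so annihilates its $\ka$-torsion part). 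Since $\varGamma_\ka$ commutes with direct sums (\cref{TostionAdjoint}), $\varGamma_\ka E_A(M)$ is a direct summand of $E_A(M)$ and, in particular, injective. Because $M\subset\varGamma_\ka E_A(M)$ by hypothesis, the essentiality of $M\subset E_A(M)$ forces the complementary direct summand to vanish, and $E_A(M)=\varGamma_\ka E_A(M)$.

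The main obstacle is the input used in part (\ref{FlatCoverIdeal.Complete}), namely that $\ka$-adic completion preserves flatness. For a commutative noetherian base this is a classical result (cf.\ \cite{MR1438789}); its extension to Noether $R$-algebras is exactly the sort of statement that should appear in \cref{sec.IdealAdicComp}. Without this preservation of flatness there is no evident way to lift $g\colon F^\wedge_\ka\to M$ through the flat precover $f\colon F\to M$, and the whole lifting/minimality argument for part (\ref{FlatCoverIdeal.Complete}) breaks down.
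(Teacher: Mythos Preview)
Your proof is correct and follows essentially the same approach as the paper's own proof. For part~\cref{FlatCoverIdeal.Complete} the paper also factors $f$ through the completion map, invokes \cref{FlCompIsFl} to know $F^\wedge_\ka$ is flat, lifts through the flat cover, and uses right minimality to conclude that $F$ is a direct summand of $F^\wedge_\ka$; for part~\cref{FlatCoverIdeal.Torsion} the paper likewise decomposes $E_A(M)$ into indecomposable injectives and observes (via \cref{CompAndColocOfInj}\cref{CompAndColocOfInj.Comp}, which packages the case analysis you spell out) that $\varGamma_\ka E_A(M)$ is an injective submodule containing $M$, hence equals $E_A(M)$ by essentiality.
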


\begin{proof}
\cref{FlatCoverIdeal.Complete}: Let $f\colon F_A(M)\to M$ be the flat cover. Since $M$ is $\ka$-complete, we may identify $M$ with $\varLambda^\ka M$. Then $f$ is factorized as the composition of the completion map $F_A(M)\to \varLambda^\ka F_A(M)$ and $\varLambda^\ka f\colon \varLambda^\ka F_A(M) \to M$. By \cref{FlCompIsFl}, $\varLambda^\ka F_A(M)$ is flat. Since $f$ is a flat cover,
there exists a morphism $g$ such that the diagram
\begin{equation*}
	\begin{tikzcd}
		F_{A}(M)\ar[r]\ar[dr,"f"'] & \varLambda^\ka F_A(M)\ar[d,"\varLambda^\ka f"']\ar[r,"g"] & F_{A}(M)\ar[dl,"f"] \\
		& M &
	\end{tikzcd}
\end{equation*}
commutes. The right minimality of $f$ implies that $g$ is a split epimorphism, so
$F_A(M)$ is a direct summand of $\varLambda^\ka F_A(M)$. Since $\varLambda^\ka F_A(M)$ is $\ka$-complete (\cref{CompletionIdempotent}), the direct summand $F_A(M)$ is also $\ka$-complete.

\cref{FlatCoverIdeal.Torsion}: Let $g\colon M\to E_A(M)$ be the injective envelope. Since $A$ is right noetherian, $E_A(M)$ decomposes as a direct sum of copies of $I_A(P)$ for various $P\in \Spec A$; see \cref{IndecomposableDecomposition}. The $\ka$-torsion $A$-submodule $\varGamma_\ka E_A(M)$ of $E_A(M)$ is injective by \cref{CompAndColocOfInj}\cref{CompAndColocOfInj.Comp}, and the induced map $\varGamma_\ka g\colon M\to \varGamma_\ka E_A(M)$ is a monomorphism as $\varGamma_\ka$ is left exact (\cref{TostionAdjoint}). Thus we have $\varGamma_\ka E_A(M)=E_A(M)$.
\end{proof}

\begin{remark}\label{SemisimpleModules}
Let $\p\in \Spec A$. Recall that $A_\p/\rad A_\p$ is a semisimple ring (see \cref{TopOfFiber}). Hence every module over this ring is a direct sum of simple modules, and each simple module is isomorphic to $S_A(P)$ for some $P\in \Spec A$ with $P\cap R=\p$. Moreover, the category $\Mod (A_\p/\rad A_\p)$ is naturally equivalent to the subcategory of $\Mod A_\p$ (or $\Mod A$) formed by semisimple $A_\p$-modules (see \cref{LocOfNoethAlg}\cref{LocOfNoethAlg.Max} and \cref{SimpleOverNoethAlg}).
\end{remark}

With this remark, we obtain the following result:

\begin{proposition}\label{TopOfFlCovAndSocOfInjEnv}
	Let $\p\in \Spec R$ and let $M$ be a semisimple right $A_\p$-module. 
	
	\begin{enumerate}
	\item\label{TopOfFlCovAndSocOfInjEnv.Flat} Let $f\colon F_A(M)\to M$ be a flat cover in $\Mod A$. Then $F_A(M)$ is $\p$-local and $\p$-complete. Moreover, the morphism $f$ induces an isomorphism 
	\begin{equation*}F_A(M)\otimes_{A}(A_\p/\rad A_\p)\isoto M.
	\end{equation*}
	 	
	\item\label{TopOfFlCovAndSocOfInjEnv.Injective} Let $g\colon M\to E_A(M)$ be an injective envelope in $\Mod A$. Then $E_A(M)$ is $\p$-local and $\p$-torsion. Moreover, the morphism $g$ induces an isomorphism 
	\begin{equation*}
	M\isoto \Hom_A(A_\p/\rad A_\p, E_A(M)).
	\end{equation*}
	\end{enumerate}
\end{proposition}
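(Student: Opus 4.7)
The strategy is to leverage the strong torsion/completeness properties that $M$ automatically enjoys as a semisimple $A_\p$-module, and then to apply \cref{FlCovAndInjEnvAndTop}. By \cref{LocAdj}, $M$ is $\p$-local, and since every maximal ideal of $A_\p$ contains $\p A_\p$ (\cref{SpecOfNoethAlgMax} applied to $A_\p$), we have $\p A_\p\subseteq\rad A_\p$; thus semisimplicity forces $M$ to be $\p$-annihilated, so it is both $\p$-torsion and $\p$-complete trivially.

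For \cref{TopOfFlCovAndSocOfInjEnv.Flat}, $F_A(M)$ is $\p$-local by \cref{ClosureEnvCovLocal} and $\p$-complete by \cref{FlatCoverIdeal}\cref{FlatCoverIdeal.Complete}, so \cref{FlCovAndInjEnvAndTop}\cref{FlCovAndInjEnvAndTop.Fl} says that the canonical morphism $\phi\colon F_A(M)\to N:=F_A(M)\otimes_A(A_\p/\rad A_\p)$ is itself a flat cover in $\Mod A$. Because $M$ is $\rad A_\p$-annihilated, $f\colon F_A(M)\to M$ factors uniquely as $f=g\phi$ for some $g\colon N\to M$; the goal is to prove $g$ is an isomorphism. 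Since flat covers are epimorphisms, both $\phi$ and $f$ are surjective, hence so is $g$. As $M$ and $N$ are semisimple right $A_\p$-modules (\cref{SemisimpleModules}), $g$ admits a section $s\colon M\to N$. Applying the flat-precover property of $\phi$ to the morphism $sf\colon F_A(M)\to N$ yields $h\in\End_A(F_A(M))$ with $\phi h=sf$, and then $fh=g\phi h=gsf=f$, so the right minimality of $f$ forces $h$ to be an automorphism. Therefore $\phi=sfh^{-1}$ has image contained in $s(M)$; combined with the surjectivity of $\phi$, this shows $s$ is surjective, hence (being a section, it is injective) an isomorphism, and so is $g$.

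For \cref{TopOfFlCovAndSocOfInjEnv.Injective}, the proof runs dually but is slightly cleaner thanks to essentiality. Now $E_A(M)$ is $\p$-local by \cref{ClosureEnvCovLocal} and $\p$-torsion by \cref{FlatCoverIdeal}\cref{FlatCoverIdeal.Torsion}, so \cref{FlCovAndInjEnvAndTop}\cref{FlCovAndInjEnvAndTop.Inj} gives that $\psi\colon I':=\Hom_A(A_\p/\rad A_\p,E_A(M))\to E_A(M)$ is an injective envelope in $\Mod A$. Since $M$ is $\rad A_\p$-annihilated, $e\colon M\to E_A(M)$ factors as $e=\psi g$, and $g$ is injective because $e$ is. For surjectivity, the essentiality of $e(M)\subseteq E_A(M)$ passes to the submodule $I'$, so $g(M)=e(M)$ is essential in $I'$; but $I'$ is a semisimple $A_\p$-module (\cref{SemisimpleModules}), in which every submodule is a direct summand, so an essential submodule can only be the whole thing.

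The main obstacle I foresee is the bookkeeping step in \cref{TopOfFlCovAndSocOfInjEnv.Flat}, where one has to braid together the flat-precover property of $\phi$ and the right minimality of $f$ to promote an abstract splitting of $g$ into an actual isomorphism. Part \cref{TopOfFlCovAndSocOfInjEnv.Injective} avoids this juggling because essentiality in a semisimple module immediately rules out any proper submodule.
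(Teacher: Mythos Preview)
Your proof is correct and follows the same overall strategy as the paper: both arguments first use \cref{ClosureEnvCovLocal} and \cref{FlatCoverIdeal} to get the locality and completeness/torsion properties, then invoke \cref{FlCovAndInjEnvAndTop} to see that the canonical map $\phi$ (the paper calls it $u$) is itself a flat cover (resp.\ injective envelope), and finally use semisimplicity together with right/left minimality to conclude.

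The only genuine difference is in the endgame of part~\cref{TopOfFlCovAndSocOfInjEnv.Flat}. The paper observes that $h:=f\otimes_A(A_\p/\rad A_\p)$ is a split epimorphism of semisimple modules, splits the target as $\Ker h\oplus M$, decomposes the flat cover $u$ accordingly as $F_A(\Ker h)\oplus F_A(M')$, and then notes that the summand $F_A(\Ker h)$ lies in $\Ker f$, so right minimality of $f$ forces it to vanish. You instead pick a section $s$ of $g$, lift $sf$ through the flat precover $\phi$ to obtain $h\in\End_A(F_A(M))$ with $\phi h=sf$, and use right minimality of $f$ directly on $h$. Your route is slightly more hands-on and avoids the (tacit) appeal to the fact that a flat cover of a finite direct sum decomposes as a direct sum of flat covers; the paper's route is more structural. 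For part~\cref{TopOfFlCovAndSocOfInjEnv.Injective}, the paper simply says the dual argument works, whereas you give the cleaner observation that an essential submodule of a semisimple module must be everything; this is a nice simplification over literally dualizing the paper's argument.
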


\begin{proof}
	We only prove \cref{TopOfFlCovAndSocOfInjEnv.Flat} because the dual argument proves \cref{TopOfFlCovAndSocOfInjEnv.Injective}.
	
Note that the semisimple $A_\p$-module $M$ is $\p$-complete as $\p^n M=0$ for every $n\geq 1$ (see \cref{SemisimpleModules}). Hence $F_{A}(M)$ is $\p$-local $\p$-complete by \cref{ClosureEnvCovLocal} and \cref{FlatCoverIdeal}\cref{FlatCoverIdeal.Complete}.
	We have a commutative diagram
	\begin{equation*}
		\begin{tikzcd}
			F_{A}(M)\ar[d,"u"']\ar[r,"f"] & M\ar[d,"\wr"] \\
			\displaystyle F_{A}(M)\otimes_{A}(A_\p/\rad A_\p)\ar[r,"h"'] & \displaystyle M\otimes_{A}(A_\p/\rad A_\p)\rlap{,}
		\end{tikzcd}
	\end{equation*}
	where the vertical morphisms are canonical and $h:=f\otimes_{A}(A_\p/\rad A_\p)$. By \cref{FlCovAndInjEnvAndTop}\cref{FlCovAndInjEnvAndTop.Fl}, $u$ is a flat cover. Since $h$ is an epimorphism between semisimple $A_{\p}$-modules, it is a split epimorphism. Thus the flat cover $F_{A}(\Ker h)\to\Ker h$ is a direct summand of the flat cover $u$. Since $F_{A}(\Ker h)$ is in the kernel of $f$, it should be zero by the right minimality of $f$. Therefore $\Ker h=0$.
\end{proof}

\begin{theorem}\label{BijBetweenFlAndInjAndSemisimple}
For every $\p\in \Spec R$, we have the following one-to-one correspondences:
	\begin{equation*}
	\begin{tikzcd}
			\setwithtext{isoclasses of $\p$-local $\p$-complete flat right $A$-modules}
			\ar[d, shift right=1.5ex, "-\otimes_{A}(A_\p/\rad A_\p)"']\ar[d,phantom,"\wr"]\\
			\setwithtext{isoclasses of semisimple right $A_\p$-modules}\ar[u, shift right=1.5ex, "F_A(-)"']
			\ar[d, shift right=1.5ex, "E_A(-)"']\ar[d,phantom,"\wr"]\\
			\setwithtext{isoclasses of $\p$-local $\p$-torsion injective right $A$-modules}\ar[u, shift right=1.5ex,  "{\Hom_A(A_\p/\rad A_\p,-)}"']\rlap{.}
		\end{tikzcd}
	\end{equation*}
\end{theorem}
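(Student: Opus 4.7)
The plan is to combine \cref{FlCovAndInjEnvAndTop} and \cref{TopOfFlCovAndSocOfInjEnv} directly, checking that the pairs of functors listed are mutually inverse on isoclasses. All the technical work has already been done; what remains is verifying well-definedness and the inverse relations.

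First I would address the top bijection. Given a $\p$-local $\p$-complete flat right $A$-module $F$, the quotient $F\otimes_A(A_\p/\rad A_\p)$ is a right $(A_\p/\rad A_\p)$-module, hence a semisimple right $A_\p$-module by \cref{SemisimpleModules}; this shows the downward map is well-defined. Conversely, for a semisimple right $A_\p$-module $M$, \cref{TopOfFlCovAndSocOfInjEnv}\cref{TopOfFlCovAndSocOfInjEnv.Flat} tells us that $F_A(M)$ is $\p$-local and $\p$-complete and that the canonical morphism induces an isomorphism $F_A(M)\otimes_A(A_\p/\rad A_\p)\isoto M$, which gives one composite of the pair. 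For the other composite, let $F$ be $\p$-local $\p$-complete flat and set $M:=F\otimes_A(A_\p/\rad A_\p)$. By \cref{FlCovAndInjEnvAndTop}\cref{FlCovAndInjEnvAndTop.Fl}, the canonical morphism $F\to M$ is a flat cover in $\Mod A$, so by uniqueness of the flat cover we have $F\cong F_A(M)$.

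The bottom bijection is proved dually. Given a $\p$-local $\p$-torsion injective right $A$-module $I$, the submodule $\Hom_A(A_\p/\rad A_\p,I)$ is a right $(A_\p/\rad A_\p)$-module and hence semisimple over $A_\p$, so the upward map is well-defined. Given a semisimple $A_\p$-module $M$, \cref{TopOfFlCovAndSocOfInjEnv}\cref{TopOfFlCovAndSocOfInjEnv.Injective} shows that $E_A(M)$ is $\p$-local and $\p$-torsion and that the canonical morphism yields $M\isoto\Hom_A(A_\p/\rad A_\p,E_A(M))$. Conversely, for a $\p$-local $\p$-torsion injective $I$, \cref{FlCovAndInjEnvAndTop}\cref{FlCovAndInjEnvAndTop.Inj} identifies the canonical morphism $\Hom_A(A_\p/\rad A_\p,I)\to I$ as an injective envelope, so $I\cong E_A(\Hom_A(A_\p/\rad A_\p,I))$ by the uniqueness of the injective envelope.

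There is no real obstacle here, since the heavy lifting was done in establishing \cref{FlCovAndInjEnvAndTop,TopOfFlCovAndSocOfInjEnv}; the only point that deserves a brief remark is that flat covers and injective envelopes computed in $\Mod A_\p$ coincide with those in $\Mod A$ for $\p$-local modules (\cref{ClosureEnvCovLocal}), so it is harmless to apply $F_A(-)$ and $E_A(-)$ to a semisimple $A_\p$-module. With these identifications the two composite natural transformations are precisely the canonical morphisms appearing in \cref{FlCovAndInjEnvAndTop,TopOfFlCovAndSocOfInjEnv}, and the theorem follows.
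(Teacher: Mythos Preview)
Your proposal is correct and takes essentially the same approach as the paper: the paper's proof is the single sentence ``This follows from \cref{FlCovAndInjEnvAndTop,SemisimpleModules,TopOfFlCovAndSocOfInjEnv},'' and you have simply spelled out in detail how these results combine to give the mutually inverse correspondences.
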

\begin{proof}
This follows from \cref{FlCovAndInjEnvAndTop,SemisimpleModules,TopOfFlCovAndSocOfInjEnv}.
\end{proof}

Let $\p\in \Spec R$. As we observed in \cref{SemisimpleModules}, every semisimple right $A_\p$-module $M$ decomposes as
\begin{equation}\label{SemisimpleDecomposition}
M\cong \bigoplus_{\substack{P\in\Spec A\\P\cap R=\p}}S_{A}(P)^{(B_P)}
\end{equation}
for some family of sets $\set{B_{P}}_{P}$, where $\setwithcondition{P\in\Spec A}{P\cap R=\p}$ is a finite set (\cref{SpecOfNoethAlg}).
\cref{DualOfInjEnv} (\resp \cref{DualOfFlCov}) below shows that a flat cover (\resp injective envelope) of $S_{A}(P)^{(B_P)}$ in $\Mod A$ can be obtained by applying a variant of Matlis dual to an injective envelope (\resp flat cover) of $S_{A^\op}(P)$ in $\Mod A^\op$.

\begin{proposition}\label{DualOfInjEnv}
	Let $P\in\Spec A$ and $\p:=P\cap R$. For every set $B$, the injective envelope $S_{A^{\op}}(P)\to I_{A^{\op}}(P)$ induces a flat cover
	\begin{equation}\label{eq.FlCovOfSimple}
		\Hom_{R}(I_{A^{\op}}(P),E_{R}(R/\p)^{(B)})\to\Hom_{R}(S_{A^{\op}}(P),E_{R}(R/\p)^{(B)})\cong S_{A}(P)^{(B)}
	\end{equation}
	in $\Mod A$.
\end{proposition}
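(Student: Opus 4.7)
The plan is to identify $F := \Hom_{R}(I_{A^{\op}}(P), E_{R}(R/\p)^{(B)})$ as a $\p$-local $\p$-complete flat right $A$-module and to conclude via \cref{FlCovAndInjEnvAndTop}\cref{FlCovAndInjEnvAndTop.Fl} that the map $f\colon F \to S_{A}(P)^{(B)}$ in the statement is a flat cover. That proposition already exhibits the canonical map $F \to F \otimes_{A} (A_{\p}/\rad A_{\p})$ as a flat cover, so it suffices to factor $f$ as this canonical map followed by an isomorphism $\tilde f\colon F \otimes_{A} (A_{\p}/\rad A_{\p}) \isoto S_{A}(P)^{(B)}$.

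First I would dispatch the right-hand isomorphism $\Hom_{R}(S_{A^{\op}}(P), E_{R}(R/\p)^{(B)}) \cong S_{A}(P)^{(B)}$. Every $R$-linear map between $\p$-local $R$-modules is automatically $R_{\p}$-linear, so $\Hom_{R}$ may be replaced with $\Hom_{R_{\p}}$. The simple module $S_{A^{\op}}(P)$ is a finite-dimensional $\kappa(\p)$-vector space and in particular finitely generated over $R_{\p}$, so $\Hom_{R_{\p}}(S_{A^{\op}}(P), -)$ commutes with direct sums, reducing the claim to $|B| = 1$, which is \cref{SimpleMatlisDuality} applied to the local ring $R_{\p}$ (via $E_{R_{\p}}(\kappa(\p)) \cong E_{R}(R/\p)$). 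By \cref{DualOfInjIsLocComp} and \cref{InjDualProperties}\cref{InjDualProperties.InjBecomesFlCot}, the module $F$ is $\p$-local, $\p$-complete, and flat.

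The crux is computing $F \otimes_{A} (A_{\p}/\rad A_{\p})$. Since $F$ is $\p$-local, this agrees with $F \otimes_{A_{\p}} (A_{\p}/\rad A_{\p})$, and $A_{\p}/\rad A_{\p}$ is finitely presented as a left $A_{\p}$-module because $A_{\p}$ is noetherian. The tool I would invoke is the Hom-tensor identity: for every injective $R$-module $E$, every left $A_{\p}$-module $N$, and every finitely presented left $A_{\p}$-module $M$, the natural map
\begin{equation*}
\mu\colon \Hom_{R}(N, E) \otimes_{A_{\p}} M \to \Hom_{R}(\Hom_{A_{\p}^{\op}}(M, N), E), \qquad \phi \otimes m \mapsto \bigl(\psi \mapsto \phi(\psi(m))\bigr),
\end{equation*}
is an isomorphism. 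This is clear for $M = A_{\p}$ (both sides equal $\Hom_{R}(N, E)$) and extends to finitely presented $M$ by the 5-lemma, since both sides are right exact in $M$ (the target being right exact because $E$ is injective).

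Specializing $\mu$ to $N = I_{A^{\op}}(P)$, $M = A_{\p}/\rad A_{\p}$, and $E = E_{R}(R/\p)^{(B)}$, and noting that $\Hom_{A_{\p}^{\op}}(A_{\p}/\rad A_{\p}, I_{A^{\op}}(P))$ identifies via $\psi \mapsto \psi(\bar 1)$ with the socle of $I_{A^{\op}}(P)$, which equals $S_{A^{\op}}(P)$, one obtains $F \otimes_{A_{\p}}(A_{\p}/\rad A_{\p}) \cong \Hom_{R}(S_{A^{\op}}(P), E_{R}(R/\p)^{(B)}) \cong S_{A}(P)^{(B)}$. A direct check on an elementary tensor $\phi \otimes \bar a$, which $\mu$ sends to the map $s \mapsto \phi(as)$ on $S_{A^{\op}}(P)$, shows that this isomorphism coincides with $\tilde f$ after identifying $S_{A}(P)^{(B)} \otimes_{A} (A_{\p}/\rad A_{\p}) = S_{A}(P)^{(B)}$. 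Hence $\tilde f$ is an isomorphism and $f$ is a flat cover. The main obstacle is establishing the isomorphism $\mu$ for finitely presented $M$; once this is in hand, computing the socle of the injective envelope of a simple module finishes the argument.
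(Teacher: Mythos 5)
Your proof is correct and follows essentially the same route as the paper's: reduce via \cref{FlCovAndInjEnvAndTop}\cref{FlCovAndInjEnvAndTop.Fl} to checking that the map becomes an isomorphism after applying $-\otimes_A(A_\p/\rad A_\p)$, use the Hom-tensor identity for the finitely presented module $A_\p/\rad A_\p$ (which the paper simply cites rather than reproving by the $5$-lemma), and identify $\Hom_{A_\p^{\op}}(A_\p/\rad A_\p, I_{A^\op}(P))$ with $S_{A^\op}(P)$ (the paper does this via \cref{TopOfFlCovAndSocOfInjEnv}\cref{TopOfFlCovAndSocOfInjEnv.Injective} instead of your direct socle computation, but these are the same fact). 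No gaps.
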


\begin{proof}
	We first recall that each $\Hom_{R}$ in \cref{eq.FlCovOfSimple} can be replaced by $\Hom_{R_{\p}}$; see \cref{LocAdj} and \cref{IndecomposableInjectiveLocal}. 
Moreover, the $\p$-local right $A$-module $\Hom_{R}(I_{A^{\op}}(P),E_{R}(R/\p)^{(B)})$ is $\p$-complete and flat as well; see \cref{FlCotIffDSummandOfProdOfDual} and \cref{DualOfInjIsLocComp}.

Next, notice that the functor $\Hom_{R_\p}(S_{A^{\op}}(P),-)\colon\Mod R_{\p}\to\Mod A$ commutes with arbitrary direct sums, because $S_{A^\op}(P)$ is finitely generated over $R_\p$. So the isomorphism in \cref{eq.FlCovOfSimple} follows from \cref{SimpleMatlisDuality}.
Since $S_A(P)\cong S_A(P)\otimes_A(A_\p/\rad A_\p)$ by construction and we have \cref{FlCovAndInjEnvAndTop}\cref{FlCovAndInjEnvAndTop.Fl}, it only remains to show that \cref{eq.FlCovOfSimple} becomes an isomorphism upon application of $-\otimes_A(A_\p/\rad A_\p)$.

To this end, we remark that there is a canonical isomorphism
	\begin{equation}\label{CanonicalIsom}
		\Hom_{R}(-,E_{R}(R/\p)^{(B)})\otimes_{A}(A_\p/\rad A_\p)\cong \Hom_{R}(\Hom_{A^{\op}}(A_\p/\rad A_\p,-),E_{R}(R/\p)^{(B)})
	\end{equation}
	of functors $\Mod {A_\p^\op}\to \Mod A$ (see \cite[Theorem~3.2.11]{MR1753146}), because $\Hom_R$, $\Hom_{A^{\op}}$, and $\otimes_A$ can be replaced by $\Hom_{R_\p}$, $\Hom_{A_\p^{\op}}$, and $\otimes_{A_\p}$, respectively.
	Under \cref{CanonicalIsom} and the natural isomorphism $\Hom_{{A^\op}}(A_\p/\rad A_\p,S_{A^{\op}}(P))\isoto S_{A^{\op}}(P)$, application of $-\otimes_A(A_\p/\rad A_\p)$ to the first map in \cref{eq.FlCovOfSimple} yields a morphism 
	\begin{equation*}
	\Hom_{R}(\Hom_{A^{\op}}(A_\p/\rad A_\p,I_{A^{\op}}(P)),E_{R}(R/\p)^{(B)})\to \Hom_{R}(S_{A^{\op}}(P),E_{R}(R/\p)^{(B)})
	\end{equation*}
	of right $A$-modules.
	This is an isomorphism since it is induced by the isomorphism 
	\begin{equation*}
	S_{A^{\op}}(P)\isoto \Hom_{A^{\op}}(A_\p /\rad A_\p,I_{A^{\op}}(P))
	\end{equation*}
	obtained by applying \cref{TopOfFlCovAndSocOfInjEnv}\cref{TopOfFlCovAndSocOfInjEnv.Injective} to the injective envelope $S_{A^{\op}}(P)\to I_{A^{\op}}(P)$.
\end{proof}

\cref{DualOfInjEnv} leads us to the following definition, which is essential for the main results of this paper:

\begin{definition}\label{def.IndecFlCot}Let $P\in \Spec A$ and $\p:=P\cap R$.
	We define
	\begin{equation*}
		T_{A}(P):=\Hom_{R}(I_{A^{\op}}(P),E_{R}(R/\p)),
	\end{equation*}
	which is a flat cover of $S_{A}(P)$ in $\Mod A$ by \cref{DualOfInjEnv}, that is, 
	\begin{equation*}
	T_{A}(P)=F_{A}(S_{A}(P))
	\end{equation*}
	as isoclasses of right $A$-modules. 
	As we recalled in the proof of \cref{DualOfInjEnv}, $T_{A}(P)$ is $\p$-local and $\p$-complete. Moreover, by \cref{InjDualProperties}\cref{InjDualProperties.Cot}, $T_{A}(P)$ is pure-injective, hence cotorsion.
\end{definition}

\begin{remark}\label{CompletionNecessity}
In \cref{eq.FlCovOfSimple} and \cref{def.IndecFlCot}, each $\Hom_{R}$ can also be replaced by $\Hom_{\widehat{R_\p}}$; see \cref{IndecomposableInjectiveLocal}, \cref{TorsionEquivalence} (applied to $A=R$), and \cref{ArtinianInjective}. On the other hand, the second $\Hom_{\widehat{R_\p}}$ in \cref{eq.InjEnvOfSimple} below cannot be replaced either by $\Hom_{R}$ or by $\Hom_{R_\p}$. To see this, consider the case where $(R,\km,k)$ is local, $A=R$, $P=\km$, and $B$ is a set consisting of one element. Then $T_R(\km)\cong \widehat{R}$, and $E_{R}(k)$ naturally becomes an $\widehat{R}$-module (see \cref{subsec.MatlisDual}), so we have a canonical injection $f\colon E_{R}(k)= \Hom_{\widehat{R}}(\widehat{R}, E_{R}(k))\to \Hom_{R}(\widehat{R}, E_{R}(k))$. This injection is not surjective as far as $R$ is not $\km$-complete, because the surjection $g\colon\Hom_R(\widehat{R},E_R(k))\to \Hom_R(R, E_R(k))=E_R(k)$ induced by the completion map $R\to \widehat{R}$ is not injective, and $gf$ is the identity map.

It should also be noticed that the last isomorphism of \cref{eq.InjEnvOfSimple} shows that $T_{A}(P)$ is indecomposable.
\end{remark}

\begin{proposition}\label{DualOfFlCov}
	Let $P\in\Spec A$ and $\p:=P\cap R$. For every set $B$, the flat cover $T_{A}(P)\to S_{A}(P)$ induces an injective envelope
	\begin{equation}\label{eq.InjEnvOfSimple}
		S_{A^{\op}}(P)^{(B)}\cong\Hom_{\widehat{R_{\p}}}(S_{A}(P),E_{R}(R/\p)^{(B)})\to\Hom_{\widehat{R_{\p}}}(T_{A}(P),E_{R}(R/\p)^{(B)})\cong I_{A^{\op}}(P)^{(B)}
	\end{equation}
	in $\Mod {A^\op}$.
\end{proposition}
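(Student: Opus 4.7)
The plan is to follow the approach used in the proof of \cref{DualOfInjEnv} in the dual direction: show that the displayed composed map is an injective envelope by factoring it through the canonical monomorphism considered in \cref{FlCovAndInjEnvAndTop}\cref{FlCovAndInjEnvAndTop.Inj}.

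First I would verify the two endpoint isomorphisms, setting $E:=E_{R}(R/\p)$. Since $I_{A^{\op}}(P)$ is artinian as an $R$-module by \cref{ArtinianInjective}, Matlis duality (\cref{MatlisDualOverNoetherianAlgebra}) applied over the complete local ring $\widehat{R_{\p}}$ implies that $T_{A}(P)$ is finitely generated as an $\widehat{R_{\p}}$-module, and the double Matlis dual isomorphism \cref{DoubleMatlisDual2} gives $\Hom_{\widehat{R_{\p}}}(T_{A}(P),E)\cong I_{A^{\op}}(P)$. Since $T_{A}(P)$ is finitely generated over the noetherian ring $\widehat{R_{\p}}$, the functor $\Hom_{\widehat{R_{\p}}}(T_{A}(P),-)$ commutes with arbitrary direct sums, yielding the right endpoint isomorphism. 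The left endpoint isomorphism is established similarly, using that $S_{A}(P)$ is a finite-dimensional $\kappa(\p)$-vector space (hence finitely generated over $\widehat{R_{\p}}$) and applying \cref{SimpleMatlisDuality} in place of the double dual.

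Let $h$ denote the composed displayed map. Since $f\colon T_{A}(P)\to S_{A}(P)$ is surjective, $h$ is injective. The target $I_{A^{\op}}(P)^{(B)}$ is $\p$-local, $\p$-torsion, and injective in $\Mod A^{\op}$ (as $A^{\op}$ is noetherian). Since $S_{A^{\op}}(P)^{(B)}$ is semisimple and hence annihilated by $\rad A_{\p}$, the map $h$ factors canonically as $h=\iota\circ\alpha$, where $\iota\colon\Hom_{A^{\op}}(A_{\p}/\rad A_{\p},I_{A^{\op}}(P)^{(B)})\to I_{A^{\op}}(P)^{(B)}$ is the canonical monomorphism and $\alpha$ is the induced map from $S_{A^{\op}}(P)^{(B)}$. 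By \cref{FlCovAndInjEnvAndTop}\cref{FlCovAndInjEnvAndTop.Inj}, $\iota$ is an injective envelope in $\Mod A^{\op}$, so it suffices to show that $\alpha$ is an isomorphism, or equivalently that $\Hom_{A^{\op}}(A_{\p}/\rad A_{\p},h)$ is an isomorphism.

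For this, I would apply the Hom-tensor adjunction providing a natural isomorphism $\Hom_{A^{\op}}(A_{\p}/\rad A_{\p},\Hom_{\widehat{R_{\p}}}(M,E^{(B)}))\cong \Hom_{\widehat{R_{\p}}}(M\otimes_{A}(A_{\p}/\rad A_{\p}),E^{(B)})$ in a right $A$-module $M$, under which $\Hom_{A^{\op}}(A_{\p}/\rad A_{\p},h)$ corresponds to $\Hom_{\widehat{R_{\p}}}(f\otimes_{A}(A_{\p}/\rad A_{\p}),E^{(B)})$. By \cref{TopOfFlCovAndSocOfInjEnv}\cref{TopOfFlCovAndSocOfInjEnv.Flat}, $f\otimes_{A}(A_{\p}/\rad A_{\p})$ is an isomorphism, and the conclusion follows. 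The main obstacle is the careful bookkeeping of base rings highlighted in \cref{CompletionNecessity}: $T_{A}(P)$ and $I_{A^{\op}}(P)$ must be regarded as $\widehat{R_{\p}}$-modules for the Matlis arguments to work, and one must verify that all the adjunctions and canonical maps respect the various bimodule structures.
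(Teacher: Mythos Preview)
Your proposal is correct and follows essentially the same route as the paper: establish the endpoint isomorphisms via Matlis duality over $\widehat{R_{\p}}$ (using that $T_{A}(P)$ is finitely generated there so that $\Hom_{\widehat{R_{\p}}}(T_{A}(P),-)$ commutes with direct sums), then reduce to showing that $\Hom_{A^{\op}}(A_{\p}/\rad A_{\p},h)$ is an isomorphism via \cref{FlCovAndInjEnvAndTop}\cref{FlCovAndInjEnvAndTop.Inj}, and conclude by the tensor-hom adjunction together with \cref{TopOfFlCovAndSocOfInjEnv}\cref{TopOfFlCovAndSocOfInjEnv.Flat}. The only cosmetic difference is that the paper phrases the finite generation of $T_{A}(P)$ over $\widehat{A_{\p}}$ rather than over $\widehat{R_{\p}}$, which is equivalent since $\widehat{A_{\p}}$ is module-finite over $\widehat{R_{\p}}$.
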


\begin{proof}
	The first isomorphism in \cref{eq.InjEnvOfSimple} follows from \cref{DualOfInjEnv,CompletionNecessity}.
	
	Let $(-)^{*}:=\Hom_{\widehat{R_\p}}(-,E_{R}(R/\p))$. Since $I_{A^{\op}}(P)$ is an artinian left $\widehat{A_\p}$-module (see \cref{IndecomposableInjectiveLocal} and \cref{ArtinianInjective}), $T_{A}(P)=(I_{A^{\op}}(P))^*$ is a finitely generated right $\widehat{A_\p}$-module by \cref{MatlisDualOverNoetherianAlgebra}, and thus we have a canonical isomorphism 
	\begin{equation*}
	(T_{A}(P)^{*})^{(B)}\isoto \Hom_{\widehat{R_{\p}}}(T_{A}(P),E_{R}(R/\p)^{(B)}).
	\end{equation*}
	\cref{MatlisDualOverNoetherianAlgebra} also yields a canonical isomorphism 
	\begin{equation*}I_{A^{\op}}(P)^{(B)}\isoto (I_{A^{\op}}(P)^{**})^{(B)}= (T_{A}(P)^{*})^{(B)}
	\end{equation*}
	of left $A$-modules. Therefore the last isomorphism in \cref{eq.InjEnvOfSimple} holds.
	
	Recall that $I_{A^{\op}}(P)^{(B)}$ is a $\p$-local $\p$-torsion injective left $A$-module (see \cref{ArtinianInjective}). Since we have \cref{FlCovAndInjEnvAndTop}\cref{FlCovAndInjEnvAndTop.Inj} and $\Hom_{A^{\op}}(A_\p /\rad A_\p,S_{A^{\op}}(P)^{(B)})\cong S_{A^{\op}}(P)^{(B)}$, it suffices to show that the morphism in $\cref{eq.InjEnvOfSimple}$ becomes an isomorphism upon application of $\Hom_{A^\op}(A_\p /\rad A_\p,-)$.
	By the tensor-hom adjunction, we have a canonical isomorphism
	\begin{equation*}
		\Hom_{A^\op}(A_\p /\rad A_\p,\Hom_{\widehat{R_{\p}}}(-,E_{R}(R/\p)^{(B)}))\cong\Hom_{\widehat{R_{\p}}}(-\otimes_A  (A_\p /\rad A_\p),E_{R}(R/\p)^{(B)})
	\end{equation*}
	of functors $\Mod \widehat{A_{\p}}\to \Mod A^\op$. In addition, the flat cover $T_A(P)\to S_A(P)$ induces an isomorphism $T_{A}(P)\otimes_A  (A_\p /\rad A_\p)\isoto S_A(P)\otimes_A  (A_\p /\rad A_\p)\cong S_{A}(P)$ by \cref{TopOfFlCovAndSocOfInjEnv}\cref{TopOfFlCovAndSocOfInjEnv.Flat}. Therefore application of $\Hom_{A^\op}(A_\p /\rad A_\p,-)$ makes the morphism in \cref{eq.InjEnvOfSimple} an isomorphism.
\end{proof}

In general, the character dual of a flat precover over an arbitrary ring is an injective preenvelope; see \cref{InjDualProperties}\cref{InjDualProperties.FlBecomesInj} and \cref{NonnoethereianCase}.
Conversely, if the ring is right coherent, then the character dual of an injective preenvelope of a right module is a flat precover; see \cite[Proposition~5.3.5]{MR1753146}.

\section{Descriptions of local complete flat modules}
\label{DescripOfLocCompFlMod}

In this section, we give various descriptions of local complete flat right modules over a Noether algebra.
We first look back on some classical facts for a commutative noetherian ring $R$.

Let $\p\in \Spec R$. Gruson and Raynaud \cite[Part~II, Proposition~2.4.3.1]{MR308104} showed that every $\p$-local $\p$-complete flat $R$-module is isomorphic to the $\p$-adic completion of some free $R_{\p}$-module. More precisely, it is shown that, given a flat $R$-module $F$, there is an isomorphism $\widehat{F_\p}\cong (R_\p^{(B)})^\wedge_\p$, where $B:=\dim_{\kappa(\p)} F\otimes_R\kappa(\p)$ (see also \cite[Lemma~6.7.4]{MR1753146}). It is also shown that $(R_\p^{(B)})^\wedge_\p$ is a flat $R$-module (\cite[Part~II, (2.4.2)]{MR308104}).
Furthermore, Enochs pointed out in \cite[p.~181, Example]{MR754698} that $(R_\p^{(B)})^\wedge_\p$ is isomorphic to $\Hom_R(E(R/\p), E(R/\p)^{(B)})$ (see also \cite[Theorem~3.4.1(7)]{MR1753146}).
In particular,  $(R_\p^{(B)})^\wedge_\p$ is a flat cotorsion $R$-module (\cref{InjDualProperties}\cref{InjDualProperties.InjBecomesFlCot}).
It then follows that the following conditions are equivalent for an arbitrary $R$-module $M$: 
\begin{enumerate}
\item\label{HistoryLocalComplete.Flat} $M$ is a $\p$-local $\p$-complete flat $R$-module.
\item\label{HistoryLocalComplete.FlCot} $M$ is a $\p$-local $\p$-complete flat cotorsion $R$-module.
\item\label{HistoryLocalComplete.Free} $M$ is isomorphic to the $\p$-adic completion of a free $R_\p$-module.
\end{enumerate}
The term ``free $R_\p$-module'' in \cref{HistoryLocalComplete.Free} can be replaced by ``projective $R_\p$-module'', ``free $\widehat{R_\p}$-module'', or ``projective $\widehat{R_\p}$-module'' because $R_{\p}$ and $\widehat{R_\p}$ are local rings and $(R_\p^{(B)})^\wedge_\p \cong (\widehat{R_\p}^{(B)})^\wedge_\p$ by \cref{CompTensorRed}.

This section is devoted to generalizing these classical facts to an arbitrary Noether $R$-algebra $A$.
We start with the following lemma, which slightly refines \cite[Proposition~2.5.5]{MR1898632} and is known when $A$ is commutative  (see \cite[Theorem~1.1]{MR2471985}):

\begin{lemma}\label{DecompOfInjDualOfAlg}
	For every $\p\in\Spec R$, there is an isomorphism
	\begin{equation*}
		\Hom_{R}(A,E_{R}(R/\p))\cong\bigoplus_{\substack{P\in\Spec A\\P\cap R=\p}}I_{A^{\op}}(P)^{n_{P}}
	\end{equation*}
	of left $A$-modules.
\end{lemma}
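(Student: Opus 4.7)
The plan is to show that both sides are injective envelopes in $\Mod A^{\op}$ of the same semisimple left $A$-module. Write $M:=\Hom_{R}(A,E_{R}(R/\p))$, regarded as a left $A$-module via $(a\cdot f)(b):=f(ba)$ (coming from the right $A$-action on $A$).

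First, I would verify that $M$ is $\p$-local, $\p$-torsion, and injective as a left $A$-module. Injectivity is immediate from \cref{InjDualProperties}\cref{InjDualProperties.FlBecomesInj} applied to the flat (in fact free) right $A$-module $A$. The $\p$-locality of $E_{R}(R/\p)$ passes to $M$. For $\p$-torsion, use that $A$ is finitely generated over $R$: any $f\in M$ is determined by its values on a finite generating set of $A$, each of which is annihilated by some power of $\p$, and the maximum of these powers annihilates $f$ itself.

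Second, the left-module analog of \cref{FlCovAndInjEnvAndTop}\cref{FlCovAndInjEnvAndTop.Inj} (obtained by applying that result to the Noether $R$-algebra $A^{\op}$, noting that $(A_{\p})^{\op}/\rad(A_{\p})^{\op}$ as a right $A^{\op}$-module coincides with $A_{\p}/\rad A_{\p}$ as a left $A$-module) then provides a canonical injective envelope
\begin{equation*}
\Hom_{A^{\op}}(A_{\p}/\rad A_{\p},\,M)\hookrightarrow M
\end{equation*}
in $\Mod A^{\op}$. The standard tensor-hom adjunction reduces the source to
\begin{equation*}
\Hom_{R}(A\otimes_{A}(A_{\p}/\rad A_{\p}),\,E_{R}(R/\p))\cong\Hom_{R}(A_{\p}/\rad A_{\p},\,E_{R}(R/\p)).
\end{equation*}
Next, \cref{TopOfFiber} yields a finite (by \cref{SpecOfNoethAlg}) decomposition $A_{\p}/\rad A_{\p}\cong\bigoplus_{P\cap R=\p}S_{A}(P)^{n_{P}}$, and the symmetric form of \cref{SimpleMatlisDuality} (proved by passing to the local Noether algebra $A_{\p}$ over $R_{\p}$, using the $\p$-locality of both $S_{A}(P)$ and $E_{R}(R/\p)$) gives $\Hom_{R}(S_{A}(P),E_{R}(R/\p))\cong S_{A^{\op}}(P)$. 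Combining,
\begin{equation*}
\Hom_{A^{\op}}(A_{\p}/\rad A_{\p},\,M)\cong\bigoplus_{P\cap R=\p}S_{A^{\op}}(P)^{n_{P}}.
\end{equation*}

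Taking injective envelopes in $\Mod A^{\op}$, which commute with the finite direct sum, produces $M\cong\bigoplus_{P\cap R=\p}I_{A^{\op}}(P)^{n_{P}}$, the desired isomorphism. The main care needed is organizational: transferring the right-module statements of \cref{FlCovAndInjEnvAndTop}\cref{FlCovAndInjEnvAndTop.Inj} and \cref{SimpleMatlisDuality} to their left-module counterparts via $A^{\op}$, and keeping the left/right $A$-actions on the bimodule $A$ straight in the tensor-hom identification.
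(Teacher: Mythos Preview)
Your proof is correct and follows essentially the same route as the paper's: both reduce to showing that $\Hom_{A^{\op}}(A_\p/\rad A_\p,M)\cong\bigoplus_{P\cap R=\p}S_{A^{\op}}(P)^{n_P}$ via the tensor-hom adjunction, \cref{TopOfFiber}, and \cref{SimpleMatlisDuality}, after invoking \cref{FlCovAndInjEnvAndTop}\cref{FlCovAndInjEnvAndTop.Inj} for the $\p$-local $\p$-torsion injective module $M$. The only cosmetic difference is that the paper obtains $\p$-torsion by citing \cref{MatlisDual1} (Matlis dual of a finitely generated module is artinian, hence torsion), whereas you give a direct elementary argument using finite generation of $A$ over $R$.
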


\begin{proof}
	By \cref{InjDualProperties}\cref{InjDualProperties.FlBecomesInj}, $\Hom_{R}(A,E_{R}(R/\p))$ is an injective left $A$-module.
As $E_R(R/\p)\cong E_{R_\p}(\kappa(\p))$ by \cref{IndecomposableInjectiveLocal}, the functor $\Hom_{R}(-,E_{R}(R/\p))$ sends finitely generated right $A$-modules to $\p$-local $\p$-torsion left $A$-modules; see \cref{LocAdj} and \cref{MatlisDual1}. 
Thus, \cref{FlCovAndInjEnvAndTop}\cref{FlCovAndInjEnvAndTop.Inj} applied to $I:=\Hom_{R}(A,E_{R}(R/\p))$ implies that the canonical morphism 
$\Hom_{A^{\op}}(A_\p /\rad A_\p, I) \to I$ is an injective envelope in $\Mod A^{\op}$.  Now we have
	\begin{align*}
		\Hom_{A^{\op}}(A_\p /\rad A_\p, I)
		&\cong\Hom_{R}(A\otimes_{A}(A_\p /\rad A_\p),E_{R}(R/\p))\\
		&\cong\Hom_{R}(\bigoplus_{\substack{P\in\Spec A\\P\cap R=\p}}S_{A}(P)^{n_{P}},E_{R}(R/\p))\\
		&\cong\bigoplus_{\substack{P\in\Spec A\\P\cap R=\p}}S_{A^{\op}}(P)^{n_{P}},
	\end{align*}
	where the second isomorphism follows from \cref{TopOfFiber}, and the third follows from \cref{SpecOfNoethAlg,SimpleMatlisDuality}. Since each $I_{A^{\op}}(P)$ is the injective envelope of $S_{A^{\op}}(P)$, we obtain the desired isomorphism.
\end{proof}

\begin{proposition}\label{DecompOfCompOfAlg}
	For every $\p\in\Spec R$, there is an isomorphism
	\begin{equation*}
		\widehat{A_{\p}}\cong\bigoplus_{\substack{P\in\Spec A\\P\cap R=\p}}T_{A}(P)^{n_{P}}
	\end{equation*}
	of right $A$-modules.
\end{proposition}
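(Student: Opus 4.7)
The plan is to derive this proposition by applying the contravariant functor $\Hom_{R}(-, E_{R}(R/\p))$ to the isomorphism of \cref{DecompOfInjDualOfAlg} and then identifying the resulting double Matlis dual of $A$ with $\widehat{A_\p}$.

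On the right-hand side of \cref{DecompOfInjDualOfAlg}, the functor converts the direct sum into a product, yielding
\[
\Hom_{R}\!\Bigl(\bigoplus_{P\cap R=\p} I_{A^{\op}}(P)^{n_{P}},\; E_{R}(R/\p)\Bigr)\;\cong\; \prod_{P\cap R=\p} T_{A}(P)^{n_{P}}
\]
by the very definition of $T_{A}(P)$. By \cref{SpecOfNoethAlg} the indexing set is finite and each $n_{P}$ is finite, so this product coincides with the direct sum $\bigoplus_{P\cap R=\p} T_{A}(P)^{n_{P}}$.

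For the left-hand side, I claim that
\[
\Hom_{R}\!\bigl(\Hom_{R}(A, E_{R}(R/\p)),\; E_{R}(R/\p)\bigr)\;\cong\; \widehat{A_\p}
\]
as right $A$-modules. Using the identification $E_{R}(R/\p) \cong E_{R_\p}(\kappa(\p))$ and the adjunction of \cref{LocAdj}, the inner Hom may be rewritten as $\Hom_{R_\p}(A_\p, E_{R_\p}(\kappa(\p)))$; since every $R$-homomorphism from the finitely generated $R$-module $A$ to the $\p$-torsion module $E_{R}(R/\p)$ is annihilated by some power of $\p$, this module is $\p$-torsion, hence $\p$-local, so the outer $\Hom_{R}$ likewise reduces to $\Hom_{R_\p}$. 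Thus the expression becomes the double Matlis dual of the finitely generated $R_\p$-module $A_\p$ over the local noetherian ring $(R_\p, \p R_\p, \kappa(\p))$, and \cref{DoubleMatlisDual1} applied to the Noether $R_\p$-algebra $A_\p$ with $M=A_\p$ identifies it canonically with $\widehat{A_\p}$ as a right $\widehat{A_\p}$-module, and hence as a right $A$-module.

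Combining these two identifications with the isomorphism of \cref{DecompOfInjDualOfAlg} produces the desired decomposition. The only delicate point is to check that the right $A$-module structures correspond correctly under the various identifications, but this is automatic because $A$ is naturally an $(A,A)$-bimodule, each Matlis dual inherits a canonical bimodule structure, and every isomorphism invoked above is natural in this bimodule structure.
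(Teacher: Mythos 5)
Your proof is correct and follows essentially the same route as the paper's: identify $\widehat{A_\p}$ with the double dual $\Hom_{R}(\Hom_{R}(A,E_{R}(R/\p)),E_{R}(R/\p))$ via \cref{LocAdj}, \cref{IndecomposableInjectiveLocal}, and \cref{DoubleMatlisDual1}, then decompose the inner dual by \cref{DecompOfInjDualOfAlg} and use the finiteness from \cref{SpecOfNoethAlg} to turn the resulting product into a direct sum of the $T_A(P)^{n_P}$.
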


\begin{proof}
By \cref{LocAdj}, \cref{IndecomposableInjectiveLocal}, and \cref{DoubleMatlisDual1}, there is a canonical isomorphism 
	\begin{equation}\label{DoubleMatlisDualOfA}
		\widehat{A_{\p}}\isoto \Hom_{R}(\Hom_{R}(A,E_{R}(R/\p)),E_{R}(R/\p))
	\end{equation}
	of right $\widehat{A_\p}$-modules.
	Thus the result follows from \cref{SpecOfNoethAlg} and \cref{DecompOfInjDualOfAlg}.
\end{proof}

\begin{remark}\label{HomomorphismBetweenLocalCompleteModules}
By \cref{LocAdj,CompleteEquivalence}, all $A$-homomorphism between $\p$-local $\p$-complete right $A$-modules are $\widehat{A_{\p}}$-homomorphisms. The isomorphism in \cref{DecompOfCompOfAlg} is therefore an isomorphism of right $\widehat{A_{\p}}$-modules. This implies that each $T_{A}(P)$ is a projective right $\widehat{A_\p}$-module.

Similarly, by \cref{LocAdj,TorsionEquivalence}, all $A$-homomorphism between $\p$-local $\p$-torsion right $A$-modules are also $\widehat{A_{\p}}$-homomorphisms. So the isomorphism in \cref{DecompOfInjDualOfAlg} is an isomorphism of left $\widehat{A_{\p}}$-modules.
\end{remark}

We will observe in \cref{CompOfDSumIsPInjEnv} that a direct sum of infinite copies of $T_A(P)$ is not necessarily cotorsion, but its $\p$-adic completion is cotorsion by the next result.

\begin{proposition}\label{DualOfInj}
	Let $P\in\Spec A$ and $\p:=P\cap R$. For every set $B$, there exists a canonical isomorphism
	\begin{equation*}
		(T_{A}(P)^{(B)})_{\p}^{\wedge}\isoto\Hom_{R}(I_{A^{\op}}(P),E_{R}(R/\p)^{(B)}).
	\end{equation*}
	of $A$-modules. In particular, $(T_{A}(P)^{(B)})_{\p}^{\wedge}$ is flat and pure-injective, that is, flat cotorsion. 
\end{proposition}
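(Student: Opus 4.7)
The plan is to realize both sides as flat covers of the same semisimple $A_{\p}$-module $S_{A}(P)^{(B)}$ and then deduce the isomorphism from the uniqueness of flat covers. First I construct the canonical comparison map. Since $T_{A}(P)=\Hom_{R}(I_{A^{\op}}(P),E_{R}(R/\p))$, there is an obvious $A$-homomorphism
\[
T_{A}(P)^{(B)}\to\Hom_{R}(I_{A^{\op}}(P),E_{R}(R/\p)^{(B)})
\]
sending $(f_{b})_{b}$ to the map $i\mapsto(f_{b}(i))_{b}$; this is well-defined because $(f_{b})_{b}$ has finite support. The target is $\p$-complete by \cref{InjDualSendsTorsToComp} together with \cref{ArtinianInjective}, so this map extends uniquely through a morphism $\phi\colon (T_{A}(P)^{(B)})_{\p}^{\wedge}\to\Hom_{R}(I_{A^{\op}}(P),E_{R}(R/\p)^{(B)})$. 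Both sides are $\p$-local $\p$-complete flat right $A$-modules: the right-hand side by \cref{DualOfInjIsLocComp}, and the left-hand side because $T_{A}(P)$ is $\p$-local flat, flatness is preserved under direct sums and under $\p$-adic completion (\cref{FlCompIsFl}), and $\p$-locality survives the inverse limit defining the completion.

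Next I equip both sides with flat cover maps to $S_{A}(P)^{(B)}$ that are compatible under $\phi$. For the target, this is exactly the content of \cref{DualOfInjEnv} applied with the set $B$. For the source, the flat cover $T_{A}(P)\to S_{A}(P)$ direct-sums to an epimorphism $T_{A}(P)^{(B)}\to S_{A}(P)^{(B)}$ which factors through the $\p$-adic completion since $S_{A}(P)^{(B)}$ is $\p$-annihilated. The resulting map $(T_{A}(P)^{(B)})_{\p}^{\wedge}\to S_{A}(P)^{(B)}$ is itself a flat cover by \cref{FlCovAndInjEnvAndTop}\cref{FlCovAndInjEnvAndTop.Fl}, once I identify
\[
(T_{A}(P)^{(B)})_{\p}^{\wedge}\otimes_{A}(A_{\p}/\rad A_{\p})\cong T_{A}(P)^{(B)}\otimes_{A}(A_{\p}/\rad A_{\p})\cong S_{A}(P)^{(B)};
\]
the first isomorphism rests on the commutativity of $\p$-adic completion with reduction modulo $\p$ (via \cref{CompTensorRed}) together with $\p A_{\p}\subset\rad A_{\p}$, and the second uses \cref{TopOfFlCovAndSocOfInjEnv}\cref{TopOfFlCovAndSocOfInjEnv.Flat} applied to $T_{A}(P)=F_{A}(S_{A}(P))$. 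I then verify commutativity of the triangle at the level of $T_{A}(P)^{(B)}$: both paths send $(f_{b})_{b}$ to $(f_{b}|_{S_{A^{\op}}(P)})_{b}\in S_{A}(P)^{(B)}$, invoking the identification $\Hom_{R}(S_{A^{\op}}(P),E_{R}(R/\p))\cong S_{A}(P)$ from \cref{SimpleMatlisDuality} and the fact that $\Hom_{R_{\p}}(S_{A^{\op}}(P),-)$ commutes with direct sums because $S_{A^{\op}}(P)$ is finitely generated over $R_{\p}$.

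The standard uniqueness argument for flat covers (any morphism between two flat covers of the same object that commutes with the cover maps is an isomorphism) then forces $\phi$ to be an isomorphism, which yields the desired canonical isomorphism. The ``in particular'' clause is then immediate: the right-hand side is flat by \cref{InjDualProperties}\cref{InjDualProperties.InjBecomesFlCot} (since $E_{R}(R/\p)^{(B)}$ is injective over the noetherian ring $R$) and pure-injective by \cref{InjDualProperties}\cref{InjDualProperties.Cot}. I expect the principal subtlety to be the bookkeeping in identifying $(T_{A}(P)^{(B)})_{\p}^{\wedge}\otimes_{A}(A_{\p}/\rad A_{\p})$ with $S_{A}(P)^{(B)}$, which requires commuting $\p$-adic completion past a reduction modulo $\p^{n}$ via the appendix lemma and then passing from $\p A_{\p}$ to the larger $\rad A_{\p}$ through a further base change.
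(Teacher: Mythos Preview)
Your proof is correct and follows essentially the same strategy as the paper's: you construct the canonical comparison map $\phi$ via the universal property of $\p$-adic completion, exhibit both sides as flat covers of $S_{A}(P)^{(B)}$ (the target via \cref{DualOfInjEnv}, the source via \cref{FlCovAndInjEnvAndTop}\cref{FlCovAndInjEnvAndTop.Fl} after the identification through \cref{CompTensorRed}), and conclude by uniqueness of flat covers. The paper packages the same ingredients into a single commutative diagram with two rows, but the mathematical content and the cited lemmas are identical.
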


\begin{proof}
Let $S_{A^\op}(P) \to I_{A^{\op}}(P)$ be the injective envelope.
Applying $\Hom_R(-,E_R(R/\p))$ to this, we obtain the flat cover $\Hom_R(I_{A^{\op}}(P),E_R(R/\p)) \to \Hom_R(S_{A^\op}(P),E_R(R/\p))$ by \cref{DualOfInjEnv}. 
Taking the the direct sum of $B$-indexed copies of the flat cover, we obtain the first row of the following diagram: 
\begin{equation*}
\begin{tikzcd}
 T_A(P)^{(B)} \ar[r,equal]\arrow[d] &\Hom_{R}(I_{A^{\op}}(P),E_{R}(R/\p))^{(B)}  \ar[d]\ar[r]&\Hom_R(S_{A^\op}(P),E_R(R/\p))^{(B)}\ar[d,"\wr"]\\
(T_A(P)^{(B)})^\wedge_\p\ar[r]&\Hom_{R}(I_{A^{\op}}(P),E_{R}(R/\p)^{(B)})\ar[r]&\Hom_{R}(S_{A^{\op}}(P),E_{R}(R/\p)^{(B)})
\end{tikzcd}
\end{equation*}
The vertical morphisms are canonical ones, and the third is an isomorphism by the proof of \cref{DualOfInjEnv}.
The first morphism in the second row is the unique morphism making the left square commutative; this exists since $\Hom_{R}(I_{A^{\op}}(P),E_{R}(R/\p)^{(B)})$ is $\p$-complete; see \cref{DualOfInjIsLocComp} and \cref{CompAdj}. 
The second morphism in the second row is the one induced by the injective envelope $S_{A^\op}(P) \to I_{A^{\op}}(P)$, so it is a flat cover by \cref{DualOfInjEnv}. Moreover, the right square is commutative as well.

If we apply $-\otimes_A (A_\p /\rad A_\p)$ to the above commutative diagram, then the second morphism in each row becomes an isomorphism by the proof of \cref{DualOfInjEnv}, and the first vertical morphism becomes an isomorphism by \cref{CompTensorRed} and \cref{JacobsonRadicalAlgebra}, so the other morphisms in the diagram are also isomorphisms, where the third vertical morphism remains the same morphism.

Therefore, it follows from \cref{FlCovAndInjEnvAndTop}\cref{FlCovAndInjEnvAndTop.Fl} that the second row of the above diagram is a flat cover because $(T_A(P)^{(B)})^\wedge_\p$ is a $\p$-local $\p$-complete flat right $A$-module; see \cref{def.IndecFlCot,FlCompIsFl,CompletionIdempotent}.
\end{proof}

\begin{remark}\label{EnochsIsomExtended}
Let $\p\in \Spec R$ and let $B$ be a set. We can recover the known isomorphism
\begin{equation}\label{eq.hom.inj.env.dsum}
	(R_\p^{(B)})^\wedge_\p\cong \Hom_R(E(R/\p), E(R/\p)^{(B)})
\end{equation}
from \cref{DecompOfCompOfAlg,DualOfInj}.
Indeed, \cref{DecompOfCompOfAlg} applied to $A=R$ simply identifies $\widehat{R_\p}$ with $T_R(\p)=\Hom_{R}(E_{R}(R/\p),E_{R}(R/\p))$, and then \cref{DualOfInj} gives an isomorphism 
 \begin{equation*}
 (\widehat{R_\p}^{(B)})^\wedge_\p\isoto \Hom_{R}(E_{R}(R/\p),E_{R}(R/\p)^{(B)}),
 \end{equation*}
 where the left-hand side coincides with $(R_\p^{(B)})^\wedge_\p$ by \cref{CompTensorRed}.

This isomorphism can be generalized to $A$. Applying the functor $-\otimes_RA$ to \cref{eq.hom.inj.env.dsum} and using \cref{FlTensorCompAsTensor}, we obtain an isomorphism
 \begin{equation*}
 (A_\p^{(B)})^\wedge_\p\isoto \Hom_{R}(\Hom_R(A,E_{R}(R/\p)),E_{R}(R/\p)^{(B)}),
 \end{equation*}
as we deduced \cref{DoubleMatlisDual1}.
In particular, it follows that $(A_\p^{(B)})^\wedge_\p$ is a $\p$-local $\p$-complete flat cotorsion right $A$-module; see \cref{InjDualProperties,CompletionIdempotent}.
 \end{remark}

\begin{theorem}\label{DecompFlCotAtEachBasePrime}
	Let $\p\in\Spec R$. For a right $A$-module $M$, the following are equivalent:
	\begin{enumerate}
		\item\label{DecompFlCotAtEachBasePrime.LocCompFl} $M$ is a $\p$-local $\p$-complete flat right $A$-module.
		\item\label{DecompFlCotAtEachBasePrime.LocCompFlCot} $M$ is a $\p$-local $\p$-complete flat cotorsion right $A$-module.
		\item\label{DecompFlCotAtEachBasePrime.DSummand} $M$ is a direct summand of $(A_{\p}^{(B)})_{\p}^{\wedge}$ for some set $B$.
		\item\label{DecompFlCotAtEachBasePrime.DSumOfIndec} $M$ is isomorphic to
		\begin{equation*}
			\bigoplus_{\substack{P\in\Spec A\\P\cap R=\p}}(T_{A}(P)^{(B_{P})})_{\p}^{\wedge}
		\end{equation*}
		for some family of sets $\set{B_{P}}_{P}$.

	\end{enumerate}
	The cardinality of each $B_{P}$ in \cref{DecompFlCotAtEachBasePrime.DSumOfIndec} is uniquely determined by $M$.
\end{theorem}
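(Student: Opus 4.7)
The plan is to establish the cycle \cref{DecompFlCotAtEachBasePrime.LocCompFl}$\Rightarrow$\cref{DecompFlCotAtEachBasePrime.DSumOfIndec}$\Rightarrow$\cref{DecompFlCotAtEachBasePrime.DSummand}$\Rightarrow$\cref{DecompFlCotAtEachBasePrime.LocCompFlCot}$\Rightarrow$\cref{DecompFlCotAtEachBasePrime.LocCompFl}, together with the uniqueness assertion. The last implication is tautological, and \cref{DecompFlCotAtEachBasePrime.DSummand}$\Rightarrow$\cref{DecompFlCotAtEachBasePrime.LocCompFlCot} follows from \cref{EnochsIsomExtended}, which states that $(A_\p^{(B)})_\p^\wedge$ is $\p$-local, $\p$-complete, flat, and cotorsion; all four properties pass to direct summands.

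For \cref{DecompFlCotAtEachBasePrime.DSumOfIndec}$\Rightarrow$\cref{DecompFlCotAtEachBasePrime.DSummand}, I would use \cref{DecompOfCompOfAlg} to realize each $T_A(P)$ with $P\cap R=\p$ as a direct summand of $\widehat{A_\p}$. Passing to $B_P$-indexed direct sums and then applying $\p$-adic completion (which functorially preserves split embeddings) shows that $(T_A(P)^{(B_P)})_\p^\wedge$ is a direct summand of $(\widehat{A_\p}^{(B_P)})_\p^\wedge\cong(A_\p^{(B_P)})_\p^\wedge$, where the isomorphism is the one used in \cref{CompTensorRed}. The fiber $\setwithcondition{P\in\Spec A}{P\cap R=\p}$ is finite by \cref{SpecOfNoethAlg} and $\p$-adic completion commutes with finite direct sums, so $\bigoplus_P(T_A(P)^{(B_P)})_\p^\wedge$ embeds as a direct summand of $(A_\p^{(B)})_\p^\wedge$ with $B:=\bigsqcup_P B_P$.

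The crux is \cref{DecompFlCotAtEachBasePrime.LocCompFl}$\Rightarrow$\cref{DecompFlCotAtEachBasePrime.DSumOfIndec}, where I would exploit uniqueness of flat covers. Given a $\p$-local $\p$-complete flat $M$, the canonical map $M\to M\otimes_A(A_\p/\rad A_\p)$ is a flat cover in $\Mod A$ by \cref{FlCovAndInjEnvAndTop}\cref{FlCovAndInjEnvAndTop.Fl}. The target is a semisimple right $A_\p$-module and hence decomposes as $\bigoplus_P S_A(P)^{(B_P)}$ by \cref{SemisimpleModules}, where $P$ ranges over the finitely many primes of $A$ lying over $\p$. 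The proof of \cref{DualOfInj} exhibits $(T_A(P)^{(B_P)})_\p^\wedge\to S_A(P)^{(B_P)}$ as a flat cover; since the indexing set is finite and both $(-)_\p^\wedge$ and the formation of flat covers commute with finite direct sums, the induced map $\bigoplus_P(T_A(P)^{(B_P)})_\p^\wedge\to\bigoplus_P S_A(P)^{(B_P)}$ is a flat cover. Uniqueness of flat covers then forces $M\cong\bigoplus_P(T_A(P)^{(B_P)})_\p^\wedge$.

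The uniqueness of $|B_P|$ follows by reading it off as the multiplicity of the simple $A_\p$-module $S_A(P)$ in the intrinsically defined semisimple module $M\otimes_A(A_\p/\rad A_\p)$; these multiplicities are well-defined because the $S_A(P)$ with $P$ lying over $\p$ are pairwise non-isomorphic by \cref{SimpleOverNoethAlg}. The step I expect to require the most care is verifying that the direct sum of flat covers over the finite fiber is again a flat cover (both the precover property and right minimality must be checked), but this reduces to standard, elementary observations about split decompositions of endomorphism rings.
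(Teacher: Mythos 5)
Your proposal is correct and follows essentially the same route as the paper: the implication \cref{DecompFlCotAtEachBasePrime.LocCompFl}$\Rightarrow$\cref{DecompFlCotAtEachBasePrime.DSumOfIndec} via \cref{FlCovAndInjEnvAndTop}\cref{FlCovAndInjEnvAndTop.Fl}, the flat covers from \cref{DualOfInjEnv,DualOfInj}, finiteness of the fiber (\cref{SpecOfNoethAlg}) and uniqueness of flat covers; \cref{DecompFlCotAtEachBasePrime.DSumOfIndec}$\Rightarrow$\cref{DecompFlCotAtEachBasePrime.DSummand} via \cref{DecompOfCompOfAlg} and \cref{CompTensorRed}; and \cref{DecompFlCotAtEachBasePrime.DSummand}$\Rightarrow$\cref{DecompFlCotAtEachBasePrime.LocCompFlCot} via \cref{EnochsIsomExtended}. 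The uniqueness argument, reading off $|B_P|$ from the semisimple module $M\otimes_A(A_\p/\rad A_\p)$, is also the paper's (which invokes the Krull--Remak--Schmidt--Azumaya theorem at that point).
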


\begin{proof}
	Assume \cref{DecompFlCotAtEachBasePrime.LocCompFl}. By \cref{FlCovAndInjEnvAndTop}\cref{FlCovAndInjEnvAndTop.Fl}, 
	 $M$ is a flat cover of $M\otimes_{A}(A_\p/\rad A_\p)$ in $\Mod A$, where 
	\begin{equation*}
		M\otimes_{A}(A_\p/\rad A_\p)\cong \bigoplus_{\substack{P\in\Spec A\\P\cap R=\p}}S(P)^{(B_{P})}
	\end{equation*}
	for a family of sets $\set{B_{P}}_{P}$; see \cref{SemisimpleModules} and \cref{SemisimpleDecomposition}. 
Then \cref{DecompFlCotAtEachBasePrime.DSumOfIndec} follows from the above decomposition and \cref{DualOfInjEnv,DualOfInj}, along with \cref{SpecOfNoethAlg} and the elementary fact that a finite direct sum of flat covers is a flat cover (\cite[Theorem~1.2.10]{MR1438789}). 

Assume \cref{DecompFlCotAtEachBasePrime.DSumOfIndec}.
We first show the uniqueness of each $B_P$.
By \cref{DualOfInjEnv,DualOfInj},
$(T_A(P)^{(B_P)})^\wedge_\p$ is the flat cover of $S_A(P)^{(B_P)}$. 
By \cref{BijBetweenFlAndInjAndSemisimple}, 
we have an isomorphism $(T_A(P)^{(B_P)})^\wedge_\p\otimes_A (A_\p/\rad A_\p) \cong S_A(P)^{(B_P)}$.
It then follows that 
\begin{equation*}
M \otimes_A (A_\p/\rad A_\p)\cong
\bigg(\bigoplus_{\substack{P\in\Spec A\\P\cap R=\p}} (T_A(P)^{(B_P)})^\wedge_\p\bigg) \otimes_A (A_\p/\rad A_\p)
\cong \bigoplus_{\substack{P\in\Spec A\\P\cap R=\p}} S_A(P)^{(B_P)}.
\end{equation*}
Therefore, the cardinality of each $B_{P}$ is uniquely determined by $M$, due to the Krull-Remak-Schmidt-Azumaya theorem; see \cite[Theorem~E.1.24]{MR2530988} for example.

Let us next show that \cref{DecompFlCotAtEachBasePrime.DSumOfIndec} implies \cref{DecompFlCotAtEachBasePrime.DSummand}.
We know from \cref{DecompOfCompOfAlg} that $T_{A}(P)^{(B_{P})}$ is a direct summand of $\widehat{A_\p}^{(B_{P})}$,
 so $(T_{A}(P)^{(B_{P})})^\wedge_\p$ is a direct summand of $(\widehat{A_\p}^{(B_{P})})^\wedge_\p$, which is isomorphic to $(A_\p^{(B_{P})})^\wedge_\p$ by \cref{CompTensorRed}.
Let $B$ be the disjoint union of all $B_{P}$. Then
\begin{equation*}(A_{\p}^{(B)})_{\p}^{\wedge}\cong \bigoplus_{\substack{P\in\Spec A\\P\cap R=\p}}(A_{\p}^{(B_P)})_{\p}^{\wedge},\end{equation*}
so \cref{DecompFlCotAtEachBasePrime.DSummand} holds.
	
The implication \cref{DecompFlCotAtEachBasePrime.DSummand} $\Rightarrow$ \cref{DecompFlCotAtEachBasePrime.LocCompFlCot} is clear in view of the last sentence of \cref{EnochsIsomExtended}. The remaining implication \cref{DecompFlCotAtEachBasePrime.LocCompFlCot} $\Rightarrow$ \cref{DecompFlCotAtEachBasePrime.LocCompFl} is trivial.
\end{proof}

\begin{remark}
\label{LocTorsFlModIsPInj}
By \cref{DecompOfCompOfAlg}, \cref{HomomorphismBetweenLocalCompleteModules,EnochsIsomExtended}, and \cref{DecompFlCotAtEachBasePrime}, the following conditions are equivalent for a right $A$-module $M$:
\begin{enumerate}
\item\label{ExtendedLocalComplete.Flat} $M$ is a $\p$-local $\p$-complete flat right $A$-module.
\item\label{ExtendedLocalComplete.FlCot} $M$ is a $\p$-local $\p$-complete flat cotorsion right $A$-modules.
\item\label{ExtendedLocalComplete.Free} $M$ is isomorphic to the $\p$-adic completion of a projective $\widehat{A_\p}$-module.
\end{enumerate}
If this is the case, then the projective $\widehat{A_\p}$-module in \cref{ExtendedLocalComplete.Free} can be taken as a direct sum of indecomposable projective $\widehat{A_\p}$-modules.

Let $F$ be a flat right $A$-module. Then its localization $F_\p$ is also a flat right $A$-module, so its $\p$-adic completion $\widehat{F_\p}$ is a $\p$-local $\p$-complete flat right $A$-module (see \cref{FlCompIsFl,CompletionIdempotent}).
Thus \cref{DecompFlCotAtEachBasePrime} yields an isomorphism 
$\widehat{F_\p}\cong \bigoplus_{\substack{P\in\Spec A\\P\cap R=\p}}(T_{A}(P)^{(B_P)})_{\p}^{\wedge}
$,
and the proof of the theorem shows that the index sets $B_{P}$ are determined by a decomposition
\begin{equation*}
	F\otimes_A(A_\p/ \rad A_\p)\cong\widehat{F_\p}\otimes_A(A_\p/ \rad A_\p)\cong\bigoplus_{\substack{P\in\Spec A\\P\cap R=\p}}S(P)^{(B_{P})},
\end{equation*}
where the first isomorphism follows from \cref{JacobsonRadicalAlgebra} and $\widehat{F_\p}\otimes_{R}\kappa(\p)\cong F\otimes_{R}\kappa(\p)$ (see \cref{CompTensorRed}). If $A=R$, then the left-most side is $F\otimes_{R}\kappa(\p)$. Therefore, all the classical facts mentioned at the beginning of this section have been generalized to Noether algebras.
\end{remark}

\begin{remark}
	Contrary to the classical case, the term ``projective $\widehat{A_\p}$-module'' in \cref{LocTorsFlModIsPInj}\cref{ExtendedLocalComplete.Free} cannot be replaced either by ``free $A_\p$-module'', ``projective $A_\p$-module'', or ``free $\widehat{A_\p}$-module'', even if $A$ is commutative. We give a counter-example to all of these at the same time.
	
	Let $k$ be a field and $R:=k[x,y]/(y^{2}-x^{2}(x+1))$. The ring $R$ can be embedded into the polynomial ring $A:=k[t]$ by $x\mapsto t^{2}-1$ and $y\mapsto t(t^{2}-1)$. Then $R$ and $k[t]$ have the same quotient field $k(t)$, and $k[t]$ is the integral closure of $R$ in the quotient field $k(t)$. Note that $A=k[t]$ is a Noether $R$-algebra since $A=R+Rt$.
	
Consider the maximal ideal $\km:=(x,y)\subset R$. We have 
$\km^{n}A=(\kn_{1}\kn_{-1})^{n}$ for each $n\geq 1$, where $\kn_{i}=(t-i)\in\Max A$. The $\km$-adic completion of $A_{\km}$ has a decomposition $\widehat{A_{\km}}=A_{\km}^{\wedge}\cong A_{\kn_{1}}^{\wedge}\times A_{\kn_{-1}}^{\wedge}$
	as a ring (\cref{DecompositionOfCompletion}). Letting $M:=A_{\kn_{1}}^{\wedge}$, we have $M=\widehat{A_{\kn_{1}}}\cong T_A(\kn_{1})$, so this is an indecomposable flat cotorsion $A$-module and also is an indecomposable projective $\widehat{A_{\km}}$-module, which is $\km$-complete.
Thus $M$ satisfies the equivalent conditions in \cref{LocTorsFlModIsPInj}, setting  $\p:=\km$.

	However, $M$ is not isomorphic to the $\km$-adic completion of any free $\widehat{A_{\km}}$-module since such a completion is a direct sum of copies of $A_{\kn_{1}}^{\wedge}\times A_{\kn_{-1}}^{\wedge}$ (which is decomposable or zero). We also show that $M$ is not isomorphic to the $\km$-adic completion of any projective $A_{\km}$-module, either. Given a nonzero projective $A_{\km}$-module $P$, we have $P_{\km}^{\wedge}\cong P_{\kn_{1}}^{\wedge}\oplus P_{\kn_{-1}}^{\wedge}$ (\cref{DecompositionOfCompletion}). Since $\bigcap_{n\geq 1} \kn_i^n F=0$ for every free $A_\km$-module $F$ (see \cite[Theorem~8.10]{MR1011461}), the canonical map $F\to F_{\kn_{i}}^{\wedge}$ is injective, so the same holds for $P$. Therefore each $P_{\kn_{i}}^{\wedge}$ is nonzero, and hence $P_{\km}^{\wedge}$ is not indecomposable.
\end{remark}

\section{Structure of flat cotorsion modules}
\label{sec.DescripOfFlCotMod}

Let us now complete the proof of the structure theorem for flat cotorsion modules (\cref{intro.ClassifOfFlCotOverNoethAlg}):

\begin{theorem}\label{ClassifOfFlCotOverNoethAlg}
	Let $A$ be a Noether $R$-algebra. A right $A$-module $M$ is flat cotorsion if and only if $M$ is isomorphic to
		\begin{equation}\label{eq.FlCotDecomp}
			\prod_{P\in\Spec A}(T_{A}(P)^{(B_{P})})_{P\cap R}^{\wedge}
		\end{equation}
		for some family of sets $\{B_{P}\}_{P\in\Spec A}$.
The cardinality of each $B_{P}$ is uniquely determined by $M$.
\end{theorem}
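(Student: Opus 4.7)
The plan is to combine the two decomposition results already in hand: \cref{DecompToLocCompFlCot}, which splits a flat cotorsion module as a product over $\Spec R$ of $\p$-local $\p$-complete flat cotorsion components, and \cref{DecompFlCotAtEachBasePrime}, which classifies each such component as a finite direct sum indexed by the fiber $\{P\in\Spec A\mid P\cap R=\p\}$.

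First, for the ``only if'' direction, let $M$ be flat cotorsion. By \cref{DecompToLocCompFlCot}, there is an isomorphism
\begin{equation*}
M\cong\prod_{\p\in\Spec R}M(\p),
\end{equation*}
with each $M(\p)$ a $\p$-local $\p$-complete flat cotorsion right $A$-module, uniquely determined up to isomorphism. Next, apply \cref{DecompFlCotAtEachBasePrime}, \cref{DecompFlCotAtEachBasePrime.LocCompFlCot}$\Rightarrow$\cref{DecompFlCotAtEachBasePrime.DSumOfIndec}, to each $M(\p)$: there is a family of sets $\{B_P\}_{P\cap R=\p}$ with
\begin{equation*}
M(\p)\cong\bigoplus_{\substack{P\in\Spec A\\P\cap R=\p}}(T_A(P)^{(B_P)})_\p^\wedge.
\end{equation*}
Since this direct sum is indexed by the \emph{finite} set of primes lying over $\p$ (\cref{SpecOfNoethAlg}), it is in particular a (finite) direct product. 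Reassembling over all $\p\in\Spec R$, one gets
\begin{equation*}
M\cong\prod_{\p\in\Spec R}\prod_{\substack{P\in\Spec A\\P\cap R=\p}}(T_A(P)^{(B_P)})_\p^\wedge\cong\prod_{P\in\Spec A}(T_A(P)^{(B_P)})_{P\cap R}^\wedge,
\end{equation*}
which is the required form.

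For the ``if'' direction, it suffices to observe that each factor $(T_A(P)^{(B_P)})_{P\cap R}^\wedge$ is flat cotorsion by \cref{DualOfInj}, and that the class of flat cotorsion modules is closed under arbitrary direct products: closure under products is immediate for cotorsion modules, and for flatness it uses that $A$ is left coherent, as was already invoked in the proof of \cref{FlCotIffDSummandOfProdOfDual}.

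Finally, for uniqueness of the cardinalities $|B_P|$, one reads it off in two stages. By the uniqueness clause of \cref{DecompToLocCompFlCot}, the isoclass of the $\p$-component $M(\p)=\prod_{P\cap R=\p}(T_A(P)^{(B_P)})_\p^\wedge$ is determined by $M$. Then, by the uniqueness clause of \cref{DecompFlCotAtEachBasePrime} (whose proof uses the Krull--Remak--Schmidt--Azumaya theorem applied after tensoring with $A_\p/\rad A_\p$ to extract the semisimple ``top'' $\bigoplus_P S_A(P)^{(B_P)}$), each individual $|B_P|$ is determined by $M(\p)$, hence by $M$. No step here is a real obstacle, since the heavy lifting has already been done in \cref{DecompToLocCompFlCot,DecompFlCotAtEachBasePrime}; the only thing to be careful about is the interchange of product over $\Spec R$ with direct sum over the fiber, which is legitimate precisely because the fibers are finite.
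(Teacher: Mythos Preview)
Your proof is correct and takes essentially the same approach as the paper, which simply states that the theorem follows from \cref{DecompToLocCompFlCot} and \cref{DecompFlCotAtEachBasePrime}. You have spelled out the combination in detail, including the point that the finite fiber over each $\p$ allows the direct sum to be rewritten as a direct product, which is exactly what makes the two decompositions compatible.
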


\begin{proof}
This follows from \cref{DecompToLocCompFlCot} and \cref{DecompFlCotAtEachBasePrime}.
\end{proof}

Consequently, we obtain a complete description of indecomposable flat cotorsion modules (\cref{intro.ClassifOfIndecFlCotOverNoethAlg}):

\begin{corollary}\label{ClassifOfIndecFlCotOverNoethAlg}
	Let $A$ be a Noether $R$-algebra. Then there is a bijection
	\begin{equation*}
		\Spec A\isoto\setwithtext{isoclasses of indecomposable flat cotorsion right $A$-modules}
	\end{equation*}
	given by $P\mapsto T_{A}(P)=\Hom_R(I_{A^{\op}}(P), E_R(R/P\cap R))$.
\end{corollary}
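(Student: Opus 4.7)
The plan is to deduce the corollary as a direct consequence of the structure theorem \cref{ClassifOfFlCotOverNoethAlg}. The two things to verify are: (i) each $T_A(P)$ is indecomposable and flat cotorsion, and (ii) the assignment $P \mapsto T_A(P)$ is a bijection onto the isoclasses of indecomposable flat cotorsion modules.

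For (i), indecomposability of $T_A(P)$ was already observed in \cref{CompletionNecessity} via the identification $T_A(P) \cong \Hom_{\widehat{R_\p}}(T_A(P), E_R(R/\p))$ combined with the fact that $I_{A^\op}(P)$ is indecomposable and Matlis duality \cref{MatlisDualOverNoetherianAlgebra} gives an antiequivalence between $\mod \widehat{A_\p}$ and $\artin A^\op$; alternatively it is also recorded in \cref{def.IndecFlCot} that $T_A(P)$ is flat cotorsion, being $\p$-local $\p$-complete flat (and pure-injective as the $R$-dual of a module).

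For (ii), I would argue as follows. Let $M$ be an indecomposable flat cotorsion right $A$-module. By \cref{ClassifOfFlCotOverNoethAlg}, there exists a family of sets $\{B_P\}_{P \in \Spec A}$ with
\begin{equation*}
M \cong \prod_{P \in \Spec A} (T_A(P)^{(B_P)})_{P \cap R}^{\wedge}.
\end{equation*}
Since $M$ is indecomposable (and nonzero), exactly one index $P_0 \in \Spec A$ can have $B_{P_0} \neq \emptyset$, and moreover $B_{P_0}$ must be a singleton: otherwise the corresponding factor $(T_A(P_0)^{(B_{P_0})})^{\wedge}_{P_0 \cap R}$ would split off a copy of $T_A(P_0)$ as a direct summand by \cref{DecompFlCotAtEachBasePrime}. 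Concretely, if $B_{P_0}$ has more than one element, writing $B_{P_0}$ as a disjoint union of two nonempty subsets and using that completion commutes with finite direct sums yields a nontrivial decomposition. Thus $M \cong T_A(P_0)$.

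Finally, for injectivity of $P \mapsto T_A(P)$, suppose $T_A(P) \cong T_A(P')$. Apply the uniqueness clause of \cref{ClassifOfFlCotOverNoethAlg}: both sides are expressed as the product \cref{eq.FlCotDecomp} with the index sets being the singleton at $P$ (respectively $P'$) and empty elsewhere. The uniqueness of the cardinalities $|B_P|$ forces $P = P'$. This completes the bijection. No step is truly an obstacle here, since all the hard work has been done in \cref{ClassifOfFlCotOverNoethAlg}; the only point that deserves a line of care is the reduction from an arbitrary indecomposable to a single $T_A(P)$, which is an immediate application of the uniqueness and the Krull--Remak--Schmidt-type split already invoked in the proof of the theorem.
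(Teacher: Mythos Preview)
Your proof is correct and follows essentially the same route as the paper: you invoke \cref{CompletionNecessity} for indecomposability, use the uniqueness clause of \cref{ClassifOfFlCotOverNoethAlg} for injectivity, and reduce an arbitrary indecomposable flat cotorsion module to a single $T_A(P)$ by peeling off direct summands from the product decomposition. The only cosmetic difference is that the paper phrases the surjectivity step by directly noting that $T_A(P)\cong T_A(P)_\p^\wedge$ is a direct summand of $(T_A(P)^{(B)})_\p^\wedge$, whereas you argue via splitting $B$ into two nonempty pieces; both are equivalent.
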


\begin{proof}
By \cref{CompletionNecessity}, $T_{A}(P)$ is indecomposable. The uniqueness of the cardinalities of $B_{P}$ in \cref{ClassifOfIndecFlCotOverNoethAlg} implies that the map in the statement is injective.

To observe the surjectivity, take an indecomposable flat cotorsion right $A$-module $M$. By \cref{ClassifOfFlCotOverNoethAlg}, $M$ is isomorphic to $(T_{A}(P)^{(B)})_{\p}^{\wedge}$ for some $P\in\Spec A$ and a nonempty set $B$, where $\p:=P\cap R$. 
Since $T_{A}(P)\cong T_{A}(P)_{\p}^{\wedge}$ is a direct summand of $(T_{A}(P)^{(B)})_{\p}^{\wedge}$, the indecomposability of $M$ implies that the cardinality of $B$ is one, and hence $M\cong T_{A}(P)$.
\end{proof}

\begin{example}\label{ex.TriMatAlg}
	Let $R$ be a commutative noetherian ring and let $A$ be the $2\times 2$ lower triangular matrix algebra over $R$, that is,
	\begin{equation*}
		A=\begin{pmatrix}R & 0 \\ R & R\end{pmatrix}.
	\end{equation*}
	Then $A$ is a Noether $R$-algebra. We describe all isoclasses of simple, indecomposable injective, and indecomposable flat cotorsion right $A$-modules. The algebra $A$ has a decomposition
	\begin{equation*}
		A=\begin{pmatrix}R & 0\end{pmatrix}\oplus\begin{pmatrix}R & R\end{pmatrix}
	\end{equation*}
	as a right $A$-module, where the action of $A$ is matrix multiplication.
	For each $\p\in\Spec R$,
	\begin{equation*}
		P_{1}(\p):=\begin{pmatrix}\p & 0 \\ R & R\end{pmatrix}\quad\text{and}\quad P_{2}(\p):=\begin{pmatrix}R & 0 \\ R & \p\end{pmatrix}
	\end{equation*}
	are prime ideals of $A$, and varying $\p$, these are all the prime ideals of $A$ (see \cref{SpecOfNoethAlg}). The simple right $A_{\p}$-modules are
	\begin{equation*}
		S_{A}(P_{1}(\p))=\begin{pmatrix}\kappa(\p) & 0\end{pmatrix}\quad\text{and}\quad S_{A}(P_{2}(\p))=\frac{\begin{pmatrix}\kappa(\p) & \kappa(\p)\end{pmatrix}}{\begin{pmatrix}\kappa(\p) & 0\end{pmatrix}}.
	\end{equation*}
	By \cref{nP}, $n_{P_{i}(\p)}=1$ for $i=1, 2$. On the other hand, the algebra $A$ has a decomposition
	\begin{equation*}
		A=\begin{pmatrix}R \\ R\end{pmatrix}\oplus\begin{pmatrix}0 \\ R\end{pmatrix}
	\end{equation*}
	as a left $A$-module, and we have
	\begin{equation*}
		\Hom_{R}(A,E_{R}(R/\p))\cong\begin{pmatrix}E_{R}(R/\p) & E_{R}(R/\p)\end{pmatrix}\oplus\frac{\begin{pmatrix}E_{R}(R/\p) & E_{R}(R/\p)\end{pmatrix}}{\begin{pmatrix}E_{R}(R/\p) & 0\end{pmatrix}}
	\end{equation*}
	as right $A$-modules. Hence, by \cref{DecompOfInjDualOfAlg},
	\begin{equation*}
		I_{A}(P_{1}(\p))=\begin{pmatrix}E_{R}(R/\p) & E_{R}(R/\p)\end{pmatrix}\quad\text{and}\quad I_{A}(P_{2}(\p))=\frac{\begin{pmatrix}E_{R}(R/\p) & E_{R}(R/\p)\end{pmatrix}}{\begin{pmatrix}E_{R}(R/\p) & 0\end{pmatrix}}
	\end{equation*}
	because each $I_{A}(P_{i}(\p))$ should have $S_{A}(P_{i}(\p))$ as a right $A$-submodule. Similarly, by \cref{DecompOfCompOfAlg},
	\begin{equation*}
		T_{A}(P_{1}(\p))=\begin{pmatrix}\widehat{R_{\p}} & 0\end{pmatrix}\quad\text{and}\quad T_{A}(P_{2}(\p))=\begin{pmatrix}\widehat{R_{\p}} & \widehat{R_{\p}}\end{pmatrix}
	\end{equation*}
	since each $T_{A}(P_{i}(\p))$ should have $S_{A}(P_{i}(\p))$ as a quotient $A$-module.
\end{example}

\begin{remark}\label{FinDimAlg}
	Let us consider the case where $R=k$ is a field, that is, $A$ is a finite-dimensional $k$-algebra. As mentioned in \cref{ArtinFlcovProjcov}, all flat right $A$-modules are projective and all right $A$-modules are cotorsion. Thus the flat cotorsion right $A$-modules are precisely the projective right $A$-modules. For every $P\in\Spec A$,
	\begin{equation*}
		T_{A}(P)=\Hom_{k}(I_{A^{\op}}(P),k)
	\end{equation*}
	is the projective cover of $S_{A}(P)$ (see \cref{DualOfInjEnv}), and the product in \cref{ClassifOfFlCotOverNoethAlg} can be written as
	\begin{equation*}
		\bigoplus_{P\in\Spec A}T_{A}(P)^{(B_{P})}
	\end{equation*}
	since $\Spec A$ is a finite set by \cref{SpecOfNoethAlg} and $P\cap k=0$ for each $P\in \Spec A$.
\end{remark}

\section{Flat cotorsion modules as flat covers and pure-injective envelopes}
\label{sec.FlCotModAsFlCovCotEnv}

Let $A$ be a Noether $R$-algebra. 
In this section, we prove \cref{FlatCoverAndCotorsionEnvelopeTheorem}, which gives other descriptions of each flat cotorsion right $A$-module in terms of a flat cover and a pure-injective envelope.

\begin{lemma}\label{ProdOfFlCover}
	For each $\p\in\Spec R$, let  $f(\p)\colon F(\p)\to M(\p)$ be a flat cover in $\Mod A$ such that $F(\p)$ is $\p$-local and $\p$-complete. Then the product
	\begin{equation*}
		\prod_{\p\in\Spec R}f(\p)\colon\prod_{\p\in\Spec R}F(\p)\to\prod_{\p\in\Spec R}M(\p)
	\end{equation*}
	is a flat cover.
\end{lemma}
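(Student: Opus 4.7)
The plan is to verify the three defining properties of a flat cover for the product map: flatness of the source, the precover property, and right minimality.

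First I will observe that $\prod_{\p} F(\p)$ is flat. Each $F(\p)$ is flat as the source of a flat cover, and since $A$ is left noetherian, hence left coherent, arbitrary direct products of flat right $A$-modules are flat (this is Chase's theorem, already invoked in the proof of \cref{FlCotIffDSummandOfProdOfDual}). In fact, by \cref{DecompFlCotAtEachBasePrime} the hypothesis that $F(\p)$ is $\p$-local $\p$-complete flat promotes it to a flat cotorsion module, although this stronger information is not strictly needed here.

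Next, for the precover property, I would take an arbitrary morphism $g\colon G\to\prod_\p M(\p)$ with $G$ flat, and for each $\p$ use that $f(\p)$ is a flat precover to lift the composition $\pi_\p g\colon G\to M(\p)$ through $f(\p)$, obtaining $h(\p)\colon G\to F(\p)$. The universal property of the product then produces $h\colon G\to\prod_\p F(\p)$ with $\pi_\p h = h(\p)$ for all $\p$, and componentwise $(\prod_\p f(\p))\circ h = g$.

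The heart of the argument is right minimality, and this is where \cref{IsomOfProdIffIsomOfComponent} will do the work. Suppose $s\in\End_A(\prod_\p F(\p))$ satisfies $(\prod_\p f(\p))\circ s = \prod_\p f(\p)$. For each $\q\in\Spec R$, let $\iota_\q$ and $\pi_\q$ denote the canonical inclusion and projection, and set $s_\q := \pi_\q\circ s\circ\iota_\q\in\End_A(F(\q))$. Precomposing the identity $\pi_\q\circ(\prod_\p f(\p))\circ s = \pi_\q\circ\prod_\p f(\p)$ with $\iota_\q$ and using $\pi_\q\iota_\q = \id_{F(\q)}$ yields $f(\q)\circ s_\q = f(\q)$. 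Since $f(\q)$ is right minimal, $s_\q$ is an isomorphism for every $\q$. Because each $F(\p)$ is $\p$-local and $\p$-complete, \cref{IsomOfProdIffIsomOfComponent} applies with $M(\p) = N(\p) = F(\p)$, and the isomorphy of all the $s_\q$ forces $s$ itself to be an isomorphism. This completes the proof, the only potentially delicate step being the appeal to \cref{IsomOfProdIffIsomOfComponent}, whose hypotheses are already recorded in our setup.
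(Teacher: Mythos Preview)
Your proof is correct and follows essentially the same approach as the paper's own proof: flatness of the product via left coherence of $A$, the precover property by lifting componentwise (the paper phrases this as surjectivity of $\Hom_A(F',f)=\prod_\p\Hom_A(F',f(\p))$), and right minimality by checking that each diagonal component $\pi_\q s\iota_\q$ is an isomorphism and then invoking \cref{IsomOfProdIffIsomOfComponent}.
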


\begin{proof}
	Denote the product of morphisms by $f\colon F\to M$, where $F$ is a flat right $A$-module since $A$ is left noetherian. 
	For every flat right $A$-module $F'$, the morphism $\Hom_{A}(F',f(\p))$ is an epimorphism since $f(\p)$ is a flat (pre)cover. Hence the product $\Hom_{A}(F',f)=\prod_{\p\in\Spec R}\Hom_{A}(F',f(\p))$ is also an epimorphism. This shows that $f$ is a flat precover.
	
	It remains to show that $f$ is right minimal. 
	Let $g\in\End_{A}(F)$ such that $fg=f$. For each $\q\in\Spec R$, we have a commutative diagram
	\begin{equation*}
		\begin{tikzcd}
			F(\q)\ar[ddr,"f(\q)"']\ar[rr,hookrightarrow,"\textnormal{inclusion}"] & & F\ar[dr,"f"']\ar[rr,"g"] & & F\ar[rr,twoheadrightarrow,"\textnormal{projection}"]\ar[dl,"f"] & & F(\q)\ar[ddl,"f(\q)"] \\
			& & & M\ar[drr,twoheadrightarrow,"\textnormal{projection}"] & & & \\
			& M(\q)\ar[urr,hookrightarrow,"\textnormal{inclusion}"]\ar[rrrr,equal] & & & & M(\q)\rlap{.} &
		\end{tikzcd}
	\end{equation*}
	Since $f(\q)$ is a flat cover, the composition in the first row is an isomorphism. 
	Therefore, $g$ is an isomorphism by \cref{IsomOfProdIffIsomOfComponent}.
\end{proof}

A special case of \cref{ProdOfFlCover} is discussed in the third paragraph of the proof of \cite[p.~183, Theorem]{MR754698}.

The assumption in \cref{ProdOfFlCover} that each $F(\p)$ is $\p$-local and $\p$-complete is satisfied if each 
$M(\p)$ is $\p$-local and $\p$-complete, by \cref{ClosureEnvCovLocal} and \cref{FlatCoverIdeal}\cref{FlatCoverIdeal.Complete}.

\begin{proposition}\label{PureInjectiveEnvelopeProjective}
Let $M$ be a right $A$-module that is finitely generated or projective. Then the morphism $M\to \prod_{\km\in \Max R} M^\wedge_\km$ induced by the completion maps $M\to M^\wedge_\km$ is a pure-injective envelope.
\end{proposition}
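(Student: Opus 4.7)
The plan is to verify that $\phi_M\colon M\to\prod_{\km\in\Max R}M^\wedge_\km$ is both (i) a pure monomorphism into a pure-injective module and (ii) left minimal; by \cref{PInjEnv}\cref{PInjEnv.Preenv} this identifies $\phi_M$ as a left minimal pure-injective preenvelope, i.e., a pure-injective envelope. For pure-injectivity of the target it suffices that each factor $M^\wedge_\km$ be pure-injective, since products of pure-injectives are pure-injective. If $M$ is projective, then $M^\wedge_\km\cong(M_\km)^\wedge_\km$ is a $\km$-local $\km$-complete flat $A$-module by \cref{FlCompIsFl} and \cref{CompletionIdempotent}, hence flat cotorsion by \cref{DecompFlCotAtEachBasePrime} and pure-injective by \cref{InjDualProperties3}. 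If $M$ is finitely generated, then $M^\wedge_\km\cong M\otimes_R R^\wedge_\km$ is finitely generated over the Noether algebra $A^\wedge_\km$ over the complete local ring $R^\wedge_\km$; by \cref{MatlisDualOverNoetherianAlgebra}, $M^\wedge_\km$ is isomorphic to its double Matlis dual $\Hom_{R^\wedge_\km}(N,E_{R^\wedge_\km}(\kappa(\km)))$ for some artinian left $A^\wedge_\km$-module $N$, hence pure-injective by \cref{InjDualProperties}\cref{InjDualProperties.Cot}.

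For left minimality, given $g\in\End_A(\prod_\km M^\wedge_\km)$ with $g\circ\phi_M=\phi_M$, the orthogonality of \cref{Orthogonality} (distinct maximal ideals being incomparable) forces $g$ to decompose diagonally as $g=\prod_\km g_\km$ with $g_\km\in\End_A(M^\wedge_\km)$ satisfying $g_\km\circ\phi_{M,\km}=\phi_{M,\km}$, where $\phi_{M,\km}\colon M\to M^\wedge_\km$ is the completion. Reducing modulo $\km$ and using the isomorphism $M/\km M\isoto M^\wedge_\km/\km M^\wedge_\km$ yields $g_\km\equiv\id\pmod{\km}$. In the projective case, \cref{LiftingHomForFl} applied to $\ka=\km$ shows the canonical map $M^\wedge_\km\to M^\wedge_\km/\km M^\wedge_\km$ is a flat cover, so right minimality forces each $g_\km$ to be an isomorphism. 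In the finitely generated case, Nakayama's lemma over the local ring $R^\wedge_\km$ yields surjectivity of $g_\km$, and an ascending-chain argument on $\ker g_\km^n$ in the noetherian module $M^\wedge_\km$ over the right noetherian ring $A^\wedge_\km$ yields injectivity.

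For the pure monomorphism property in the finitely generated case, $M$ is finitely presented over noetherian $R$, so $M\otimes_R-$ commutes with direct products and $\prod_\km M^\wedge_\km\cong M\otimes_R S$ with $S:=\prod_\km R^\wedge_\km$. The ring map $R\to S$ is faithfully flat: each $R^\wedge_\km$ is flat over $R$, the product remains flat by coherence of $R$, and $\Spec S\to\Spec R$ is surjective since every prime of $R$ lies below some maximal ideal $\km$ and $\Spec R^\wedge_\km\to\Spec R$ hits all primes contained in $\km$ (via faithful flatness of $R_\km\to R^\wedge_\km$). Faithful flatness gives a pure monomorphism of $R$-modules, and the associativity identification $(M\otimes_R S)\otimes_A U\cong(M\otimes_A U)\otimes_R S$ (valid by centrality of $R$ in $A$) upgrades this to a pure monomorphism of $A$-modules.

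For the pure monomorphism in the projective case, $M$ is flat, so by \cref{CotEnvAndPInjEnvForFlatMod} the pure-injective envelope coincides with the cotorsion envelope; since $\prod_\km M^\wedge_\km$ is flat cotorsion and left minimality has been shown, it suffices to verify that the cokernel $Q$ of $\phi_M$ is flat, for then the Tor long exact sequence gives pure exactness of $0\to M\to\prod_\km M^\wedge_\km\to Q\to 0$. \emph{The main obstacle} is this cokernel flatness for non-finitely-generated projective $M$, since $M^\wedge_\km$ strictly contains $M\otimes_R R^\wedge_\km$ and the finitely generated argument does not transfer verbatim. I plan to reduce to the free case $M=A^{(B)}$ (as cokernel flatness passes to direct summands of $\phi$) and verify flatness of $Q$ by matching the explicit $\km$-local $\km$-complete structure of $(A^{(B)})^\wedge_\km$ with the flat cotorsion module prescribed by \cref{BijBetweenFlAndInjAndSemisimple} for the semisimple $(A_\km/\rad A_\km)^{(B)}$.
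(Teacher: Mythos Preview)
Your arguments for pure-injectivity of the target and for left minimality are correct, though for the latter the paper simply invokes the adjoint property of $\varLambda^{\km}$ (\cref{CompAdj}): any $g_{\km}\in\End_A(M^\wedge_\km)$ with $g_{\km}\circ\phi_{M,\km}=\phi_{M,\km}$ satisfies $\varLambda^\km(g_\km)=\id$ after identifying $\varLambda^\km M\cong\varLambda^\km\varLambda^\km M$, so $g_\km$ is an isomorphism. Your finitely generated case for the pure monomorphism is also fine.

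The genuine gap is in the projective case. Your reduction ``it suffices to show $Q=\Cok\phi_M$ is flat'' is formally correct but \emph{equivalent} to the claim you are trying to prove: for an exact sequence $0\to M\to N\to Q\to 0$ with $M$ and $N$ flat, $Q$ is flat if and only if the sequence is pure exact (both amount to $\Tor_1^A(Q,-)=0$). So you have not reduced the problem, and your proposed plan---matching $(A^{(B)})^\wedge_\km$ with the flat cover of $(A_\km/\rad A_\km)^{(B)}$ via \cref{BijBetweenFlAndInjAndSemisimple}---identifies the target but says nothing about the cokernel of the specific map $\phi_M$. (You also have not yet argued that $\phi_M$ is injective in this case, though that is easy.)

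The paper's device avoids this entirely. After reducing to local $(R,\km)$ via \cref{PureMonomorphismLocalization}, take $M=A^{(B)}$ and consider the commutative square
\[
\begin{tikzcd}
A^{(B)}\ar[r,"\eta(A^{(B)})"]\ar[d,hookrightarrow,"g"'] & \varLambda^{\km}(A^{(B)})\ar[d,"\varLambda^{\km}g"] \\
A^{B}\ar[r,"\eta(A^{B})"'] & \varLambda^{\km}(A^{B})\rlap{.}
\end{tikzcd}
\]
The left vertical map is a pure monomorphism (direct sum into product). The bottom map is identified with $\eta(A)^B\colon A^B\to(\widehat{A})^B$ because $\varLambda^\km$ commutes with products, and $\eta(A)\colon A\to\widehat{A}$ is a pure monomorphism by the finitely generated case already established; products of pure monomorphisms are pure (tensor with a finitely presented module commutes with products). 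Hence the composite down-then-right is a pure monomorphism, and therefore so is the top map $\eta(A^{(B)})$. This is the missing idea: factor through $A^B$ to reduce the infinite-rank completion to a product of rank-one completions.
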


When $A=R$, this is shown in \cite[Proposition~6.7.3 and Remark~6.7.12]{MR1753146}. Let us first recall an elementary fact before giving a proof.

\begin{remark}\label{PureMonomorphismLocalization}
For every right $A$-module $M$, the morphism $g\colon M\to \prod_{\km\in \Max R} M_\km$ induced by the localization maps $M\to M_\km$ is a pure monomorphism, or equivalently, $g\otimes_AN$ is a monomorphism in $\Mod R$ for every finitely generated (presented) left $A$-module $N$ (see \cite[Lemma~2.19]{MR2985554}).
Indeed, the functor $-\otimes_AN$ commutes with arbitrary direct products (see \cite[Theorem~3.2.22]{MR1753146}) and the localization functors $(-)_\km$, so the morphism $g\otimes_AN$ can be written as 
$M\otimes_RN\to \prod_{\km\in \Max R} (M\otimes_RN)_\km$. This is a monomorphism, because if a given element $x$ of $M\otimes_RN$ becomes zero in $(M\otimes_RN)_\km$ for all $\km\in\Max R$, then $x$ is zero in $M\otimes_RN$; see \cite[Theorem~4.6]{MR1011461}.

It is also seen from the above argument that, given a family $\{M_b\}_{b\in B}$ of right $A$-modules, the canonical inclusion $\bigoplus_{b\in B}M_b\hookrightarrow \prod_{b\in B} M_b$ is a pure monomorphism (\cite[Lemma~2.1.10]{MR2530988}).
\end{remark}

\begin{proof}[Proof of \cref{PureInjectiveEnvelopeProjective}]
Denote by $f$  the morphism $M\to \prod_{\km\in \Max R} M^\wedge_\km$. To see that $f$ is left minimal, it suffices, by \cref{Orthogonality}, to show that each morphism $M\to M^\wedge_\km$ is left minimal for all $\km\in\Max R$ (since $M^\wedge_\km$ is $\km$-local and $\km$-complete), and this follows from the adjoint property of the $\km$-adic completion functor (\cref{CompAdj}).

It remains to check that $f$ is a pure-injective preenvelope.
If $M$ is finitely generated, then each $M^\wedge_\km$ is pure-injective by \cref{InjDualProperties}\cref{InjDualProperties.Cot}, \cref{ClosureEnvCovLocal}, and \cref{DoubleMatlisDual1}. 
If $M$ is projective, then $\prod_{\km\in \Max R} M^\wedge_\km$ is flat cotorsion (\cref{LocTorsFlModIsPInj}) and hence pure-injective by \cref{InjDualProperties3}.
Therefore, it suffices to check that $f$ is a pure monomorphism; see \cref{PInjEnv}\cref{PInjEnv.Preenv}.
By \cref{PureMonomorphismLocalization}, we only need to check that the completion map $M_\km\to M_\km^\wedge$ is a pure monomorphism for each $\km \in \Max R$, so 
we may assume that $A$ is a Noether algebra over a local ring $R$ with maximal ideal $\km$.

If $M$ is finitely generated, then the completion map $M\to \widehat{M}$ is a pure monomorphism since it coincides with the map induced by the pure monomorphism $R\to \widehat{R}$; see \cref{subsec.MatlisDual}.

If $M$ is projective, then we may replace it by a free module $A^{(B)}$ with basis $B$. The inclusion $g\colon A^{(B)} \into A^B$ and the canonical morphism 
$\eta\colon\id_{\Mod A}\to \varLambda^\km$ of functors $\Mod A\to\Mod A$ yield a commutative diagram:
\begin{equation*}
		\begin{tikzcd}
		A^{(B)}\ar[d,"g"']\ar[rr, "\eta (A^{(B)})"] && \varLambda^\km(A^{(B)})\ar[d,"\varLambda^\km g"] \\
			A^B\ar[rr,  "\eta (A^{B})"'] &&\varLambda^\km(A^B)\rlap{.}
		\end{tikzcd}
\end{equation*}
The functor $\varLambda^\km$ commutes with arbitrary direct products (\cref{CommutativityWithDirestProduct}), so $\varLambda^\km(A^B)\cong (\varLambda^\km A)^B$ and $\eta(A^B)$ is identified with $\eta(A)^B$. 

Now, $g$ is a pure monomorphism (\cref{PureMonomorphismLocalization}). The completion map $\eta(A)\colon A \to \varLambda^\km A=\widehat{A}$ is also a pure monomorphism as we recalled above, and hence so is $\eta (A)^B=\eta (A^B)$ (because tensoring a finitely generated module commutes with arbitrary direct products; see \cref{PureMonomorphismLocalization}). Therefore the commutative diagram above implies that $\eta (A^{(B)})$ is a pure monomorphism, as desired.
\end{proof}

As a consequence of \cref{PureInjectiveEnvelopeProjective}, we obtain the following remark:

\begin{remark}\label{CompOfDSumIsPInjEnv}
Let $P\in\Spec A$ and $\p:=P\cap R$. 
Recall that $T_A(P)$ is a projective right $\widehat{A_\p}$-module (\cref{HomomorphismBetweenLocalCompleteModules}). 
It then follows from \cref{PureInjectiveEnvelopeProjective} that the completion map 
\begin{equation*}f\colon T_{A}(P)^{(B)}\to (T_{A}(P)^{(B)})_{\p}^{\wedge}
\end{equation*}
 is a pure-injective envelope in $\Mod \widehat{A_\p}$, for every set $B$. 
Embedding this map into a pure exact sequence, we notice that the cokernel of $f$ is a flat right $\widehat{A_\p}$-module (see \cref{CotEnvAndPInjEnvForFlatMod,SpeFlCovCotEnv}). The cokernel is also a flat right $A$-module, as the canonical maps $A\to A_\p\to \widehat{A_\p}$ are flat ring homomorphisms. This shows that $f$ is a pure monomorphism in $\Mod A$ as well. Moreover, $f$ is left minimal in $\Mod A$ by \cref{CompAdj}, and $(T_{A}(P)^{(B)})_{\p}^{\wedge}$ is pure-injective in $\Mod A$ by \cref{DualOfInj}. Therefore, $f$ is a pure-injective envelope in $\Mod A$.

Let us consider the case where $A=R$ is a local ring and $P=\km$ is its maximal ideal. Then $T_R(\km)\cong \widehat{R}$, so the pure-injective envelope of $\widehat{R}^{(B)}$ (which is also the cotorsion envelope) is the completion map $f\colon\widehat{R}^{(B)}\to \widehat{R^{(B)}}$. If $B$ is an infinite set and the Krull dimension of $R$ is greater than $0$, then $f$ is not an isomorphism. This shows that a direct sum of copies of $T_A(P)$ is neither pure-injective nor cotorsion in general.
\end{remark}

\begin{lemma}\label{PInjEnvOfDSum}
	For each $\p\in\Spec R$, let  $g(\p)\colon M(\p)\to H(\p)$ be a pure-injective envelope in $\Mod A$ such that $H(\p)$ is $\p$-local and $\p$-complete. Then the morphism
	\begin{equation*}
		g\colon\bigoplus_{\p\in\Spec R}M(\p)\to\prod_{\p\in\Spec R}H(\p)
	\end{equation*}
	induced by $\{g(\p)\}_{\p\in \Spec R}$ is a pure-injective envelope.
\end{lemma}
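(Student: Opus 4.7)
The approach mirrors that of \cref{ProdOfFlCover}, but adapted from flat covers to pure-injective envelopes. Set $H:=\prod_{\p\in\Spec R}H(\p)$. First I would note that $H$ is pure-injective, since $\Hom_{A}(-,H)\cong\prod_{\p}\Hom_{A}(-,H(\p))$ sends every pure exact sequence to an exact sequence by pure-injectivity of each $H(\p)$.

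Next I would verify that $g$ is a pure monomorphism by factoring it as
\begin{equation*}
\bigoplus_{\p\in\Spec R}M(\p)\hookrightarrow\prod_{\p\in\Spec R}M(\p)\xrightarrow{\prod_{\p}g(\p)}\prod_{\p\in\Spec R}H(\p),
\end{equation*}
where the first arrow is the canonical inclusion, which is a pure monomorphism by the last paragraph of \cref{PureMonomorphismLocalization}. For the second arrow, recall that tensoring with a finitely presented left $A$-module commutes with arbitrary direct products (as cited in \cref{PureMonomorphismLocalization}); purity of each $g(\p)$ therefore gives $(\prod_{\p}g(\p))\otimes_{A}U\cong\prod_{\p}(g(\p)\otimes_{A}U)$ injective for every finitely presented left $A$-module $U$, so $\prod_{\p}g(\p)$ is itself a pure monomorphism. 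Hence $g$ is a pure monomorphism, and by \cref{PInjEnv}\cref{PInjEnv.Preenv} it is a pure-injective preenvelope.

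To upgrade this to a pure-injective envelope, it remains to prove left minimality of $g$. Let $h\in\End_{A}(H)$ satisfy $hg=g$, and denote by $\iota_{\p}\colon H(\p)\hookrightarrow H$ and $\pi_{\p}\colon H\twoheadrightarrow H(\p)$ the canonical inclusion and projection, and by $\iota'_{\p}\colon M(\p)\hookrightarrow\bigoplus_{\q}M(\q)$ the inclusion into the direct sum. From the construction of $g$, we have $g\iota'_{\p}=\iota_{\p}g(\p)$, which together with $hg=g$ and $\pi_{\p}\iota_{\p}=\id_{H(\p)}$ yields
\begin{equation*}
(\pi_{\p}h\iota_{\p})\circ g(\p)=\pi_{\p}hg\iota'_{\p}=\pi_{\p}g\iota'_{\p}=g(\p).
\end{equation*}
Since each $g(\p)$ is a pure-injective envelope and therefore left minimal, $\pi_{\p}h\iota_{\p}$ is an automorphism of $H(\p)$ for every $\p\in\Spec R$. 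Because each $H(\p)$ is $\p$-local and $\p$-complete by hypothesis, \cref{IsomOfProdIffIsomOfComponent} applies and yields that $h$ itself is an isomorphism. Hence $g$ is left minimal, completing the proof that it is a pure-injective envelope.

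The main delicate point, as in \cref{ProdOfFlCover}, is the left-minimality step. This is exactly where the hypothesis that each $H(\p)$ be $\p$-local and $\p$-complete is indispensable: without it, \cref{IsomOfProdIffIsomOfComponent} would be unavailable and one could not pass from componentwise isomorphisms to an isomorphism on the product.
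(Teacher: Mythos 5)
Your proof is correct and follows essentially the same route as the paper's: the only cosmetic difference is that you factor $g$ through $\prod_{\p}M(\p)$ (verifying that a product of pure monomorphisms is pure via commutation of $-\otimes_{A}U$ with direct products for finitely presented $U$), whereas the paper factors through $\bigoplus_{\p}H(\p)$, with both factorizations resting on \cref{PureMonomorphismLocalization}. Your left-minimality step—reducing to componentwise automorphisms via $hg=g$ and then invoking \cref{IsomOfProdIffIsomOfComponent}—is exactly the paper's argument.
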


\begin{proof}
We first remark  that $g$ is  factorized as the composition of the direct sum
\begin{equation*}
\bigoplus_{\p\in\Spec R} g(\p)\colon\bigoplus_{\p\in\Spec R}M(\p)\to  \bigoplus_{\p\in\Spec R}H(\p)
\end{equation*}
 and the canonical map $\bigoplus_{\p\in\Spec R}H(\p)\to \prod_{\p\in\Spec R}H(\p)$, where the former map is evidently a pure monomorphism, and so is the latter by \cref{PureMonomorphismLocalization}. It follows from the definition of pure-injective modules that the direct product $\prod_{\p\in\Spec R}H(\p)$ is pure-injective. 
Thus, $g$ is a pure monomorphism into a pure-injective module, that is, $g$ is a pure-injective preenvelope; see \cref{PInjEnv}\cref{PInjEnv.Preenv}. 

It remains to show the left minimality of $g\colon M\to H$, where $M:= \bigoplus_{\p\in\Spec R}M(\p)$ and $H:=\prod_{\p\in\Spec R}H(\p)$. 
 Let $h\in\End_{A}(H)$ with $hg=g$. For each $\q\in\Spec R$, we have a commutative diagram
	\begin{equation*}
		\begin{tikzcd}
			& \ar[ddl,"g(\q)"'] M(\q)\ar[drr,hookrightarrow,"\textnormal{inclusion}"']\ar[rrrr,equal] & & & & M(\q)\ar[ddr,"g(\q)"] & \\
			& & & \ar[dl,"g"'] M\ar[dr,"g"] \ar[urr,twoheadrightarrow,"\textnormal{projection}"'] & & & \\
			H(\q)\ar[rr,hookrightarrow,"\textnormal{inclusion}"'] & & H\ar[rr,"h"'] & & H\ar[rr,twoheadrightarrow,"\textnormal{projection}"'] & & H(\q)\rlap{.}
		\end{tikzcd}
	\end{equation*}
Since $g(\q)$ is a pure-injective envelope, the composition in the second row is an isomorphism. Therefore, $h$ is an isomorphism by \cref{IsomOfProdIffIsomOfComponent}.
\end{proof}

We can now prove the main result in this section.
Recall that, for a right $A$-module $M$, its pure-injective envelope and cotorsion envelope are denoted by $H_A(M)$ and $C_A(M)$, respectively.

\begin{theorem}
\label{FlatCoverAndCotorsionEnvelopeTheorem}
For every family of sets $\{B_{P}\}_{P\in\Spec A}$, we have isomorphisms of right $A$-modules
		\begin{align*}
			F_{A}(\prod_{P\in\Spec A}S_{A}(P)^{(B_{P})})\cong \prod_{P\in\Spec A}(T_{A}(P)^{(B_{P})})_{P\cap R}^{\wedge}\cong H_{A}(\bigoplus_{P\in\Spec A}T_{A}(P)^{(B_{P})}),
			\end{align*}
where $H_A$ can be replaced by $C_A$.
\end{theorem}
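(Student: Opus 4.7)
The approach is to regroup the index set $\Spec A$ according to the map $P \mapsto \p := P\cap R \in \Spec R$, whose fibres are finite by \cref{SpecOfNoethAlg}. For each $\p \in \Spec R$, set
\begin{equation*}
F(\p) := \bigoplus_{\substack{P\in\Spec A\\P\cap R = \p}} (T_{A}(P)^{(B_{P})})_{\p}^{\wedge}, \quad N(\p) := \bigoplus_{\substack{P\in\Spec A\\P\cap R = \p}} T_{A}(P)^{(B_{P})}, \quad M(\p) := \bigoplus_{\substack{P\in\Spec A\\P\cap R = \p}} S_{A}(P)^{(B_{P})},
\end{equation*}
each a finite direct sum. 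Because these inner sums are finite, one has canonical identifications $\prod_{P} (T_{A}(P)^{(B_{P})})^{\wedge}_{P\cap R} \cong \prod_{\p} F(\p)$, $\prod_{P} S_{A}(P)^{(B_{P})} \cong \prod_{\p} M(\p)$, and $\bigoplus_{P} T_{A}(P)^{(B_{P})} \cong \bigoplus_{\p} N(\p)$. The strategy is then to verify the hypotheses of \cref{ProdOfFlCover} and \cref{PInjEnvOfDSum} at each $\p$ and assemble the global flat cover and pure-injective envelope from the $\p$-local $\p$-complete pieces.

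\textbf{First isomorphism.} For each $P$ with $P\cap R = \p$, the canonical morphism $(T_{A}(P)^{(B_{P})})_{\p}^{\wedge} \to S_{A}(P)^{(B_{P})}$ is a flat cover: via the isomorphism in \cref{DualOfInj} it is identified with the flat cover of \cref{DualOfInjEnv}. Summing over the finitely many primes with $P\cap R = \p$, and using that a finite direct sum of flat covers is again a flat cover (\cite[Theorem~1.2.10]{MR1438789}), one obtains a flat cover $F(\p) \to M(\p)$ with $F(\p)$ evidently $\p$-local and $\p$-complete. \cref{ProdOfFlCover} then yields that the induced morphism $\prod_{\p} F(\p) \to \prod_{\p} M(\p)$ is a flat cover, establishing the first isomorphism.

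\textbf{Second isomorphism and cotorsion envelope.} Since $\varLambda^{\p}$ commutes with finite direct sums, the canonical morphism $N(\p) \to F(\p)$ coincides with the $\p$-adic completion map of $N(\p)$. By \cref{HomomorphismBetweenLocalCompleteModules}, each $T_{A}(P)$ (for $P\cap R = \p$) is a projective right $\widehat{A_{\p}}$-module, so $N(\p)$ is a projective right $\widehat{A_{\p}}$-module. Because $\widehat{R_{\p}}$ is local, \cref{PureInjectiveEnvelopeProjective} applied to the Noether $\widehat{R_{\p}}$-algebra $\widehat{A_{\p}}$ shows that this completion map is a pure-injective envelope in $\Mod \widehat{A_{\p}}$. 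The transfer argument of \cref{CompOfDSumIsPInjEnv} then promotes this to a pure-injective envelope in $\Mod A$: the cokernel is flat over $\widehat{A_{\p}}$, hence flat and pure over $A$; left minimality in $\Mod A$ follows from the adjointness \cref{CompAdj}; and $F(\p)$ is pure-injective in $\Mod A$ by \cref{DualOfInj}. Applying \cref{PInjEnvOfDSum} to $\{N(\p) \to F(\p)\}_{\p \in \Spec R}$ gives the second isomorphism. Finally, $\bigoplus_{P} T_{A}(P)^{(B_{P})}$ is flat, so \cref{CotEnvAndPInjEnvForFlatMod} permits replacing $H_{A}$ by $C_{A}$. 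The principal subtlety of the proof is precisely this transfer step for the finite direct sum at each $\p$, i.e., ensuring a pure-injective envelope in $\Mod A$ and not merely in $\Mod \widehat{A_{\p}}$.
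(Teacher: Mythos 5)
Your proof is correct and follows essentially the same route as the paper: regroup over the finite fibres of $\Spec A\to\Spec R$, verify that each $\p$-component gives a flat cover (via \cref{DualOfInjEnv,DualOfInj}) and a pure-injective envelope with $\p$-local $\p$-complete target, and then apply \cref{ProdOfFlCover} and \cref{PInjEnvOfDSum}. The only (harmless) variation is that for the envelope at each $\p$ you rerun the completion argument of \cref{CompOfDSumIsPInjEnv} for the whole finite sum $N(\p)$ as a projective $\widehat{A_\p}$-module, whereas the paper takes the envelope of each summand separately and invokes the general fact that a finite direct sum of pure-injective envelopes is a pure-injective envelope.
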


\begin{proof}
For every $P\in \Spec A$, we have a flat cover 
\begin{equation*}
f(P)\colon (T_{A}(P)^{(B_{P})})_{P\cap R}^{\wedge}\to S_{A}(P)^{(B_{P})}
\end{equation*}
	and a pure-injective envelope 
		\begin{equation*}
		g(P)\colon T_{A}(P)^{(B_{P})} \to (T_{A}(P)^{(B_{P})})_{P\cap R}^{\wedge}\end{equation*}
		 by \cref{DualOfInjEnv}, \cref{DualOfInj}, and \cref{CompOfDSumIsPInjEnv}.
 Fix $\p\in \Spec R$, and define $f(\p) \colon F(\p)\to M(\p)$ and $g(\p)\colon N(\p)\to H(\p)$ as the direct sum of $f(P)$ and the direct sum of $g(P)$, respectively, for all $P\in \Spec A$ with $P\cap R=\p$. 
Since there are only finitely many such $P$ (\cref{SpecOfNoethAlg}), $f(\p)$ and $g(\p)$ are a flat cover and a pure-injective envelope, respectively (see \cite[Theorems 1.2.5 and 1.2.10]{MR1438789}). Moreover, $F(\p)=H(\p)$ is $\p$-local and $\p$-complete.
Thus the first and the second isomorphisms in the theorem follow from \cref{ProdOfFlCover} and \cref{PInjEnvOfDSum}, respectively. \cref{CotEnvAndPInjEnvForFlatMod} shows that $H_A$ can be replaced by $C_A$.
\end{proof}

\begin{remark}\label{UniquenessOfCard}
It is known that each pure-injective right module $M$ over an arbitrary ring $A$ has a decomposition $M\cong H_A(\bigoplus_{c\in C}M_c^{(B_c)})\oplus N$, where $\set{M_c}_{c\in C}$ is a family of indecomposable pure-injective modules such that $M_c\not \cong M_{c'}$ whenever $c\neq c'$, $\set{B_{c}}_{c\in C}$ is a family of sets, and $N$ is a \emph{superdecomposable} module, that is, a module having no indecomposable direct summands. The cardinality of each $B_c$ and the isoclass of $N$ are uniquely determined by $M$. See \cite[Theorem~4.4.2]{MR2530988}.

For a flat cotorsion right module $M$ over a Noether $R$-algebra $A$, this fact has been explicitly realized as 
	\begin{equation*}
		M\cong H_{A}(\bigoplus_{P\in\Spec A}T_{A}(P)^{(B_{P})})
	\end{equation*}
by \cref{ClassifOfFlCotOverNoethAlg} and the second isomorphism in \cref{FlatCoverAndCotorsionEnvelopeTheorem}. Note that the superdecomposable summand $N$ is interpreted as the zero module.
\end{remark}

\begin{remark}
Another general fact we should mention is that every flat right module over an arbitrary ring admits a pure monomorphism into a direct product of indecomposable flat cotorsion right modules; this was shown by Guil Asensio and Herzog \cite[Corollary~10]{MR2377125}.
In particular, if the ring is left coherent, then this result implies that every flat cotorsion right module is a direct summand of a direct product of indecomposable flat cotorsion right modules, as flat cotorsion right modules are pure-injective (see \cref{NonnoethereianCase}).

In the case of a Noether $R$-algebra $A$, we can recover the result (\cite[Corollary~10]{MR2377125}) as follows:
Given a flat right $A$-module $M$, the pure-injective envelope $M\to H_{A}(M)$ is a pure monomorphism into a flat cotorsion module (see \cref{CotEnvAndPInjEnvForFlatMod,SpeFlCovCotEnv,PInjEnv}), so we may assume that $M$ itself is flat cotorsion, and thus
	\begin{equation*}
		M\cong\prod_{P\in\Spec A}(T_{A}(P)^{(B_{P})})_{P\cap R}^{\wedge}
	\end{equation*}
	by \cref{ClassifOfFlCotOverNoethAlg}. We show that the canonical morphism
	\begin{equation}\label{eq.EmbIntoProdOfIndecFlCot}
		\prod_{P\in\Spec A}(T_{A}(P)^{(B_{P})})_{P\cap R}^{\wedge}\to\prod_{P\in\Spec A}(T_{A}(P)^{B_{P}})_{P\cap R}^{\wedge}
	\end{equation}
	is a pure monomorphism. As we observed in \cref{PureMonomorphismLocalization}, it suffices to see that $-\otimes_{A}N$ applied to \cref{eq.EmbIntoProdOfIndecFlCot} is a monomorphism for every finitely generated left $A$-module $N$. We also observed that $-\otimes_{A}N$ commutes with direct products. Since $T_{A}(P)^{(B_{P})}$ and $T_{A}(P)^{B_{P}}$ are flat, \cref{FlTensorCompAsTensor} implies that $-\otimes_{A}N$ applied to \cref{eq.EmbIntoProdOfIndecFlCot} becomes
	\begin{equation*}
		\prod_{P\in\Spec A}((T_{A}(P)\otimes_{A}N)^{(B_{P})})_{P\cap R}^{\wedge}\to\prod_{P\in\Spec A}((T_{A}(P)\otimes_{A}N)^{B_{P}})_{P\cap R}^{\wedge},
	\end{equation*}
	which is clearly a monomorphism. Thus \cref{eq.EmbIntoProdOfIndecFlCot} is a pure monomorphism. Since completion commutes with direct products (\cref{CommutativityWithDirestProduct}) and each $T_{A}(P)$ is $(P\cap R)$-complete, the right-hand side of \cref{eq.EmbIntoProdOfIndecFlCot} is $\prod_{P\in\Spec A}T_{A}(P)^{B_{P}}$, which is a direct product of indecomposable flat cotorsion right $A$-modules.
\end{remark}

\section{Ziegler spectra and elementary duality}
\label{sec.ZgAndElementaryDual}

Let $A$ be a Noether $R$-algebra. Combining \cref{InjOverNoethRing} and \cref{ClassifOfIndecFlCotOverNoethAlg}, it follows that there exists a one-to-one correspondence between the isoclasses of indecomposable injective left $A$-modules and the isoclasses of indecomposable flat cotorsion right $A$-modules, given by $I_{A^{\op}}(P)\mapsto T_{A}(P)$ for each $P\in\Spec A$. In this section, we observe that this one-to-one correspondence is compatible, and is actually induced from, elementary duality between the Ziegler spectrum of $A^{\op}$ and that of $A$ (\cref{FlCotAndInjAreRefl}).

For a while, let $A$ be an arbitrary ring.
Denote by $\fp(\mod A,\Ab)$ the category of finitely presented additive functors $\mod A\to\Ab$, where $\mod A$ is the category of finitely presented right $A$-modules and $\Ab$ is the category of abelian groups.
Each functor $F\in\fp(\mod A,\Ab)$ admits a unique extension $\overrightarrow{F}\colon\Mod A\to\Ab$ (up to isomorphism) that commutes with (filtered) direct limits. By definition, there exists an exact sequence
$\Hom_A(M,-)\xrightarrow{-\circ f} \Hom_A(L,-)\to F\to 0$ in $\fp(\mod A,\Ab)$, where $f\colon L\to M$ is a morphism in $\mod A$, and the extension $\overrightarrow{F}$ can be defined as the cokernel of the same morphism $\Hom_A(M,-)\xrightarrow{-\circ f} \Hom_A(L,-)$ but regarded as a morphism of functors $\Mod A\to\Ab$; see \cite[Corollary~10.2.42]{MR2530988}.

Denote by $\Zg_A$ the \emph{Ziegler spectrum} of $A$, which is a topological space whose points are the isoclasses of indecomposable pure-injective right $A$-modules (they actually form a small set; see \cite[Corollary~4.3.38]{MR2530988}). The topology on $\Zg_{A}$ is defined so that $\setwithcondition{(F)}{F\in\fp(\mod A,\Ab)}$ is an open basis, where
\begin{equation*}
	(F):=\setwithcondition{N\in\Zg_{A}}{\overrightarrow{F} (N)\neq 0}
\end{equation*}
for each $F\in\fp(\mod A,\Ab)$; see \cite[Corollary~10.2.45]{MR2530988}.

Although this definition of the topology is convenient, it should be mentioned that the topology was originally introduced in terms of model theory for modules, and such a viewpoint helps us to understand elementary duality, particularly via \cref{CharDualOfZgPt}.
For this reason, we interpret the topology on the Ziegler spectrum via model theoretic language.

Let $l$, $m$, and $n$ be nonnegative integers. 
Let $H$ and $H'$ be matrices whose entries are elements of $A$, where $H$ is an $n\times l$ matrix and $H'$ is an $m\times l$ matrix.
A \emph{pp-formula} (\emph{positive primitive formula}) $\varphi$ for right $A$-modules is a formula of the form $\exists y(xH=yH')$, where $x=(x_1,\ldots,x_n)$ is a tuple of free variables and $y=(y_{1},\ldots,y_{m})$ is a tuple of variables bound by the existential quantifier. So $n$ is referred to as the number of free variables in $\varphi$.

For each right $A$-module $M$, the pp-formula $\varphi$ defines an abelian subgroup of $M^n$ as
\begin{equation*}
	F_{\varphi}(M):=\setwithcondition{x\in M^n}{\textnormal{there exists $y\in M^m$ such that $xH=yH'$}},
\end{equation*}
where $x$ and $y$ are regarded as row vectors. A subgroup of $M^n$ of this form (for some $l$ and $m$) is called a subgroup of $M^{n}$ \emph{pp-definable} in $M$. For every morphism $f\colon M\to N$ in $\Mod A$, the direct sum $f^{n}\colon M^{n}\to N^{n}$ restricts to a homomorphism $F_{\varphi}(M)\to F_{\varphi}(N)$ of abelian groups. So we obtain an additive functor $F_{\varphi}\colon\Mod A\to \Ab$.

Let $\varphi$ and $\psi$ be pp-formulas for right $A$-modules, and suppose that they have the same number, say $n$, of free variables. 
Then both $F_{\varphi}(M)$ and $F_{\psi}(M)$ are subgroups of $M^n$ for each $M\in \Mod A$. We write $\varphi\leq \psi$ if $F_{\varphi}(M)\subseteq F_{\psi}(M)$ for all right $A$-modules $M$. 
Moreover, $\varphi$ and $\psi$ are said to be \emph{equivalent} if $\varphi\leq \psi$ and $\varphi \geq \psi$, in which case we have equality of functors $F_{\varphi}=F_{\psi}$.

A \emph{pp-pair} $\varphi/\psi$ is a pair of pp-formulas with $\varphi \geq \psi$. Each pp-pair $\varphi/\psi$ defines an additive functor $F_{\varphi/\psi}\colon\Mod A \to \Ab$ by the assignment $M\mapsto F_{\varphi}(M)/F_{\psi}(M)$. The following remarkable fact allows us to understand the topology on $\Zg_A$ via pp-pairs.

\begin{theorem}\label{PpPairAndFunctorCategory}
Let $A$ be a ring. For each pp-pair $\varphi/\psi$, the functor $F_{\varphi/\psi}\colon\Mod A \to \Ab$ commutes with direct limits and its restriction to $\mod A$ belongs to $\fp(\mod A,\Ab)$. Conversely, for each $F\in \fp(\mod A,\Ab)$, there exists a pp-pair $\varphi\geq \psi$ such that $\overrightarrow{F}\cong F_{\varphi/\psi}$ as functors $\Mod A \to \Ab$. 
\end{theorem}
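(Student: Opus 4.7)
The plan is to translate between pp-pairs and morphisms in $\fp(\mod A,\Ab)$ through an explicit matrix construction in both directions. Given $\varphi(x) \equiv \exists y(xH = yH')$ with $x$ of length $n$, $y$ of length $m$, $H$ of size $n\times l$, and $H'$ of size $m\times l$, I would build a morphism of right $A$-modules $\pi\colon A^l \to A^n \oplus A^m$ so that the induced map $\Hom_A(A^n \oplus A^m, M) \to \Hom_A(A^l, M)$ at a right $A$-module $M$, read through the identifications $\Hom_A(A^n, M) = M^n$ and $\Hom_A(A^m, M) = M^m$, is $(x,y) \mapsto xH - yH'$. Setting $C_\varphi := \Cok\pi$, the resulting exact sequence $A^l \to A^n \oplus A^m \to C_\varphi \to 0$ shows that $F_\varphi(M)$ is exactly the image of the natural map $\Hom_A(C_\varphi, M) \to \Hom_A(A^n, M)$ induced by $A^n \into A^n \oplus A^m \onto C_\varphi$.

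Consequently, $F_\varphi$ (and the analogous $F_\psi$) is the image of a morphism between representable functors, so its restriction to $\mod A$ lies in $\fp(\mod A,\Ab)$; the quotient $F_{\varphi/\psi}|_{\mod A}$ then lies there as well. To see that $F_{\varphi/\psi}$ commutes with direct limits on all of $\Mod A$, I would use that $\Hom_A(X,-)$ preserves filtered direct limits for any finitely presented $X$, so the same holds for $F_\varphi$, $F_\psi$, and their subquotient $F_{\varphi/\psi}$, because images and cokernels in $\Ab$ of direct-limit-preserving natural transformations inherit the property.

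For the converse, given $F \in \fp(\mod A,\Ab)$ with presentation $\Hom_A(M,-) \to \Hom_A(L,-) \to F \to 0$ induced by a morphism $f\colon L \to M$ in $\mod A$, I would choose finite free presentations $A^{l_0} \to A^{n_0} \onto L$ and $A^{l_1} \to A^{n_1} \onto M$. These give isomorphisms $\Hom_A(L,N) \cong F_\varphi(N)$ and $\Hom_A(M,N) \cong F_{\varphi'}(N)$ for pp-formulas $\varphi(x) \equiv (xH_L = 0)$ and $\varphi'(y) \equiv (yH_M = 0)$, where $H_L$ and $H_M$ are matrices recording the first differentials of the presentations. Lifting $f$ through the chosen free covers produces a matrix $K$ such that the map $\Hom_A(M,N) \to \Hom_A(L,N)$ is given at each $N$ by an operation of the form $y \mapsto yK$. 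The image inside $F_\varphi(N)$ is therefore $F_\psi(N)$ for $\psi(x) \equiv \exists y(yH_M = 0 \wedge x = yK)$, which can be placed in the required single-equation form $\exists y(xH_1 = yH_2)$ by block-concatenating $H_M$ with $K$ and stacking the identity $I_{n_0}$ with a zero block. Since the image is contained in $F_\varphi(N)$, we have $\psi \leq \varphi$, and $F \cong F_{\varphi/\psi}|_{\mod A}$ in $\fp(\mod A,\Ab)$; their unique direct-limit-preserving extensions to $\Mod A$, namely $\overrightarrow{F}$ and $F_{\varphi/\psi}$, are then isomorphic.

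The main obstacle will be careful bookkeeping of matrix conventions when translating between the row-vector description of elements of $M^n$ implicit in pp-formulas and the $\Hom_A(A^n, M)$-description natural for representable functors, where transposes and directions of arrows must be handled consistently. A secondary, routine step is encoding the conjunction ``$yH_M = 0$ and $x = yK$'' as a single pp-formula of the exact syntactic form prescribed, which is accomplished by arranging $H_M$, $K$, an identity block, and zero blocks together into two matrices $H_1$ and $H_2$ whose equality $xH_1 = yH_2$ encodes both conditions simultaneously.
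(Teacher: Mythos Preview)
Your proposal is correct and supplies the standard argument, whereas the paper simply refers to Prest's book for the proof. One step you pass over quickly is that the image $F_\varphi$ of a morphism between representables is again in $\fp(\mod A,\Ab)$; this uses that $\fp(\mod A,\Ab)$ is an abelian subcategory of $(\mod A,\Ab)$ (Auslander's coherence theorem), which is standard but not entirely trivial for an arbitrary ring $A$ since $\mod A$ itself need not be abelian. Alternatively, one checks directly that the kernel of $\Hom_A(C_\varphi,-)\to\Hom_A(A^n,-)$ is represented by the finitely presented module $\Cok(A^l\xrightarrow{H'}A^m)$, so that $F_\varphi$ is explicitly a cokernel of a map between representables. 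With that point made precise, your outline goes through.
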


\begin{proof}
See \cite[Lemma~1.2.31 and Remark~10.2.29]{MR2530988} for the first statement, and \cite[Proposition~10.2.43]{MR2530988} for the second.
\end{proof}

In fact, the category $\fp(\mod A, \Ab)$ is equivalent to the \emph{category of pp-pairs} for right $A$-modules; see \cite[Theorem~10.2.30]{MR2530988}.

It follows from \cref{PpPairAndFunctorCategory} that
\begin{equation*}
	\setwithcondition{(F_{\varphi/\psi})}{\textnormal{$\varphi/\psi$ is a pp-pair for right $A$-modules}}
\end{equation*}
is an open basis for $\Zg_{A}$.

We now explain elementary duality, first in terms of pp-formulas. Let $\varphi$ be a pp-formula $\exists y(xH=yH')$ for right $A$-modules, where 
$H$ is an $n\times l$ matrix and $H'$ is an $m\times l$ matrix.
Regarding the transposes $H^{\transpose}$ and $H'^{\transpose}$ as matrices over $A^{\op}$, we can define the pp-formula $D\varphi$ for right $A^{\op}$-modules to be $\exists z(xK=zK')$, where $x=(x_{1},\ldots,x_{n})$ is a tuple of free variables, $z=(z_1\ldots, z_l)$ is a tuple of bound variables, $K:=\begin{pmatrix}I&0\end{pmatrix}$, $K':=\begin{pmatrix}H^{\transpose}&H'^{\transpose}\end{pmatrix}$, $I$ is the $n\times n$ identity matrix, and $0$ is the $n\times m$ zero matrix. The pp-formula $D\varphi$ is called the \emph{elementary dual} of $\varphi$. For each right $A^{\op}$-module $M$,
\begin{align*}
	F_{D\varphi}(M)&=\setwithcondition{x\in M^{n}}{\textnormal{there exists $z\in M^{l}$ such that $xK=zK'$}}\\
	&=\setwithcondition{x\in M^n}{\textnormal{there exists $z\in M^l$ such that $x=zH^{\transpose}$ and $zH'^{\transpose}=0$}}.
\end{align*}
If we regard $M$ as a left $A$-module and $x$ and $z$ as column vectors, then the equations in the second line can be written as $x=Hz$ and $H'z=0$.

We can apply the same construction to $D\varphi$ and obtain the pp-formula $D^{2}\varphi=DD\varphi$ for right $A$-modules. Elementary duality claims that $D^{2}\varphi$ is equivalent to $\varphi$, and moreover, two pp-formulas $\varphi$ and $\psi$ satisfies $\varphi\geq\psi$ if and only if $D\psi\geq D\varphi$. Denote by $\mathrm{pp}^n_A$ the poset of equivalent classes of pp-formulas in $n$ free variables for right $A$-modules. In fact, this is a modular lattice; see \cite[\S 1.1.3]{MR2530988}.

\begin{theorem}[Elementary duality of pp-formulas]\label{ElementaryDualityOfPpConditions}
Let $A$ be a ring. The operator $D$ is an anti-isomorphism from $\mathrm{pp}^n_A$ to $\mathrm{pp}^n_{A^\op}$ for each $n\geq 0$.
\end{theorem}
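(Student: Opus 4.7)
The plan is to verify two properties: (a) $D^2\varphi$ is equivalent to $\varphi$ for every pp-formula $\varphi$ in $n$ free variables, and (b) $D$ reverses order, i.e., $\varphi \leq \psi$ implies $D\psi \leq D\varphi$. Property (a) ensures that $D$ descends to equivalence classes and that $D \circ D$ is the identity on $\mathrm{pp}^n_A$; combined with (b), it yields the claimed anti-isomorphism between $\mathrm{pp}^n_A$ and $\mathrm{pp}^n_{A^\op}$.

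For (a), I would carry out the matrix computation directly. Writing $\varphi = \exists y(xH = yH')$ with $H$ of size $n\times l$ and $H'$ of size $m\times l$, the definition gives $D\varphi = \exists z(xK = zK')$ with $K = (I_n\;0)$ of size $n\times (n+m)$ and $K' = (H^\transpose\;H'^\transpose)$ of size $l\times(n+m)$. Applying $D$ once more produces the matrices $(I_n\;0)$ of size $n\times(n+l)$ and $\bigl(\begin{smallmatrix}I_n & H\\0 & H'\end{smallmatrix}\bigr)$ of size $(n+m)\times(n+l)$. Splitting the existentially quantified tuple as $(w',w'')$ with $w'$ of length $n$, the defining equations of $D^2\varphi$ reduce to $x = w'$ together with $0 = w'H + w''H'$; eliminating $w'$ and substituting $-w''$ for $w''$ recovers $\varphi$.

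For (b), I would use the notion of a \emph{free realization} of a pp-formula. For $\varphi = \exists y(xH = yH')$, let $L_\varphi$ be the finitely presented right $A$-module generated by $\bar x_1,\dots,\bar x_n,\bar y_1,\dots,\bar y_m$ subject to the relations $\bar xH = \bar yH'$, and set $a_\varphi := (\bar x_1,\dots,\bar x_n) \in L_\varphi^n$. By construction $a_\varphi \in F_\varphi(L_\varphi)$, and for every $M \in \Mod A$ and every $b \in F_\varphi(M)$, mapping $\bar x_i$ to $b_i$ (together with a choice of witness for $\bar y$) extends to a morphism $L_\varphi \to M$ carrying $a_\varphi$ coordinate-wise to $b$. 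The crucial identification is
\begin{equation*}
F_{D\varphi}(N) = \setwithcondition{b \in N^n}{\textstyle\sum_{i=1}^{n}(a_\varphi)_i \otimes b_i = 0 \textnormal{ in } L_\varphi \otimes_A N}
\end{equation*}
for every $N \in \Mod{A^\op}$: computing $L_\varphi \otimes_A N$ from the presentation of $L_\varphi$ as $\Cok(N^l \to N^n \oplus N^m,\ d \mapsto (Hd, -H'd))$, the vanishing of $\sum_i (a_\varphi)_i \otimes b_i$ amounts to $(b,0)$ lying in the image, which unfolds to $\exists d\,(b = Hd \wedge H'd = 0)$, precisely the defining condition of $D\varphi$ over left $A$-modules.

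With this in hand, order-reversal becomes essentially formal. If $\varphi \leq \psi$, then $a_\varphi \in F_\varphi(L_\varphi) \subset F_\psi(L_\varphi)$, so the free realization property of $\psi$ supplies a morphism $g\colon L_\psi \to L_\varphi$ with $g(a_\psi) = a_\varphi$ coordinate-wise; the induced map $g \otimes \id_N$ then sends $\sum_i (a_\psi)_i \otimes b_i$ to $\sum_i (a_\varphi)_i \otimes b_i$ for every $b \in N^n$, forcing $F_{D\psi}(N) \subset F_{D\varphi}(N)$ by the identification above, i.e., $D\psi \leq D\varphi$. The main obstacle I anticipate is the bookkeeping required to navigate the row/column and left/right conventions when unfolding $D$ and the tensor computation; once these conventions are aligned consistently, the rest of the argument is purely formal.
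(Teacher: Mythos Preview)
Your argument is correct. Note, however, that the paper does not actually prove this theorem: its entire proof is the single line ``See \cite[Proposition~1.3.1]{MR2530988},'' i.e., a citation to Prest's book. What you have written is essentially the standard proof found there: part (a) is the direct matrix unwinding showing $D^2\varphi$ is equivalent to $\varphi$, and part (b) is the free-realization/tensor-annihilation criterion (sometimes called Herzog's criterion) identifying $F_{D\varphi}(N)$ with the kernel of the pairing $b\mapsto\sum_i (a_\varphi)_i\otimes b_i$ in $L_\varphi\otimes_A N$. One small logical remark: it is actually (b), not (a), that guarantees $D$ is well-defined on equivalence classes (if $\varphi_1$ and $\varphi_2$ are equivalent then each is $\leq$ the other, and (b) gives $D\varphi_1$ equivalent to $D\varphi_2$); (a) then supplies bijectivity via $D^2=\id$. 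Your anticipated obstacle about row/column and left/right conventions is real but you have handled it correctly, matching the paper's convention that for a left $A$-module $N$ the condition $F_{D\varphi}(N)$ reads $x=Hz$ and $H'z=0$ with column vectors.
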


\begin{proof}
See \cite[Proposition~1.3.1]{MR2530988}.
\end{proof}

The following fact is the key to describe elementary duality of Ziegler spectra:

\begin{lemma}\label{CharDualOfZgPt}
Let $A$ be a ring and let $M$ be a right $A$-module. Fix a ring homomorphism $S\to\End_{A}(M)$ from a ring $S$.
Let $E$ be an injective cogenerator in $\Mod S^{\op}$ and set $M^*:=\Hom_{S^{\op}}(M, E)\in \Mod A^\op$.
For each pp-pair $\varphi/\psi$, we have $F_{\varphi/\psi}(M)=0$ if and only if $F_{D\psi/D\varphi}(M^*)=0$.
\end{lemma}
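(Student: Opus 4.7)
The plan is to prove the stronger pointwise identity
\begin{equation*}
F_{D\varphi}(M^*) \;=\; F_\varphi(M)^\perp
\end{equation*}
for every pp-formula $\varphi$ in $n$ free variables, where the annihilator is taken under the bilinear pairing
\begin{equation*}
\langle -,-\rangle\colon M^n \times (M^*)^n \to E,\qquad \langle x, g\rangle = \sum_{i=1}^n g_i(x_i),
\end{equation*}
which, under the canonical identification $(M^*)^n \cong \Hom_{S^\op}(M^n, E)$, is just evaluation. From this the lemma follows formally: $F_{\varphi/\psi}(M) = 0$ (resp.\ $F_{D\psi/D\varphi}(M^*) = 0$) is equivalent to $F_\varphi(M) = F_\psi(M)$ (resp.\ to $F_\varphi(M)^\perp = F_\psi(M)^\perp$), and these two equalities will be shown mutually equivalent below.

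To establish the identity, I will write $\varphi = \exists y(xH = yH')$ with $H$ an $n\times l$ and $H'$ an $m\times l$ matrix over $A$, and introduce the $S$-$A$-bilinear maps
\begin{equation*}
\alpha\colon M^m \to M^l,\ y\mapsto yH', \qquad \beta\colon M^n \to M^l,\ x\mapsto xH,
\end{equation*}
so that $F_\varphi(M) = \beta^{-1}(\Im\alpha)$ and $\beta$ induces an embedding $M^n/F_\varphi(M) \hookrightarrow M^l/\Im\alpha$. Unwinding the explicit form of $D\varphi$ recalled in the paper (with $K=(I\mid 0)$ and $K'=(H^\transpose\mid H'^\transpose)$) and using the paper's column-vector interpretation $x=Hz$ and $H'z=0$, one checks that $g \in F_{D\varphi}(M^*)$ is equivalent to the existence of $h \in (M^*)^l$ with $h\circ\alpha = 0$ and $g = h\circ\beta$; equivalently, $g$ factors as $M^n \twoheadrightarrow M^n/F_\varphi(M) \hookrightarrow M^l/\Im\alpha \to E$. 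The inclusion $F_{D\varphi}(M^*) \subseteq F_\varphi(M)^\perp$ is then immediate by direct computation, while the reverse inclusion uses injectivity of $E$ to extend any element of $\Hom_{S^\op}(M^n/F_\varphi(M), E) = F_\varphi(M)^\perp$ along $M^n/F_\varphi(M) \hookrightarrow M^l/\Im\alpha$ to the required $h \in (M^*)^l$.

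With the identity in hand, the lemma becomes a short formal argument. Since $S \to \End_A(M)$ acts by right-$A$-module endomorphisms, the $S$-action on $M^n$ commutes with right multiplication by matrices over $A$ and therefore preserves every pp-definable subgroup, so each $F_\varphi(M)$ is a left $S$-submodule of $M^n$. The hypothesis that $E$ is an injective cogenerator in $\Mod S^\op$ then yields the standard double-annihilator equality $U^{\perp\perp} = U$ for every left $S$-submodule $U \subseteq M^n$: if $x \notin U$ the class $\bar x$ is nonzero in $M^n/U$, and the cogenerator property furnishes $g \in \Hom_{S^\op}(M^n/U, E) = U^\perp$ with $g(x) \neq 0$. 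Applying this with $U = F_\varphi(M)$ and $U = F_\psi(M)$ makes the equivalence ``$F_\varphi(M) = F_\psi(M)$ iff $F_\varphi(M)^\perp = F_\psi(M)^\perp$'' immediate, finishing the proof.

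The main obstacle is the explicit matrix-level translation in the second step, where one must correctly match $xK = zK'$ against the two module-theoretic conditions $g = h\circ\beta$ and $h\circ\alpha = 0$; once this bookkeeping is carried out, the remainder consists only of standard homological arguments using injectivity and the cogenerator property of $E$.
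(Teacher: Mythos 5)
Your proof is correct. The paper does not prove this lemma itself but simply cites \cite[Theorem~1.3.15]{MR2530988}, and your argument is precisely the standard proof of that cited result: the pointwise identity $F_{D\varphi}(M^*)=F_\varphi(M)^\perp$ (with injectivity of $E$ supplying the reverse inclusion via the extension along $M^n/F_\varphi(M)\hookrightarrow M^l/\Im\alpha$, and the cogenerator property supplying the double-annihilator step), so you have supplied in full the verification the paper outsources.
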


\begin{proof}
	See \cite[Theorem~1.3.15]{MR2530988}.
\end{proof}

Let $U$ be an open subset of $\Zg_A$ such that $U=(F_{\varphi/\psi})$ for some pp-pair $\varphi/\psi$ for right $A$-modules. Since $D\psi/D\varphi$ is a pp-pair for right $A^{\op}$-modules, $(F_{D\psi/D\varphi})$ is an open subset of $\Zg_{A^{\op}}$, which does not depend on the choice of the pp-pair $\varphi/\psi$ for $U$. Indeed, for each $N\in\Zg_{A^\op}$, \cref{CharDualOfZgPt} (applied to $S:=\End_{A^\op}(N)$ and arbitrary $E$) implies that
\begin{equation*}
	N\in (F_{D\psi/D\varphi})\iff F_{D\psi/D\varphi}(N)\neq 0\iff F_{\varphi/\psi}(N^{*})\neq 0\iff N^{*}\in U.
\end{equation*}
Therefore we can write $\Do U:=(F_{D\psi/D\varphi})$.
Then $D^{2}U=U$ by \cref{ElementaryDualityOfPpConditions}, so this gives an order-preserving bijection between open bases of $\Zg_A$ and $\Zg_{A^\op}$, so it extends uniquely to an order-preserving bijection between all open subsets of $\Zg_A$ and those of $\Zg_{A^\op}$.
We summarize these facts in the next theorem, which was originally shown by Herzog \cite[Proposition~4.4]{MR1091706}.

\begin{theorem}[Elementary duality of Ziegler spectra]\label{def.ElementaryDuality}
For every ring $A$, there is an order-preserving bijection
\begin{equation*}
		\Do\colon\setwithtext{open subsets of $\Zg_{A}$}\isoto\setwithtext{open subsets of $\Zg_{A^{\op}}$},
	\end{equation*}
	which sends $(F_{\varphi/\psi})$ to $(F_{D\psi/D\varphi})$ for each pp-pair $\varphi/\psi$.
\end{theorem}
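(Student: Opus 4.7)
The plan is to define $\Do$ first on the basis $\{(F_{\varphi/\psi})\}$ of $\Zg_A$ via $\Do(F_{\varphi/\psi}) := (F_{D\psi/D\varphi})$, verify well-definedness and order-preservation on this basis, and then extend uniquely to all opens by taking unions.

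The heart of the matter is well-definedness: if $(F_{\varphi/\psi}) = (F_{\varphi'/\psi'})$ as subsets of $\Zg_A$, one must deduce $(F_{D\psi/D\varphi}) = (F_{D\psi'/D\varphi'})$ inside $\Zg_{A^{\op}}$. For this I would fix any $N \in \Zg_{A^{\op}}$ and apply \cref{CharDualOfZgPt} with $S := \End_{A^{\op}}(N)$ and an injective cogenerator $E \in \Mod S^{\op}$: setting $N^* := \Hom_{S^{\op}}(N, E) \in \Mod A$, this yields
\[
N \in (F_{D\psi/D\varphi}) \iff F_{\varphi/\psi}(N^*) \neq 0,
\]
and analogously for the primed pp-pair. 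The question thereby reduces to showing that the vanishing of $F_{\varphi/\psi}$ on the pure-injective $A$-module $N^*$ (pure-injectivity by \cref{InjDualProperties}\cref{InjDualProperties.Cot}) depends only on the subset $(F_{\varphi/\psi})$ of $\Zg_A$. This is a standard consequence of the Ziegler--Herzog correspondence between closed subsets of $\Zg_A$ and definable subcategories of $\Mod A$: the closed complement $\Zg_A \setminus (F_{\varphi/\psi})$ determines the class $\{M \in \Mod A : F_{\varphi/\psi}(M) = 0\}$, hence in particular determines membership of $N^*$ in this class.

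Order-preservation is established by exactly the same argument applied to inclusions rather than equalities. Once $\Do$ is well-defined and monotone on basic opens, it extends uniquely and order-preservingly to arbitrary opens by $\Do(\bigcup_i U_i) := \bigcup_i \Do U_i$. For bijectivity, I would apply the same construction with the roles of $A$ and $A^{\op}$ interchanged, obtaining a candidate inverse; \cref{ElementaryDualityOfPpConditions} (which gives $D^2 \varphi$ equivalent to $\varphi$) shows the two maps are mutually inverse on basic opens, hence on all opens. The principal obstacle is the reduction in the well-definedness step: it uses the Ziegler--Herzog correspondence, a fact that is not made explicit earlier in the paper but is a classical ingredient of model theory of modules.
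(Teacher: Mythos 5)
Your proposal follows essentially the same route as the paper: the paper's formal proof is a citation to Prest, but the paragraph preceding the theorem defines $\Do$ on basic open sets exactly as you do, checks independence of the chosen pp-pair via \cref{CharDualOfZgPt} applied with $S=\End_{A^{\op}}(N)$, invokes \cref{ElementaryDualityOfPpConditions} for $D^{2}=\id$, and then extends to arbitrary open sets by taking unions. Your explicit treatment of the well-definedness step---reducing to the fact that the closed set $\Zg_{A}\setminus(F_{\varphi/\psi})$ determines the full vanishing class $\setwithcondition{M\in\Mod A}{F_{\varphi/\psi}(M)=0}$ via the correspondence between closed subsets and definable subcategories---supplies precisely the ingredient the paper leaves implicit (it only cites that correspondence a few paragraphs later, for a different purpose), and your appeal to \cref{InjDualProperties}\cref{InjDualProperties.Cot} for pure-injectivity of $N^{*}$ is a slight citation mismatch (that result assumes a commutative base) but is inessential, since the definable-subcategory argument does not require $N^{*}$ to be pure-injective.
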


\begin{proof}
See \cite[Theorem~5.4.1]{MR2530988}.
\end{proof}

We may also interpret elementary duality as an order-preserving bijection between the closed subsets of $\Zg_A$ and those of $\Zg_{A^\op}$ in an obvious way; that is, given a closed subset $C\subseteq\Zg_A$, its complement $C^{\compl}=\Zg_A\setminus C$ is open, so send $C$ to $DC:=(\Do(C^{\compl}))^{\compl}=\Zg_{A^{\op}}\setminus\Do(C^{\compl})$. Note that, if $C$ is also open, then elementary duality for open subsets and that for closed subsets send $C$ to the same open closed subset of $\Zg_{A^{\op}}$. Thus we can safely denote both bijections by $D$.

\begin{remark}\label{ElementaryDualityDoesNotInduceHomeomorphism}
Elementary duality does not mean that there is a homeomorphism between $\Zg_{A}$ and $\Zg_{A^{\op}}$. This is due to the fact that the Ziegler spectrum is not necessarily a $T_{0}$-space, that is, they may contain topologically indistinguishable points (see \cite[p.~267]{MR2530988}). It is not known in general whether $\Zg_A$ is homeomorphic to $\Zg_{A^\op}$; see \cite[Question~5.4.8]{MR2530988}.
\end{remark}

We next explain how elementary duality is interpreted in terms of finitely presented functors.

\begin{theorem}[Auslander-Gruson-Jensen duality]\label{AGJDuality}
	For every ring $A$, there is a duality of categories
	\begin{equation*}
		d\colon\fp(\mod A,\Ab)\isoto\fp(\mod A^{\op},\Ab)
	\end{equation*}
	given by $F\mapsto dF$, where $(dF)(L):=\Hom(F,-\otimes_{A}L)$ for $L\in\mod A^{\op}$. Its quasi-inverse is given by $G\mapsto dG$, where $(dG)(M):=\Hom(G,M\otimes_{A}-)$ for $M\in\mod A$.
\end{theorem}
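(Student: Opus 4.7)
The plan is to verify well-definedness of $d$, construct the analogous contravariant functor $d'$ in the opposite direction, and show the unit $\eta\colon\id\to d'd$ is an isomorphism; the verification that $dd'\cong\id$ is symmetric.

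For well-definedness of $d$, fix a projective presentation
\begin{equation*}
\Hom_A(M_1,-)\to \Hom_A(M_0,-)\to F\to 0
\end{equation*}
by representables in the abelian category $\fp(\mod A,\Ab)$; representables are projective generators by the Yoneda lemma. Applying $\Hom(-,-\otimes_A L)$ and using the Yoneda identity $\Hom(\Hom_A(M,-),-\otimes_A L)\cong(-\otimes_A L)(M)=M\otimes_A L$, natural in $L\in\mod A^\op$, identifies $dF$ with $\Ker(M_0\otimes_A -\to M_1\otimes_A -)$ viewed as a functor on $\mod A^\op$. Each tensor functor $M\otimes_A -$ lies in $\fp(\mod A^\op,\Ab)$: a presentation $A^m\to A^n\to M\to 0$ induces $\Hom_{A^\op}(A^m,-)\to \Hom_{A^\op}(A^n,-)\to M\otimes_A -\to 0$ via the canonical identifications $A^k\otimes_A L\cong L^k\cong \Hom_{A^\op}(A^k,L)$. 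Since $\fp(\mod A^\op,\Ab)$ is also abelian (Auslander), $dF$ is finitely presented, and the assignment $F\mapsto dF$ is evidently additive and functorial. The analogous functor $d'\colon\fp(\mod A^\op,\Ab)\to\fp(\mod A,\Ab)$, defined by $(d'G)(M):=\Hom(G,M\otimes_A -)$, is well-defined by the symmetric argument.

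Define the unit $\eta_F\colon F\to d'dF$ naturally in $F$ by the rule: for $x\in F(N)$ and $\beta\in (dF)(L)=\Hom(F,-\otimes_A L)$, set
\begin{equation*}
\eta_F(N)(x)(L)(\beta):=\beta_N(x)\in N\otimes_A L.
\end{equation*}
On a representable $h_M:=\Hom_A(M,-)$, the Yoneda computation gives $d(h_M)=M\otimes_A -$, while the tensor-functor presentation above yields $d'(M\otimes_A -)=\Ker(\Hom_A(A^n,-)\to\Hom_A(A^m,-))=h_M$ naturally in $M$ (by left exactness of $\Hom_A(-,N)$ applied to the presentation of $M$). Direct inspection shows that, under this identification, $\eta_{h_M}$ becomes the identity on $\Hom_A(M,N)$; hence $\eta$ is an isomorphism on every representable.

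For a general $F$ with presentation $h_{M_1}\to h_{M_0}\to F\to 0$, naturality of $\eta$ produces a commutative diagram whose top row is the presentation, whose bottom row is its image under the covariant composite $d'd$, and whose first two vertical maps are the isomorphism units at representables. The five lemma then forces $\eta_F$ to be an isomorphism, provided the bottom row is a cokernel sequence. Establishing this right-exactness is the main obstacle: I would argue it by extending the presentation one syzygy further to $h_{M_2}\to h_{M_1}\to h_{M_0}\to F\to 0$, applying $d$ to obtain the exact sequence $0\to dF\to M_0\otimes_A -\to M_1\otimes_A -\to M_2\otimes_A -$, extracting a projective presentation of $dF$ in the abelian category $\fp(\mod A^\op,\Ab)$, and applying $d'$ to obtain the required cokernel description of $d'dF$. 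The symmetric argument yields the counit isomorphism, completing the proof that $d$ and $d'$ are mutually quasi-inverse.
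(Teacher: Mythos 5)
The paper does not prove this theorem; it simply cites \cite[Theorem~10.3.4]{MR2530988}, so the only thing to assess is whether your argument would stand on its own. The formal scaffolding is fine: the Yoneda computation $d(\Hom_A(M,-))\cong M\otimes_A-$, the finite presentability of tensor functors, the identification $\Hom(M\otimes_A-,N\otimes_A-)\cong\Hom_A(M,N)$, and the evaluation unit $\eta$ are all correct and are exactly the standard ingredients. You also correctly identify where the difficulty lies: showing that the bottom row $d'd(h_{M_1})\to d'd(h_{M_0})\to d'dF\to 0$ is exact.

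However, your proposed resolution of that difficulty does not work, and this is a genuine gap. From the exact sequence $h_{M_2}\to h_{M_1}\to h_{M_0}\to F\to 0$, the left exact contravariant functor $d=\Hom(-,-\otimes_AL)$ only yields exactness of $0\to dF\to M_0\otimes_A-\to M_1\otimes_A-\to M_2\otimes_A-$ at the first two spots; exactness at $M_1\otimes_A-$ is, by definition, the vanishing of $\Ext^1_{\fp(\mod A,\Ab)}(F,-\otimes_AL)$ (the homology of $\Hom$ applied to a projective resolution at that spot \emph{is} $\Ext^1$). That vanishing — the statement that every tensor functor $-\otimes_AL$ is an fp-injective object of the functor category — is precisely the nontrivial content of Auslander--Gruson--Jensen duality, and the same issue reappears when you apply $d'$ to the left-exact sequence $0\to dF\to M_0\otimes_A-\to M_1\otimes_A-$ and want a right-exact output: you need $\Ext^1(C,N\otimes_A-)=0$ for the image $C$ of $M_0\otimes_A-\to M_1\otimes_A-$. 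A further (smaller) slip: the objects $M_i\otimes_A-$ are not projective in $\fp(\mod A^{\op},\Ab)$ (the projectives there are summands of representables $\Hom_{A^{\op}}(L,-)$), so the four-term sequence is a copresentation by fp-injectives, not "a projective presentation of $dF$". To close the gap you must prove the fp-injectivity of tensor functors directly (e.g., via the pp-formula computation $\Hom(F_{\varphi/\psi},-\otimes_AL)\cong F_{D\psi/D\varphi}(L)$ together with elementary duality of pp-formulas, or by the Gruson--Jensen argument), which is the lemma the cited reference supplies.
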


\begin{proof}
	\cite[Theorem~10.3.4]{MR2530988}.
\end{proof}

It is known that the equivalence $d$ in \cref{AGJDuality} sends $F_{\varphi/\psi}$ to $F_{D\psi/D\varphi}$ for each pp-pair $\varphi/\psi$; see \cite[Corollary~10.3.8]{MR2530988} and its proof.
Thus the bijection in \cref{def.ElementaryDuality} can also be written as $(F)\mapsto (dF)$ for $F\in\fp(\mod A,\Ab)$.

It would be worth noting that there is an order-preserving bijection between the open subsets of $\Zg_A$ and the Serre subcategories of $\fp(\mod A, \Ab)$ (\cite[Theorem~3.8]{MR1434441} and \cite[Theorem~4.2]{MR1426488}). Hence the bijection in \cref{def.ElementaryDuality} induces a bijection between the Serre subcategories of $\fp(\mod A, \Ab)$ and those of $\fp(\mod A^{\op},\Ab)$; this also follows from \cref{AGJDuality}.

On the other hand, there is an order-preserving bijection between the closed subsets of $\Zg_A$ and the definable subcategories of $\Mod A$, that is, full subcategories of $\Mod A$ closed under direct limits, direct products, and pure submodules (see \cite[Corollary~5.1.6]{MR2530988}). Typical examples are the subcategory of injective right $A$-modules when $A$ is right noetherian and the subcategory of flat right A-modules when $A$ is left coherent (see \cite[Theorem~3.4.28(a) and Theorem~3.4.24]{MR2530988}). Given a definable subcategory, its corresponding closed subset is obtained by collecting the isoclasses of indecomposable pure-injective modules in the subcategory.

Now let $A$ be a Noether $R$-algebra. Denote by $\inj_{A}$ (\resp $\flcot_A$) the set of isoclasses of indecomposable injective (\resp indecomposable flat cotorsion) right $A$-modules. As we observed in \cref{PureInjectiveCotorsion,InjDualProperties3}, the flat cotorsion right $A$-modules are precisely the flat pure-injective right $A$-modules. So $\inj_{A}$ and $\flcot_{A}$ are closed subsets of $\Zg_A$ by the above observation. We endow $\inj_{A}$ and $\flcot_{A}$ with the topologies induced from $\Zg_A$.

\begin{lemma}\label{FlCotAndIndecInjAndOpen}
	Let $A$ be a Noether $R$-algebra and let $P\in\Spec A$. For each open subset $U\subset\Zg_{A^{\op}}$, we have $I_{A^{\op}}(P)\in U$ if and only if $T_{A}(P)\in \Do U$.
\end{lemma}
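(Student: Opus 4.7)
The plan is to reduce the claim to Herzog's criterion \cref{CharDualOfZgPt} applied to the right $A^{\op}$-module $M := I_{A^{\op}}(P)$, using a carefully chosen auxiliary commutative ring $S$. The naive attempt $S = R$ with $E = E_{R}(R/\p)$ fails because $E_{R}(R/\p)$ is not an injective cogenerator of $\Mod R$ in general. Instead, I would take $S := \widehat{R_{\p}}$. Since $I_{A^{\op}}(P)$ is $\p$-local by \cref{IndecomposableInjectiveLocal} and $\p$-torsion by \cref{ArtinianInjective}, the $R$-action on $I_{A^{\op}}(P)$ extends canonically to a $\widehat{R_{\p}}$-action (cf.\ \cref{subsec.MatlisDual}, in particular the discussion around \cref{TorsionTensorCompletion} and \cref{UniqueCompleteStructure}), yielding a ring homomorphism $\widehat{R_{\p}} \to \End_{A^{\op}}(I_{A^{\op}}(P))$ as required by \cref{CharDualOfZgPt}.

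Next, $\widehat{R_{\p}}$ is a commutative noetherian local ring with residue field $\kappa(\p)$, so an injective cogenerator of $\Mod\widehat{R_{\p}}$ is $E_{\widehat{R_{\p}}}(\kappa(\p))$, which coincides with $E_{R}(R/\p)$ (as recalled in \cref{subsec.MatlisDual}). Applying \cref{CharDualOfZgPt} with $A$ there replaced by $A^{\op}$, $S = \widehat{R_{\p}}$, and $E = E_{R}(R/\p)$ yields, for every pp-pair $\varphi/\psi$ for right $A^{\op}$-modules,
\begin{equation*}
F_{\varphi/\psi}(I_{A^{\op}}(P)) = 0 \iff F_{D\psi/D\varphi}\bigl(\Hom_{\widehat{R_{\p}}}(I_{A^{\op}}(P), E_{R}(R/\p))\bigr) = 0.
\end{equation*}
By \cref{CompletionNecessity}, the module on the right is precisely $T_{A}(P)$. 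Consequently,
\begin{equation*}
I_{A^{\op}}(P) \in (F_{\varphi/\psi}) \iff T_{A}(P) \in (F_{D\psi/D\varphi}) = D(F_{\varphi/\psi}),
\end{equation*}
which proves the lemma for basic open subsets of $\Zg_{A^{\op}}$.

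For a general open subset $U \subseteq \Zg_{A^{\op}}$, write $U = \bigcup_{i} (F_{\varphi_{i}/\psi_{i}})$ as a union of basic opens. Since elementary duality is an order-preserving bijection between the open-set lattices (\cref{def.ElementaryDuality}), it preserves arbitrary unions, so $DU = \bigcup_{i} (F_{D\psi_{i}/D\varphi_{i}})$; the desired equivalence then follows by applying the basic-open case termwise. The only delicate point in the whole argument is the choice of $S$: one needs both $M^{*} = T_{A}(P)$ and the injective-cogenerator hypothesis simultaneously, and these can be reconciled only after passing from $R$ to $\widehat{R_{\p}}$ via \cref{CompletionNecessity}. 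Beyond that, everything is a direct invocation of the machinery already set up in \cref{sec.Prelim} and \cref{sec.LocCompFlModAsFlCov}.
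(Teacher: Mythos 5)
Your proof is correct and follows essentially the same route as the paper: reduce to basic open sets via the order-isomorphism of open-set lattices, then apply \cref{CharDualOfZgPt} to $M=I_{A^{\op}}(P)$ with $E=E_{R}(R/\p)$ and conclude via the identification $M^{*}\cong T_{A}(P)$. The only (harmless) difference is the choice of auxiliary ring $S$: the paper takes $S=R_{\p}$, using that $I_{A^{\op}}(P)$ is $\p$-local and that $E_{R}(R/\p)$ is an injective cogenerator of $\Mod R_{\p}$, whereas you pass further to $S=\widehat{R_{\p}}$, which works equally well since \cref{CompletionNecessity} shows $\Hom_{\widehat{R_{\p}}}(I_{A^{\op}}(P),E_{R}(R/\p))\cong T_{A}(P)$.
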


\begin{proof}
Since elementary duality $\Do$ is order-preserving and $\Zg_{A^{\op}}$ has an open basis $\set{(F_{\varphi/\psi})}$, we may assume that $U=(F_{\varphi/\psi})$ for some pp-pair $\varphi/\psi$ for right $A^\op$-modules, and hence $\Do U=(F_{D\psi/D\varphi})$.

Let $\p:=P\cap R$. 
Since $I_{A^\op}(P)$ is $\p$-local by  \cref{IndecomposableInjectiveLocal}, there is a ring homomorphism $R_\p\to \End_{A^\op}(I_{A^\op}(P))$ given by scalar multiplication. Moreover, $E_R(R/\p)\cong E_{R_\p}(\kappa(\p))$ is an injective cogenerator in $\Mod R_\p$ (\cite[Lemma~A.27]{MR2355715}), and by definition $T_A(P)=\Hom_R(I_{A^\op}(P), E_R(R/\p))$. 
Thus, it follows from \cref{CharDualOfZgPt} that  
$F_{\varphi/\psi}(I_{A^\op}(P))\neq 0$ if and only if $F_{D\psi/D\varphi}(T_A(P))\neq 0$.
Therefore $I_{A^{\op}}(P)\in U$ if and only if $T_{A}(P)\in \Do U$.
\end{proof}

\begin{theorem}\label{HomeoBwFlCotAndInjOverNoethAlg}
	Let $A$ be a Noether $R$-algebra. Then the bijection $\inj_{A^{\op}}\isoto \flcot_{A}$ given by $I_{A^{\op}}(P)\mapsto T_{A}(P)$ is a homeomorphism.
\end{theorem}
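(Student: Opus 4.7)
The plan is to deduce this from \cref{FlCotAndIndecInjAndOpen} together with elementary duality in an essentially formal manner. Write $f\colon\inj_{A^{\op}}\to\flcot_{A}$ for the map $I_{A^{\op}}(P)\mapsto T_{A}(P)$; that $f$ is a bijection was already noted immediately after \cref{intro.ClassifOfIndecFlCotOverNoethAlg} by combining \cref{InjOverNoethRing} with \cref{ClassifOfIndecFlCotOverNoethAlg}. By construction $\inj_{A^{\op}}$ and $\flcot_{A}$ carry the subspace topologies inherited from $\Zg_{A^{\op}}$ and $\Zg_{A}$, so a subset of $\inj_{A^{\op}}$ is open iff it is of the form $U\cap\inj_{A^{\op}}$ for some open $U\subseteq\Zg_{A^{\op}}$, and similarly for $\flcot_{A}$.

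The central step is then to verify the identity
\begin{equation*}
f(U\cap\inj_{A^{\op}})=\Do U\cap\flcot_{A}
\end{equation*}
for every open $U\subseteq\Zg_{A^{\op}}$. This is a pointwise statement: for each $P\in\Spec A$, the point $I_{A^{\op}}(P)$ lies in $U$ iff $T_{A}(P)=f(I_{A^{\op}}(P))$ lies in $\Do U$, which is precisely \cref{FlCotAndIndecInjAndOpen}. Taking the union over $P\in\Spec A$ and using that $f$ is bijective between $\inj_{A^{\op}}$ and $\flcot_{A}$ yields the claimed equality.

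Given this, the conclusion is immediate. Reading the equality from left to right shows that $f$ sends a basic open set to an open set of $\flcot_{A}$; reading it from right to left (using that $\Do$ is a bijection of open subsets by \cref{def.ElementaryDuality}, so every open $W\subseteq\Zg_{A}$ can be written as $\Do U$ for a unique open $U\subseteq\Zg_{A^{\op}}$) shows that $f^{-1}(W\cap\flcot_{A})$ is open in $\inj_{A^{\op}}$. Thus $f$ is a continuous open bijection, hence a homeomorphism.

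I expect no serious obstacle; all the substantial work has been carried out in \cref{FlCotAndIndecInjAndOpen} (which encodes the interaction of Matlis-type duality with pp-pairs via \cref{CharDualOfZgPt}) and in the general setup of elementary duality. The remaining task is purely bookkeeping: checking that the subspace topologies on $\inj_{A^{\op}}$ and $\flcot_{A}$ are indeed the ones in play, and that $\Do$ as a bijection on open subsets transports through the pointwise correspondence $I_{A^{\op}}(P)\leftrightarrow T_{A}(P)$.
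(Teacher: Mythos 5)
Your proof is correct and follows exactly the paper's route: the paper's own proof consists of invoking \cref{FlCotAndIndecInjAndOpen} to see that the bijection restricts to $U\cap\inj_{A^{\op}}\isoto \Do U\cap\flcot_{A}$ for each open $U\subseteq\Zg_{A^{\op}}$, and then concluding via the bijectivity of $\Do$ on open subsets. Your write-up just spells out the bookkeeping in slightly more detail.
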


\begin{proof}
\cref{FlCotAndIndecInjAndOpen} implies that, for each open subset $U\subset\Zg_{A^{\op}}$, the bijection $\inj_{A^{\op}}\isoto \flcot_{A}$ restricts to a bijection $U\cap\inj_{A^{\op}}\isoto DU\cap\flcot_{A}$. Hence the result follows.
\end{proof}

We can deduce from \cref{FlCotAndIndecInjAndOpen} that   
\begin{equation}\label{DescriptionOfElementaryDual}
\Dc(\inj_{A^\op})=\flcot_A.
\end{equation}
for a Noether $R$-algebra $A$. 
Indeed, setting $U:=(\inj_{A^\op})^{\compl}$, we obtain $\Do U\cap \flcot_A=\emptyset$ from \cref{FlCotAndIndecInjAndOpen}, and hence $\Dc(\inj_{A^\op})=(\Do U)^{\compl}\supseteq \flcot_A$. On the other hand, setting $O:=(\flcot_{A})^{\compl}$ and applying \cref{FlCotAndIndecInjAndOpen} to $\Do O\subseteq \Zg_{A^\op}$, we obtain $\Do O\cap\inj_{A^{\op}}=\emptyset$. This implies that $\inj_{A^{\op}}\subset (DO)^{\compl}=\Dc(\flcot_{A})$, and hence $\Dc(\inj_{A^{\op}})\subset\Dc^{2}(\flcot_{A})=\flcot_{A}$. Therefore \cref{DescriptionOfElementaryDual} holds.

In fact, \cref{DescriptionOfElementaryDual} holds for an arbitrary left coherent ring $A$ (\cite[Theorem~9.3]{MR1091706}). Moreover, Herzog proved that elementary duality ``constitutes'' a homeomorphism $\inj_{A^{\op}}\isoto \flcot_{A}$ for a class of rings $A$, including all left noetherian rings (\cite[Corollary~9.6]{MR1091706}).
In the rest of this section, we prove that our homeomorphism in \cref{HomeoBwFlCotAndInjOverNoethAlg} coincides with Herzog's one 
when $A$ is a Noether $R$-algebra.

Recall that a \emph{generic point} of a topological space $X$ is a point $x\in X$ whose closure is the whole space $X$.

\begin{definition}\label{def.Refl}
Let $A$ be a ring. A point $N\in\Zg_{A}$ is called \emph{reflexive} if its closure $\overline{\set{N}}$ has a unique generic point (which is necessarily $N$) and if the elementary dual $\Dc\overline{\set{N}}$ of $\overline{\set{N}}$ also has 
a unique generic point. In this case, the generic point of $\Dc\overline{\set{N}}$ is denoted by $\Dc N$ and called the \emph{elementary dual} of $N$.
\end{definition}

If $N\in\Zg_{A}$ is reflexive, then $\Dc N$ is also reflexive and $\Dc^{2} N=N$ by definition. Thus we have a bijection between the reflexive points in $\Zg_{A}$ and those in $\Zg_{A^{\op}}$ given by $N\mapsto \Dc N$.
Herzog's homeomorphism $\inj_{A^\op}\isoto \flcot_A$ (for a left noetherian ring $A$) is realized as
a restriction of this bijection based on the fact that all points of $\inj_{A^{\op}}$ and $\flcot_{A}$ are reflexive; see \cite[the last paragraph of \S 4 and the paragraph preceding Corollary~9.6]{MR1091706}, where the definition of reflexivity (see \cite[the paragraph preceding Theorem~4.10]{MR1091706}) is stronger than ours following \cite[p.~271]{MR2530988}. The dual of a reflexive point $N$ in the former sense is actually $DN$ defined as above; see \cite[Theorems 5.3.2 and 5.4.12]{MR2530988}.

Therefore, to see that Herzog's homeomorphism coincides with ours for a Noether algebra $A$, it is enough to show that each $T_{A}(P)$ is the elementary dual of $I_{A^{\op}}(P)$; this will be done in \cref{HomeoBwFlCotAndInjOverNoethAlg}. We also give an explicit proof for the reflexivity of points in $\inj_{A^{\op}}$ and $\flcot_{A}$.

For this purpose,  we describe the topology on $\inj_{A}$ in terms of prime ideals of $A$. 
The description is merely a paraphrase of known results.

\begin{definition}\label{SupportOverNoetherianAlgebra}
Let $A$ be a Noether $R$-algebra.
	For a right $A$-module $M$, define the \emph{support} of $M$ to be
	\begin{equation*}
		\Supp_{A}M:=\setwithcondition{P\in\Spec A}{\Hom_{A}(M,I_{A}(P))\neq 0}.
	\end{equation*}
\end{definition}

This support coincides with the classical one in commutative algebra. Indeed, by \cref{IndecomposableInjectiveLocal} and \cref{LocAdj},
\begin{equation}\label{eq.HomToIndecInj}
\Hom_A(M, I_{A}(P))\cong \Hom_{A_\p}(M_\p, I_{A}(P)),
\end{equation}
where $\p:= P\cap R$. If $A=R$, then $I_A(P)=E_R(R/\p)$ is an injective cogenerator in $\Mod R_\p$.
It should also be mentioned that \cref{SupportOverNoetherianAlgebra} just imitates the description of an open basis for $\inj_A$ given by Herzog and Krause; see \cref{HerzogKrause} below.

Let us state an auxiliary proposition. We say that a subset $\Phi\subset\Spec A$ is \emph{specialization-closed} (\resp \emph{generalization-closed}) if, for every pair $P\subset Q$ in $\Spec A$, $P\in\Phi$ implies $Q\in\Phi$ (\resp $Q\in\Phi$ implies $P\in\Phi$).

\begin{proposition}\label{SuppOfModOverNoethAlg}
	Let $A$ be a Noether $R$-algebra.
	\begin{enumerate}
		\item\label{SuppOfModOverNoethAlg.ExSeq} For every short exact sequence $0\to L\to M\to N\to 0$ of right $A$-modules,
		\begin{equation*}
			\Supp_{A}M=\Supp_{A}L\cup\Supp_{A}N.
		\end{equation*}
		\item\label{SuppOfModOverNoethAlg.Prime} For every $P\in\Spec A$, $\Supp_{A}(A/P)=\setwithcondition{Q\in\Spec A}{P\subset Q}$, which is the smallest specialization-closed subset of $\Spec A$ containing $P$.
		\item\label{SuppOfModOverNoethAlg.SpCl} For every right $A$-module $M$, $\Supp_{A}M$ is specialization-closed.
	\end{enumerate}
\end{proposition}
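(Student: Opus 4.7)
For \cref{SuppOfModOverNoethAlg.ExSeq}, the plan is to apply the exact contravariant functor $\Hom_A(-,I_A(P))$ to the given short exact sequence; by injectivity of $I_A(P)$ the resulting sequence of abelian groups is itself short exact, so $\Hom_A(M,I_A(P))\neq 0$ if and only if at least one of $\Hom_A(L,I_A(P))$ and $\Hom_A(N,I_A(P))$ is nonzero.

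For \cref{SuppOfModOverNoethAlg.Prime}, I will prove the two inclusions separately. If $P\subseteq Q$, then the composition $A/P\twoheadrightarrow A/Q\hookrightarrow E_A(A/Q)\cong I_A(Q)^{n_Q}$ (the last isomorphism coming from \cref{InjectiveEnvelopeDecomposition}) is nonzero, so projecting onto one of the summands yields a nonzero map $A/P\to I_A(Q)$. Conversely, suppose $Q\in\Supp_A(A/P)$ and set $\p=P\cap R$ and $\q=Q\cap R$. Using the $\q$-locality of $I_A(Q)$ together with \cref{LocAdj}, we have $\Hom_A(A/P,I_A(Q))\cong\Hom_{A_\q}((A/P)_\q,I_A(Q))$, and nonvanishing forces $(A/P)_\q\neq 0$, hence $\p\subseteq\q$ (else any $s\in\p\setminus\q$ would be both zero and a unit in $(A/P)_\q$). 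If moreover $P\not\subseteq Q$, then $P_\q\not\subseteq Q_\q$ in $A_\q$; a nonzero $A_\q$-homomorphism $A_\q/P_\q\to I_{A_\q}(Q_\q)=I_A(Q)$ yields, by the essentiality and simplicity of $S_A(Q)\subseteq I_A(Q)$, a nonzero element $y\in S_A(Q)$ with $yP_\q=0$. Since $P_\q$ is two-sided and $S_A(Q)=yA_\q$, this gives $S_A(Q)\cdot P_\q=0$, whence $P_\q\subseteq \Ann_{A_\q}S_A(Q)=Q_\q$, a contradiction.

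For \cref{SuppOfModOverNoethAlg.SpCl}, I will first reduce to finitely generated modules: since $I_A(P)$ is injective, every nonzero morphism $M\to I_A(P)$ restricts to a nonzero morphism from some finitely generated (indeed cyclic) submodule, and such a restriction always extends back to $M$, so $\Supp_A M=\bigcup_N\Supp_A N$ as $N$ ranges over the finitely generated submodules of $M$. It therefore suffices to show that $\Supp_A N$ is specialization-closed for each finitely generated $N$. For this, the two-sided noetherianity of $A$ yields a finite filtration $0=N_0\subsetneq N_1\subsetneq\cdots\subsetneq N_n=N$ whose successive quotients satisfy $\Supp_A(N_i/N_{i-1})=V(P_i)$ for suitable primes $P_i\in\Spec A$; iteratively applying \cref{SuppOfModOverNoethAlg.ExSeq} then presents $\Supp_A N=\bigcup_{i=1}^n V(P_i)$ as a finite union of specialization-closed subsets of $\Spec A$.

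The main obstacle is producing the above filtration with each quotient's support equal to the corresponding $V(P_i)$, since cyclic quotients of $A$ are of the form $A/I$ for right ideals $I$ and one must control their two-sided annihilators. The cleanest supporting step is to first strengthen \cref{SuppOfModOverNoethAlg.Prime} to the statement $\Supp_A(A/J)=V(J)$ for an arbitrary two-sided ideal $J$ of $A$ (the proof carries over verbatim: the two-sidedness of $J_\q$ allows one to conclude $S_A(Q)\cdot J_\q=0$ from $y\cdot J_\q=0$ for a single $y\in S_A(Q)$, contradicting $J\not\subseteq Q$); one then invokes the standard theory of affiliated primes for finitely generated modules over a two-sided noetherian ring. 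Alternatively, one can appeal directly to the Herzog--Krause classification of the open subsets of the Ziegler-topological space $\inj_A$, which in the noetherian setting identifies these with the specialization-closed subsets of $\Spec A$ via the support, thereby making \cref{SuppOfModOverNoethAlg.SpCl} immediate for finitely generated $N$ and hence for all $M$ via the reduction above.
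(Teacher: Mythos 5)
Parts \cref{SuppOfModOverNoethAlg.ExSeq} and \cref{SuppOfModOverNoethAlg.Prime} of your argument are correct. \cref{SuppOfModOverNoethAlg.ExSeq} is exactly the paper's proof. For the converse inclusion in \cref{SuppOfModOverNoethAlg.Prime} you localize at $\q$ and argue through the socle $S_A(Q)\subset I_A(Q)$, whereas the paper works directly with the essential submodule $A/Q\subset E_A(A/Q)$ and the definition of a prime ideal; both routes are valid, and your strengthening to $\Supp_A(A/J)=\setwithcondition{Q\in\Spec A}{J\subset Q}$ for a two-sided ideal $J$ is also correct.

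The gap is in \cref{SuppOfModOverNoethAlg.SpCl}, at the assertion that the factors $N_i/N_{i-1}$ of an affiliated-prime filtration satisfy $\Supp_A(N_i/N_{i-1})=\setwithcondition{Q\in\Spec A}{P_i\subset Q}$. The affiliated-prime theory only produces factors that are fully faithful finitely generated $(A/P_i)$-modules; it says nothing about their support in the sense of \cref{SupportOverNoetherianAlgebra}. Your strengthened \cref{SuppOfModOverNoethAlg.Prime} gives the inclusion $\Supp_A(N_i/N_{i-1})\subset\setwithcondition{Q\in\Spec A}{P_i\subset Q}$ (each factor is a quotient of a finite direct sum of copies of $A/P_i$), but the reverse inclusion --- which is the one specialization-closedness actually needs --- requires embedding $A/P_i$ into a finite direct sum of copies of $N_i/N_{i-1}$. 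That is a genuine extra input: it holds because a Noether algebra is a fully bounded Noetherian ring (Gabriel's condition (H)), and it is precisely the role played by \cite[Lemma~2.5.1]{MR1898632} in the paper's proof, which avoids filtrations altogether: given $P\subset Q$ and a nonzero $g\colon M\to I_A(P)$, it embeds $A/P$ into a finite power of $\Im g$ and concludes from \cref{SuppOfModOverNoethAlg.ExSeq,SuppOfModOverNoethAlg.Prime}. Your fallback to Herzog--Krause does not close the gap either: their classification identifies the open subsets of $\inj_A$ with Serre subcategories of the functor category, not with specialization-closed subsets of $\Spec A$; the latter identification is \cref{SpClAndOpenInInj}, which the paper deduces \emph{from} the present proposition, so invoking it here is circular.
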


\begin{proof}
	\cref{SuppOfModOverNoethAlg.ExSeq}: Applying the exact functor $\Hom_{A}(-,I_{A}(P))$, for each $P\in\Spec A$, to the given short exact sequence, we obtain the result.
	
	\cref{SuppOfModOverNoethAlg.Prime}: Let $Q\in\Spec A$. First assume that $P\subset Q$. We have canonical morphisms $A/P\onto A/Q\into E_{A}(A/Q)$. Since $E_{A}(A/Q)$ is a finite direct sum of copies of $I_{A}(Q)$ by \cref{InjectiveEnvelopeDecomposition}, there exists a nonzero morphism $A/P\to I_{A}(Q)$. Thus $Q\in\Supp_{A}(A/P)$.
	
	Conversely, assume that $Q\in\Supp_{A}(A/P)$. Then there exists a nonzero morphism $f\colon A/P\to E_{A}(A/Q)$. Since $A/Q$ is an essential submodule of $E_{A}(A/Q)$, the intersection $\Im f\cap(A/Q)$ is nonzero. This means that $A/Q$ has a nonzero submodule annihilated by $P$. Therefore $P\subset Q$ by the definition of prime ideals.
	
	\cref{SuppOfModOverNoethAlg.SpCl}: Let $P\subset Q$ in $\Spec A$ and $P\in\Supp_{A}M$. Then there exists a nonzero morphism $g\colon M\to I_{A}(P)$. Let $N:=\Im g$. Since $I_A(P)$ is $\p$-local by \cref{IndecomposableInjectiveLocal}, $N_{\p}$ is a nonzero $A_{\p}$-submodule of $I_A(P)=E_A(S_A(P))$, and hence $N_{\p}$ contains $S_{A}(P)$ as an $A_{\p}$-submodule. Thus, by \cite[Lemma~2.5.1]{MR1898632}, there is a monomorphism from $A/P$ to a finite direct sum of copies of $N$. Therefore $Q\in\Supp_{A}(A/P)\subset\Supp_{A}N\subset\Supp_{A}M$ by \cref{SuppOfModOverNoethAlg.ExSeq} and \cref{SuppOfModOverNoethAlg.Prime}.
\end{proof}

\begin{remark}\label{HerzogKrause}
It is known (for any ring $A$) that there is a bijection from $\Zg_A$ to the set of isoclasses of indecomposable injective objects in $\fp(\mod A^\op, \Ab)$ given by $M\mapsto M\otimes_A-$ (\cite[Corollary~12.1.9]{MR2530988}). 
Extending this viewpoint, Herzog \cite{MR1434441} and Krause \cite{MR1426488} studied 
the spectrum formed by isoclasses of indecomposable injective objects for an arbitrary locally coherent Grothendieck category. In particular, when $A$ is a Noether $R$-algebra (or more generally, when $A$ is a right coherent ring), their work provides another way to think of $\inj_{A}$ as a topological space, with open basis consisting of all subsets of the form
\begin{equation*}
(M):=\setwithcondition{I\in\inj_{A}}{\Hom_{A}(M,I)\neq 0}
\end{equation*}
for some finitely presented right $A$-module $M$; see \cite[Corollary~3.5]{MR1434441} or \cite[Corollary~4.6]{MR1426488}. It follows from \cite[Theorem~5.1.11]{MR2530988} and \cite[Corollary~4.3]{MR1426488} that this topology coincides with the induced topology on $\inj_{A}$ as a (closed) subset of $\Zg_{A}$.
\end{remark}

\begin{proposition}\label{SpClAndOpenInInj}
	Let $A$ be a Noether $R$-algebra. There is an order-preserving bijection
	\begin{equation*}
		\setwithtext{specialization-closed subsets of $\Spec A$}\isoto\setwithtext{open subsets of $\inj_{A}$}
	\end{equation*}
	given by $\Phi\mapsto\setwithcondition{I_{A}(P)}{P\in\Phi}$.
\end{proposition}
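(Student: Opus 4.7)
The plan is to build the bijection directly by showing that each direction is well-defined, using the identification $\Spec A \isoto \inj_A$ from \cref{InjOverNoethRing} together with the support description in \cref{SuppOfModOverNoethAlg} and the open basis for $\inj_A$ recalled in \cref{HerzogKrause}.

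First, since $P\mapsto I_A(P)$ is a bijection $\Spec A\isoto\inj_A$, the assignment $\Phi\mapsto \{I_A(P)\mid P\in\Phi\}$ is an order-preserving bijection from \emph{arbitrary} subsets of $\Spec A$ to \emph{arbitrary} subsets of $\inj_A$. Its inverse sends a subset $U\subset\inj_A$ to $\Phi_U:=\{P\in\Spec A\mid I_A(P)\in U\}$. So it suffices to show: (a) if $\Phi$ is specialization-closed, then $\{I_A(P)\mid P\in\Phi\}$ is open in $\inj_A$; and (b) if $U$ is open, then $\Phi_U$ is specialization-closed.

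For (a): I would write
\begin{equation*}
 \{I_A(P)\mid P\in\Phi\} \;=\; \bigcup_{P\in\Phi}(A/P),
\end{equation*}
where $(A/P):=\{I\in\inj_A\mid \Hom_A(A/P,I)\neq 0\}$ is a basic open subset of $\inj_A$ by \cref{HerzogKrause}. The inclusion ``$\subset$'' uses that $P\in\Supp_A(A/P)$ by \cref{SuppOfModOverNoethAlg}\cref{SuppOfModOverNoethAlg.Prime}, so $I_A(P)\in(A/P)$. The inclusion ``$\supset$'' uses that $(A/P)$ consists exactly of those $I_A(Q)$ with $Q\in\Supp_A(A/P)=\{Q\mid P\subset Q\}$, and $\Phi$ being specialization-closed forces such $Q$ to lie in $\Phi$.

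For (b): I would reduce to basic opens. Any open $U$ is a union of basic opens $(M)$ for finitely presented right $A$-modules $M$, and $\Phi_U$ is then the union of the sets $\Phi_{(M)}=\Supp_A M$. Each $\Supp_A M$ is specialization-closed by \cref{SuppOfModOverNoethAlg}\cref{SuppOfModOverNoethAlg.SpCl}, and a union of specialization-closed subsets is specialization-closed, so $\Phi_U$ is specialization-closed. The order-preserving property is immediate from both constructions. There is no real obstacle here: the argument is essentially a bookkeeping exercise, with the only substantive ingredients being the identification of $\inj_A$ with $\Spec A$ and the interaction between support and the Herzog–Krause open basis, both of which are already in hand.
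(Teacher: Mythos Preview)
Your proof is correct and follows essentially the same approach as the paper's: both use the bijection from \cref{InjOverNoethRing}, the Herzog--Krause open basis $(M)$ for $\inj_A$, and the identification of $(M)$ with $\Supp_A M$ via \cref{SuppOfModOverNoethAlg}. The paper condenses your two directions (a) and (b) into the single equivalence ``$\Phi$ is specialization-closed $\Leftrightarrow$ $\Phi$ is a union of sets $\Supp_A M$ with $M\in\mod A$,'' but the substance is identical.
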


\begin{proof}
	We show that the bijection in \cref{InjOverNoethRing} induces the desired bijection. By the above observation, $\inj_{A}$ (with topology induced from $\Zg_{A}$) has an open basis consisting all subsets of the form $(M)$ for some finitely presented right $A$-module $M$. Furthermore, each subset $(M)\subseteq \inj_A$ corresponds to $\Supp_{A}M$ by the bijection in \cref{InjOverNoethRing}. So it suffices to show that a subset $\Phi\subset\Spec A$ is specialization-closed if and only if $\Phi$ is the union of subsets of the form $\Supp_{A}M$ for some $M\in\mod A$. The ``if'' part follows from \cref{SuppOfModOverNoethAlg}\cref{SuppOfModOverNoethAlg.SpCl}. Conversely, if $\Phi$ is specialization-closed, then $\Phi=\bigcup_{P\in \Phi}\Supp_A (A/P)$ by \cref{SuppOfModOverNoethAlg}\cref{SuppOfModOverNoethAlg.Prime}.
\end{proof}

By \cref{HomeoBwFlCotAndInjOverNoethAlg,SpClAndOpenInInj}, we obtain an order-preserving bijection
	\begin{equation}\label{SpClAndOpenInFlCot}
		\setwithtext{specialization-closed subsets of $\Spec A$}\isoto\setwithtext{open subsets of $\flcot_{A}$}
	\end{equation}
	given by $\Phi\mapsto\setwithcondition{T_{A}(P)}{P\in\Phi}$.

The following is the main theorem in this section:

\begin{theorem}\label{FlCotAndInjAreRefl}
	Let $A$ be a Noether $R$-algebra. Then all points in $\inj_{A^{\op}}$ and $\flcot_{A}$ are reflexive. For each $P\in\Spec A$, the elementary dual of $I_{A^{\op}}(P)$ is $T_{A}(P)$.
\end{theorem}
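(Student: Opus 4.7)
The plan is to reduce the claim to an explicit computation of closures in the closed subspaces $\inj_{A^{\op}} \subseteq \Zg_{A^{\op}}$ and $\flcot_A \subseteq \Zg_A$, then invoke \cref{FlCotAndIndecInjAndOpen} and \cref{DescriptionOfElementaryDual} to match them up under elementary duality. The reflexivity statements and the identification $D I_{A^{\op}}(P) = T_A(P)$ will then follow immediately.

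First I will compute $C_P := \overline{\{I_{A^{\op}}(P)\}}$ in $\Zg_{A^{\op}}$. Since $\inj_{A^{\op}}$ is closed in $\Zg_{A^{\op}}$, this closure equals the closure in the subspace $\inj_{A^{\op}}$. Applying \cref{SpClAndOpenInInj} to the Noether $R$-algebra $A^{\op}$ (noting that $\Spec A = \Spec A^{\op}$ as sets, since the condition $aAb \subseteq P$ is symmetric in $a,b$), closed subsets of $\inj_{A^{\op}}$ correspond to generalization-closed subsets of $\Spec A$, and therefore
\begin{equation*}
C_P = \setwithcondition{I_{A^{\op}}(Q)}{Q \in \Spec A,\, Q \subseteq P}.
\end{equation*}
The same argument, now using \cref{SpClAndOpenInFlCot} together with the fact that $\flcot_A$ is closed in $\Zg_A$, gives
\begin{equation*}
\overline{\{T_A(P)\}} = \setwithcondition{T_A(Q)}{Q \in \Spec A,\, Q \subseteq P}
\end{equation*}
in $\Zg_A$. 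In both cases uniqueness of the generic point is immediate: for any $Q \subseteq P$ the closure of $\{I_{A^{\op}}(Q)\}$ (resp. $\{T_A(Q)\}$) consists only of those indexed by ideals contained in $Q$, and this agrees with $C_P$ (resp. $\overline{\{T_A(P)\}}$) only when $Q = P$.

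To conclude, I will identify $DC_P$ with $\overline{\{T_A(P)\}}$. Taking complements in \cref{FlCotAndIndecInjAndOpen} yields: for any closed $C \subseteq \Zg_{A^{\op}}$ and any $Q \in \Spec A$, $I_{A^{\op}}(Q) \in C$ if and only if $T_A(Q) \in DC$. Applied to $C = C_P$, this gives $DC_P \cap \flcot_A = \setwithcondition{T_A(Q)}{Q \subseteq P}$. On the other hand, by the monotonicity of $D$ on closed subsets and by \cref{DescriptionOfElementaryDual}, $C_P \subseteq \inj_{A^{\op}}$ forces $DC_P \subseteq D(\inj_{A^{\op}}) = \flcot_A$, so $DC_P = \setwithcondition{T_A(Q)}{Q \subseteq P} = \overline{\{T_A(P)\}}$. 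Thus $I_{A^{\op}}(P)$ is reflexive with $D I_{A^{\op}}(P) = T_A(P)$, and the reflexivity of $T_A(P)$ follows from the involution property of $D$ on reflexive points. No step presents a serious obstacle: once the closures are computed using the correspondences of \cref{SpClAndOpenInInj,SpClAndOpenInFlCot}, the whole matching is forced by the single point-level identity $T_A(Q) \in DC \Leftrightarrow I_{A^{\op}}(Q) \in C$.
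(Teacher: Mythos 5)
Your proof is correct and follows essentially the same route as the paper: compute the closures of $I_{A^{\op}}(P)$ and $T_A(P)$ via the correspondence with generalization-closed subsets of $\Spec A$ (\cref{SpClAndOpenInInj} and \cref{SpClAndOpenInFlCot}), then combine the point-level equivalence from \cref{FlCotAndIndecInjAndOpen} with $\Dc(\inj_{A^{\op}})=\flcot_A$ to identify $\Dc\overline{\set{I_{A^{\op}}(P)}}$ with $\overline{\set{T_A(P)}}$. The only (harmless) difference is one of ordering: you establish the duality of the closures before concluding reflexivity, which is if anything slightly more careful than the paper's presentation.
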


\begin{proof}
By \cref{SpClAndOpenInInj} and \cref{SpClAndOpenInFlCot}, the generalization-closed subsets of $\Spec A$ bijectively correspond to the closed subsets of $\inj_{A^{\op}}$ and the closed subsets of $\flcot_A$. Let  $P\in \Spec A$, $I:=I_{A^{\op}}(P)\in\inj_{A^{\op}}$, and $T:=T_{A}(P)\in\flcot_{A}$.
The generalization-closed subset $\Psi:=\setwithcondition{Q\in\Spec A}{Q\subset P}$ corresponds to the closures $\overline{\set{I}}\subset\inj_{A^{\op}}$ and $\overline{\set{T}}\subset\flcot_{A}$,
and none of the proper generalization-closed subsets of $\Psi$ contains $P$.
Hence $I$ and $T$ are the unique generic points of $\overline{\set{I}}$ and $\overline{\set{T}}$, respectively. Consequently, $I$ and $T$ are reflexive.

It remains to show that $\Dc\overline{\set{I}}=\overline{\set{T}}$. 
This follows from the next lemma.
\end{proof}

\begin{lemma}
	Let $A$ be a Noether $R$-algebra.
For every closed subset $C\subseteq \inj_{A^\op}$, we have 
\begin{equation*}
\Dc C=\setwithcondition{T_A(P)\in \flcot_A }{\textnormal{$P\in\Spec A$, $I_{A^\op}(P) \in C$}}.
\end{equation*}
For every $P\in\Spec A$, it follows that $D\overline{\set{I_{A^{\op}}(P)}}=\overline{\set{T_{A}(P)}}$.
\end{lemma}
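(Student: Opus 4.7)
The plan is to reduce both assertions to the pointwise correspondence \cref{FlCotAndIndecInjAndOpen}, together with the order-preserving property of elementary duality (\cref{def.ElementaryDuality}) and the global identity \cref{DescriptionOfElementaryDual} that $\Dc(\inj_{A^{\op}})=\flcot_{A}$. First I will note that any subset $C\subseteq\inj_{A^{\op}}$ that is closed in the induced topology is automatically closed in $\Zg_{A^{\op}}$, since $\inj_{A^{\op}}$ itself is closed in $\Zg_{A^{\op}}$. Setting $U:=\Zg_{A^{\op}}\setminus C$, so that $\Do U=\Zg_{A}\setminus\Dc C$, and applying \cref{FlCotAndIndecInjAndOpen} to $U$ and taking complements yields, for every $P\in\Spec A$, the equivalence
\begin{equation*}
I_{A^{\op}}(P)\in C\iff T_{A}(P)\in\Dc C.
\end{equation*}

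Consequently,
\begin{equation*}
\Dc C\cap\flcot_{A}=\setwithcondition{T_{A}(P)}{\textnormal{$P\in\Spec A$, $I_{A^{\op}}(P)\in C$}}.
\end{equation*}
To upgrade this intersection to full equality, I will invoke the order-preserving property of $\Dc$ on closed subsets: since $C\subseteq\inj_{A^{\op}}$ we have $\Dc C\subseteq\Dc(\inj_{A^{\op}})=\flcot_{A}$ by \cref{DescriptionOfElementaryDual}, so the intersection with $\flcot_{A}$ is superfluous and the first assertion follows.

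For the second assertion, I will apply the formula just proved to $C:=\overline{\set{I_{A^{\op}}(P)}}$, the closure being taken in $\inj_{A^{\op}}$ (equivalently in $\Zg_{A^{\op}}$, since $\inj_{A^{\op}}$ is closed). The bijection of \cref{SpClAndOpenInInj}, applied to the Noether $R$-algebra $A^{\op}$, identifies closed subsets of $\inj_{A^{\op}}$ with generalization-closed subsets of $\Spec A^{\op}=\Spec A$; hence $\overline{\set{I_{A^{\op}}(P)}}$ corresponds to the smallest generalization-closed subset containing $P$, namely $\setwithcondition{Q\in\Spec A}{Q\subseteq P}$. The first assertion then gives $\Dc C=\setwithcondition{T_{A}(Q)}{Q\subseteq P}$, which under the analogous bijection \cref{SpClAndOpenInFlCot} is precisely the closure of $\set{T_{A}(P)}$ in $\flcot_{A}$.

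I expect no substantial obstacle. The one delicate point is that \cref{FlCotAndIndecInjAndOpen} is only a pointwise criterion and a priori determines only the intersection $\Dc C\cap\flcot_{A}$; the equality \cref{DescriptionOfElementaryDual} is exactly what rules out stray points of $\Dc C$ outside $\flcot_{A}$, so invoking it at the right moment is the conceptual heart of the argument.
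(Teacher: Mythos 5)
Your proposal is correct and follows essentially the same route as the paper: both deduce $\Dc C\subseteq\Dc(\inj_{A^{\op}})=\flcot_{A}$ from \cref{DescriptionOfElementaryDual}, obtain the pointwise equivalence $I_{A^{\op}}(P)\in C\iff T_{A}(P)\in\Dc C$ from \cref{FlCotAndIndecInjAndOpen} applied to the complement, and identify the two closures with the generalization closure of $P$ via \cref{SpClAndOpenInInj} and \cref{SpClAndOpenInFlCot}.
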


\begin{proof}
Since $\inj_{A^{\op}}$ is closed in $\Zg_{A^\op}$, the subset $C$ is also closed in $\Zg_{A^\op}$ and $\Dc C\subseteq\Dc(\inj_{A^{op}})=\flcot_A$ by \cref{DescriptionOfElementaryDual}. 
Moreover, \cref{FlCotAndIndecInjAndOpen} implies that 
$I_{A^\op}(P)\in \inj_{A^\op}\setminus C$ if and only if $T_{A}(P)\in \flcot_{A}\setminus \Dc C$ for each $P\in \Spec A$. In other words, $I_{A^\op}(P)\in C$ if and only if $T_{A}(P)\in \Dc C$ for each $P\in \Spec A$.
Thus we obtain the desired description of $\Dc C$.
The last statement of the lemma follows because \cref{SpClAndOpenInInj} and \cref{SpClAndOpenInFlCot} show that 
the closures of $I_{A^\op}(P)$ and $T_A(P)$ both correspond to the generalization closure of $P$.
\end{proof}

\begin{example}\label{ex.FlCotOfTriMatAlg}
	Consider the algebra $A$ in \cref{ex.TriMatAlg}. For two prime ideals $P_{i}(\p)$ and $P_{j}(\q)$ of $A$, we have $P_{i}(\p)\subset P_{j}(\q)$ if and only if $i=j$ and $\p\subset\q$. So we have an order-preserving bijection from $\Spec A$ to the disjoint union $\Spec R\amalg\Spec R$ given by $P_{i}(\p)\mapsto (\text{$\p$ in the $i$th $\Spec R$})$. Every specialization-closed subset of $\Spec A$ is of the form $\Phi_1\amalg \Phi_2$, where each $\Phi_{i}$ is a specialization-closed subset of the $i$th $\Spec R$.
Hence, by \cref{SpClAndOpenInFlCot}, all open subsets of $\flcot_{A}$ are of the form
	\begin{equation*}
		\setwithcondition{T_{A}(P_{1}(\p))}{\p\in\Phi_{1}}\cup\setwithcondition{T_{A}(P_{2}(\p))}{\p\in\Phi_{2}},
	\end{equation*}
	where $\Phi_{1}$ and $\Phi_{2}$ are specialization-closed subsets of $\Spec R$. The closure of each $T_{A}(P_{i}(\q))$ in $\flcot_{A}$ is
	\begin{equation*}
		\setwithcondition{T_{A}(P_{i}(\p))}{\q\subset\p}.
	\end{equation*}
\end{example}

Although \cref{SpClAndOpenInFlCot} describes the induced topology on $\flcot_A$ explicitly, it is also possible to give an open basis for $\flcot_A$ in a similar way to \cref{HerzogKrause}:

\begin{proposition}\label{OpenBasisForFlCot}
	The set of subsets of $\flcot_{A}$ of the form
	\begin{equation*}
		\setwithcondition{T_A(P) \in \flcot_{A}}{T_{A}(P)\otimes_AM\neq 0}
	\end{equation*}
	for some finitely generated left $A$-module $M$ is an open basis for $\flcot_{A}$.
\end{proposition}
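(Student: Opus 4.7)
The plan is to transfer the Herzog--Krause open basis for $\inj_{A^{\op}}$ to $\flcot_{A}$ via the homeomorphism of \cref{HomeoBwFlCotAndInjOverNoethAlg}. By \cref{HerzogKrause} applied to $A^{\op}$, which is also a Noether $R$-algebra, the sets
\begin{equation*}
(M)\cap\inj_{A^{\op}}=\setwithcondition{I_{A^{\op}}(P)\in\inj_{A^{\op}}}{\Hom_{A^{\op}}(M,I_{A^{\op}}(P))\neq 0}
\end{equation*}
for finitely presented (equivalently, finitely generated, by left noetherianity of $A$) left $A$-modules $M$ form an open basis for $\inj_{A^{\op}}$. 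Since the homeomorphism sends $I_{A^{\op}}(P)$ to $T_{A}(P)$, the statement reduces to proving the equivalence
\begin{equation*}
\Hom_{A^{\op}}(M,I_{A^{\op}}(P))\neq 0\iff T_{A}(P)\otimes_{A}M\neq 0
\end{equation*}
for every $P\in\Spec A$ and every finitely generated left $A$-module $M$.

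To prove this equivalence, I fix $P\in\Spec A$, set $\p:=P\cap R$, and put $E:=E_{R}(R/\p)\cong E_{R_{\p}}(\kappa(\p))$. The tensor-hom adjunction gives
\begin{equation*}
\Hom_{R}(T_{A}(P)\otimes_{A}M,E)\cong\Hom_{A^{\op}}(M,\Hom_{R}(T_{A}(P),E)).
\end{equation*}
Since $T_{A}(P)\otimes_{A}M$ is $\p$-local (as $T_{A}(P)$ is) and $E$ is an injective cogenerator in $\Mod R_{\p}$, the nonvanishing of $T_{A}(P)\otimes_{A}M$ is equivalent to the nonvanishing of the left-hand side. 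Thus it suffices to identify $\Hom_{R}(T_{A}(P),E)$ canonically with $I_{A^{\op}}(P)$; since $T_{A}(P)=\Hom_{R}(I_{A^{\op}}(P),E)$ by definition, this is a biduality statement for $I_{A^{\op}}(P)$.

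The biduality follows from \cref{MatlisDualOverNoetherianAlgebra} applied to the Noether algebra $\widehat{A_{\p}}$ over the complete local ring $\widehat{R_{\p}}$: the $\p$-local $\p$-torsion module $I_{A^{\op}}(P)$ carries a canonical $\widehat{A_{\p}^{\op}}$-module structure and is artinian over $\widehat{A_{\p}^{\op}}$ (ultimately by \cref{ArtinianInjective} after localization and completion), so Matlis biduality gives a natural isomorphism $I_{A^{\op}}(P)\cong\Hom_{\widehat{R_{\p}}}(\Hom_{\widehat{R_{\p}}}(I_{A^{\op}}(P),E_{\widehat{R_{\p}}}(\kappa(\p))),E_{\widehat{R_{\p}}}(\kappa(\p)))$. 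The main obstacle is the bookkeeping needed to replace $\Hom_{R}$ by $\Hom_{\widehat{R_{\p}}}$ on the relevant modules: for $I_{A^{\op}}(P)$ this is immediate from $\p$-torsion, while for $T_{A}(P)$ one argues that any $R$-linear map $T_{A}(P)\to E$ has image annihilated by some power of $\p$ (since $E$ is $\p$-torsion), hence factors through a quotient $T_{A}(P)/\p^{n}T_{A}(P)$ on which the $R_{\p}$- and $\widehat{R_{\p}}$-actions coincide, so $\widehat{R_{\p}}$-linearity is automatic. Once this structural matching is handled, \cref{MatlisDualOverNoetherianAlgebra} supplies the biduality and completes the proof.
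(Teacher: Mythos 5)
Your reduction of the statement to the equivalence $\Hom_{A^{\op}}(M,I_{A^{\op}}(P))\neq 0\iff T_{A}(P)\otimes_{A}M\neq 0$ — by transferring the Herzog--Krause basis through the homeomorphism of \cref{HomeoBwFlCotAndInjOverNoethAlg} — is sound, and in substance it is the same reduction the paper makes via $\Supp_{A^{\op}}M$ and \cref{SpClAndOpenInFlCot}. The problem is in how you close the equivalence. You apply the adjunction over $R$ and then need $\Hom_{R}(T_{A}(P),E)\cong I_{A^{\op}}(P)$ with $E=E_{R}(R/\p)$; this is false in general. \cref{CompletionNecessity} is exactly a warning against this: for $A=R$ local and $P=\km$ one has $T_{R}(\km)\cong\widehat{R}$, and $\Hom_{R}(\widehat{R},E_{R}(k))$ strictly contains $E_{R}(k)=\Hom_{\widehat{R}}(\widehat{R},E_{R}(k))$ whenever $R$ is not complete. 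Your justification — that any $R$-linear map $T_{A}(P)\to E$ has image annihilated by some power of $\p$ and hence factors through $T_{A}(P)/\p^{n}T_{A}(P)$ — is where the argument breaks: $E$ being $\p$-torsion means each element is killed by some power of $\p$, not that the image of a map is killed by a single fixed power, so such a factorization need not exist. (The nonzero maps $\widehat{R}\to E_{R}(k)$ vanishing on $R$, which witness the failure of surjectivity in \cref{CompletionNecessity}, are exactly maps that do not factor through any $\widehat{R}/\km^{n}\widehat{R}$.)

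The repair is to run the adjunction over $\widehat{R_{\p}}$ rather than $R$, which is what the paper does. By \cref{DualOfFlCov} one has $I_{A^{\op}}(P)\cong\Hom_{\widehat{R_{\p}}}(T_{A}(P),E)$, so tensor-hom adjunction gives $\Hom_{A^{\op}}(M,I_{A^{\op}}(P))\cong\Hom_{\widehat{R_{\p}}}(T_{A}(P)\otimes_{A}M,E)$; since $T_{A}(P)\otimes_{A}M$ is naturally an $\widehat{R_{\p}}$-module (as $T_{A}(P)$ is a right $\widehat{A_{\p}}$-module) and $E\cong E_{\widehat{R_{\p}}}(\kappa(\p))$ is an injective cogenerator in $\Mod\widehat{R_{\p}}$, the right-hand side vanishes exactly when $T_{A}(P)\otimes_{A}M$ does. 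With that substitution your argument becomes essentially the paper's proof.
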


\begin{proof}
	Recall that $I_{A^\op}(P)\cong \Hom_{\widehat{R_\p}}(T_{A}(P), E_R(R/\p))$, where $\p:=P\cap R$ (\cref{DualOfFlCov}). Using this isomorphism and the tensor-hom adjunction, we obtain
\begin{equation*}
\Hom_{{A^\op}}(M, I_{A^\op}(P))\cong \Hom_{\widehat{R_\p}}(T_{A}(P)\otimes_A M, E_R(R/\p))
\end{equation*}
for every left $A$-module $M$.
Since $E_R(R/\p)\cong E_{\widehat{R_\p}}(\kappa(\p))$ is an injective cogenerator in $\Mod \widehat{R_\p}$, we have
\begin{equation*}
\Supp_{A^\op}M=\setwithcondition{P\in \Spec A}{T_{A}(P)\otimes_AM\neq 0}.
\end{equation*}
Thus the desired conclusion follows from \cref{SpClAndOpenInFlCot} and \cref{SuppOfModOverNoethAlg}.
\end{proof}

\appendix

\section{Ideal-adic completion}
\label{sec.IdealAdicComp}

Let $R$ be a commutative noetherian ring and $A$ a Noether $R$-algebra. This appendix provides basic facts on $\ka$-adic completion of right $A$-modules, where $\ka$ is an ideal of $R$. All results here are generalizations or restatements of known results for $R$. Although the proofs resemble those for the commutative case, we provide a precise proof to each result for the reader's sake.

We denote by $\Mod A$ (\resp $\mod A$) the category of all (\resp finitely generated) right $A$-modules, and interpret $\Mod A^{\op}$ as the category of all left $A$-modules, where $A^{\op}$ is the opposite ring. 
The \emph{$\ka$-adic completion functor}
	$\varLambda^{\ka}\colon\Mod A\to \Mod A$ is defined by 
	\begin{equation*}
	\varLambda^{\ka}:=\varprojlim_{n\geq 1}(-\otimes_R R/\ka^n).
	\end{equation*}
	The functor $\varLambda^{\ka}$ is often written as $(-)^\wedge_\ka$.
A right $A$-module $M$ is called \emph{$\ka$-complete} if the canonical morphism $M\to M^\wedge_\ka$ is an isomorphism.

We start with the following lemma, which follows from the Artin-Rees lemma over $R$ and an intersection property of a flat right $A$-module.

\begin{lemma}\label{FlTensorCompIsEx}
	Let $F$ be a flat right $A$-module and let $\ka\subset R$ be an ideal. Then the functor 
	\begin{equation*}(F\otimes_{A}-)_{\ka}^{\wedge}\colon\mod A^{\op}\to\Mod R
	\end{equation*}
	 is exact.
\end{lemma}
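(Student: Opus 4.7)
The plan is to start from a short exact sequence $0 \to L \to M \to N \to 0$ in $\mod A^{\op}$, apply the flat functor $F \otimes_A -$ to get an exact sequence $0 \to Y \to X \to Z \to 0$ of right $A$-modules with $Y := F\otimes_A L$, $X := F\otimes_A M$, $Z := F\otimes_A N$, and then argue that its $\ka$-adic completion is still exact. Since reduction modulo $\ka^n$ commutes with cokernels, for each $n$ the sequence
\begin{equation*}
0 \to Y/(Y \cap \ka^n X) \to X/\ka^n X \to Z/\ka^n Z \to 0
\end{equation*}
is exact, and its transition maps are surjective. So by the Mittag-Leffler criterion, taking $\varprojlim_n$ yields a short exact sequence
\begin{equation*}
0 \to \varprojlim_{n} Y/(Y\cap \ka^n X) \to \varLambda^{\ka}X \to \varLambda^{\ka}Z \to 0.
\end{equation*}
The remaining step is to identify the leftmost term with $\varLambda^{\ka}Y$, which will follow once the filtrations $\{\ka^n Y\}_n$ and $\{Y \cap \ka^n X\}_n$ on $Y$ are shown to be cofinal.

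For this, I would combine two inputs. First, because the $R$-action on $X$ factors through the centre of $A$, one verifies by a direct element calculation that $F \otimes_A (\ka^n M) = \ka^n X$ as submodules of $X$, so in particular $\ka^n Y \subset Y \cap \ka^n X$. Second, I would use that for a flat right $A$-module $F$ and submodules $L_1, L_2$ of a common $A$-module, the equality $F \otimes_A (L_1 \cap L_2) = (F \otimes_A L_1) \cap (F \otimes_A L_2)$ holds in $F \otimes_A M$; this is obtained by tensoring the canonical exact sequence $0 \to L_1 \cap L_2 \to L_1 \oplus L_2 \to L_1 + L_2 \to 0$ with $F$ and using flatness to embed the terms into $F \otimes_A M$. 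Specialising to $L_1 = L$ and $L_2 = \ka^n M$ gives
\begin{equation*}
Y \cap \ka^n X = F \otimes_A (L \cap \ka^n M).
\end{equation*}

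The reverse cofinality inclusion now comes from the Artin-Rees lemma. Since $A$ is finitely generated over $R$, the finitely generated left $A$-module $M$ is also a finitely generated $R$-module, with $L \subset M$ an $R$-submodule. Artin-Rees applied to $(M, L, \ka)$ over the noetherian ring $R$ furnishes an integer $k \geq 0$ such that $\ka^n M \cap L \subset \ka^{n-k} L$ for all $n \geq k$. Tensoring with $F$ over $A$ and using the identifications above then gives $Y \cap \ka^n X \subset \ka^{n-k}Y$, so the two filtrations on $Y$ are indeed cofinal, the inverse limits coincide, and the result follows. The only step that requires care is the interaction between the flatness identity for intersections and the tensor-ideal identity $F \otimes_A (\ka^n M) = \ka^n X$, which must be verified at the level of subobjects of $X$; but these are routine once the centrality of the $R$-action is invoked, so I do not anticipate a real obstacle.
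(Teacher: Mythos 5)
Your proposal is correct and follows essentially the same route as the paper's proof: both reduce the problem to showing that the induced filtration $\{Y\cap\ka^nX\}$ and the $\ka$-adic filtration $\{\ka^nY\}$ on $Y=F\otimes_AL$ are cofinal, using the Artin--Rees lemma over $R$, the identity $F\otimes_A(\ka^nM)=\ka^n(F\otimes_AM)$, and the fact that the exact functor $F\otimes_A-$ preserves intersections of submodules. The only cosmetic difference is that you unpack the Mittag--Leffler/inverse-limit step explicitly, whereas the paper delegates it to a citation of Matsumura's Theorem~8.1.
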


\begin{proof}
Let $0\to L \to M\to N\to 0$ be an exact sequence of finitely generated left $A$-modules.
This is sent by the functor $F\otimes_{A}-$ to an exact sequence of $R$-modules
\begin{equation*}
0\to F\otimes_A L \to F\otimes_A M\to F\otimes_A N\to 0.
\end{equation*}
We regard $L$ (\resp $F\otimes_{A}L$) as a submodule of $M$ (\resp $F\otimes_{A}M$). By \cite[Theorem~8.1]{MR1011461}, it is enough to see that the $\ka$-adic topology on $F\otimes_AL$ coincides with the topology induced from the $\ka$-adic topology on $F\otimes_AM$.

Let $n\geq 1$ be an integer. Since $F$ is flat, the inclusion $\ka^n M\hookrightarrow M$ induces a canonical injection $F\otimes_A (\ka^n M)\hookrightarrow F\otimes_A M$, and
\begin{equation}\label{EquationRsub}
F\otimes_A (\ka^n M)=\ka^n (F\otimes_A  M)
\end{equation}
 as $R$-submodules of $F\otimes_A M$.

By the Artin-Rees lemma \cite[Theorem~8.5]{MR1011461}, there is an integer $c>0$ such that 
\begin{equation*}
\ka^n L\subseteq (\ka^n M)\cap L \subseteq \ka^{n-c} L,
\end{equation*}
for every $n>c$.
Application of $F\otimes_{A}-$ to this sequence yields   
\begin{equation}\label{InclusionSequence}
F\otimes_A(\ka^n L) \subseteq F\otimes_A  ((\ka^n M)\cap L) \subseteq F\otimes_A(\ka^{n-c} L),
\end{equation}
where the middle term coincides with
\begin{equation*}
(F\otimes_A  (\ka^n M))\cap (F\otimes_AL)
\end{equation*}
because the exact functor $F\otimes_{A}-$ preserves intersections of submodules.
Hence, using \cref{EquationRsub}, we can rewrite \cref{InclusionSequence} as
\begin{equation*}
\ka^n  (F\otimes_A L) \subseteq (\ka^n(F\otimes_A M))\cap (F\otimes_AL) \subseteq \ka^{n-c}(F\otimes_AL),
\end{equation*}
and this shows that the $\ka$-adic topology on $F\otimes_AL$ coincides with the topology induced from the $\ka$-adic topology on $F\otimes_AM$, as desired.
\end{proof}

\begin{proposition}\label{FlTensorCompAsTensor}
	Let $F$ be a flat right $A$-module and let $\ka\subset R$ be an ideal. Then there is a canonical isomorphism
	\begin{equation*}
	F_{\ka}^{\wedge}\otimes_{A}-\isoto(F\otimes_{A}-)_{\ka}^{\wedge}
	\end{equation*}
	of functors $\mod A^{\op}\to\Mod R$.
\end{proposition}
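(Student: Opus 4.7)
The plan is to construct the canonical natural transformation, verify the claim on finitely generated free $A$-modules, and then reduce the general case to finite presentations by a five-lemma argument, using \cref{FlTensorCompIsEx} to supply the needed exactness.

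First, for any left $A$-module $M$ and each $n\geq 1$, the natural isomorphism $F/\ka^{n}F\otimes_{A}M\cong (F\otimes_{A}M)\otimes_{R}(R/\ka^{n})=(F\otimes_{A}M)/\ka^{n}(F\otimes_{A}M)$ is compatible with the transition maps as $n$ varies, so the structure maps $F^\wedge_\ka\to F/\ka^{n}F$ induce natural $R$-linear maps
\begin{equation*}
\eta_{M}\colon F^\wedge_\ka\otimes_{A}M\to\varprojlim_{n\geq 1}(F/\ka^{n}F\otimes_{A}M)\cong (F\otimes_{A}M)^\wedge_\ka,
\end{equation*}
where the first arrow is obtained from the universal property of the inverse limit applied to the family of maps $F^\wedge_\ka\otimes_AM\to F/\ka^nF\otimes_AM$. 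One checks that $\eta$ is natural in $M$ and that $\eta_{A}$ is the identity on $F^\wedge_\ka$ (since $F\otimes_{A}A\cong F$); consequently $\eta_{A^{n}}$ is an isomorphism for every finite $n$, as both functors commute with finite direct sums.

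Next, let $M\in\mod A^{\op}$ and choose a finite presentation $A^{m}\to A^{n}\to M\to 0$ in $\mod A^{\op}$. Applying the two functors yields a commutative diagram
\begin{equation*}
\begin{tikzcd}
F^\wedge_\ka\otimes_{A}A^{m}\ar[r]\ar[d,"\eta_{A^{m}}"'] & F^\wedge_\ka\otimes_{A}A^{n}\ar[r]\ar[d,"\eta_{A^{n}}"'] & F^\wedge_\ka\otimes_{A}M\ar[r]\ar[d,"\eta_{M}"] & 0 \\
(F\otimes_{A}A^{m})^\wedge_\ka\ar[r] & (F\otimes_{A}A^{n})^\wedge_\ka\ar[r] & (F\otimes_{A}M)^\wedge_\ka\ar[r] & 0\rlap{.}
\end{tikzcd}
\end{equation*}
The top row is exact because $F^\wedge_\ka\otimes_{A}-$ is right exact, and the bottom row is exact because \cref{FlTensorCompIsEx} asserts that $(F\otimes_{A}-)^\wedge_\ka\colon\mod A^{\op}\to\Mod R$ is exact. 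The first two vertical arrows are isomorphisms by the previous paragraph, so the five lemma forces $\eta_{M}$ to be an isomorphism as well.

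The main obstacle is really just the exactness of the bottom row, which is exactly what \cref{FlTensorCompIsEx} provides; everything else is formal diagram chasing. One small bookkeeping point to check is that the canonical map $\eta_{M}$ is indeed the one used in the commutative case, so that naturality and the matching of $\eta_{A}$ with the identity hold unambiguously; this is straightforward since both constructions boil down to the isomorphism $F/\ka^{n}F\otimes_{A}M\cong(F\otimes_{A}M)\otimes_{R}R/\ka^{n}$ together with the universal property of the inverse limit.
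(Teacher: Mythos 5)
Your proof is correct and follows essentially the same route as the paper: both arguments rest on \cref{FlTensorCompIsEx} supplying the (right) exactness of $(F\otimes_{A}-)^{\wedge}_{\ka}$ on $\mod A^{\op}$, after which the paper simply invokes the Eilenberg--Watts theorem, whose proof in this finitely presented setting is exactly your explicit construction of $\eta$ plus the finite-presentation/five-lemma argument. The only difference is that you unpack the cited theorem by hand, which has the mild benefit of exhibiting the canonical map concretely.
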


\begin{proof}
By \cref{FlTensorCompIsEx}, the functor $(F\otimes_{A}-)_{\ka}^{\wedge}$ is right exact, so the Eilenberg-Watts theorem (\cite[Theorem~2]{MR118757}) gives a canonical isomorphism $(F\otimes_{A}A)_{\ka}^{\wedge}\otimes_{A}-\isoto(F\otimes_{A}-)_{\ka}^{\wedge}$.
The desired isomorphism follows from the canonical isomorphism $F\otimes_{A}A\isoto F$ of right $A$-modules.
\end{proof}

\begin{proposition}\label{FlCompIsFl}
Let $F$ be a flat right $A$-module and let $\ka\subset R$ be an ideal. Then $F_{\ka}^{\wedge}$ is a flat right $A$-module.
\end{proposition}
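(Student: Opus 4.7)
The plan is to deduce the flatness of $F^\wedge_\ka$ as a right $A$-module by combining the two preceding results, \cref{FlTensorCompIsEx} and \cref{FlTensorCompAsTensor}, which together force the functor $F^\wedge_\ka \otimes_A -$ to be exact on $\mod A^{\op}$.

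First, I would observe that \cref{FlTensorCompAsTensor} provides a natural isomorphism $F^\wedge_\ka \otimes_A - \isoto (F \otimes_A -)^\wedge_\ka$ of functors on $\mod A^{\op}$, and \cref{FlTensorCompIsEx} asserts that the right-hand functor is exact on this domain. Hence $F^\wedge_\ka \otimes_A -$ is exact on the category of finitely generated left $A$-modules.

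Next, I would upgrade this exactness to all of $\Mod A^{\op}$ via a standard direct limit argument: given any monomorphism $L \into M$ in $\Mod A^{\op}$, writing it as the direct limit of inclusions $L' \into M'$ with $L' \subset M'$ finitely generated left $A$-submodules (which is possible since $A$ is left noetherian, so finitely generated equals finitely presented), and using that tensor products commute with direct limits while direct limits preserve monomorphisms in $\Mod R$, the map $F^\wedge_\ka \otimes_A L \to F^\wedge_\ka \otimes_A M$ remains injective. This shows $F^\wedge_\ka$ is flat as a right $A$-module.

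There is no significant obstacle here; the content of the proposition is essentially already contained in \cref{FlTensorCompIsEx,FlTensorCompAsTensor}, and the only routine step is the passage from finitely generated left $A$-modules to arbitrary ones via direct limits, which is standard given that $A$ is noetherian.
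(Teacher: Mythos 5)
Your proof is correct and follows exactly the paper's route: combine \cref{FlTensorCompIsEx} and \cref{FlTensorCompAsTensor} to get exactness of $F^\wedge_\ka\otimes_A-$ on $\mod A^{\op}$, then conclude flatness. The only difference is that the paper delegates the final passage from finitely generated to arbitrary left $A$-modules to a standard reference, whereas you spell out the (correct) direct limit argument.
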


\begin{proof}
	By \cref{FlTensorCompIsEx,FlTensorCompAsTensor}, the functor $F_{\ka}^{\wedge}\otimes_{A}-$ is exact on $\mod A^{\op}$. This implies that $F_{\ka}^{\wedge}$ is a flat right $A$-module (see \cite[Proposition~I.10.6]{MR0389953}, for example).
\end{proof}

In the case where $A=R$, \cref{FlCompIsFl} was shown by Gruson and Raynaud \cite[Part~II, (2.4.2) and Proposition~2.4.3.1]{MR308104} when $\ka\subset R$ is a maximal ideal, and by Bartijn \cite[Chapter~1, Corollary~4.7]{Bartijn} for arbitrary $\ka$.
Another proof was given by Schenzel and Simon \cite[Theorem~2.4.4]{MR3838396}. See \cite[Theorem~1.6]{MR3826724} for a certain generalization to non-noetherian commutative rings. Our proof of \cref{FlCompIsFl} is essentially the same as Gabber and Ramero \cite[Lemma~7.1.6]{MR2004652} but the settings are different.

Schenzel and Simon \cite[Theorem~2.4.4]{MR3838396} also showed the flatness of $F^\wedge_\ka$ over $R^\wedge_\ka$. This will be generalized to Noether algebras in \cref{FlatnessOverCompletionOfA}.

The next two results are often used by experts implicitly.

\begin{lemma}\label{CompTensorRed}
	Let $\ka,\kb\subset R$ be ideals such that $\ka^{n}\subset\kb$ for some $n>0$ and  
	let $M$ be a right $A$-module. Then the canonical morphism $M\to M^\wedge_\ka$ induces an isomorphism $M\otimes_{R}(R/\kb)\isoto M_{\ka}^{\wedge}\otimes_{R}(R/\kb)$.
\end{lemma}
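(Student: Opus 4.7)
The plan is to show the induced map $\alpha\colon M/\kb M\to M^\wedge_\ka/\kb M^\wedge_\ka$ is an isomorphism. First I would construct a splitting: since $\ka^n\subset\kb$, the canonical projection $M^\wedge_\ka\to M/\ka^n M$ composed with the surjection $M/\ka^n M\twoheadrightarrow M/\kb M$ annihilates $\kb M^\wedge_\ka$ and so descends to $\beta\colon M^\wedge_\ka/\kb M^\wedge_\ka\to M/\kb M$ satisfying $\beta\alpha=\id$. Hence $\alpha$ is split injective, and the problem reduces to showing $\alpha$ is surjective.

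Next I would reduce to the case $\kb=\ka^n$. The surjection $R/\ka^n\twoheadrightarrow R/\kb$ induces a commutative diagram
\begin{equation*}
\begin{tikzcd}
M/\ka^n M\ar[d,"\alpha'"']\ar[r,twoheadrightarrow] & M/\kb M\ar[d,"\alpha"]\\
M^\wedge_\ka/\ka^n M^\wedge_\ka\ar[r,twoheadrightarrow] & M^\wedge_\ka/\kb M^\wedge_\ka\rlap{,}
\end{tikzcd}
\end{equation*}
whose horizontal arrows are surjective. Thus it suffices to prove $\alpha'$ is surjective, equivalently that $\ker(M^\wedge_\ka\to M/\ka^n M)\subset\ka^n M^\wedge_\ka$ (the reverse inclusion being immediate by $R$-linearity of the projection).

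For this final step, the noetherian hypothesis on $R$ lets us write $\ka^n=(b_1,\ldots,b_t)$. Given $y=(y_k)_{k\geq 1}\in\ker(M^\wedge_\ka\to M/\ka^n M)$, we have $y_k=0$ for $k\leq n$ and $y_k\in\ka^n M/\ka^k M$ for $k>n$; lift to $z_k\in\ka^n M$ with $z_k=0$ for $k\leq n$ and $z_{k+1}-z_k\in\ka^k M$. Set $c_{j,k}=0$ for $k\leq n$, and for $k\geq n$ inductively construct $c_{j,k+1}\in M$ satisfying the exact equality $z_{k+1}=\sum_j b_j c_{j,k+1}$ and $c_{j,k+1}-c_{j,k}\in\ka^{k-n}M$: by the inductive hypothesis $z_{k+1}-\sum_j b_j c_{j,k}=z_{k+1}-z_k$ lies in $\ka^k M=\ka^n\cdot\ka^{k-n}M$, so using $\ka^n=(b_1,\ldots,b_t)$ it can be written as $\sum_j b_j d_{j,k+1}$ with $d_{j,k+1}\in\ka^{k-n}M$; setting $c_{j,k+1}=c_{j,k}+d_{j,k+1}$ closes the induction. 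After the index shift $\tilde c_{j,m}:=c_{j,n+m}$, each sequence $(\tilde c_{j,m})_{m\geq 1}$ is compatible for $\varprojlim_m M/\ka^m M=M^\wedge_\ka$ and defines $\tilde c_j\in M^\wedge_\ka$ with $y=\sum_j b_j\tilde c_j\in\ka^n M^\wedge_\ka$. The main obstacle is organizing the compatibility of these sequences across all levels; the essential use of noetherianity is the finite generation of $\ka^n$, which is what allows the decomposition $\ka^k M=\ka^n\cdot\ka^{k-n}M$ to be converted into a decomposition in the chosen generators with coefficients controllably deep in the $\ka$-adic filtration.
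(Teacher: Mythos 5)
Your proof is correct. The paper itself does not spell out an argument here: it reduces to the case of $R$-modules (harmless, since the map in question is $A$-linear and bijectivity is checked on underlying groups) and then cites Bartijn and Strooker--Simon for the statement over a commutative ring with $\ka$ finitely generated. What you have written is essentially that standard argument, carried out in full: the splitting $\beta$ with $\beta\alpha=\id$ reduces everything to surjectivity, the reduction to $\kb=\ka^{n}$ is clean, and the telescoping construction of the coefficient sequences $c_{j,k}$ with $c_{j,k+1}-c_{j,k}\in\ka^{k-n}M$ is exactly the mechanism by which one proves $\ker\bigl(M^{\wedge}_{\ka}\to M/\ka^{n}M\bigr)=\ka^{n}M^{\wedge}_{\ka}$ for a finitely generated ideal. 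Two small points worth noting. First, your only use of noetherianity is the finite generation of $\ka^{n}$, so your argument in fact proves the lemma at the same level of generality as the cited references (arbitrary commutative base ring, $\ka$ finitely generated); this is strictly more than the paper needs. Second, in verifying $y=\sum_{j}b_{j}\tilde c_{j}$ one should check that the $m$-th component of the right-hand side, namely $z_{n+m}\bmod\ka^{m}M$, agrees with $y_{m}=z_{m}\bmod\ka^{m}M$; this follows from telescoping $z_{n+m}-z_{m}=\sum_{k=m}^{n+m-1}(z_{k+1}-z_{k})\in\ka^{m}M$, a step you leave implicit but which causes no difficulty. So the proposal is a complete, self-contained substitute for the citation.
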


\begin{proof}
It suffices to prove this by regarding $M$ as just an $R$-module, so the proof can be found in \cite[Chapter~I, Theorem~3.1]{Bartijn} or \cite[Theorem~2.2.5]{MR1074178}, which deals with completion with respect to a finitely generated ideal of a (possibly non-noetherian) commutative ring. Note that, in \cite[Theorem~2.2.5]{MR1074178}, a result like \cref{FlTensorCompAsTensor} is implicitly used at the end of the proof. Another proof can be found in \cite[Theorem~2.2.2]{MR3838396}.
\end{proof}

\begin{proposition}\label{CompletionIdempotent}
Let $\ka$ be an ideal of $R$. Denote by $\eta\colon \id_{\Mod A}\to \varLambda^\ka$ the canonical morphism of functors $\Mod A\to\Mod A$. 
For every right $A$-module $M$, the morphisms 
$\varLambda^\ka (\eta M)\colon\varLambda^\ka M\to \varLambda^\ka \varLambda^\ka M$ 
and $\eta (\varLambda^\ka M)\colon\varLambda^\ka M\to \varLambda^\ka \varLambda^\ka M$ are isomorphisms. In particular, $\varLambda^\ka M=M^\wedge_\ka$ is $\ka$-complete.
\end{proposition}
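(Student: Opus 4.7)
The strategy is to leverage \cref{CompTensorRed} in families indexed by $n$, and to recognise $\varLambda^{\ka}(\eta M)$ as the inverse limit of a system of isomorphisms. First I would apply \cref{CompTensorRed} with $\kb:=\ka^{n}$ for each $n\geq 1$: the hypothesis $\ka^{n}\subset\ka^{n}$ is trivially satisfied, so the canonical morphism $\eta M\colon M\to M_{\ka}^{\wedge}$ induces an isomorphism
\begin{equation*}
\varphi_{n}\colon M/\ka^{n}M\isoto M_{\ka}^{\wedge}/\ka^{n}M_{\ka}^{\wedge}.
\end{equation*}
Since $\varLambda^{\ka}=\varprojlim_{n\geq 1}(-\otimes_{R}R/\ka^{n})$ by definition, the morphism $\varLambda^{\ka}(\eta M)$ is precisely $\varprojlim_{n}\varphi_{n}$, hence an isomorphism. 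This disposes of half of the statement.

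For the other morphism $\eta(\varLambda^{\ka}M)\colon M_{\ka}^{\wedge}\to (M_{\ka}^{\wedge})_{\ka}^{\wedge}$, my plan is to identify it with $\varLambda^{\ka}(\eta M)$. The canonical projection $\pi_{n}\colon M_{\ka}^{\wedge}\to M/\ka^{n}M$ from the defining inverse limit satisfies $\pi_{n}(\ka^{n}M_{\ka}^{\wedge})\subset\ka^{n}(M/\ka^{n}M)=0$, so it factors through a well-defined map $\bar\pi_{n}\colon M_{\ka}^{\wedge}/\ka^{n}M_{\ka}^{\wedge}\to M/\ka^{n}M$. A one-line check on representatives shows $\bar\pi_{n}\circ\varphi_{n}=\id$, so $\bar\pi_{n}=\varphi_{n}^{-1}$. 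Taking inverse limits, $\varprojlim_{n}\bar\pi_{n}$ is the inverse of $\varLambda^{\ka}(\eta M)$. A direct coordinate-wise computation then shows that the composition of $\eta(\varLambda^{\ka}M)$ with $\varprojlim_{n}\bar\pi_{n}$ sends $x=(\bar x_{n})_{n}\in M_{\ka}^{\wedge}$ to $(\bar\pi_{n}(x\bmod\ka^{n}M_{\ka}^{\wedge}))_{n}=(\pi_{n}(x))_{n}=(\bar x_{n})_{n}=x$, i.e., it is the identity. Consequently $\eta(\varLambda^{\ka}M)=\varLambda^{\ka}(\eta M)$, and both are isomorphisms.

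The one subtle point, and the main potential pitfall, is keeping the two natural transformations distinct: a priori $\varLambda^{\ka}(\eta M)$ and $\eta(\varLambda^{\ka}M)$ are different maps $\varLambda^{\ka}M\to\varLambda^{\ka}\varLambda^{\ka}M$. Naturality of $\eta$ applied to $\eta M$ only yields $\eta(\varLambda^{\ka}M)\circ\eta M=\varLambda^{\ka}(\eta M)\circ\eta M$, which does not suffice on its own since $\eta M$ need not be epic. The projection maps $\pi_{n}$ provide exactly the handle needed to upgrade this to the global equality of the two morphisms, which is of course the content of the idempotency of the monad $\varLambda^{\ka}$. Everything else reduces to routine manipulation of inverse limits, granted \cref{CompTensorRed}.
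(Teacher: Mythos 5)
Your proof is correct and follows essentially the same route as the paper: both halves rest on \cref{CompTensorRed} applied with $\kb=\ka^{n}$, the first morphism is identified as the inverse limit of the resulting isomorphisms, and the second is handled by composing $\eta(\varLambda^{\ka}M)$ with the inverse limit of the factored projections and checking coordinatewise that the composite is the identity. Your closing observation that the two morphisms actually coincide (rather than merely both being isomorphisms) is a slight sharpening that the paper leaves implicit, but the argument is the same.
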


\begin{proof}
\cref{CompTensorRed} applied to $\kb=\ka^{n}$ ($n\geq 1$) yields the isomorphism $f_{n}\colon M\otimes_{R}(R/\ka^{n})\isoto M_{\ka}^{\wedge}\otimes_{R}(R/\ka^{n})$ induced from the completion map $M\to M_{\ka}^{\wedge}$. This implies that $\varLambda^\ka(\eta(M))$ is an isomorphism.

Applying $-\otimes_{R}R/\ka^{n}$ to the canonical map $M_{\ka}^{\wedge}\to M\otimes_{R}(R/\ka^{n})$ appearing in the definition of the inverse limit, we obtain $g_{n}\colon M_{\ka}^{\wedge}\otimes_{R}(R/\ka^{n})\to M\otimes_{R}(R/\ka^{n})$. As mentioned in the proofs of \cite[Chapter~I, Proposition~2.3]{Bartijn} and \cite[Theorem~2.2.5]{MR1074178}, it is easy to see that $g_{n}f_{n}$ is the identity map, so $g_{n}=f_{n}^{-1}$ is also an isomorphism. One can also check that the composition
\begin{equation*}
	M_{\ka}^{\wedge}\to\varprojlim_{n\geq 1}M_{\ka}^{\wedge}\otimes_{R}(R/\ka^{n})\isoto\varprojlim_{n\geq 1}M\otimes_{R}(R/\ka^{n})=M_{\ka}^{\wedge}
\end{equation*}
of $\eta(\varLambda^\ka M)$ and the isomorphism induced by $(g_{n})_{n}$ is the identity map, so $\eta(\varLambda^\ka M)$ is also an isomorphism.
\end{proof}

\begin{remark}\label{bCompleteISaComplete}
Let $\ka$ and $\kb$ are ideals of $R$ with $\ka \subseteq\kb$.
Then every $\kb$-complete right $A$-module $M$ is $\ka$-complete.
Indeed, the composition of the completion maps $M\to M^\wedge_\ka$ and $M^\wedge_\ka\to (M^\wedge_\ka)^\wedge_\kb$ is an isomorphism since $(M^\wedge_\ka)^\wedge_\kb\cong M^\wedge_\kb$ by \cref{CompTensorRed}.
Thus $M$ is a direct summand of $M^\wedge_\ka$. This implies that $M$ is $\ka$-complete by \cref{CompletionIdempotent}.
\end{remark}

The functor $\varLambda^\ka\colon\Mod A\to \Mod A$ is not necessarily left exact or right exact (even if $A=R$; see \cite[Chapter~10, Exercise~1]{MR0242802}, for example) so it is not isomorphic to $-\otimes_AA^\wedge_\ka$. However, \cref{CompletionPreservesSurjectivity,CommutativityWithDirestProduct} below show some basic properties of $\varLambda^\ka$.

\begin{proposition}\label{CompletionPreservesSurjectivity}
	The functor $\varLambda^\ka\colon\Mod A\to\Mod A$ preserves epimorphisms.
\end{proposition}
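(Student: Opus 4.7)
The plan is to argue that, given an epimorphism $f\colon M\to N$ in $\Mod A$ with kernel $K$, the induced map $\varLambda^\ka f\colon M^\wedge_\ka\to N^\wedge_\ka$ is already surjective on the level of inverse limits, by constructing compatible liftings by hand.

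For each $n\geq 1$, tensoring the short exact sequence $0\to K\to M\to N\to 0$ with $R/\ka^n$ shows that the induced map $\bar f_n\colon M/\ka^nM\to N/\ka^nN$ is surjective, with kernel $(K+\ka^nM)/\ka^nM$. The point is therefore to show that the inverse limit of this tower of surjections is surjective. Given any element $(n_k+\ka^kN)_{k\geq 1}\in N^\wedge_\ka$, I would build a compatible sequence $(m_k+\ka^kM)_{k\geq 1}\in M^\wedge_\ka$ lifting it by induction on $k$. For $k=1$, use surjectivity of $\bar f_1$. For the inductive step, assume $m_1,\dots,m_k$ have been chosen so that $f(m_i)\equiv n_i\pmod{\ka^iN}$ and $m_{i+1}\equiv m_i\pmod{\ka^iM}$ for $i<k$. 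Pick any lift $m'_{k+1}\in M$ with $f(m'_{k+1})\equiv n_{k+1}\pmod{\ka^{k+1}N}$. Reducing modulo $\ka^k$, the element $m'_{k+1}-m_k$ lies in the kernel of $\bar f_k$, so it is of the form $k_0+\ka^kM$ for some $k_0\in K$. Setting $m_{k+1}:=m'_{k+1}-k_0$, I obtain $f(m_{k+1})=f(m'_{k+1})\equiv n_{k+1}\pmod{\ka^{k+1}N}$ and $m_{k+1}\equiv m_k\pmod{\ka^kM}$, completing the induction.

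The resulting sequence defines an element of $M^\wedge_\ka$ that maps to the given element of $N^\wedge_\ka$, proving surjectivity of $\varLambda^\ka f$.

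There is not really a hard step here; the whole argument is a standard diagonal/telescoping construction that works for an arbitrary ideal of an arbitrary ring and does not require any noetherian or finite-generation hypothesis on $M$, $N$, or $K$ (in particular, Artin--Rees and Mittag--Leffler can be bypassed entirely because the kernel $k_0\in K$ is chosen directly in $K$, not in some quotient of it). The only thing to be careful about is bookkeeping between the various reductions modulo $\ka^k$ and $\ka^{k+1}$.
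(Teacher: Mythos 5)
Your proof is correct: the inductive lifting works because $k_0\in K=\Ker f$, so subtracting it leaves $f(m'_{k+1})$ unchanged, while $m'_{k+1}-m_k-k_0\in\ka^kM$ gives the required compatibility, and the kernel computation $\Ker\bar f_n=(K+\ka^nM)/\ka^nM$ from right-exactness of $-\otimes_RR/\ka^n$ is exactly what is needed. This is in substance the same argument as the paper's, which reduces to $A=R$ and cites Matsumura's Theorem~8.1(ii); the content of that citation (that the $\ka$-adic topology on $N\cong M/K$ is the quotient topology, so the completed map stays surjective) is precisely the telescoping construction you wrote out explicitly.
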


\begin{proof}
Since this property is that for the functor $\varLambda^\ka\colon\Mod R\to\Mod R$, we may assume $A=R$. So the result follows from \cite[Theorem~8.1(ii)]{MR1011461} because the $\ka$-adic topology of a quotient module $M/N$ coincides with the topology induced from the $\ka$-adic topology of $M$.
\end{proof}

\begin{proposition}\label{CommutativityWithDirestProduct}
	The functor $\varLambda^\ka\colon\Mod A\to\Mod A$ commutes with arbitrary direct products.
\end{proposition}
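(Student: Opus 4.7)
The plan is to reduce the statement to the standard fact that the inverse limit functor commutes with direct products, together with the observation that for a finitely generated ideal, powers of the ideal commute with direct products of modules.

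First, I would fix a family $\{M_i\}_{i\in I}$ of right $A$-modules and set $M := \prod_{i\in I} M_i$. For each $n\geq 1$, there is a canonical $A$-homomorphism
\begin{equation*}
\varphi_n\colon M/\ka^n M \to \prod_{i\in I}(M_i/\ka^n M_i)
\end{equation*}
induced by the product of the canonical surjections. I would show that $\varphi_n$ is an isomorphism. Surjectivity is clear since each coordinate projection $M\to M_i$ induces a surjection $M/\ka^n M \to M_i/\ka^n M_i$. For injectivity, it suffices to verify that $\ka^n M = \prod_{i\in I} \ka^n M_i$ as submodules of $M$; the inclusion $\subseteq$ is formal, and for the reverse inclusion I would use that $R$ is noetherian: choose finitely many generators $c_1,\ldots,c_m$ of $\ka^n$, and for any element $(x_i)_i$ of $\prod_i \ka^n M_i$, pick $z_{ij}\in M_i$ with $x_i = \sum_{j=1}^m c_j z_{ij}$, so that $(x_i)_i = \sum_{j=1}^m c_j (z_{ij})_i \in \ka^n M$.

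Next, I would pass to the inverse limit over $n$. The functor $\varprojlim_{n\geq 1}$ commutes with arbitrary direct products (being itself a limit construction and hence right adjoint to the constant-diagram functor). Combining this with the natural isomorphism $\varphi_n$ (which is compatible with the transition maps in an evident way), I obtain
\begin{equation*}
\varLambda^{\ka} M = \varprojlim_{n\geq 1} M/\ka^n M \isoto \varprojlim_{n\geq 1}\prod_{i\in I}(M_i/\ka^n M_i) \isoto \prod_{i\in I}\varprojlim_{n\geq 1}(M_i/\ka^n M_i) = \prod_{i\in I}\varLambda^{\ka} M_i,
\end{equation*}
which is the desired canonical isomorphism.

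The only nontrivial step is the identity $\ka^n M = \prod_{i\in I}\ka^n M_i$, and the key point is that this uses finite generation of $\ka^n$ (equivalently, noetherianness of $R$); without this hypothesis the statement can fail. Everything else is formal.
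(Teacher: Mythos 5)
Your proof is correct and follows essentially the same route as the paper: the paper observes that $-\otimes_R R/\ka^n$ commutes with direct products because $R/\ka^n$ is finitely presented (citing a standard reference), and then passes to the inverse limit, which is exactly what you do, except that you verify the key identity $\ka^n\prod_i M_i=\prod_i \ka^n M_i$ by hand instead of citing it. Your explicit identification of finite generation of $\ka^n$ as the crucial hypothesis is exactly the right point.
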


\begin{proof}
The $R$-module $R/\ka^n R$ is finitely presented for each $n\geq 1$, so a standard argument shows that the functor $-\otimes_RR/\ka^n R\colon\Mod A\to \Mod A$ commutes with arbitrary direct products (see \cite[Theorem~3.2.22]{MR1753146}). Hence the functor $\varLambda^\ka=\varprojlim_{n\geq 1}(-\otimes_RR/\ka^n R)$ commutes with arbitrary direct products.
\end{proof}

Let $\ka$ be an ideal of $R$. The \emph{$\ka$-torsion functor} $\varGamma_\ka\colon\Mod A \to \Mod A$ is defined by
\begin{equation*}
\varGamma_\ka:=\varinjlim_{n\geq 1} \Hom_R(R/\ka^n,-).
\end{equation*}
A right $A$-module $M$ is called \emph{$\ka$-torsion} if the canonical morphism $\varGamma_\ka M\to M$ is an isomorphism.

It is well-known that the functor $\varGamma_\ka$ from $\Mod A$ to its full subcategory consisting of all $\ka$-torsion modules is a right adjoint to the inclusion functor. A similar result holds for $\varLambda^\ka$:

\begin{proposition}\label{CompAdj}
The functor $\varLambda^{\ka}$ from $\Mod A$ to its full subcategory consisting of all $\ka$-complete modules is a left adjoint to the inclusion functor.
\end{proposition}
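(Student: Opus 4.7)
The plan is to show that the canonical natural transformation $\eta\colon\id_{\Mod A}\to\varLambda^\ka$ serves as the unit of the desired adjunction, i.e., that for every right $A$-module $M$ and every $\ka$-complete right $A$-module $N$, precomposition with $\eta M$ induces a bijection
\begin{equation*}
\Hom_A(M^\wedge_\ka,N)\isoto \Hom_A(M,N).
\end{equation*}
The two ingredients we will use are both contained in \cref{CompletionIdempotent}: that $\eta N\colon N\to N^\wedge_\ka$ is an isomorphism (since $N$ is $\ka$-complete) and that $\eta(\varLambda^\ka M)\colon \varLambda^\ka M\to \varLambda^\ka\varLambda^\ka M$ and $\varLambda^\ka(\eta M)\colon \varLambda^\ka M\to \varLambda^\ka\varLambda^\ka M$ are both isomorphisms.

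For surjectivity, given $f\colon M\to N$, set
\begin{equation*}
\tilde f:=(\eta N)^{-1}\circ \varLambda^\ka f\colon M^\wedge_\ka\to N.
\end{equation*}
Naturality of $\eta$ with respect to $f$ gives the commutative square $\eta N\circ f=\varLambda^\ka f\circ \eta M$, which upon rearrangement yields $\tilde f\circ \eta M=f$, as required. For injectivity, suppose $g\colon M^\wedge_\ka\to N$ satisfies $g\circ \eta M=f$. Applying $\varLambda^\ka$ and using that $\varLambda^\ka(\eta M)$ is an isomorphism, we see that $\varLambda^\ka g$ is forced to equal $\varLambda^\ka f\circ(\varLambda^\ka\eta M)^{-1}$; hence $\varLambda^\ka g$ depends only on $f$. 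Now naturality of $\eta$ applied to $g$ gives $\eta N\circ g=\varLambda^\ka g\circ \eta(M^\wedge_\ka)$, and since both $\eta N$ and $\eta(M^\wedge_\ka)=\eta(\varLambda^\ka M)$ are isomorphisms, $g$ itself is determined by $f$. This proves uniqueness, so $\tilde f$ is the unique preimage of $f$.

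There is no real obstacle: the statement is a purely formal consequence of the idempotency properties of $\varLambda^\ka$ established in \cref{CompletionIdempotent}, together with the naturality of the canonical morphism $\eta$. The only thing to be mindful of is to invoke both halves of \cref{CompletionIdempotent}, namely that $\eta(\varLambda^\ka M)$ and $\varLambda^\ka(\eta M)$ are both isomorphisms; each plays a distinct role (the former in recovering $g$ from $\varLambda^\ka g$, the latter in showing that $\varLambda^\ka g$ is determined by $f$).
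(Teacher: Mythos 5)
Your proof is correct, and it is essentially the same argument as the paper's: the paper simply cites the general criterion for a reflective subcategory (\cite[Proposition~4.1.3(iii)]{MR2182076}) together with \cref{CompletionIdempotent}, whereas you unfold that criterion explicitly, using exactly the two isomorphisms $\eta(\varLambda^\ka M)$ and $\varLambda^\ka(\eta M)$ supplied by \cref{CompletionIdempotent} and the naturality of $\eta$. No gap; your write-up is a valid self-contained substitute for the citation.
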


\begin{proof}
This follows from \cref{CompletionIdempotent} and 
the general theory of categories; see \cite[Proposition~4.1.3(iii)]{MR2182076}.
\end{proof}

\cref{TorsionIsomorphism} and \cref{TorsionEquivalence} below are essentially stated in \cite[Remark~A.30(7) and (8)]{MR2355715} for the case $A=R$.

\begin{lemma}\label{TorsionIsomorphism}
Let $M$ be an $\ka$-torsion right $A$-module. Then the canonical morphism 
$M\to M\otimes_A A^\wedge_\ka$ is an isomorphism of right $A$-modules.
\end{lemma}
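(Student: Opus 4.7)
The plan is to use that $M$ is a filtered union of its $\ka^{n}$-annihilated $A$-submodules together with the fact that the completion map $A\to A^\wedge_\ka$ becomes an isomorphism modulo $\ka^{n}$.

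First, set $M_{n}:=\setwithcondition{x\in M}{x\ka^{n}=0}=\Hom_{R}(R/\ka^{n},M)$ for each $n\geq 1$. Because the image of $R$ lies in the center of $A$, each $M_n$ is in fact an $A$-submodule of $M$: for $x\in M_{n}$, $a\in A$, and $r\in\ka^{n}$, we have $(xa)r=x(ar)=x(ra)=(xr)a=0$. Since $M$ is $\ka$-torsion, $M=\bigcup_{n\geq 1}M_{n}=\varinjlim_{n\geq 1}M_{n}$ as right $A$-modules. Because $-\otimes_{A}A^{\wedge}_{\ka}$ commutes with direct limits, and the canonical morphism is natural in $M$, it suffices to prove that for each $n$ the canonical map $M_{n}\to M_{n}\otimes_{A}A^{\wedge}_{\ka}$ is an isomorphism.

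Second, I would fix $n$ and compute $M_{n}\otimes_{A}A^{\wedge}_{\ka}$. Since $M_{n}\ka^{n}=0$, every element of $\ka^{n}A^{\wedge}_{\ka}$ becomes zero upon tensoring over $A$ with $M_n$; equivalently, the surjection $A^{\wedge}_{\ka}\onto A^{\wedge}_{\ka}/\ka^{n}A^{\wedge}_{\ka}$ induces an isomorphism
\begin{equation*}
	M_{n}\otimes_{A}A^{\wedge}_{\ka}\isoto M_{n}\otimes_{A}(A^{\wedge}_{\ka}/\ka^{n}A^{\wedge}_{\ka}).
\end{equation*}
Now apply \cref{CompTensorRed} to the right $R$-module $A$ with $\kb=\ka^{n}$: the completion map $A\to A^{\wedge}_{\ka}$ induces an isomorphism $A/\ka^{n}A\isoto A^{\wedge}_{\ka}\otimes_{R}(R/\ka^{n})=A^{\wedge}_{\ka}/\ka^{n}A^{\wedge}_{\ka}$ of $(A,A)$-bimodules (the bimodule structure being the one induced by centrality of $R$). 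Combining these and using $M_{n}\otimes_{A}(A/\ka^{n}A)\cong M_{n}$, we obtain an isomorphism $M_{n}\otimes_{A}A^{\wedge}_{\ka}\cong M_{n}$.

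Third, I would trace through the constructions to check that the composition
\begin{equation*}
	\begin{tikzcd}
		M_{n}\ar[r] & M_{n}\otimes_{A}A^{\wedge}_{\ka}\ar[r,"\sim"] & M_{n}\otimes_{A}(A/\ka^{n}A)\ar[r,"\sim"] & M_{n}
	\end{tikzcd}
\end{equation*}
is the identity; this is straightforward since every arrow above, read in the right order, comes from the unit $x\mapsto x\otimes 1$ and the canonical identifications. Hence the first arrow is a (split, thus) isomorphism. Passing to the direct limit over $n$ yields the desired isomorphism $M\isoto M\otimes_{A}A^{\wedge}_{\ka}$.

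There is no real obstacle; the only point deserving care is the verification that $M_{n}$ is an $A$-submodule (handled by the centrality of $R$ in $A$) and the identification $A^{\wedge}_{\ka}/\ka^{n}A^{\wedge}_{\ka}\cong A/\ka^{n}A$, which is exactly the content of \cref{CompTensorRed}.
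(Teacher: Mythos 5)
Your proof is correct and follows essentially the same route as the paper's: reduce via the direct limit $M=\varinjlim_{n}\Hom_R(R/\ka^n,M)$ to the case of a module killed by $\ka^n$, and then use \cref{CompTensorRed} to identify $A/\ka^nA$ with $A^\wedge_\ka/\ka^nA^\wedge_\ka$. The paper organizes the tensor manipulations slightly differently (writing $N\cong N\otimes_R R/\ka^n$ and reassociating), but the key ingredients are identical.
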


\begin{proof}
If $N$ is a right $A$-module such that $\ka^n N=0$ for some $n>0$, then $N\cong N\otimes_R R/\ka^n$, so 
\begin{equation*}
N\otimes_A A^\wedge_\ka\cong (N\otimes_R R/\ka^n)\otimes_A A^\wedge_\ka\cong N\otimes_A (A^\wedge_\ka \otimes_R R/\ka^n)\cong N\otimes_A (A \otimes_R R/\ka^n)\cong N
\end{equation*}
 as right $A$-modules, where the third isomorphism follows from \cref{CompTensorRed}.

Now, if $M$ is $\ka$-torsion, then $M$ is canonically isomorphic to $\varinjlim_{n\geq 1} \Hom_R(R/\ka^n,M)$. The above argument shows that each $\Hom_R(R/\ka^n,M)$ satisfies the property in the statement. Since $-\otimes_AA^\wedge_\ka$ commutes with direct limits, so does $M$.
\end{proof}

Note that a right $A^\wedge_\ka$-module is $\ka A^\wedge_\ka$-complete (\resp $\ka A^\wedge_\ka$-torsion) if and only if it is $\ka$-complete (\resp $\ka$-torsion) as a right $A$-module.

\begin{proposition}\label{TorsionEquivalence}
The functor $-\otimes_A A^\wedge_\ka\colon\Mod A\to \Mod A^\wedge_\ka$ induces an equivalence from 
the full subcategory of $\ka$-torsion right $A$-modules to the full subcategory of $\ka$-torsion right $A^\wedge_\ka$-modules. Its quasi-inverse is given by the scalar restriction functor.
\end{proposition}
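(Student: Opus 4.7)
The plan is to exhibit the equivalence as (the restriction of) the standard adjoint pair between extension of scalars and scalar restriction along the canonical ring homomorphism $A\to A^\wedge_\ka$. Write $F:=-\otimes_A A^\wedge_\ka\colon\Mod A\to\Mod A^\wedge_\ka$ and $\rho$ for the scalar restriction functor. Then $(F,\rho)$ is an adjoint pair whose unit $\eta_M\colon M\to \rho F M$ is the canonical map $m\mapsto m\otimes 1$ and whose counit $\varepsilon_N\colon F\rho N\to N$ is $n\otimes a\mapsto na$. The crucial input is \cref{TorsionIsomorphism}, which says exactly that $\eta_M$ is an isomorphism when $M$ is $\ka$-torsion.

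First I would check that $F$ and $\rho$ restrict to the subcategories in question. For $\rho$ this uses only the remark preceding the proposition: an $A^\wedge_\ka$-module is $\ka A^\wedge_\ka$-torsion if and only if it is $\ka$-torsion as an $A$-module (one inclusion uses $\ka\subset \ka A^\wedge_\ka$, the other uses associativity of the action together with $(\ka A^\wedge_\ka)^n=\ka^n A^\wedge_\ka$). For $F$, the isomorphism $\eta_M\colon M\isoto \rho F M$ of \cref{TorsionIsomorphism} shows that $\rho F M$ is $\ka$-torsion whenever $M$ is, and hence $F M$ is $\ka A^\wedge_\ka$-torsion as an $A^\wedge_\ka$-module.

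The remaining task is to show that the unit and counit restrict to natural isomorphisms on the relevant subcategories. The unit is handled directly by \cref{TorsionIsomorphism}. For the counit, given a $\ka$-torsion right $A^\wedge_\ka$-module $N$, the triangle identity gives $\rho(\varepsilon_N)\circ \eta_{\rho N}=\id_{\rho N}$; since $\rho N$ is $\ka$-torsion, \cref{TorsionIsomorphism} makes $\eta_{\rho N}$ an isomorphism, forcing $\rho(\varepsilon_N)$ to be its inverse and hence an isomorphism. Because scalar restriction is conservative (it is the identity on underlying abelian groups), $\varepsilon_N$ itself is an isomorphism. I do not expect a genuine obstacle here: once \cref{TorsionIsomorphism} is in hand, the statement is a formal consequence of the extension/restriction adjunction, and the only mildly delicate point is verifying the equivalence of the two torsion conditions on $A^\wedge_\ka$-modules.
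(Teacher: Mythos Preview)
Your proof is correct and takes essentially the same approach as the paper. Both arguments reduce to \cref{TorsionIsomorphism} and the triangle identity: the paper phrases the counit step as ``the composition of the canonical maps $N\isoto N\otimes_A A^\wedge_\ka\to N$ is the identity map,'' which is exactly your use of $\rho(\varepsilon_N)\circ\eta_{\rho N}=\id_{\rho N}$ together with conservativity of $\rho$.
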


\begin{proof}
If $M$ is an $\ka$-torsion right $A$-module, then we have the canonical isomorphism $M\isoto M\otimes_A A^\wedge_\ka$ of right $A$-modules by  \cref{TorsionIsomorphism}, and this means that 
the composition of $-\otimes_A A^\wedge_\ka\colon\Mod A\to \Mod A^\wedge_\ka$ and the scalar restriction functor $ \Mod A^\wedge_\ka\to \Mod A$ induces an autoequivalence on 
the full subcategory of $\ka$-torsion right $A$-modules.

Let $N$ be an $\ka$-torsion right $A^\wedge_\ka$-module. 
We only need to check that the canonical morphism 
$N\otimes_A A^\wedge_\ka\to N$ of right $A^\wedge_\ka$-modules is an isomorphism. This also follows from \cref{TorsionIsomorphism} because the composition of the canonical maps $N\isoto N\otimes_A A^\wedge_\ka\to N$ is the identity map.
\end{proof}

\begin{remark}\label{TwoActionOfCompletion}
For a right $A$-module $M$, its $\ka$-adic completion $M^\wedge_\ka$ is naturally realized as a right $A$-submodule of $\prod_{n\geq 1} M/\ka^n M$. In particular, we may interpret $A^\wedge_\ka$ as a subring of $\prod_{n\geq 1} A/\ka^n A$. So the componentwise action defines a canonical right $A^\wedge_\ka$-module structure on $M^\wedge_\ka$.
Moreover, taking the $\ka$-adic completion sends each $A$-homomorphism $M\to N$ an $A^\wedge_\ka$-homomorphism $M^\wedge_\ka\to N^\wedge_\ka$, so 
we may regard $(-)^\wedge_\ka$ as a functor $\Mod A\to \Mod A^\wedge_\ka$.

For a finitely generated right $A$-module $M$, \cref{FlTensorCompAsTensor} gives a canonical isomorphism $M \otimes_A A^\wedge_\ka\to M^\wedge_\ka$ of right $A$-modules. It is easily seen from the proof that this is an isomorphism of right $A^\wedge_\ka$-modules as well.
\end{remark}

\begin{proposition}\label{FlatnessOverCompletionOfA}
For every flat right $A$-module $F$, its $\ka$-adic completion $F^\wedge_\ka$ is a flat right $A^\wedge_\ka$-module.
\end{proposition}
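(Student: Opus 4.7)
The plan is to reduce the problem to the already-established \cref{FlTensorCompAsTensor} via a change-of-rings argument. I would first consider the auxiliary right module $G := F\otimes_A A^\wedge_\ka$ over $B := A^\wedge_\ka$. This $G$ is flat over $B$ for a trivial reason: given any injection $L'\hookrightarrow L$ of left $B$-modules, the induced map $G\otimes_B L' = F\otimes_A L' \to F\otimes_A L = G\otimes_B L$ is injective by $A$-flatness of $F$.

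The key step will be to establish, for every finitely generated left $B$-module $N$, a natural isomorphism
\begin{equation*}
F^\wedge_\ka \otimes_B N \cong (F\otimes_A N)^\wedge_\ka.
\end{equation*}
I will obtain this by applying \cref{FlTensorCompAsTensor} to the Noether $R^\wedge_\ka$-algebra $B$ with the ideal $\ka R^\wedge_\ka$ (which induces the same adic topology as $\ka$) and the flat right $B$-module $G$: this yields $G^\wedge_\ka \otimes_B N \cong (G\otimes_B N)^\wedge_\ka$. The right-hand side becomes $(F\otimes_A N)^\wedge_\ka$ because $A^\wedge_\ka\otimes_B N = N$. For the left-hand side, I will compute
\begin{equation*}
G^\wedge_\ka = \varprojlim_n (F\otimes_A A^\wedge_\ka)/\ka^n(F\otimes_A A^\wedge_\ka) \cong \varprojlim_n F\otimes_A (A^\wedge_\ka/\ka^n A^\wedge_\ka) \cong \varprojlim_n F\otimes_A (A/\ka^n A) = F^\wedge_\ka,
\end{equation*}
using centrality of $\ka$ (so that $\ka^n(F\otimes_A A^\wedge_\ka)=F\otimes_A \ka^n A^\wedge_\ka$) together with \cref{CompTensorRed}.

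To conclude, it suffices to show that $F^\wedge_\ka\otimes_B -$ preserves injections between finitely generated left $B$-modules; the general case then follows since every left $B$-module is a filtered colimit of finitely generated (equivalently finitely presented, by left noetherianness of $B$) ones, and tensor commutes with filtered colimits. Through the key identity, this reduces to showing that for an inclusion $N'\hookrightarrow N$ of finitely generated left $B$-modules, the map $(F\otimes_A N')^\wedge_\ka \to (F\otimes_A N)^\wedge_\ka$ is injective. Since $N',N$ are finitely generated over the commutative noetherian ring $R^\wedge_\ka$, the Artin–Rees lemma provides an integer $c$ with $\ka^n N\cap N' \subset \ka^{n-c}N'$ for $n>c$. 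Tensoring over $A$ with the flat module $F$ (exactly as in the proof of \cref{FlTensorCompIsEx}, using $A$-flatness to identify $F\otimes_A\ka^n N$ with $\ka^n(F\otimes_A N)$ and to preserve intersections) transports this into the analogous inequality for $F\otimes_A N'\subset F\otimes_A N$, so the $\ka$-adic topology on $F\otimes_A N'$ coincides with the subspace topology induced from $F\otimes_A N$, whence the desired injectivity follows.

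The main obstacle will be the bookkeeping in the key identity, in particular the careful verification that $G^\wedge_\ka$ really equals $F^\wedge_\ka$ and that the isomorphism is natural in $N$; once this is in place, the Artin–Rees argument is essentially a transcription of the proof of \cref{FlTensorCompIsEx} with the base ring $R$ replaced by its noetherian completion $R^\wedge_\ka$.
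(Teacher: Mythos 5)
Your proof is correct and rests on the same two pillars as the paper's: the identification $F^\wedge_\ka\otimes_{A^\wedge_\ka}N\cong(F\otimes_A N)^\wedge_\ka$ for finitely generated left $A^\wedge_\ka$-modules $N$, and exactness of the latter functor via the Artin--Rees lemma over $R^\wedge_\ka$, exactly as in \cref{FlTensorCompIsEx}. The organizational difference is that you route everything through the base-changed module $G:=F\otimes_A A^\wedge_\ka$, which lets you quote \cref{FlTensorCompAsTensor} verbatim for the Noether $R^\wedge_\ka$-algebra $A^\wedge_\ka$ instead of rerunning the Eilenberg--Watts argument, whereas the paper identifies the functors $(F^\wedge_\ka\otimes_{A^\wedge_\ka}-)^\wedge_\ka\cong(F\otimes_A-)^\wedge_\ka$ directly using \cref{CompTensorRed} and the torsion identity. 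Your packaging actually buys you more than you use: once you know that $G$ is a flat right $A^\wedge_\ka$-module and that $G^\wedge_\ka\cong F^\wedge_\ka$ (your computation via \cref{CompTensorRed} is correct, and the isomorphism is one of right $A^\wedge_\ka$-modules), you can conclude immediately by applying \cref{FlCompIsFl} to the Noether $R^\wedge_\ka$-algebra $A^\wedge_\ka$ and the flat module $G$ --- so your final Artin--Rees paragraph, while correct, is a re-derivation of \cref{FlTensorCompIsEx} for that algebra and could be replaced by a citation.
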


\begin{proof}
If $L$ is an $\ka$-torsion left $A^\wedge_\ka$-module, then $L\cong A^\wedge_\ka\otimes_A L$ as left $A^\wedge_\ka$-modules by \cref{TorsionEquivalence}, so
\begin{equation}\label{TorsionTensor}
-\otimes_{A^\wedge_\ka}L \cong -\otimes_{A^\wedge_\ka}(A^\wedge_\ka\otimes_A L)\cong -\otimes_A L
\end{equation}
as functors $\Mod A^\wedge_\ka\to \Mod R^\wedge_\ka$.
Similarly to the proof of \cref{FlTensorCompIsEx}, we show that the functor 
\begin{equation*}
(F^\wedge_{\ka}\otimes_{A^\wedge_\ka}-)_{\ka}^{\wedge}\colon\mod {A^\wedge_\ka}^{\op}\to \Mod R^\wedge_\ka
\end{equation*}
is exact.
By \cref{CompTensorRed} and \cref{TorsionTensor},
\begin{equation*}
(F^\wedge_{\ka}\otimes_{A^\wedge_\ka}-)^\wedge_\ka=\varprojlim_{n\geq 1}(F^\wedge_{\ka}\otimes_{A^\wedge_\ka}-)\otimes_{R} (R/\ka^n)\cong \varprojlim_{n\geq 1}(F\otimes_{A}-)\otimes_{R} (R/\ka^n)=(F\otimes_{A}-)^\wedge_\ka
\end{equation*}
as functors  $\mod {A^\wedge_\ka}^{\op}\to \Mod R^\wedge_\ka$.
The exactness of the functor $(F\otimes_{A}-)^\wedge_\ka$ can be shown in the same way as \cref{FlTensorCompIsEx}, using the Artin-Rees lemma for finitely generated left $R^\wedge_\ka$-modules.
Hence the functor $(F^\wedge_{\ka}\otimes_{A^\wedge_\ka}-)^\wedge_\ka$ is also exact. In the same way as in the proofs of \cref{FlTensorCompAsTensor,FlCompIsFl}, we have $F^\wedge_{\ka}\otimes_{A^\wedge_\ka}-\cong (F^\wedge_{\ka}\otimes_{A^\wedge_\ka}-)^\wedge_\ka$, so  $F^\wedge_{\ka}$ is a flat right $A^\wedge_\ka$-module.
\end{proof}

The following result is analogous to \cref{TorsionEquivalence}.

\begin{proposition}\label{CompleteEquivalence}
The functor $(-)^\wedge_\ka\colon\Mod A \to \Mod A^\wedge_\ka$ induces an equivalence from the full subcategory of $\ka$-complete right $A$-modules to the full subcategory of $\ka$-complete right $A^\wedge_\ka$-modules. Its quasi-inverse is given by the scalar restriction functor.
\end{proposition}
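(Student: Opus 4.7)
The plan is to verify two points in tandem: that the completion functor $(-)^\wedge_\ka\colon\Mod A\to\Mod A^\wedge_\ka$ (from \cref{TwoActionOfCompletion}) and the scalar restriction $\Mod A^\wedge_\ka\to\Mod A$ restrict to functors between the full subcategories of $\ka$-complete modules, and that the restrictions are mutually quasi-inverse through the canonical completion maps.

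The key preliminary observation I will make is that, for any right $A^\wedge_\ka$-module $N$ and every $n\geq 1$, the $A$-submodule $\ka^n N$ and the $A^\wedge_\ka$-submodule $(\ka A^\wedge_\ka)^n N$ of $N$ coincide. Indeed, the image of $\ka$ in $A^\wedge_\ka$ factors through $R^\wedge_\ka$, which lies in the center of $A^\wedge_\ka$, so $N\cdot\ka$ is automatically an $A^\wedge_\ka$-submodule of $N$, and the two filtrations agree. Consequently the quotients $N/\ka^n N$ are the same regardless of whether we view $N$ in $\Mod A$ or in $\Mod A^\wedge_\ka$, so $N$ is $\ka$-complete as a right $A^\wedge_\ka$-module if and only if it is so as a right $A$-module. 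In particular, scalar restriction preserves $\ka$-completeness, and for any $M\in\Mod A$ the completion $M^\wedge_\ka$ (which is $\ka$-complete as a right $A$-module by \cref{CompletionIdempotent}) is $\ka$-complete as a right $A^\wedge_\ka$-module. Thus both functors restrict to the desired subcategories.

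For mutual inverseness, given a $\ka$-complete $M\in\Mod A$ the composition lands back at $M^\wedge_\ka$ regarded as an $A$-module, and the canonical completion map $M\to M^\wedge_\ka$ is an isomorphism in $\Mod A$ by hypothesis, giving the unit natural isomorphism. Given a $\ka$-complete $N\in\Mod A^\wedge_\ka$, the other composition produces the $A^\wedge_\ka$-module $(N|_A)^\wedge_\ka$, and the canonical map $N\to(N|_A)^\wedge_\ka$ is an isomorphism of $A$-modules by the preceding paragraph. The main obstacle I anticipate is to promote this last morphism to an isomorphism of $A^\wedge_\ka$-modules, since $N$ already carries its original $A^\wedge_\ka$-action while $(N|_A)^\wedge_\ka$ carries the action from \cref{TwoActionOfCompletion}. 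I will resolve this by comparing the two actions at each finite level: an element $a\in A^\wedge_\ka$ acts on $(N|_A)^\wedge_\ka=\varprojlim_n N/\ka^n N$ componentwise through its image in $A^\wedge_\ka/\ka^n A^\wedge_\ka\cong A/\ka^n$ (using \cref{CompTensorRed}), which is precisely the reduction of the given $A^\wedge_\ka$-action on $N$ modulo $\ka^n N$; since the completion map $N\to(N|_A)^\wedge_\ka$ is an $A$-isomorphism whose source is Hausdorff (that is, $\bigcap_n\ka^nN=0$ by $\ka$-completeness), the two $A^\wedge_\ka$-structures must coincide. Alternatively, this agreement follows at once from the uniqueness of compatible $A^\wedge_\ka$-structures asserted in \cref{UniqueCompleteStructure}.
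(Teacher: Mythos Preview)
Your argument is correct and follows essentially the same route as the paper: both reduce the only nontrivial point---that for an $\ka$-complete $A^\wedge_\ka$-module $N$ the $A$-isomorphism $N\to N^\wedge_\ka$ is in fact $A^\wedge_\ka$-linear---to the observation that each $A^\wedge_\ka$-action on $N/\ka^n N$ factors through $A^\wedge_\ka/\ka^n A^\wedge_\ka\cong A/\ka^n A$ (via \cref{CompTensorRed}) and is therefore determined by the underlying $A$-action. Your preliminary remark that $\ka^n N=(\ka A^\wedge_\ka)^n N$ makes explicit something the paper uses implicitly.

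One caution: your alternative appeal to \cref{UniqueCompleteStructure} is circular, since in the paper that proposition is \emph{deduced from} \cref{CompleteEquivalence}. Drop that sentence and keep only your direct finite-level comparison.
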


\begin{proof}
If $M$ is an $\ka$-complete right $A$-module, then by definition we have the canonical isomorphism $M\isoto M^\wedge_\ka$ of right $A$-modules, and this means that 
the composition of $\varLambda^\ka\colon\Mod A\to \Mod A^\wedge_\ka$ and the scalar restriction functor $ \Mod A^\wedge_\ka\to \Mod A$ induces an autoequivalence on 
the full subcategory of $\ka$-complete right $A$-modules.

Let $N$ be an $\ka$-complete right $A^\wedge_\ka$-module. Then, by definition, we have an isomorphism $N\to N^{\wedge}_{\ka}$ of right $A$-modules. We show that this is an isomorphism of right $A^\wedge_\ka$-module, where the $A^\wedge_\ka$-module structure on $N^{\wedge}_{\ka}$ is the one defined in \cref{TwoActionOfCompletion}. The embedding $f\colon N\hookrightarrow \prod_{n\geq 1} N/\ka^n N$ induced by the projections $N\onto N/\ka^{n}N$ is an $A^\wedge_\ka$-homomorphism if we regard $\prod_{n\geq 1} N/\ka^n N$ as the product of right $A^\wedge_\ka$-modules $N/\ka^n N$. On the other hand, we observed in \cref{TwoActionOfCompletion} that the natural embedding $N^{\wedge}_{\ka}\into\prod_{n\geq 1} N/\ka^n N$ is an $A^\wedge_\ka$-homomorphism, but here $A^\wedge_\ka$ acts on the product componentwise. Since these embeddings are identified via the isomorphism $N\to N^{\wedge}_{\ka}$, it suffices to prove that those two $A^\wedge_\ka$-module structures on $\prod_{n\geq 1} N/\ka^n N$ coincides. In other words, it suffices to prove that, for each $n\geq 1$, the $A^\wedge_\ka$-module structure on $N/\ka^{n}N$ induced from that of $N$ is the same as the $A^\wedge_\ka$-module structure on $N/\ka^{n}N$ obtained from the $A/\ka^{n}A$-module structure of $N/\ka^{n}N$ via the canonical map $A^\wedge_\ka\to A/\ka^{n}A$. The former structure factors through the $A^\wedge_\ka/\ka^{n}A^\wedge_\ka$-structure on $N/\ka^{n}N$. As we observed in the proof of \cref{CompletionIdempotent}, the map $A^\wedge_\ka\to A/\ka^{n}A$ induces an isomorphism $A^\wedge_\ka/\ka^{n}A^\wedge_\ka\isoto A/\ka^{n}A$. So we have a commutative diagram
\begin{equation*}
	\begin{tikzcd}
		A\ar[d]\ar[r,twoheadrightarrow] & A/\ka^{n}A \\
		A^\wedge_\ka\ar[ur]\ar[r,twoheadrightarrow] & A^\wedge_\ka/\ka^{n}A^\wedge_\ka\ar[u,"\wr"']\rlap{,}
	\end{tikzcd}
\end{equation*}
where all maps are canonical ones. This means that each $A^\wedge_\ka$-module structures on $N/\ka^{n}N$ is determined by the induced $A$-module structure on $N/\ka^{n}N$ via the canonical map $A\to A^\wedge_\ka$. Since the induced $A$-module structures are the same, so are the $A^\wedge_\ka$-module structures. This completes the proof.
\end{proof}

The following fact is shown in \cite[Proposition~2.1.15(a)]{MR3838396} for $\ka$-torsion $R$-modules.

\begin{proposition}\label{UniqueCompleteStructure}
Every $\ka$-torsion (\resp $\ka$-complete) right $A$-module has a unique right $A^\wedge_\ka$-module structure that is compatible with the right $A$-module structure via the canonical map $A\to A^\wedge_\ka$.
\end{proposition}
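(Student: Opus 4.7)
The plan is to split the proposition into existence and uniqueness parts, with uniqueness being the substantive content.

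For existence, I would simply transport structure along canonical isomorphisms. If $M$ is $\ka$-torsion, \cref{TorsionIsomorphism} provides a canonical $A$-linear isomorphism $M\isoto M\otimes_{A}A^{\wedge}_{\ka}$; the right-hand side carries an evident right $A^{\wedge}_{\ka}$-structure (acting on the second factor), and pulling this back along the isomorphism gives a compatible $A^{\wedge}_{\ka}$-structure on $M$. If $M$ is $\ka$-complete, then $M\isoto M^{\wedge}_{\ka}$, and by \cref{TwoActionOfCompletion} the target has a canonical right $A^{\wedge}_{\ka}$-structure extending its $A$-structure; pull this back.

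For uniqueness, the key observation I would establish first is: for any right $A^{\wedge}_{\ka}$-structure on $M$ that is compatible with the given $A$-structure, the submodule $\{x\in M\mid x\ka^{n}=0\}$ is annihilated by the two-sided ideal $\ka^{n}A^{\wedge}_{\ka}$. This uses that $\ka\subset R$ lies in the center of $A^{\wedge}_{\ka}$, so every element of $\ka^{n}A^{\wedge}_{\ka}$ is a finite sum $\sum r_{i}b_{i}$ with $r_{i}\in\ka^{n}$ and $b_{i}\in A^{\wedge}_{\ka}$; then for $x$ with $x\ka^{n}=0$, compatibility gives $x\cdot(r_{i}b_{i})=(xr_{i})\cdot b_{i}=0$. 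Therefore the $A^{\wedge}_{\ka}$-action on such $x$ factors through $A^{\wedge}_{\ka}/\ka^{n}A^{\wedge}_{\ka}\cong A/\ka^{n}A$, the last isomorphism being \cref{CompTensorRed} applied to $R\to A^{\wedge}_{\ka}$. Hence the action is completely determined by the $A$-action, independent of the chosen extension.

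Given this, the $\ka$-torsion case is immediate, since every $x\in M$ lies in $\{y\mid y\ka^{n}=0\}$ for some $n$. In the $\ka$-complete case, I would apply the observation to each quotient $M/\ka^{n}M$, noting that $\ka^{n}M$ is automatically an $A^{\wedge}_{\ka}$-submodule for any compatible structure (since $\ka^{n}$ is central), so $M/\ka^{n}M$ carries a well-defined induced $A^{\wedge}_{\ka}$-action which is forced by the $A$-action; then the isomorphism $M\isoto\varprojlim_{n}M/\ka^{n}M$ coming from $\ka$-completeness propagates agreement on each level to agreement on $M$. The only mildly delicate point is the reduction $A^{\wedge}_{\ka}/\ka^{n}A^{\wedge}_{\ka}\cong A/\ka^{n}A$ and the observation that central ideals make $\ka^{n}M$ automatically stable under any compatible $A^{\wedge}_{\ka}$-action; once these are acknowledged the argument is purely formal.
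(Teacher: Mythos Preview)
Your argument is correct. The existence part matches the paper's, but your uniqueness argument takes a different, more elementary route. The paper deduces uniqueness directly from the categorical equivalences already established in \cref{TorsionEquivalence} and \cref{CompleteEquivalence}: if $N_{1}$ and $N_{2}$ are two right $A^{\wedge}_{\ka}$-modules restricting to the same $A$-module $M$, then full faithfulness of scalar restriction on the torsion (\resp complete) subcategory forces the identity map on $M$ to be $A^{\wedge}_{\ka}$-linear, hence $N_{1}=N_{2}$. Your approach instead works element by element, using the ring isomorphism $A/\ka^{n}A\isoto A^{\wedge}_{\ka}/\ka^{n}A^{\wedge}_{\ka}$ (from \cref{CompTensorRed}) to pin down the action on each $\ka^{n}$-annihilated layer or quotient, then passes to the union or inverse limit. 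This is essentially the computation hidden inside the proof of \cref{CompleteEquivalence} itself (see the commutative square there), so your argument is more self-contained but less economical given the surrounding results; the paper's version is a two-line corollary once those equivalences are in hand.
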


\begin{proof}
This follows from \cref{TorsionEquivalence} (\resp \cref{CompleteEquivalence}). Indeed, such a structure exists since every $\ka$-torsion (\resp $\ka$-complete) right $A$-module $M$ belongs to the essential image of the scalar restriction functor $\Mod A^\wedge_\ka\to\Mod A$. If $N_{1}$ and $N_{2}$ are right $A^\wedge_\ka$-modules that are equal to $M$ as right $A$-modules, then they are $\ka$-torsion (\resp $\ka$-complete), and the scalar restriction functor gives a bijection $\Hom_{A^\wedge_\ka}(N_{1},N_{2})\to\Hom_{A}(M,M)$. Therefore the identity map $M\to M$ gives the equality of $N_{1}$ and $N_{2}$ as right $A^\wedge_\ka$-modules.
\end{proof}

Assume that an ideal $\ka\subset R$ is contained by the Jacobson radical of $R$ (which is by definition the intersection of all maximal ideals of $R$). Then the ring homomorphism $R\to R^\wedge_\ka$ is faithfully flat (\cite[Theorem~8.14]{MR1011461}), and hence
the induced map $\Spec R^\wedge_\ka\to \Spec R$ is surjective by \cite[Theorem~7.3(i)]{MR1011461}.

It is natural to ask whether this holds for a Noether $R$-algebra $A$. Under the same assumption on $\ka\subset R$, it follows that the canonical ring homomorphism $A\to A^\wedge_\ka$ is a pure monomorphism in $\Mod A$ (since $R\to R^\wedge_\ka$ is a pure monomorphism by \cite[Theorem~7.5(i)]{MR1011461} and $A^\wedge_\ka=A\otimes_{R}R^\wedge_\ka$). Thus the following proposition gives an affirmative answer to the question:

\begin{proposition}\label{SurjBwSpec}
	Let $R$ be a commutative ring and let $A$ and $B$ be rings. Let $\phi\colon R\to A$ and $f\colon A\to B$ be ring homomorphisms such that $\phi(R)$ and $f(\phi(R))$ are contained in the centers of $A$ and $B$, respectively (that is, $f$ is an $R$-algebra homomorphism). Assume that $A$ is finitely generated as an $R$-module, $B$ is a centralizing extension of $f(A)$, and $f$ is a pure monomorphism in $\Mod A$. Then the induced map $\Spec B\to\Spec A$ is surjective.
\end{proposition}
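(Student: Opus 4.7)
The plan is a Zorn's lemma argument of lying-over type, in which the pure monomorphism hypothesis plays the role usually filled by integrality or finite centralizing generation. Because $f$ is pure and hence injective, I would identify $A$ with $f(A)\subseteq B$. Writing $B=\sum_{s\in S}sA$ with $S\subseteq B$ commuting elementwise with $A$, a direct computation yields $PB=BP$ for every two-sided ideal $P$ of $A$, so $PB$ is automatically a two-sided ideal of $B$. Now fix $P\in\Spec A$; tensoring the pure monomorphism $A\hookrightarrow B$ on the left with the $A$-bimodule $A/P$ keeps it exact, which yields an injection $A/P\hookrightarrow B/PB$, or equivalently $A\cap PB=P$.

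I then introduce $\cS$, the collection of two-sided ideals $J$ of $B$ with $J\supseteq PB$ and $J\cap A=P$. This set is nonempty (it contains $PB$) and closed under unions of chains, so Zorn's lemma gives a maximal element $Q\in\cS$; since $1\in A\setminus P$, automatically $Q\subsetneq B$. It then suffices to show $Q$ is prime, since then $f^{-1}(Q)=Q\cap A=P$ produces the required preimage. Suppose toward a contradiction that $x,y\in B\setminus Q$ with $xBy\subseteq Q$. The two-sided ideals $Q+BxB$ and $Q+ByB$ contain $PB$ and strictly contain $Q$, so by maximality of $Q$ their intersections with $A$ must strictly contain $P$. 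Choosing $a,a'\in A\setminus P$ with $a\in Q+BxB$ and $a'\in Q+ByB$, one expands
\[
aAa' \subseteq aBa' \subseteq (Q+BxB)\,B\,(Q+ByB) \subseteq Q+BxByB \subseteq Q,
\]
the final inclusions using that $Q$ is two-sided and $xBy\subseteq Q$. Therefore $aAa'\subseteq Q\cap A=P$, and primality of $P$ in $A$ forces $a\in P$ or $a'\in P$, a contradiction.

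The hypotheses enter in isolated, transparent places: the centralizing condition is precisely what makes $PB$ a two-sided ideal, while purity of $f$ is precisely what forces $A\cap PB=P$. Everything else is formal bookkeeping, so I do not anticipate a serious obstacle; the proof is a standard noncommutative lying-over argument in which purity (rather than finite generation as a module) supplies the non-degeneracy that allows one to contract back to $P$ exactly.
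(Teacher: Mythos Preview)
Your argument is correct. It is, in fact, more general than the statement you were asked to prove: nowhere do you use that $A$ is finitely generated over $R$, nor do you use $R$ at all beyond the observation that $B$ is a centralizing extension of $f(A)$. Your proof works for any ring homomorphism $f\colon A\to B$ that is a pure monomorphism of right $A$-modules with $B$ a centralizing extension of $f(A)$.

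The paper's approach is quite different and does use the hypothesis on $A$. It localizes at $\p:=P\cap R$, uses purity to see that $A_{\p}/P_{\p}\hookrightarrow B_{\p}/PB_{\p}$ (so the target is nonzero), and then invokes the fact that $A_{\p}/P_{\p}$ is a simple artinian ring---this is exactly where module-finiteness over $R$ enters---so that $\Spec(A_{\p}/P_{\p})$ is a single point. Any prime of $B_{\p}/PB_{\p}$ then contracts to that point, and one lifts back to $P\in\Spec A$. What your Zorn's-lemma lying-over argument buys is a cleaner, hypothesis-light proof that avoids localization and the structure theory of $A_{\p}/P_{\p}$ entirely; what the paper's argument buys is a concrete identification of the fibre with $\Spec(B_{\p}/PB_{\p})$, which fits naturally with the rest of the paper's machinery.
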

\begin{proof}
	Let $P\in\Spec A$ and $\p:=P\cap R$, which belongs to $\Spec R$ by \cref{MapOfSpectra}. Since $B$ is a centralizing extension of $f(A)$, $BP=PB$ is a (two-sided) ideal of $B$. We have a commutative diagram
	\begin{equation*}
		\begin{tikzcd}
			A\ar[d]\ar[r,hookrightarrow,"f"] & B\ar[d] \\
			A_\p/P_\p\ar[r,hookrightarrow] & B_\p/PB_\p
		\end{tikzcd}
	\end{equation*}
	of canonical ring homomorphisms, where the second horizontal map is injective since it can be identified with $f\otimes_A (A_\p/P_\p)$ and $f$ is a pure monomorphism in $\Mod A$. By \cref{NonnoetherianSpectra,MapOfSpectra}, the diagram induces the following commutative diagram of maps:
	\begin{equation*}
		\begin{tikzcd}
			\Spec A & \Spec B\ar[l] \\
			\Spec(A_\p/P_\p)\ar[u] & \Spec(B_\p/PB_\p)\ar[u]\ar[l]\rlap{.}
		\end{tikzcd}
	\end{equation*}
	Since the ring $B_\p/PB_\p$ is nonzero, it has at least one maximal (hence prime) ideal $Q$. By \cref{SpecOfNoethAlg} along with \cref{NonnoetherianSpectra}, $\Spec(A_\p/P_\p)=\Spec ((A/P)\otimes_{R}\kappa(\p))$ consists of only the zero ideal, and it is sent to $P\in\Spec A$ by the left vertical map in the diagram. By the commutativity of the last diagram, the image of $Q$ in $\Spec B$ is sent to $P$ by the map $\Spec B\to\Spec A$.
\end{proof}

Let $f\colon A\to A^\wedge_\ka$ be the canonical ring homomorphism. Then the induced map $\Spec A^\wedge_\ka\to \Spec A$ given by $Q\mapsto f^{-1}(Q)$ is surjective by \cref{SurjBwSpec}.
The next proposition shows that, when $R$ is local and $\ka$ is its maximal ideal, the correspondence of maximal ideals can be understood well. The completion functor with respect to the maximal ideal of $R$ is written as $\widehat{(-)}$.

\begin{proposition}\label{CompletionFiber}
Assume that $(R,m,k)$ is a commutative noetherian local ring, and let $f\colon A\to \widehat{A}$ be the canonical ring homomorphism.
\begin{enumerate}
\item\label{CompletionFiber.MaximalInverse} Let $I\subset\widehat{A}$ be an ideal. Then $I\in\Max\widehat{A}$ if and only if $f^{-1}(I)\in\Max A$. If this is the case, then $I=\widehat{f^{-1}(I)}$, and $f\colon A\to\widehat{A}$ induces an isomorphism $A/f^{-1}(I)\isoto\widehat{A}/I$ of rings.

\item\label{CompletionFiber.MaximalBijective} The canonical surjection $\Spec \widehat{A}\to \Spec A$ restricts to a bijection $\Max \widehat{A} \isoto \Max A$ between the sets of maximal ideals, and $\Max \widehat{A}= \setwithcondition{\widehat{P}}{P\in \Max A}$.

\item\label{CompletionFiber.SimpleInjective} For every $P\in \Max A$, we have isomorphisms $S_{A}(P)\cong S_{\widehat{A}}(\widehat{P})$ and $I_{A}(P)\cong I_{\widehat{A}}(\widehat{P})$ in $\Mod\widehat{A}$ (and also in $\Mod A$).
\end{enumerate}
\end{proposition}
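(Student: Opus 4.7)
The plan is to establish all three parts together, with the central fact being that for $P\in\Max A$ one has $\widehat{P}=P\otimes_R\widehat{R}$ and $\widehat{A}/\widehat{P}\cong A/P$ via $f$. I begin with two preparatory observations: first, $f$ is injective because it is a pure monomorphism (as noted in the subsection on Matlis duality); second, by \cref{CompTensorRed} applied to $\ka=\kb=\km$, the map $f$ induces a ring isomorphism $A/\km A\isoto \widehat{A}/\km\widehat{A}$.

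For \cref{CompletionFiber.MaximalInverse} and \cref{CompletionFiber.MaximalBijective}, I fix $P\in\Max A$. Since $P$ is finitely generated over the noetherian $R$, the flat base change $-\otimes_R\widehat{R}$ applied to $0\to P\to A\to A/P\to 0$ yields $\widehat{P}\subseteq\widehat{A}$ with $\widehat{A}/\widehat{P}\cong (A/P)\otimes_R\widehat{R}$. Now $P\cap R=\km$ by \cref{SpecOfNoethAlgMax}, so $A/P$ is annihilated by $\km$ and is therefore naturally a $k$-vector space; hence $(A/P)\otimes_R\widehat{R}\cong (A/P)\otimes_k (k\otimes_R\widehat{R})\cong A/P$. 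The resulting ring isomorphism $A/P\isoto \widehat{A}/\widehat{P}$ is induced by $f$, so $\widehat{P}\in\Max \widehat{A}$ and $f^{-1}(\widehat{P})=P$. Conversely, given $I\in\Max\widehat{A}$, apply \cref{SpecOfNoethAlgMax} to the Noether $\widehat{R}$-algebra $\widehat{A}$ to conclude $I\cap\widehat{R}=\widehat{\km}=\km\widehat{R}$, whence $\km\widehat{A}\subseteq I$. Under the isomorphism $A/\km A\cong \widehat{A}/\km\widehat{A}$, the maximal ideal $I/\km\widehat{A}$ corresponds to some $P'/\km A$ with $P'\in\Max A$, and tracing through gives $f^{-1}(I)=P'$ and (comparing images modulo $\km\widehat{A}$) $I=\widehat{P'}$. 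This simultaneously proves the bijectivity in \cref{CompletionFiber.MaximalBijective} and the remaining assertions of \cref{CompletionFiber.MaximalInverse}.

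For \cref{CompletionFiber.SimpleInjective}, the ring isomorphism $A/P\isoto\widehat{A}/\widehat{P}$ from the previous step produces a bijection between simple right modules over these two simple artinian rings, whence $S_{\widehat{A}}(\widehat{P})\cong S_A(P)$ as right $A$-modules (using that $R$ is local so $A_\p=A$ and $\widehat{A}_{\widehat{\p}}=\widehat{A}$ for $\p=\km$). For the injective envelopes, I will use that $I_A(P)$ is $\km$-torsion by \cref{ArtinianInjective}, so by \cref{UniqueCompleteStructure} it carries a unique right $\widehat{A}$-module structure compatible with its $A$-structure; likewise $I_{\widehat{A}}(\widehat{P})$ is $\widehat{\km}$-torsion, hence $\km$-torsion as an $A$-module. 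Since $\widehat{A}$ is flat as a left $A$-module, the scalar restriction functor $\Mod\widehat{A}\to\Mod A$ preserves injectivity, so $I_{\widehat{A}}(\widehat{P})$ is injective in $\Mod A$ and contains $S_A(P)$.

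The main obstacle is showing that this inclusion is essential in $\Mod A$, for which I invoke the equivalence in \cref{TorsionEquivalence} between $\km$-torsion right $A$-modules and $\km$-torsion right $\widehat{A}$-modules: by uniqueness of the compatible $\widehat{A}$-structure (\cref{UniqueCompleteStructure}), every $A$-submodule of a $\km$-torsion $\widehat{A}$-module is automatically an $\widehat{A}$-submodule, so the lattices of $A$-submodules and $\widehat{A}$-submodules of $I_{\widehat{A}}(\widehat{P})$ coincide. Thus essentiality transfers, and $I_{\widehat{A}}(\widehat{P})$ is an injective envelope of $S_A(P)$ in $\Mod A$; uniqueness of injective envelopes gives $I_A(P)\cong I_{\widehat{A}}(\widehat{P})$ as right $A$-modules, and one more application of \cref{UniqueCompleteStructure} upgrades this to an isomorphism of right $\widehat{A}$-modules.
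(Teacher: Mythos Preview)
Your proof is correct and follows essentially the same approach as the paper's: both arguments hinge on \cref{SpecOfNoethAlgMax} to reduce to the fibre over $\km$, the identification $A/\km A\cong\widehat{A}/\km\widehat{A}$ (via \cref{CompTensorRed} or flat base change), and \cref{TorsionEquivalence} together with flatness of $\widehat{A}$ over $A$ to transfer essentiality and injectivity. The only cosmetic difference is that you argue via flat base change $-\otimes_R\widehat{R}$ where the paper phrases the same computation as identifying $\overline{f}$ with its $\km$-adic completion; one small step you leave implicit is the ``if'' direction of \cref{CompletionFiber.MaximalInverse} for an arbitrary ideal $I$ (not yet known to equal some $\widehat{P}$), but this follows immediately since $f^{-1}(I)=P\in\Max A$ forces $\widehat{P}\subseteq I\subsetneq\widehat{A}$ and $\widehat{P}$ is maximal.
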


\begin{proof}
\cref{CompletionFiber.MaximalInverse}: Let $J:=f^{-1}(I)$. We have a commutative diagram
\begin{equation*}
	\begin{tikzcd}
		R/(J\cap R)\ar[d,hookrightarrow]\ar[r,hookrightarrow] & \widehat{R}/(I\cap\widehat{R})\ar[d,hookrightarrow] \\
		A/J\ar[r,hookrightarrow,"\overline{f}"] & \widehat{A}/I\rlap{,}
	\end{tikzcd}
\end{equation*}
in which all maps are canonical ones.
If $J\in\Max A$, then $J\cap R\in\Max R$ by \cref{SpecOfNoethAlgMax}, and hence $J\cap R=\km$ and $R/(J\cap R)=k$. This means that $A/J$ is a finite-dimensional $k$-algebra. In particular, $A/J$ is of finite length as an $R$-module, so it is $\km$-complete. On the other hand, $\widehat{A}/I$ is also $\km$-complete as it is finitely generated $\widehat{R}$-module (see the third paragraph of \cref{subsec.MatlisDual}). Thus $\overline{f}$ is canonically identified with its completion $\varLambda^{\km}\overline{f}$. By \cref{FlTensorCompIsEx}, $\varLambda^\km(A/J)\cong \widehat{A}/\widehat{J}$, so $\varLambda^{\km}\overline{f}$ is the ring homomorphism $\widehat{A}/\widehat{J}\to\widehat{A}/I$, which is surjective. 
It then follows that $\overline{f}=\varLambda^{\km}\overline{f}$ is an isomorphism and $\widehat{J}=I$. 
The isomorphism $\overline{f}:A/J\isoto\widehat{A}/I$ of rings implies that $I$ is a maximal ideal of $\widehat{A}$ since $J$ is maximal. This proves the ``if'' part of the first claim and the second claim of \cref{CompletionFiber.MaximalInverse}.

Conversely, if $I\in\Max\widehat{A}$, then $I\cap\widehat{R}=\widehat{\km}\in\Max\widehat{R}$ by \cref{SpecOfNoethAlgMax}.
Since the preimage of $I\cap \widehat{R}$ by the canonical map $R\to \widehat{R}$ is $J\cap R$ (see the commutative diagram above), we have $J\cap R=\km\in \Max R$. Thus $J\in\Max A$ by \cref{SpecOfNoethAlgMax} again.

\cref{CompletionFiber.MaximalBijective}: 
It follows from the first claim of \cref{CompletionFiber.MaximalInverse} that the canonical surjection $\Spec \widehat{A}\to\Spec A$ restricts to a surjection $\Max \widehat{A}\to\Max A$, and this must be injective by the second claim of \cref{CompletionFiber.MaximalInverse}.

\cref{CompletionFiber.SimpleInjective}:
For every $P\in \Max A$, $A/P$ is a finite direct sum of copies of $S_{A}(P)$ as a right $A$-module; see \cref{nP}. By \cref{TorsionEquivalence}, this decomposition can be regarded as that of right $\widehat{A}$-modules, and $S_{A}(P)$ is a simple $\widehat{A}$-module. Since $A/P\cong \widehat{A}/\widehat{P}$ by \cref{CompletionFiber.MaximalInverse} and \cref{CompletionFiber.MaximalBijective}, and $\widehat{A}/\widehat{P}$ is a finite direct sum of copies of $S_{\widehat{A}}(\widehat{P})$ as a right $\widehat{A}$-module, we obtain an isomorphism $S_{A}(P)\cong S_{\widehat{A}}(\widehat{P})$ of right $\widehat{A}$-modules.

By this isomorphism, the injective envelope $E_{\widehat{A}}(S_A(P))$ of $S_A(P)$ coincides with the injective envelope $I_{\widehat{A}}(\widehat{P})=E_{\widehat{A}}(S_{\widehat{A}}(\widehat{P}))$ of $S_{\widehat{A}}(\widehat{P})$ in $\Mod \widehat{A}$. 
As $I_{\widehat{A}}(\widehat{P})$ is $\widehat{\km}$-torsion (\cref{ArtinianInjective}), it is $\km$-torsion, so the essential extension $S_A(P)\hookrightarrow E_{\widehat{A}}(S_A(P))\cong I_{\widehat{A}}(\widehat{P})$ in $\Mod \widehat{A}$ is also an essential extension in $\Mod A$ by \cref{TorsionEquivalence}.
Moreover, $E_{\widehat{A}}(S_A(P))$ is injective as a right $A$-module, because $\widehat{A}$ is a flat left $A$-module by \cref{FlCompIsFl} and
\begin{equation*}
	\Hom_A(-,E_{\widehat{A}}(S_A(P)))\cong\Hom_A(-,\Hom_{\widehat{A}}(\widehat{A},E_{\widehat{A}}(S_A(P))))\cong \Hom_{\widehat{A}}(-\otimes_A \widehat{A},E_{\widehat{A}}(S_A(P)))
\end{equation*}
by the tensor-hom adjunction. 
Therefore $S_A(P)\hookrightarrow E_{\widehat{A}}(S_A(P))$ is an injective envelope in $\Mod A$ as well, and hence $I_A(P)=E_{A}(S_A(P))\cong E_{\widehat{A}}(S_A(P))=I_{\widehat{A}}(\widehat{P})$ in $\Mod A$. This is also an isomorphism in $\Mod \widehat{A}$ by \cref{TorsionEquivalence}.
\end{proof}

\bibliography{bib}
\bibliographystyle{customamsalpha}

\end{document}